\title{Two-complete stable motivic stems over finite fields}
\author{Glen Matthew Wilson}
\email{glenw@math.uio.no}
\address{Department of Mathematics, University of Oslo, Norway.  }
\author{Paul Arne {\O}stv{\ae}r}
\email{paularne@math.uio.no}
\address{Department of Mathematics, University of Oslo, Norway.  }
\keywords{Motivic Adams spectral sequence, stable motivic stems over
finite fields, computer assisted motivic Ext group calculations}
\subjclass[2010]{14F42, 16-04, 18G15, 55T15}
\newcommand{\Hcomp}{{}^{{\kern -.4pt}\wedge}_{{\kern -1.5pt}H}}
\newcommand{\MASS}{\mathfrak{M}}
\newcommand{\myol}[2][3]{{}\mkern#1mu\overline{\mkern-#1mu#2}}
\newcommand{\twocomp}{{}^{{\kern -.5pt}\wedge}_2}
\newcommand{\ellcomp}{{}^{{\kern -.5pt}\wedge}_{\ell}}
\newcommand{\unit}{\mathbbm{1}}
\newcommand{\SH}{\mathcal{SH}}
\newcommand{\Sm}{\mathrm{Sm}}
\newcommand{\Spc}{\mathrm{Spc}}
\newcommand{\Spt}{\mathrm{Spt}}
\newcommand{\Ev}{\mathrm{Ev}}
\newcommand{\Fbar}{\myol{F}}
\newcommand{\Kbar}{\myol{K}}
\newcommand{\kbar}{\myol{k}}
\newcommand{\st}{ \, \vert \, }
\newcommand{\pihat}{\hat{\pi}}
\newcommand{\del}{\partial}
\newcommand{\todo}[1]{\marginpar{{\footnotesize#1}}}
\renewcommand{\todo}[1]{}
\newcommand{\details}[1]
{
{\footnotesize {\it Details:}
#1
}
}
\newcommand{\bispt}{\Spt_{\bbG_m, S^1}}
\newcommand{\symspt}{\Spt^{\Sigma}}
\newcommand{\Fpbar}{\overline{\bbF}_p} 
\newcommand{\bbF}{\mathbb{F}} 
\newcommand{\calA}{\mathcal{A}} 
\newcommand{\calC}{\mathcal{C}} 
\newcommand{\calH}{\mathcal{H}}
\newcommand{\calO}{\mathcal{O}}
\newcommand{\calX}{\mathcal{X}}
\newcommand{\calY}{\mathcal{Y}}
\newcommand{\bbA}{\mathbb{A}} 
\newcommand{\bbC}{\mathbb{C}} 
\newcommand{\bbG}{\mathbb{G}}
\newcommand{\bbL}{\mathbb{L}} 
\newcommand{\bbP}{\mathbb{P}}
\newcommand{\bbR}{\mathbb{R}} 
\newcommand{\bbZ}{\mathbb{Z}}
\newcommand{\frakS}{\mathfrak{S}}
\newcommand{\Fqbar}{\overline{\bbF}_q}
\DeclareMathOperator{\Ext}{Ext}
\DeclareMathOperator{\sSet}{\underline{sSet}}
\DeclareMathOperator{\Hom}{Hom} 
\DeclareMathOperator{\im}{im}
\DeclareMathOperator{\colim}{colim}
\DeclareMathOperator{\holim}{holim}
\DeclareMathOperator{\Sq}{Sq}
\DeclareMathOperator{\Spec}{Spec} 
\DeclareMathOperator{\InverseLimit}{\underleftarrow{\lim}}
\theoremstyle{definition} 
\newtheorem{proposition}{Proposition}[section]
\newtheorem{definition}[proposition]{Definition} 
\newtheorem{theorem}[proposition]{Theorem}
\newtheorem{example}[proposition]{Example} 
\newtheorem{corollary}[proposition]{Corollary} 
\newtheorem{lemma}[proposition]{Lemma} 
\theoremstyle{remark} 
\newtheorem{remark}[proposition]{Remark} 
\numberwithin{equation}{section}
\begin{document}

\begin{abstract}
  Let $\ell$ be a prime and $q = p^{\nu}$ where $p$ is a prime
  different from $\ell$.  We show that the $\ell$-completion of the
  $n$th stable homotopy group of spheres is a summand of the
  $\ell$-completion of the $(n, 0)$ motivic stable homotopy group of
  spheres over the finite field with $q$ elements $\bbF_q$.  With
  this, and assisted by computer calculations, we are able to
  explicitly compute the two-complete stable motivic stems $\pi_{n,
    0}(\bbF_q)\twocomp$ for $0\leq n\leq 18$ for all finite fields and
  $\pi_{19, 0}(\bbF_q)\twocomp$ and $\pi_{20, 0}(\bbF_q)\twocomp$ when
  $q \equiv 1 \bmod 4$ assuming Morel's connectivity theorem for
  $\bbF_q$ holds.
\end{abstract}

\maketitle

\section{Introduction}

The homotopy groups of spheres belong to the most important and
puzzling invariants in topology.  See Kochman \cite{Kochman} and the
more recent works of Isaksen \cite{StableStems} and Wang and Xu
\cite{WX} for amazing computer assisted ways of computing these
invariants based on the Adams spectral sequence. The Adams spectral
sequence of topology is a well studied method to calculate the stable
homotopy groups of spheres, see Adams \cite{Adams} and Ravenel
\cite{Ravenel}.  With two-primary coefficients, the second page of the
Adams spectral sequence has a description in terms of Ext groups over
the mod $2$ Steenrod algebra
\begin{equation*} 
  E_2^{s,t} =
  \Ext_{\calA^{*}}^{s,t}(\bbF_{2},\bbF_{2})
\end{equation*} 
and converges to the two-complete stable homotopy groups of spheres
$(\pi_n^s)\twocomp$.  Extensive computer calculations of these Ext
groups have been carried out by Bruner in \cite{BB1} and \cite{BB2}.
However, even if one knew completely the answer for the Ext groups in
the Adams spectral sequence, one is still not finished with computing
the stable homotopy groups of spheres.  One needs to know in addition
the differentials and all the group extensions hidden in the
associated graded of the filtration.  Only partial results have been
obtained in spite of an enormous effort.

Given any field $k$ the stable motivic homotopy category $\SH_k$ over
$k$ has the structure of a triangulated category and encodes both
topological information and arithmetic information about $k$.  An
application of this framework is Voevodsky's proof of Milnor's
conjecture on Galois cohomology \cite{MR2031199}.  Just as for the
stable homotopy category $\SH$, it is an interesting and deep problem
to compute the stable motivic homotopy groups of spheres
$\pi_{s,w}(k)$ over $k$, that is, $\SH_k(\Sigma^{s,w}\unit, \unit)$,
where $\unit$ denotes the motivic sphere spectrum over $k$.  When $k$
has finite mod 2 cohomological dimension and $s\geq w \geq 0$, the
motivic Adams spectral sequence (MASS) converges to the two-completion
of the stable motivic stems
\begin{equation*} 
  E_2^{f,(s,w)} =
  \Ext_{\calA^{**}}^{f,(s+f,w)}(H^{**},H^{**})
  \Longrightarrow
  (\pi_{s,w}\unit)\twocomp.
\end{equation*} 
This is a trigraded spectral sequence, where $\calA^{**}$ is the
bigraded mod $2$ motivic Steenrod algebra (see the work of Hoyois,
Kelly and {\O}stv{\ae}r \cite{HKOst} and Voevodsky \cite{MR2031199}),
and $H^{**}$ is the bigraded mod $2$ motivic cohomology ring of $k$.
A construction of the motivic Adams spectral sequence is given in
section \ref{section:MASS}.  The calculational challenges are to: (1)
identify the motivic Ext groups, (2) determine the differentials, and
(3) reconstruct the abutment from the filtration quotients.

Based on the MASS, Dugger and Isaksen have carried out calculations of
the $2$-complete stable motivic homotopy groups of spheres up to the
34-stem over the complex numbers \cite{DI}.  Isaksen has extended this
work largely up to the 70-stem \cite{Charts, StableStems}.  We are led
to wonder, how do the stable motivic homotopy groups vary for
different base fields?  Morel has given a complete description of the
$0$-line $\pi_{n,n}(k)$ in terms of Milnor-Witt $K$-theory
\cite{Morel12}.  The $1$ line $\pi_{n+1,n}(k)$ is determined by
Hermitian and Milnor $K$-theory groups by the work of R{\"o}ndigs,
Spitzweck and {\O}stv{\ae}r \cite{RSO}, which generalizes the partial
results obtained by Ormsby and {\O}stv{\ae}r in \cite{LowDimFields}.
Ormsby has investigated the case of related invariants over $p$-adic
fields \cite{MotInvPadic} and the rationals \cite{MotBPInvQ}, and
Dugger and Isaksen have analyzed the case over the real numbers
\cite{DI-Real}.  It is now possible to perform similar calculations
over fields of positive characteristic, thanks to work on the motivic
Steenrod algebra in positive characteristic by Hoyois, Kelly and
{\O}stv{\ae}r \cite{HKOst}.  In this paper we use computer assisted
motivic Ext group calculations in tandem with theoretical arguments to
determine stable motivic stems $\pi_{n,0}$ in weight zero over finite
fields.

We now state our main results. For a prime $\ell$ and an abelian group
$G$, we write the $\ell$-completion of $G$ by $G\ellcomp$.

\begin{theorem}
  \label{thm:alg_closed} %
  \todo{theorems now follow numbering convention given throughout} %
  Let $\Fbar$ be an algebraically closed field of positive
  characteristic $p$. For all $s\geq w \geq 0$ or $s<w$, there are
  isomorphisms $\pi_{s,w}(\Fbar)[p^{-1}] \cong
  \pi_{s,w}(\bbC)[p^{-1}]$.
\end{theorem}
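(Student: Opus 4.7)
The plan is to connect $\Fbar$ with $\bbC$ through a zig-zag of base-change maps that become isomorphisms after inverting the residue characteristic $p$.

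First I would introduce the ring of Witt vectors $W=W(\Fbar)$, a strict henselian complete discrete valuation ring with residue field $\Fbar$ and fraction field $K$ of characteristic $0$. The closed and generic immersions $\Spec(\Fbar)\to\Spec(W)\leftarrow\Spec(K)$ induce base-change functors on $\SH$ and yield a cospan
\[
\pi_{s,w}(\Fbar)[p^{-1}] \longleftarrow \pi_{s,w}(W)[p^{-1}] \longrightarrow \pi_{s,w}(K)[p^{-1}].
\]
Motivic henselian/rigidity base change with coefficients prime to the residue characteristic (as developed by Hoyois and collaborators, building on Suslin--Voevodsky rigidity) identifies $\SH(W)[1/p]$ with $\SH(\Fbar)[1/p]$, making the left arrow an isomorphism; pullback along the open immersion $\Spec(K)\hookrightarrow\Spec(W)$ and $\bbA^1$-invariance then relate the middle term to the right.

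Next I would pass to an algebraic closure $\bar K$ of $K$ by continuity of $\SH$ for filtered colimits of base rings, giving $\pi_{s,w}(\bar K)=\colim_{L/K}\pi_{s,w}(L)$ over finite extensions. After inverting $p$, each structure map in this colimit is an isomorphism in the stated bidegree range, yielding $\pi_{s,w}(K)[1/p]\cong\pi_{s,w}(\bar K)[1/p]$. Finally, $\bar K$ and $\bbC$ are both algebraically closed of characteristic $0$ and embed into a common algebraically closed field $\Omega$, so two applications of rigidity for base change among algebraically closed fields of characteristic $0$ produce $\pi_{s,w}(\bar K)[1/p]\cong\pi_{s,w}(\bbC)[1/p]$, and composing with the first step completes the zig-zag.

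The hard part will be establishing these base-change isomorphisms \emph{integrally} (that is, only after inverting $p$, not $\ell$-adically for a single prime $\ell$). In the range $s\geq w\geq 0$ I would argue prime-by-prime via the MASS at each prime $\ell\neq p$: the $E_{2}$-page $\Ext_{\calA^{**}}^{f,(s+f,w)}(H^{**},H^{**})$ is invariant under base change among algebraically closed fields once $p$ has been inverted, by the identification of the motivic Steenrod algebra of Hoyois--Kelly--\O stv\ae r \cite{HKOst}, and convergence of the MASS propagates the isomorphism to the $\ell$-completed abutment. Assembling over all $\ell\neq p$ and using a fracture square together with finite generation in each bidegree then gives the integral statement after inverting $p$. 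In the complementary range $s<w$ both sides vanish by Morel's connectivity theorem, so the isomorphism is automatic.
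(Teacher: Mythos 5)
Your overall architecture --- the cospan $\Fbar \leftarrow W(\Fbar) \to \Kbar$, comparison of mod $\ell$ motivic Adams spectral sequences for each prime $\ell \neq p$, reduction to $\bbC$ via Levine's theorem, and reassembly of the $p$-inverted statement from $\ell$-primary information using the fact (Ananyevsky--Levine--Panin) that the groups are torsion for $s > w \geq 0$ --- is exactly the route the paper takes. Two of your intermediate steps, however, would not go through as written. First, the detour through the fraction field $K$: the transition maps $\pi_{s,w}(L) \to \pi_{s,w}(L')$ over finite extensions $L \subset L' \subset \Kbar$ are not isomorphisms in general, and, more to the point, $H^{**}(K;\bbZ/\ell)$ is strictly larger than $H^{**}(\Kbar;\bbZ/\ell)$ (the absolute Galois group of $K$ is nontrivial), so the MASS over $K$ does not agree with the one over $\Kbar$ and your claim that the structure maps in the colimit become isomorphisms after inverting $p$ is unjustified. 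The paper sidesteps this by comparing $W$ with $\Kbar$ in one step: the single map $W \to \Kbar$ induces isomorphisms on $H_{**}$ and $\calA_{**}$ (proposition \ref{kbar_iso}), hence on $E_2$-pages, hence on the completed homotopy groups.

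Second, your ``fracture square plus finite generation'' reassembly only works off the diagonal. For $s = w$ the groups are not torsion ($\pi_{0,0}(\Fbar) \cong GW(\Fbar) \cong \bbZ$), proposition \ref{prop:finiteness} does not apply, and the MASS computes only the $\ell$-completion; to recover the integral group you would have to supply the rationalization and the arithmetic gluing data separately, which you do not do. The paper avoids this entirely by quoting Morel's identification $\pi_{s,s}(F) \cong K^{MW}_{-s}(F)$, which for any algebraically closed field gives $\bbZ$ (for $s=0$) or the Witt ring $\bbZ/2$ (for $s>0$), making the comparison with $\bbC$ immediate on the diagonal. The case $s<w$ you handle exactly as the paper does, via the connectivity theorem.
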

\begin{proof}
  When $s> w \geq 0$, the groups $\pi_{s,w}(\Fbar)$ and
  $\pi_{s,w}(\bbC)$ are torsion by proposition \ref{prop:finiteness}.
  The isomorphism $\pi_{s,w}(\Fbar)[p^{-1}] \cong
  \pi_{s,w}(\bbC)[p^{-1}]$ follows when $s > w \geq 0$ from
  theorem \ref{thm:ellcomp_iso} by summing up the $\ell$-primary
  parts. When $s = w \geq 0$ the result follows by Morel's
  identification of the $0$ line in \cite{Morel12}. If $s<w$ then
  Morel's connectivity theorem implies that both groups are trivial by
  corollary \ref{sphere_connective}.
\end{proof}

Let $\pi_n^s$ denote the $n$th topological stable stem.  Over the
complex numbers, Levine showed there is an isomorphism $\pi_n^s \cong
\pi_{n,0}(\bbC)$ \cite[Corollary 2]{Levine}.  We obtain a similar result
over any algebraically closed field of positive characteristic $p$
after inverting $p$. %

\begin{corollary}
  \label{cor:invert_p_iso}
  Let $\Fbar$ be an algebraically closed field of positive
  characteristic $p$. For all $n\geq 0$ the homomorphism $\bbL c :
  (\pi_n^s)[p^{-1}] \to \pi_{n,0}(\Fbar)[p^{-1}]$ is an
  isomorphism.
\end{corollary}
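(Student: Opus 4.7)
The plan is to split into the cases $n=0$ and $n\geq 1$ and reduce each to results already established in the paper; the key observation for $n\geq 1$ is that both sides are torsion, so inverting $p$ is interchangeable with summing the $\ell$-completions over all primes $\ell\ne p$.

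For $n=0$, I would argue directly. Morel's identification of the zero line gives $\pi_{0,0}(\Fbar)\cong K_0^{\mathrm{MW}}(\Fbar)\cong \mathrm{GW}(\Fbar)$, and since a non-degenerate symmetric bilinear form over an algebraically closed field is determined by its rank, $\mathrm{GW}(\Fbar)\cong \bbZ$. Combined with $\pi_0^s\cong \bbZ$ and the fact that $\bbL c$ is a unital ring homomorphism sending $1$ to $1$, this forces $\bbL c$ to be an integral isomorphism, and a fortiori an isomorphism after inverting $p$.

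For $n\geq 1$, the topological stem $\pi_n^s$ is finite, and Proposition \ref{prop:finiteness} gives that $\pi_{n,0}(\Fbar)$ is torsion. For a torsion abelian group $A$ with finite $\ell$-primary components, one has $A[p^{-1}]\cong \bigoplus_{\ell\ne p} A\ellcomp$, so it is enough to check that $\bbL c$ induces an isomorphism $(\pi_n^s)\ellcomp \xrightarrow{\cong} \pi_{n,0}(\Fbar)\ellcomp$ for each prime $\ell\ne p$. This is precisely the content of Theorem \ref{thm:ellcomp_iso}. Summing these isomorphisms over $\ell\ne p$ and invoking naturality of $\bbL c$ then yields the claimed isomorphism after inverting $p$.

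The main obstacle is concealed inside Theorem \ref{thm:ellcomp_iso} rather than in the corollary itself: that theorem packages the $\ell$-complete rigidity comparison between motivic stems over $\Fbar$ and over $\bbC$, which combined with Levine's identification $\pi_n^s\cong \pi_{n,0}(\bbC)$ delivers the pointwise $\ell$-complete isomorphism. Given that input, the corollary reduces to a bookkeeping step: dispatch $n=0$ using Morel's description, and reassemble the $\ell$-adic pieces for $n\geq 1$ via the torsion decomposition.
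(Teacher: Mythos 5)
Your argument is correct and matches the paper's intended route: dispatch $n=0$ via Morel's identification of the zero line and unitality of $\bbL c$, and for $n\geq 1$ reassemble the $\ell$-complete isomorphisms over all $\ell\neq p$, using proposition \ref{prop:finiteness} (that the $\ell$-primary parts are finite) to identify $[p^{-1}]$ with a sum of $\ell$-completions. One small slip: the pointwise $\ell$-complete statement you invoke, $\bbL c : (\pi_n^s)\ellcomp \to \pi_{n,0}(\Fbar)\ellcomp$ an isomorphism, is corollary \ref{cor:ellcomp_iso} rather than theorem \ref{thm:ellcomp_iso}, which instead concerns finite fields $\bbF_q$ (the paper itself makes the same reference slip in the proof of theorem \ref{thm:alg_closed}, where the intended target is theorem \ref{ellcomp_iso}).
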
 

We do not expect Levine's theorem to hold over a field which is not
algebraically closed. Write $\bbF_q$ for the finite field with
$q=p^{\nu}$ elements where $p$ is a prime and $\widetilde{\bbF}_q$ for
the union of the field extensions $\bbF_{q^i}$ over $\bbF_q$ with $i$
odd. In this paper, we will see how the groups $\pi_{n,0}(\bbF_q)$
differ from $\pi_n^s$ using motivic Adams spectral sequence
calculations. Corollary \ref{cor:invert_p_iso} allows us to identify
differentials in the mod 2 motivic Adams spectral sequence over a
finite field and identify the two-complete groups
$\pi_{n,0}(\bbF_q)\twocomp$ in a range.  The analogous calculations
with the mod $\ell$ motivic Adams spectral sequence for $\ell$ an odd
prime are given by Wilson in \cite{WThesis}. The groups take the
following form.

\begin{theorem}
\label{thm:ellcomp_iso} 
If Morel's connectivity theorem holds for the finite field $\bbF_q$,
then for all $0\leq n\leq 18$ there is an isomorphism
\begin{equation*}
  \pi_{n,0}(\bbF_q)[p^{-1}] \cong (\pi_n^s \oplus
  \pi_{n+1}^s)[p^{-1}].
\end{equation*}
In particular, the group $\pi_{4,0}(\bbF_q)[p^{-1}]$ is trivial.
\end{theorem}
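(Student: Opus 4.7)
The plan is to run the motivic Adams spectral sequence (MASS) over $\bbF_q$ in weight zero throughout the range $0 \leq n \leq 18$, and to combine its output with the summand statement from the abstract together with Corollary~\ref{cor:invert_p_iso}. After inverting $p$, it suffices to verify the claimed isomorphism $\ell$-primarily for each prime $\ell \neq p$. The mod-$\ell$ MASS for odd $\ell \neq p$ is considerably simpler and is handled by Wilson \cite{WThesis}, so the bulk of the work lies at the prime $2$.

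At the prime $2$, I would run the MASS converging to $\pi_{n,0}(\bbF_q)\twocomp$, convergence being guaranteed by the assumed connectivity theorem over $\bbF_q$. The $E_2$-page can be obtained by computer: the motivic cohomology $H^{**}(\bbF_q, \bbF_2)$ differs from $H^{**}(\Fbar, \bbF_2)$ by an extra class $u \in H^{1,0}$ (with additional $\rho$-class contributions when $q \not\equiv 1 \bmod 4$), enlarging the motivic Steenrod algebra $\calA^{**}_{\bbF_q}$ and its Ext groups in a controlled way. Differentials are then imported from the MASS over $\Fbar$ via base-change functoriality; by Corollary~\ref{cor:invert_p_iso}, these ultimately come from the topological Adams spectral sequence, whose differentials in this range are known from Bruner \cite{BB1,BB2} and Isaksen \cite{StableStems}. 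The genuinely new differentials that need to be identified are those involving the class $u$, which have no topological counterpart and must be checked by hand in each relevant bidegree.

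The final step is to resolve hidden extensions on the $E_\infty$-page. Here the summand statement from the abstract is decisive: it produces a split injection $(\pi_n^s)\twocomp \hookrightarrow \pi_{n,0}(\bbF_q)\twocomp$. Once the total order of the $E_\infty$-page in bidegree $(n,0)$ is confirmed to equal $|(\pi_n^s)\twocomp| \cdot |(\pi_{n+1}^s)\twocomp|$, the existence of the splitting forces the complementary summand to be $(\pi_{n+1}^s)\twocomp$. Assembling the $\ell$-primary parts over all $\ell \neq p$ yields the stated isomorphism after inverting $p$; the trivial case $\pi_{4,0}(\bbF_q)[p^{-1}] = 0$ drops out since $\pi_4^s = \pi_5^s = 0$.

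I expect the main obstacle to be the combined task of pinning down the new Adams differentials involving the $u$-class over $\bbF_q$ and resolving the hidden extensions in the filtration; both become delicate towards the top of the range, where the MASS has many potential sources and targets and where Adams filtration jumps must be ruled out in order to extract a clean direct sum decomposition rather than merely a short exact sequence.
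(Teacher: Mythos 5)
Your proposal follows the same high-level strategy as the paper: reduce prime-by-prime, run the mod~$2$ MASS in weight zero, import differentials from $\Fbar$ by base change, and use the split injection $\bbL c$ to control hidden extensions, with the odd-primary analysis deferred to Wilson's thesis. However, there are a few genuine gaps.

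The most serious is the passage from what the MASS computes to what the theorem asserts. The MASS converges to $\pi_{n,0}(\bbF_q)\ellcomp$, not to the $\ell$-primary part of $\pi_{n,0}(\bbF_q)$, and these are not automatically the same. The paper devotes Proposition~\ref{prop:finiteness} to showing that $\pi_{n,0}(\bbF_q)_{(\ell)}$ is finite for $n>0$ (using the torsion result of Ananyevsky--Levine--Panin together with the vanishing line for the Ext groups), and only then does $\pi_{n,0}(\bbF_q)_{(\ell)}\cong\pi_{n,0}(\bbF_q)\ellcomp$ hold, so that summing over $\ell\neq p$ gives the $[p^{-1}]$ statement. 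Your sketch ``assembles the $\ell$-primary parts'' without justifying this identification. Relatedly, the case $n=0$ escapes this mechanism entirely ($\pi_{0,0}$ contains a $\bbZ$ summand), and in the paper it is dispatched separately via Morel's computation $\pi_{0,0}(\bbF_q)\cong GW(\bbF_q)\cong\bbZ\oplus\bbZ/2$; your plan does not flag this as a special case. A second, smaller gap: knowing the order of the $E_\infty$-page and a split summand of the right size does not by itself pin down the isomorphism type of the complement; the paper instead uses the split injection to rule out hidden $2$-extensions directly, cell by cell. Finally, two technical inaccuracies: $u$ lies in bidegree $(1,1)$, not $(1,0)$ (it is the nonzero class of $K_1^M(\bbF_q)/2$), and the ``new'' differentials one must settle are those on powers of $\tau$ landing on $u$-multiples, which the paper determines not ad~hoc but by first running the MASS for $X=H\bbZ[p^{-1}]$ against the known target $H_{et}^*(\bbF_q;\bbZ_2(*))$ --- a clean trick your plan does not identify.
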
 
\begin{proof}
  Propositions \ref{prop:1mod4} and \ref{prop:3mod4} calculate the
  two-completion of $\pi_{n,0}(\bbF_q)$ for $0 \leq n \leq 18$. For
  primes $\ell \neq 2$, the calculations are similar and given by
  Wilson in \cite[\S\S6--7]{WThesis}. The $\ell$-completions of
  $\pi_{n,0}(\bbF_q)$ are shown to agree with the $\ell$-primary part
  of $\pi_{n,0}(\bbF_q)$ for $n>0$ in proposition
  \ref{prop:finiteness}. When $n=0$, the result follows by Morel's
  identification of $\pi_{0,0}(\bbF_q)$ with the Grothendieck-Witt
  ring of $\bbF_q$, since $GW(\bbF_q) \cong \bbZ \oplus \bbZ/2$ as
  shown by Scharlau in \cite[Chapter 2, 3.3]{Scharlau}.
\end{proof}

\begin{remark}
  The above theorem depends on Morel's connectivity theorem to prove
  that the motivic Adams spectral sequence converges to the homotopy
  groups of the $\ell$-completion of the sphere spectrum. The
  published proof of the theorem by Morel in \cite{Morel12} holds for
  infinite fields. A private message from Panin gives a new proof of
  Morel's connectivity theorem which is valid for finite fields. We
  therefore state our results under the assumption that Morel's
  connectivity theorem holds for finite fields. However, our argument
  for theorem \ref{thm:ellcomp_iso} goes through with the field
  $\bbF_q$ replaced by $\widetilde{\bbF}_q$ where Morel's connectivity
  theorem holds by proposition \ref{prop:q-tilde}. The uneasy reader
  may replace $\bbF_q$ with $\widetilde{\bbF}_q$ throughout.
\end{remark}

In the case of a finite field $\bbF_q$ with $q \equiv 3 \bmod 4$, we
use the $\rho$-Bockstein spectral sequence to identify the additive
structure of the $E_2$ page of the MASS. Some hidden products in the
$\rho$-Bockstein spectral sequence were identified with the help of
computer calculations by Fu and Wilson, which can be found in
\cite{Fu-Wilson}.

It is interesting to note that the pattern $\pi_{n,0}(\bbF_q)\twocomp
\cong (\pi_n^s \oplus \pi_{n+1}^s)\twocomp$ obtained in theorem
\ref{thm:ellcomp_iso} does not hold in general.  We show that if $q
\equiv 1 \bmod 4$, then
\begin{equation*}
\pi_{19,0}(\bbF_q)\twocomp \cong (\bbZ/8 \oplus \bbZ/2) \oplus \bbZ/4
\text{ and } \pi_{20,0}(\bbF_q)\twocomp \cong \bbZ/8 \oplus \bbZ/2.
\end{equation*}
We shall leave open for further investigations the question of whether
or not an isomorphism $\pi_{n,0}(\bbF_q)\twocomp \cong (\pi_n^s \oplus
\pi_{n+1}^s)\twocomp$ holds when $q \equiv 3 \bmod 4$ and $n = 19,
20$.

\section*{Acknowledgments}  

The work on this paper was initiated during the program ``A Special
Semester in Motivic Homotopy Theory'' organized by Marc Levine in
2014; we gratefully acknowledge the hospitality and support from
Universit{\"a}t Duisburg-Essen. 

We thank the referee for several valuable suggestions and pointing out
an error in our calculation of the 19 and 20 stem. Thanks go to Panin
\todo{Check with Panin.}%
for private correspondence on Morel's stable connectivity theorem. The
first author would like to thank Charles Weibel, Dan Isaksen and Kyle
Ormsby for helpful conversations regarding this work. This paper
constitutes a portion of the first author's PhD thesis.  The second
author gratefully acknowledges support from the RCN project ``Special
Geometries,'' no.~239015 and ``Motivic Hopf Equations,'' no.~250399.%

\section{The stable motivic homotopy category}

We first sketch a construction of the stable motivic homotopy category
that will be convenient for our purposes, and in the process, set our
notation.  Treatments of stable motivic homotopy theory can be found
in Voevodsky \cite{Voev98}, Jardine \cite{Jardine}, Hu \cite{SMod},
Dundas, R{\"o}ndigs and {\O}stv{\ae}r \cite{MotFunctors}, Morel
\cite{Mor03}, Ayoub \cite{Ayoub2}, and the Nordfjordeid lectures
\cite{Nordfjordeid}. 

\subsection{The unstable motivic homotopy category}

A base scheme $S$ is a Noetherian separated scheme of finite Krull
dimension.  We write $\Sm/S$ for the category of smooth schemes of
finite type over $S$. A space over $S$ is a simplicial presheaf on
$\Sm/S$.  The collection of spaces over $S$ forms the category
$\Spc(S)$, where morphisms are natural transformations of functors.
We write $\Spc_{*}(S)$ for the category of pointed spaces.  

The first model category structure we endow $\Spc(S)$ with is the
projective model structure, see, for example, Blander
\cite[1.4]{Blander}, Dundas, R{\"o}ndigs and {\O}stv{\ae}r
\cite[2.7]{MotFunctors}, Hirschhorn \cite[11.6.1]{Hhorn}.

\begin{definition}
  A map $f : X \to Y$ in $\Spc(S)$ is a (global) weak equivalence if
  for any $U \in \Sm/S$ the map $f(U): X(U) \to Y(U)$ of simplicial
  sets is a weak equivalence.  The projective fibrations are those
  maps $f :X \to Y$ for which $f(U) : X(U) \to Y(U)$ is a Kan
  fibration for any $U \in \Sm/S$.  The projective cofibrations are
  those maps in $\Spc(S)$ which satisfy the left lifting property for
  trivial projective fibrations.  The projective model structure on
  $\Spc(S)$ consists of the global weak equivalences, the projective
  fibrations and the projective cofibrations.
\end{definition}

The category $\Spc(S)$ equipped with the projective model structure is
cellular, proper and simplicial; see Blander \cite[1.4]{Blander}.
Furthermore, $\Spc(S)$ has the structure of a simplicial monoidal
model category, with product $\times$ and internal hom
$\underline{\Hom}$.

The constant presheaf functor $c : \sSet \to \Spc(S)$ associates to a
simplicial set $A$ the presheaf $cA$ defined by $cA(U) = A$ for any $U
\in \Sm/S$.  The functor $c$ is a left Quillen functor when $\Spc(S)$
is equipped with the projective model structure.  Its right adjoint
$\Ev_S : \Spc(S) \to \sSet$ satisfies $\Ev_S(X) = X(S)$.  One can show
that representable presheaves and constant presheaves in $\Spc(S)$ are
cofibrant in the projective model structure.

For a smooth scheme $X$ over $S$, we write $h_X$ for the representable
presheaf of simplicial sets.  We will occasionally abuse notation and
write $X$ for $h_X$.  Although the representable pre-sheaf functor
embeds $\Sm/S$ into $\Spc(S)$, colimits which exist in $\Sm/S$ are not
necessarily preserved in $\Spc(S)$.  That is, if $X = \colim X_i$ in
$\Sm/S$, it need not be true that $h_X = \colim h_{X_i}$, for example,
$\colim (h_{\bbA^1} \leftarrow h_{\bbG_m} \to h_{\bbA^1}) \neq
h_{\bbP^1}$, as one can check by applying the Picard group functor.
To fix this, one introduces the Nisnevich topology on $\Sm/S$.

Morel and Voevodsky proved in \cite[3.1.4]{MV99} that the Nisnevich
topology is generated by covers coming from the elementary
distinguished squares. Recall that an elementary distinguished square
is a pull-back square in $\Sm/S$
\begin{equation*}
  \xymatrix@R=.5cm{ V' \ar[r] \ar[d] & X' \ar[d]^f \\ V
    \ar[r]^j & X }
\end{equation*} 
for which $f$ is an {\'e}tale map, $j$ is an open embedding and
$f^{-1}(X-V) \to X-V$ is an isomorphism, where these subschemes are
given the reduced structure.  Hence a presheaf of sets $F$ on $\Sm/S$
is a Nisnevich sheaf if and only if for any elementary distinguished
square the resulting square after applying $F$ is a pull-back square.

\begin{definition} 
  For a pointed space $\calX$ and $n\geq 0$, the $n$th simplicial
  homotopy sheaf $\pi_n\calX$ of $\calX$ is the Nisnevich
  sheafification of the presheaf $U \mapsto \pi_n(\calX(U))$.

  Write $W_{Nis}$ for the class of maps $f : \calX \to \calY$ for
  which $f_* : \pi_n \calX \to \pi_n \calY$ is an isomorphism of
  Nisnevich sheaves for all $n \geq 0$.  The Nisnevich local model
  structure on $\Spc_*(S)$ is the left Bousfield localization of the
  projective model structure with respect to $W_{Nis}$.
\end{definition}

\begin{definition}
  \label{motivic_model_cat} 
  Let $W_{\bbA^1}$ be the class of maps $\pi_{X}: (X \times \bbA^1)_+
  \to X_+$ for $X \in \Sm/S$.  The motivic model structure on
  $\Spc_*(S)$ is the left Bousfield localization of the projective
  model structure with respect to $W_{Nis} \cup W_{\bbA^1}$.  We write
  $\Spc_*^{\bbA^1}(S)$ for the category of pointed spaces
  equipped with the motivic model structure.  The pointed motivic
  homotopy category $\calH_*^{\bbA^1}(S)$ is the homotopy category of
  $\Spc_*^{\bbA^1}(S)$.

  For pointed spaces $\calX$ and $\calY$, we write $[\calX, \calY]$
  for the set of maps $\calH_*^{\bbA^1}(S)(\calX, \calY)$. The $n$th
  motivic homotopy sheaf of a pointed space $\calX$ over $S$ is the
  sheaf $\pi_n\calX$ associated to the presheaf $U \mapsto [S^n\wedge
  U_+, \calX]$.
\end{definition}

There are two circles in the category of pointed spaces: the constant
simplicial presheaf $S^1$ pointed at its $0$-simplex and the
representable presheaf $\bbG_m = \bbA^1 \setminus \{0\}$ pointed at
$1$.  These determine a bigraded family of spheres
$S^{i,j} = (S^1)^{\wedge i-j}\wedge \bbG_m^{\wedge j}$.

\begin{definition} 
  For a pointed space $X$ over $S$ and natural numbers $i$ and $j$
  with $i\geq j$, write $\pi_{i,j}X$ for the set of maps
  $[S^{i,j},X]$.
\end{definition}

The category of pointed spaces $\Spc_{*}(S)$ equipped with the induced
motivic model category structure has many good properties which make
it amenable to Bousfield localization.  In particular, $\Spc_*(S)$ is
closed symmetric monoidal, pointed simplicial, left proper and
cellular.

\subsection{The stable Nisnevich local model structure}

With the unstable motivic model category in hand, we now construct the
stable motivic model category using the general framework laid out by
Hovey in \cite{H-Spectra}.

Let $T$ be a cofibrant replacement of $\bbA^1/\bbA^1-\{0\}$.  Morel
and Voevodsky have shown that $T$ is weak equivalent to $S^{2,1}$ in
$\Spc_{*}^{\bbA^1}(S)$ \cite[3.2.15]{MV99}.  The functor $T \wedge -$
on $\Spc_*^{\bbA^1}(S)$ is a left Quillen functor, and we may invert
it by creating a category of $T$-spectra.

\begin{definition}
  \label{T-spectra} 
  A $T$-spectrum $X$ is a sequence of spaces $X_n \in
  \Spc_*^{\bbA^1}(S)$ equipped with structure maps $\sigma_n : T
  \wedge X_n \to X_{n+1}$.  A map of $T$-spectra $f: X \to Y$ is a
  collection of maps $f_n : X_n \to Y_n$ which are compatible with the
  structure maps.  We write $\Spt_T(S)$ for the category of
  $T$-spectra of spaces.
\end{definition}

To start, the level model category structure on $\Spt_T(S)$ is defined
by declaring a map $f : X \to Y$ to be a weak equivalence (respectively
fibration) if every map $f_n :X_n \to Y_n$ is a weak equivalence
(respectively fibration) in the motivic model structure on
$\Spc_*(S)$. The cofibrations for the level model structure are
determined by the left lifting property for trivial level fibrations.

\begin{definition} 
  Let $X$ be a $T$-spectrum.  For integers $i$ and $j$, the $(i,j)$
  stable homotopy sheaf of $X$, written as $\pi_{i,j}X$, is the
  Nisnevich sheafification of the presheaf $U \mapsto \colim_n
  \pi_{i+2n,j+n}X_n(U)$.  A map $f:X \to Y$ is a stable weak
  equivalence if for all integers $i$ and $j$ the induced maps $f_* :
  \pi_{i,j}X \to \pi_{i,j}Y$ are isomorphisms. %
\end{definition}
  
\begin{definition} 
  \label{def:stable_model}
  The stable model structure on $\Spt_T(S)$ is the model category
  where the weak equivalences are the stable weak equivalences and the
  cofibrations are the cofibrations in the level model structure.  The
  fibrations are those maps with the right lifting property with
  respect to trivial cofibrations.  We write $\SH_S$ for the homotopy
  category of $\Spt_T(S)$ equipped with the stable model structure.
\end{definition}

The stable model structure on $\Spt_T(S)$ can be realized as a left
Bousfield localization of the level-wise model structure, as defined
by Hovey \cite[3.3]{H-Spectra}.

Just as for the category $\Spt_{S^1}$ of simplicial $S^1$-spectra,
there is not a symmetric monoidal category structure on $\Spt_T(S)$
which lifts the smash product $\wedge$ in $\SH_S$.  One remedy is to
use a category of symmetric $T$-spectra $\Spt_T^{\Sigma}(S)$.  The
construction of this category is given by Hovey in
\cite[7.7]{H-Spectra} and Jardine in \cite{Jardine}.  It is proven in
\cite[9.1]{H-Spectra} that there is a zig-zag of Quillen equivalences
from $\symspt_T(S)$ to $\Spt_T(S)$, hence $\SH_S$ is equivalent to the
homotopy category of $\Spt_T^{\Sigma}(S)$ as well.  Since Quillen
equivalences induce equivalences of homotopy categories, the category
$\SH_S$ is a symmetric monoidal triangulated category with shift
functor $[1] = S^{1,0}\wedge -$.

\begin{definition}
  If $E$ is a $T$-spectrum over $S$, write $\pi_{i,j}E$ for
  $\SH_S(\Sigma^{i,j}\unit, E)$. In the case where $E = \unit$ and $S
  = \Spec(R)$ for a ring $R$, we simply write $\pi_{i,j}(R)$ for
  $\SH_S(\Sigma^{i,j}\unit,\unit)$.
\end{definition}

In addition to the category of $T$-spectra, we will find it convenient
to work with the category of $(\bbG_m, S^1)$-bispectra, see Jardine
\cite{Jardine} or the Nordfjordeid lectures \cite{Nordfjordeid}.

\begin{definition} 
  \label{def:s1_spectra}
  Consider the simplicial circle $S^1$ as a space over $S$ given by
  the constant presheaf. An $S^1$-spectrum over $S$ is a
  sequence of spaces $X_n \in \Spc_*(S)$ equipped with structure maps
  $\sigma_n : S^1 \wedge X_n \to X_{n+1}$.  A map of $S^1$-spectra
  over $S$ is a sequence of maps $f_n :X_n \to Y_n$ that are
  compatible with the structure maps.  The collection of $S^1$-spectra
  over $S$ with compatible maps between them forms a category
  $\Spt_{S^1}(S)$.

  First equip $\Spt_{S^1}(S)$ with the level model structure with
  respect to the Nisnevich local model structure on $\Spc_*(S)$. The
  $n$th stable homotopy sheaf of an $S^1$-spectrum $E$ over $S$ is the
  Nisnevich sheaf $\pi_n E = \colim \pi_{n+j} E_j$.  A map $f : E \to
  F$ of $S^1$-spectra over $S$ is a simplicial stable weak equivalence
  if for all $n\in \bbZ$ the induced map $f_* :\pi_n E \to \pi_n F$ is
  an isomorphism of sheaves. The stable Nisnevich local model category
  structure on $\Spt_{S^1}(S)$ is obtained by localizing at the class
  of simplicial stable equivalences, as in definition
  \ref{def:stable_model}.

  The motivic stable model category structure on $\Spt_{S^1}(S)$ is
  obtained from the simplicial stable model category structure by left
  Bousfield localization at the class of maps $W_{\bbA^1} = \{
  \Sigma^{\infty} X_{+}\wedge \bbA^1 \to \Sigma^{\infty} X_+ \st X \in
  \Sm/S\}$.  Write $\Spt_{S^1}^{\bbA^1}(S)$ for the motivic stable
  model category $L_{W_{\bbA^1}}\Spt_{S^1}(S)$ and write
  $\SH^{\bbA^1}_{S^1}(S)$ for its homotopy category.  The $n$th
  motivic stable homotopy sheaf of an $S^1$-spectrum $E$ is the
  Nisnevich sheaf $\pi_n^{\bbA^1} E$ associated to the presheaf $U
  \mapsto \SH^{\bbA^1}_{S^1}(S^n \wedge \Sigma^{\infty}U_+, E)$.
\end{definition}

\begin{definition} 
  In the projective model structure on $\Spc_*(S)$, the space $\bbG_m$
  pointed at $1$ is not cofibrant.  We abuse notation and write
  $\bbG_m$ for a cofibrant replacement of $\bbG_m$.  A $(\bbG_m,
  S^1)$-bispectrum over $S$ is a $\bbG_m$-spectrum of $S^1$-spectra.
  We write $\bispt(S)$ for the category of $(\bbG_m, S^1)$-bispectra
  over $S$.  Viewing $\bispt(S)$ as the category of $\bbG_m$-spectra
  of $S^1$-spectra, we first equip $\bispt(S)$ with the level model
  category structure with respect to the motivic stable model category
  structure on $\Spt_{S^1}(S)$.  The motivic stable model category
  structure on $\bispt(S)$ is the left Bousfield localization at the
  class of stable equivalences.
\end{definition}

There are left Quillen functors $\Sigma^{\infty}_{S^1} : \Spc_*(S) \to
\Spt_{S^1}(S)$ and $\Sigma^{\infty}_{\bbG_m} : \Spt_{S^1}(S) \to
\bispt(S)$.  Additionally, the category $\bispt(S)$ equipped with the
motivic stable model structure is Quillen equivalent to the stable
model category structure on $\Spt_T(S)$; see the Nordfjordeid lectures
\cite[page 216]{Nordfjordeid}.

\begin{definition} 
  To any spectrum of simplicial sets $E \in \Spt_{S^1}$ we may
  associate the constant $S^1$-spectrum $cE$ over $S$ with value $E$.
  That is, $cE$ is the sequence of spaces $cE_n$ with the evident
  bonding maps.  For a simplicial spectrum $E$, we also write $cE$ for
  the $(\bbG_m,S^1)$-bispectrum $\Sigma^{\infty}_{\bbG_m} cE$.  This
  defines a left Quillen functor $c : \Spt_{S^1} \to \bispt(B)$ with
  right adjoint given by evaluation at $S$.  Compare with Levine
  \cite[6.5]{Levine}.
\end{definition}

\subsection{Base change of stable model categories}
\label{sec:base_change}

\begin{definition} 
  Let $f : R \to S$ be a map of base schemes.  Pull-back along $f$
  determines a functor $f^{-1} : \Sm/S \to \Sm/R$ which induces
  Quillen adjunctions
  $(f^*, f_*) : \Spc^{\bbA^1}_*(S) \to \Spc^{\bbA^1}_*(R)$ and
  $(f^*, f_*) : \Spt_T(S) \to \Spt_T(R)$.
\end{definition}

We now discuss some of the properties of base change. A more thorough
treatment is given by Morel in \cite[\S5]{Morel-Con}. The map $f_*$
sends a space $\calX$ over $R$ to the space $\calX \circ f^{-1}$ over
$S$. The adjoint $f^{*}$ is given by the formula $(f^* \calY)(U) =
\colim_{U \to f^{-1}V} \calY(V)$. For a smooth scheme $X$ over $S$, a
standard calculation shows $f^*X = f^{-1}X$. Additionally, if $cA$ is
a constant simplicial presheaf on $\Sm/S$, it follows that $f^*(cA) =
cA$.

The Quillen adjunction $(f^*, f_*)$ extends to both the model category
of $T$-spectra and $(\bbG_m, S^1)$-bispectra by applying the maps
$f^*$, and respectively $f_*$, term-wise to a given spectrum. In the
case of $f^*$ for $T$-spectra, for instance, the bonding maps of
$f^*E$ are given by $T\wedge f^*E_n \cong f^*(T\wedge E_n) \to
f^*(E_{n+1})$ as $f^*T = T$. The same reasoning shows that the
adjunction $(f^*, f_*)$ extends to $(\bbG_m, S^1)$-bispectra.

Write $Q$ (respectively $R$) for the cofibrant (respectively fibrant)
replacement functor in $\Spt_T(S)$.  The derived functors $\bbL f^*$
and $\bbR f_*$ are given by the formulas $\bbL f^* = f^* Q$ and
$\bbR f_* = f_* R$.

Let $f : C \to B$ be a smooth map. The functor
$f_{\#} : \Sm/C \to \Sm/B$ sends  $\alpha: X \to C$ to
$f\circ\alpha: X \to B$ and induces a functor
$f_{\#} : \Spc^{\bbA^1}_*(B) \to \Spc^{\bbA^1}_*(C)$ by restricting a
presheaf on $\Sm/B$ to a presheaf on $\Sm/C$. The functor $f^*$ is
canonically equivalent to $f_{\#}$ on the level of spaces and spectra.

\subsection{The connectivity theorem}
\label{sec:connectivity_thm}

Morel establishes the connectivity of the sphere spectrum over
fields $F$ by studying the effect of Bousfield localization at
$W_{\bbA^1}$ of the stable Nisnevich local model category structure on
$\Spt_{S^1}(F)$ (see definition \ref{def:s1_spectra}).

An $S^1$-spectrum $E$ over $S$ is said to be simplicially
$k$-connected if for any $n\leq k$, the simplicial stable homotopy
sheaves $\pi_n E$ are trivial.  An $S^1$-spectrum $E$ is $k$-connected
if for all $n\leq k$ the motivic stable homotopy sheaves
$\pi_n^{\bbA^1} E$ are trivial.

\begin{theorem}[Morel's connectivity theorem]
  \label{thm:morel_connectivity}
  If $E$ is a simplicially $k$-connected $S^1$-spectrum over an
  infinite field $F$, then $E$ is also $k$-connected.
\end{theorem}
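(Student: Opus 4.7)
The plan is to prove the equivalent statement that the motivic (i.e.\ $\bbA^1$-local) fibrant replacement functor on $\Spt_{S^1}(F)$ preserves simplicial $k$-connectivity; the theorem then follows at once from the identification of the motivic stable homotopy sheaves $\pi_n^{\bbA^1} E$ with the simplicial stable homotopy sheaves of such a replacement of $E$.

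One standard realization of this fibrant replacement $L_{\bbA^1}$ is a transfinite iteration, at each spectrum level, of two operations: a Nisnevich-local fibrant replacement $L_{\mathrm{Nis}}$, and the Suslin--Voevodsky singular construction $\mathrm{Sing}^{\bbA^1}_*(-)(U) = \mathrm{diag}\bigl([n] \mapsto (-)(U \times \Delta^n_{\bbA^1})\bigr)$. I would first reduce to verifying that each of these operations preserves simplicial $k$-connectivity of the stable homotopy sheaves. For $\mathrm{Sing}^{\bbA^1}_*$, this follows from the fact that the diagonal of a bisimplicial presheaf of spectra preserves connectivity at each simplicial level, so that the presheaves of stable homotopy groups below degree $k+1$ remain zero and their Nisnevich sheafifications do as well. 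For $L_{\mathrm{Nis}}$, the Brown--Gersten-style Nisnevich descent spectral sequence, bounded by the finite Nisnevich cohomological dimension of smooth $F$-schemes, shows that $L_{\mathrm{Nis}}$ does not enlarge the simplicial stable homotopy sheaves in degrees $\leq k$.

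The main obstacle is controlling how these two operations interact across the transfinite iteration: after applying $L_{\mathrm{Nis}} \circ \mathrm{Sing}^{\bbA^1}_*$ repeatedly, one must verify that the emerging $\bbA^1$-invariance of the sheaves of stable homotopy groups does not conspire with Nisnevich sheafification to produce new classes in low simplicial degrees. The crucial input is Morel's theorem that every strictly $\bbA^1$-invariant Nisnevich sheaf of abelian groups on $\Sm/F$ is in fact strongly $\bbA^1$-invariant, so that the higher Nisnevich cohomology of an $\bbA^1$-invariant sheaf remains $\bbA^1$-invariant. The proof of this result rests on Gabber's geometric presentation lemma, which produces, for a smooth affine $U$ over $F$ with a closed subscheme $Z \subset U$, an \'etale neighborhood of $Z$ projecting to $\bbA^1_V$ for some smooth $V$, via a choice of sufficiently general linear projection in a suitable affine embedding.

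This last step is precisely where the infinite-field hypothesis enters the proof: the general position argument underlying Gabber's lemma requires avoiding finitely many proper subvarieties of a parameter space, which need not be possible over sufficiently small finite fields. This is why the theorem as stated excludes finite fields, and why the remark following theorem \ref{thm:ellcomp_iso} flags Panin's separate argument as necessary to handle the case $F = \bbF_q$.
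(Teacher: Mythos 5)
The paper does not prove this statement: it is quoted from Morel \cite{Morel12}, and the surrounding text only records that Morel's published argument requires $F$ infinite and that Panin has an unpublished argument covering finite fields. So there is no in-paper proof to compare yours against, and what you have written should be judged as a reconstruction of Morel's argument. As such it is a reasonable road map --- in particular you correctly locate the role of the infinite-field hypothesis in the general-position step of Gabber's geometric presentation lemma --- but it is not a proof.

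Two concrete gaps. First, the reduction to ``each of $L_{\mathrm{Nis}}$ and $\mathrm{Sing}^{\bbA^1}_*$ separately preserves simplicial $k$-connectivity'' does not give the theorem: for $S^1$-spectra the transfinite iteration of $L_{\mathrm{Nis}}\circ\mathrm{Sing}^{\bbA^1}_*$ is not known a priori to compute the stable $\bbA^1$-localization, and the heart of Morel's argument is exactly to control the stable localization functor by passing to the $\bbA^1$-derived category of Nisnevich sheaves of abelian groups and proving, by an induction on contractions $M_{-1}$ that uses Gabber's lemma, that the $\bbA^1$-localization of a sheaf placed in degree $0$ acquires no homotopy in negative degrees. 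Your third paragraph asserts that this interaction ``must be verified'' but does not verify it; that verification is the entire content of the theorem. Second, the invariance statement you invoke is reversed: Morel's theorem is that every \emph{strongly} $\bbA^1$-invariant sheaf of abelian groups (meaning $H^0_{\mathrm{Nis}}$ and $H^1_{\mathrm{Nis}}$ are $\bbA^1$-invariant) is \emph{strictly} $\bbA^1$-invariant (all $H^n_{\mathrm{Nis}}$ are), not the other way around, and that result belongs to the unstable theory of the sheaves $\pi_n^{\bbA^1}$ rather than being the input that makes the stable localization connective.
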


Morel's connectivity theorem has been proven when $F$ is an infinite
field in \cite{Morel12}, but the argument there does not hold for
finite fields. Private correspondence with Panin gives a new argument
to prove Morel's connectivity theorem for finite fields as well.

The connectivity theorem along with the work of Morel in
\cite[\S5]{Mor03} yield the following. This also follows from Voevodsky
\cite[4.14]{Voev98}.
\begin{corollary}
  \label{sphere_connective}
  Over a field $F$ where Morel's connectivity theorem holds, the
  sphere spectrum $\unit$ is $(-1)$-connected. In particular, for all
  $s-w<0$ the groups $\pi_{s,w}(F)$ are trivial.
\end{corollary}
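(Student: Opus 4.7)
The plan is to reduce the $(-1)$-connectivity of $\unit$ to Morel's connectivity theorem (Theorem \ref{thm:morel_connectivity}) applied to the $S^1$-suspension spectrum of $S^0$, and then to propagate this connectivity through the $\bbG_m$-direction to obtain the bigraded vanishing in the ``in particular'' statement.

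First, I model $\unit \in \SH_F$ via the Quillen equivalence between $\bispt(F)$ and $\Spt_T(F)$ as $\Sigma^{\infty}_{\bbG_m} E$, where $E := \Sigma^{\infty}_{S^1} S^0 \in \Spt_{S^1}(F)$. Because the classical topological sphere spectrum has trivial stable homotopy groups in negative degrees, the simplicial stable homotopy sheaves $\pi_n E$ are sectionwise zero for $n<0$, so $E$ is simplicially $(-1)$-connected in the sense of subsection \ref{sec:connectivity_thm}. Since Morel's connectivity theorem is assumed to hold for $F$, Theorem \ref{thm:morel_connectivity} yields $\pi_n^{\bbA^1} E = 0$ for $n<0$, which is precisely the motivic $(-1)$-connectivity of $E$.

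Next, I transfer this connectivity to the bispectrum $\unit$. Following \cite[\S5]{Mor03}, smashing with $\bbG_m$ preserves motivic $(-1)$-connectivity of $S^1$-spectra, so each iterated smash $E \wedge \bbG_m^{\wedge n}$ is again motivically $(-1)$-connected. The bigraded homotopy group $\pi_{s,w}\unit$ admits a presentation, via the bispectrum structure of $\unit$ and the adjunction $(\Sigma^{\infty}_{\bbG_m}, \Omega^{\infty}_{\bbG_m})$, as a filtered colimit of groups of the form $\pi_{s-w}^{\bbA^1}(E \wedge \bbG_m^{\wedge n})$ for sufficiently large $n$. For $s - w < 0$, the $(-1)$-connectivity of each target forces this colimit to vanish, yielding both the $(-1)$-connectivity of $\unit$ and the vanishing of $\pi_{s,w}(F)$. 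Alternatively, the direct argument of Voevodsky in \cite[4.14]{Voev98} reaches the same conclusion without explicitly passing through the colimit presentation.

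The main obstacle is this final transfer step: making rigorous the colimit expression for $\pi_{s,w}\unit$ and the assertion that $\bbG_m$-smashing preserves motivic connectivity of $S^1$-spectra. Both are nontrivial statements relying on the purity theorem and the analysis in \cite[\S5]{Mor03}. Once these facts are in place, the rest of the argument is purely formal given the motivic $(-1)$-connectivity of $E$ provided by Theorem \ref{thm:morel_connectivity}.
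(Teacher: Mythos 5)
Your route matches the paper's, which here is only a pointer: the corollary is derived from Theorem \ref{thm:morel_connectivity} together with Morel \cite[\S5]{Mor03}, with Voevodsky \cite[4.14]{Voev98} given as an alternative, exactly the two sources you cite. Your opening step is correct and is the ``connectivity theorem'' half of that citation: the constant $S^1$-spectrum $E=\Sigma^\infty_{S^1}S^0$ is simplicially $(-1)$-connected because its simplicial stable homotopy sheaves are the constant sheaves $\pi^s_n$, and Theorem \ref{thm:morel_connectivity} promotes this to motivic $(-1)$-connectivity.

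The transfer step, however, is not quite right as written and misidentifies where the non-formal work sits. Unwinding the $(\bbG_m,S^1)$-bispectrum presentation of $\unit=\Sigma^\infty_{\bbG_m}E$ gives, as sheaves,
\begin{equation*}
\pi_{s,w}(\unit)\;\cong\;\colim_n\, \pi^{\bbA^1}_{s-w}\bigl(\Omega_{\bbG_m}^{\,w+n}(E\wedge\bbG_m^{\wedge n})\bigr),
\end{equation*}
not a colimit of $\pi^{\bbA^1}_{s-w}(E\wedge\bbG_m^{\wedge n})$: the $\bbG_m$-weight of the bigraded sphere ends up as $\bbG_m$-loops on the target. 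This is not cosmetic. The fact you single out as the main obstacle---that smashing with $\bbG_m$ preserves motivic $(-1)$-connectivity---is in fact the easy direction, because $\bbG_m$ is a sheaf of sets, so $\Sigma^\infty_{S^1}\bbG_m$ is simplicially $(-1)$-connected, $E\wedge\bbG_m^{\wedge n}$ is therefore simplicially $(-1)$-connected, and Theorem \ref{thm:morel_connectivity} converts this to motivic $(-1)$-connectivity. The genuinely hard ingredient that Morel supplies in \cite[\S5]{Mor03} is the dual statement, that $\Omega_{\bbG_m}$ preserves motivic connectivity of $S^1$-spectra; equivalently, that the homotopy sheaves of $\bbA^1$-local $S^1$-spectra are strictly $\bbA^1$-invariant with strictly $\bbA^1$-invariant contractions, which rests on the Gersten-resolution machinery. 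As written, your argument never calls on this, so the vanishing of the colimit terms does not yet follow. Once the colimit formula is corrected and the $\Omega_{\bbG_m}$-preservation statement is invoked (and then one observes that the sheaf vanishing gives the vanishing of $\pi_{s,w}(F)$ by evaluating at the Hensel local scheme $\Spec F$, the ``in particular'' clause), your sketch closes.
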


\section{Comparison to the stable homotopy category}

The following result of Levine is crucial for our calculations
\cite[Theorem 1]{Levine}.
\begin{theorem}
  \label{levine_comparison}
  If $S = \Spec(\bbC)$, the functor $\bbL c : \SH \to \SH_S$ is fully
  faithful.
\end{theorem}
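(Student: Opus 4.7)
The plan is to reduce full faithfulness of $\bbL c$ to the claim that the comparison map on stable stems
\[
\bbL c \colon \pi_m^s \longrightarrow \pi_{m,0}(\bbC)
\]
is an isomorphism for every integer $m$. The reduction goes through the adjunction $(\bbL c, \bbR\mathrm{Ev}_{\bbC})$: full faithfulness is equivalent to the unit $E \to \bbR\mathrm{Ev}_{\bbC}\bbL c E$ being an equivalence for every $E \in \SH$, and I would use that $\SH$ is compactly generated by shifts of the sphere spectrum, together with appropriate commutation of $\bbR\mathrm{Ev}_{\bbC}$ with filtered colimits, to reduce the check to the stable homotopy groups of the sphere.

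For split injectivity I would bring in complex Betti realization. The functor $\mathrm{Re}_B \colon \SH_{\bbC} \to \SH$, which on representables sends a smooth $\bbC$-variety $X$ to the underlying topological space $X(\bbC)^{\mathrm{an}}$, is a symmetric monoidal left adjoint, and the composite $\mathrm{Re}_B \circ \bbL c$ is canonically equivalent to the identity on $\SH$, since the Betti realization of a constant simplicial presheaf is just evaluation at a point. This immediately provides a retraction of $\pi_m^s \to \pi_{m,0}(\bbC)$, so the map is split injective for every $m$ and the real work lies in surjectivity.

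For surjectivity the plan is to compare both sides through spectral sequences: the slice spectral sequence for $\unit$ over $\bbC$ on the motivic side, and the classical Adams--Novikov spectral sequence on the topological side. Using Voevodsky's computation together with the Hopkins--Morel--Hoyois identification of the slices $s_q(\unit)$ as shifts of $H\bbZ$ smashed with appropriate Thom-like pieces, the $E_1$-page of the motivic slice spectral sequence over $\bbC$ in weight zero is built from the motivic cohomology of $\Spec(\bbC)$, which in weight zero reduces to the integral coefficient rings appearing on the $E_1$-page of the Adams--Novikov spectral sequence. Betti realization intertwines the two spectral sequences and induces the comparison map on $E_1$-pages, so a page-by-page isomorphism combined with convergence on both sides will deliver an isomorphism on abutments.

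The main obstacle will be establishing strong convergence of the slice spectral sequence for $\unit$ over $\bbC$ in weight zero and verifying that Betti realization preserves this convergence. This hinges on connectivity bounds for the slice tower together with the finiteness properties of the motivic cohomology of $\Spec(\bbC)$; the deep Hopkins--Morel--Hoyois identification of slices provides the input for these convergence estimates, and without it the approach would not get off the ground.
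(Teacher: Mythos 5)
The paper does not prove this statement at all: it is imported verbatim as \cite[Theorem 1]{Levine}, so there is no internal argument to compare against. What you have written is, in outline, precisely Levine's own strategy: reduce full faithfulness via the adjunction and compact generation to the assertion that $\bbL c:\pi_m^s\to\pi_{m,0}(\bbC)$ is an isomorphism, obtain split injectivity from the Betti realization retraction $\mathrm{Re}_B\circ\bbL c\simeq\mathrm{id}$, and prove surjectivity by comparing the slice spectral sequence of $\unit$ over $\bbC$ (with slices identified via Hopkins--Morel--Hoyois) to the Adams--Novikov spectral sequence under Betti realization. So as a reconstruction of the route the cited proof actually takes, your plan is faithful and the reductions you describe are sound modulo standard facts (compactness of the spheres $S^{n,0}$ in $\SH_{\bbC}$, exactness of $\bbL c$, and $\bbR\Ev_{\bbC}$ preserving coproducts).

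As a \emph{proof}, however, the proposal has a genuine gap, and you have located it yourself in the final paragraph: the strong convergence of the slice spectral sequence for $\unit$ over $\bbC$ in weight zero, the identification of the Betti realization of the slice tower with (a reindexed form of) the Adams--Novikov tower, and the verification that the induced map of spectral sequences is an isomorphism from the $E_2$-page on, are not routine checks --- they constitute essentially the entire technical content of Levine's paper. In particular, convergence does not follow formally from the Hopkins--Morel--Hoyois identification of the slices; it requires Levine's separate connectivity and boundedness analysis of the slice tower of the motivic sphere. Labeling this "the main obstacle" is accurate, but it means the decisive step is deferred rather than carried out, so the proposal is a correct roadmap rather than a complete argument.
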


\begin{proposition}
  \label{const_comm} 
  Let $f : R \to S$ be a map of base schemes.  The following diagram
  of stable homotopy categories commutes.
  \begin{equation*} 
    \xymatrix@R=.5cm{
      &\SH \ar[dr]^{\bbL c} \ar[dl]_{\bbL c} & \\ \SH_S \ar[rr]^{\bbL
        f^*} && \SH_R
    }
  \end{equation*}
\end{proposition}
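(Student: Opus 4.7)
The plan is to verify the commutativity by reducing to a point-set identity. The key observation, recorded in section \ref{sec:base_change}, is that for any constant simplicial presheaf $cA$ on $\Sm/S$ one has $f^*(cA) = cA$ on the nose. Applied term-wise, this identity extends to constant $S^1$-spectra, and then via $\Sigma^\infty_{\bbG_m}$ to constant $(\bbG_m, S^1)$-bispectra. Consequently, the underlying (non-derived) diagram of functors between point-set categories commutes strictly: $f^* \circ c = c$ as functors $\Spt_{S^1} \to \bispt(R)$.

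To pass from this point-set identity to the derived diagram, I would use that both $c : \Spt_{S^1} \to \bispt(S)$ and $f^* : \bispt(S) \to \bispt(R)$ are left Quillen functors. For $X \in \SH$, represent $X$ by a cofibrant $S^1$-spectrum, or equivalently write $\bbL c(X) = c(Q X)$ where $Q$ is cofibrant replacement in $\Spt_{S^1}$. Because $c$ is left Quillen, $c(QX)$ is already cofibrant in $\bispt(S)$, so $\bbL f^*(\bbL c(X))$ is represented by $f^*(c(QX))$ (the cofibrant replacement step in $\bispt(S)$ introduces only a weak equivalence between cofibrant objects, which $f^*$ preserves). The point-set identity then gives $f^*(c(QX)) = c(QX)$, which is a model for $\bbL c(X) \in \SH_R$.

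Assembling these steps yields the desired natural isomorphism $\bbL f^* \circ \bbL c \simeq \bbL c$. I would not expect any serious obstacle: the argument is essentially formal bookkeeping with cofibrant replacements, with all nontrivial input concentrated in (i) the strict point-set identity $f^* c = c$ on constant presheaves and (ii) the left Quillen property of $c$ and $f^*$, both of which are already in place in the excerpt. The only place requiring mild care is the passage from spaces to bispectra to ensure that the identification $f^* c = c$ is indeed strictly term-wise compatible with the bonding maps, which follows from $f^* T = T$ and $f^* S^1 = S^1$ for the constant circle.
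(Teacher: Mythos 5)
Your argument is essentially the paper's own: both rest on the strict point-set identity $f^*\circ c = c$, obtained from the formula $(f^*cA)(U) = \colim_{U\to f^{-1}V} cA(V) = A$, extended term-wise to spectra, and then passed to homotopy categories. The paper phrases this extension in terms of $T$-spectra rather than $(\bbG_m,S^1)$-bispectra and leaves the derived-functor bookkeeping (cofibrant replacements, left Quillen property of $c$ and $f^*$) implicit, but that is exactly the content you spell out; there is no substantive difference in the route taken.
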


\begin{proof}
  The result follows by establishing $f^* \circ c = c$ on the level of
  model categories. For a constant space $cA \in \Spc(S)$, we have
  $f^* cA = cA$ by the calculation
  \begin{equation*}
    (f^*cA)(U) = \underset{U \to f^{-1}V}{\colim} cA(V) = A
  \end{equation*} 
  given the formula for $f^*$ in section \ref{sec:base_change}. As the
  base change map is extended to $T$-spectra by applying $f^*$
  term-wise, the claim follows.
\end{proof}

\begin{proposition} 
  \label{prop:char0_comparison}
  Let $S$ be a base scheme equipped with a map $\Spec(\bbC) \to S$.
  Then $\bbL c : \SH \to \SH_S$ is faithful.
\end{proposition}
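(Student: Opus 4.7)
The plan is to bootstrap Levine's theorem (Theorem~\ref{levine_comparison}) using the structure map $g : \Spec(\bbC) \to S$ provided by hypothesis. First I would apply Proposition~\ref{const_comm} to $g$: this yields a commutative triangle in which the constant-presheaf functor $\bbL c : \SH \to \SH_{\Spec(\bbC)}$ factors through $\bbL c : \SH \to \SH_S$ via the base-change functor $\bbL g^* : \SH_S \to \SH_{\Spec(\bbC)}$. In other words, the composite $\bbL g^* \circ \bbL c$ (with $\bbL c$ into $\SH_S$) coincides with the version of $\bbL c$ landing in $\SH_{\Spec(\bbC)}$.

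With this factorization in hand, faithfulness is immediate from a routine categorical observation: if $\alpha, \beta$ are morphisms in $\SH$ satisfying $\bbL c(\alpha) = \bbL c(\beta)$ in $\SH_S$, then applying $\bbL g^*$ to both sides yields equality of their images in $\SH_{\Spec(\bbC)}$. Levine's theorem provides full faithfulness, and in particular faithfulness, of $\bbL c : \SH \to \SH_{\Spec(\bbC)}$, which then forces $\alpha = \beta$. This gives the desired faithfulness of $\bbL c : \SH \to \SH_S$.

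I do not anticipate any genuine obstacle to this approach: both inputs (Proposition~\ref{const_comm} and Theorem~\ref{levine_comparison}) are already established in the paper, and the step reducing faithfulness of $\bbL c_S$ to faithfulness of $\bbL c_{\bbC}$ is purely formal. The only point worth double-checking is that Proposition~\ref{const_comm} genuinely applies to the map $g : \Spec(\bbC) \to S$, but this is automatic since its hypothesis is nothing more than a map of base schemes. Notably, no hypothesis on $S$ beyond the existence of the complex point is needed, which is exactly what the statement claims.
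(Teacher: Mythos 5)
Your proposal is correct and matches the paper's proof: both apply Proposition~\ref{const_comm} to the given map $\Spec(\bbC) \to S$ to factor $\bbL c : \SH \to \SH_{\Spec(\bbC)}$ through $\SH_S$, then invoke Levine's full faithfulness (Theorem~\ref{levine_comparison}) and the elementary fact that a composite being injective forces the first factor to be injective.
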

\begin{proof} 
  For symmetric spectra $X$ and $Y$, the map $\bbL c : \SH(X,Y) \to
  \SH(\bbC)(cX,cY)$ factors through $\SH_S(cX, cY)$ by proposition
  \ref{const_comm}.  Theorem \ref{levine_comparison} implies that the map $\bbL
  c : \SH(X,Y) \to \SH_S(cX,cY)$ must be injective.
\end{proof}

\begin{corollary} 
  Write $W(\Fpbar)$ for the ring of Witt vectors of $\Fpbar$ and $K$
  for the fraction field of $W(\Fpbar)$ (see Serre \cite[II
  \S6]{Serre} for a definition).  The map $\bbL c : \pi_n^s \to
  \pi_{n,0}(W(\Fpbar))$ is an injection, because we have maps
  $W(\Fpbar) \to K \to \bbC$.
\end{corollary}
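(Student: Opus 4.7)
The statement is essentially an immediate corollary of proposition \ref{prop:char0_comparison} once the asserted chain of ring maps $W(\Fpbar) \to K \to \bbC$ is justified. The plan is to exhibit a morphism of base schemes $\Spec(\bbC) \to \Spec(W(\Fpbar))$, so that the hypothesis of proposition \ref{prop:char0_comparison} is met with $S = \Spec(W(\Fpbar))$, and then to specialize faithfulness to the homotopy groups in question.

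First I would produce the embedding $K \hookrightarrow \bbC$. The field $K = \Frac W(\Fpbar)$ has characteristic zero, and its cardinality is $2^{\aleph_0}$ since $W(\Fpbar)$ consists of sequences in $\Fpbar$. Because $\bbC$ is an algebraically closed field of characteristic zero with transcendence degree $2^{\aleph_0}$ over $\bbQ$, a transcendence basis argument shows that any characteristic zero field of cardinality at most $2^{\aleph_0}$ admits an embedding into $\bbC$, and in particular $K$ does. Composing with the canonical localization map $W(\Fpbar) \hookrightarrow K$ gives the desired ring homomorphism $W(\Fpbar) \to \bbC$, hence a morphism of base schemes $\Spec(\bbC) \to \Spec(W(\Fpbar))$.

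Next I would apply proposition \ref{prop:char0_comparison} with $S = \Spec(W(\Fpbar))$ to conclude that $\bbL c : \SH \to \SH_{W(\Fpbar)}$ is faithful. Specializing to $X = \Sigma^n \unit$ and $Y = \unit$, the induced map on hom-sets
\begin{equation*}
\bbL c : \SH(\Sigma^n\unit, \unit) \longrightarrow \SH_{W(\Fpbar)}(\Sigma^{n,0}\unit, \unit)
\end{equation*}
is injective, which is precisely the injectivity of $\bbL c : \pi_n^s \to \pi_{n,0}(W(\Fpbar))$.

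There is no real obstacle here; the only nontrivial input is the set-theoretic embedding $K \hookrightarrow \bbC$, and even this is standard. The substance of the corollary is already packaged in proposition \ref{prop:char0_comparison} together with Levine's theorem \ref{levine_comparison}, so the role of this statement is simply to record that the mixed-characteristic base scheme $\Spec(W(\Fpbar))$ lies in the scope of that proposition.
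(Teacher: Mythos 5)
Your proposal is correct and follows exactly the route the paper intends: the corollary is stated with its justification inline (``because we have maps $W(\Fpbar) \to K \to \bbC$''), and the content is precisely an application of proposition \ref{prop:char0_comparison} with $S = \Spec(W(\Fpbar))$ specialized to the sphere. Your transcendence-degree argument merely fills in the standard embedding $K \hookrightarrow \bbC$ that the paper takes for granted.
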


\section{Motivic cohomology}
\label{mot_coh}

Spitzweck has constructed a spectrum $H\bbZ$ in $\symspt_T(S)$ which
represents motivic cohomology $H^{a,b}(X; \bbZ)$ defined using Bloch's
cycle complex when $S$ is the Zariski spectrum of a Dedekind domain
\cite{Spitzweck}.  Spitzweck establishes enough nice properties of
$H\bbZ$ so that we may construct the motivic Adams spectral sequence
over general base schemes and establish comparisons between the
motivic Adams spectral sequence over a Hensel local ring in which
$\ell$ is invertible and its residue field.

\subsection{Integral motivic cohomology}

\begin{definition} 
  Over the base scheme $\Spec(\bbZ)$, the spectrum
  $H\bbZ_{\Spec(\bbZ)}$ is defined by Spitzweck in
  \cite[4.27]{Spitzweck}.  For a general base scheme $S$, we define
  $H\bbZ_S$ to be $f^* H\bbZ_{\Spec(\bbZ)}$ where $f : S \to
  \Spec(\bbZ)$ is the unique map.
\end{definition}

Let $S = \Spec(D)$ for $D$ a Dedekind domain.  For $X \in \Sm/S$,
Spitzweck shows there is a canonical isomorphism
$\SH_S(\Sigma^{\infty}X_+, \Sigma^{a,b}H\bbZ) \cong H^{a,b}(X;\bbZ)$,
where $H^{a,b}(-;\bbZ)$ denotes Levine's motivic cohomology defined
using Bloch's cycle complex \cite[7.19]{Spitzweck}.  The isomorphism
is functorial with respect to maps in $\Sm/S$.  Additionally, if $i :
\{s\} \to S$ is the inclusion of a closed point with residue field
$k(s)$, there is a commutative diagram for $X \in \Sm/S$.
\begin{equation*} 
  \xymatrix@R=.5cm{
    \SH_S(\Sigma^{\infty}X_+, \Sigma^{a, b}H\bbZ) \ar[r]^-{\cong}
    \ar[d] & H^{a,b}(X; \bbZ) \ar[d] \\ \SH(k(s))(\bbL i^*
    \Sigma^{\infty} X_+, \Sigma^{a,b} H\bbZ) \ar[r]^-{\cong} &
    H^{a,b}(\bbL i^* X;\bbZ)
  }
\end{equation*}

If the residue field $k(s)$ has positive characteristic, there is a
canonical isomorphism of ring spectra $\bbL i^* H\bbZ_S \cong
H\bbZ_{k(s)}$ by Spitzweck \cite[9.16]{Spitzweck}.  For a smooth map
of base schemes $f : R \to S$, there is an isomorphism $\bbL f^*
H\bbZ_S \cong H\bbZ_R$, because when $f$ is smooth we have $\bbL f^* =
f^*$, see Morel \cite[page 44]{Morel-Con}. It is then straightforward
to see that $f^{*}H\bbZ_S \cong H\bbZ_R$.

\subsection{Motivic cohomology with coefficients $\bbZ/\ell$}

For a prime $\ell$, write $H\bbZ/\ell$ for the cofiber of the map
$\ell\cdot: H\bbZ \to H\bbZ$ in $\SH_S$.  The spectrum $H\bbZ/\ell$
represents motivic cohomology with $\bbZ/\ell$ coefficients.  For a
smooth scheme $X$ over $S$, we write $H^{**}(X;\bbZ/\ell)$ for the
motivic cohomology of $X$ with $\bbZ/\ell$ coefficients.  When $S$ is
the Zariski spectrum of a ring $R$, we write $H^{**}(R;\bbZ/\ell)$ for
$H^{**}(\Spec(R); \bbZ/\ell)$.  We will frequently omit $\Spec$ from
our notation when the meaning is clear in other cases as well.

The now resolved Beilinson-Lichtenbaum conjecture allows us to
calculate the mod 2 motivic cohomology of a finite field $\bbF_q$ of
odd characteristic. In particular, there is an isomorphism
$H^{**}(\bbF_q ; \bbZ/2) \cong K^M_*(\bbF_q)/2[\tau]$ where $\tau$ has
bidegree $(0,1)$ and elements of $K^M_n(\bbF_q)/2$ have bidegree
$(n,n)$. The group $K^M_1(\bbF_q)/2 \cong
\bbF_q^{\times}/\bbF_q^{\times\,2}$ is isomorphic to $\bbZ/2$. We
write $u$ for the non-trivial element of
$\bbF_q^{\times}/\bbF_q^{\times\,2}$ and $\rho$ for the class of
$-1$. It is well known that $-1$ is a square in $\bbF_q$ if and only
if $q \equiv 1 \bmod 4$. Hence $H^{**}(\bbF_q ; \bbZ/2) \cong
\bbZ/2[\tau, u]/(u^2)$ and $u = \rho$ if and only if $q \equiv 3 \bmod
4$. 

The mod 2 Bockstein homomorphism $\beta$ is the motivic cohomology
operation given by the connecting homomorphism in the long exact
sequence of cohomology associated to the short exact sequence of
coefficient groups
\begin{equation*}
  0 \to \bbZ/2 \to \bbZ/4 \to \bbZ/2 \to 0.
\end{equation*}
The Bockstein is a cohomology operation of bidegree $(1,0)$. On the
mod 2 motivic cohomology of a finite field $\bbF_q$, the Bockstein is
determined by $\beta(\tau) = \rho$ and $\beta(u)=0$ as it is a
derivation. We remark that the Bockstein is trivial on the mod 2
motivic cohomology of a finite field $\bbF_q$ if and only if $q \equiv
1 \bmod 4$.

\begin{proposition}
  \label{prop:motcoh_dedekind}
  Let $D$ be a Hensel local ring in which $\ell$ is invertible. Write
  $F$ for the residue field of $D$ and write $\pi : D \to F$ for the
  quotient map. Then the map
  $\pi^* : H^{**}(D; \bbZ/\ell) \to H^{**}(F; \bbZ/\ell)$ is an
  isomorphism of $\bbZ/\ell$--algebras. Furthermore, the action of the
  Bockstein is the same in either case.
\end{proposition}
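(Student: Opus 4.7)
The plan is to verify the isomorphism bigrading-by-bigrading, after which both the algebra structure and the Bockstein compatibility follow for formal reasons. Namely, $\pi^{*}$ is induced by the scheme map $i : \Spec(F) \to \Spec(D)$, so it automatically respects the cup product carried by $H\bbZ/\ell$; and the Bockstein is the connecting map for the distinguished triangle
\[
H\bbZ/\ell \to H\bbZ/\ell^{2} \to H\bbZ/\ell
\]
in $\SH_{D}$, which is preserved by $\bbL i^{*}$. Thus naturality of the triangle in $i^{*}$ gives the final sentence of the proposition for free.

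It remains to show that $\pi^{*}: H^{a,b}(D;\bbZ/\ell) \to H^{a,b}(F;\bbZ/\ell)$ is an isomorphism of abelian groups in every bidegree. I would split the argument along the Beilinson--Lichtenbaum line. For $b<0$ or $a>b$ both groups vanish: on the residue field side this is the classical vanishing for Bloch's cycle complex, and on the Hensel-local side it follows from Spitzweck's identification of $\SH_{D}(\Sigma^{\infty}X_{+},\Sigma^{a,b}H\bbZ)$ with Levine's motivic cohomology in Section~\ref{mot_coh}, coupled with the long exact sequence for the coefficients $0 \to \bbZ \xrightarrow{\ell} \bbZ \to \bbZ/\ell \to 0$.

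In the remaining range $0 \leq a \leq b$, I would invoke the Beilinson--Lichtenbaum theorem in the form proved by Spitzweck over Dedekind bases. This supplies natural isomorphisms
\[
H^{a,b}(D;\bbZ/\ell) \xrightarrow{\cong} H^{a}_{\text{\'et}}(D;\mu_{\ell}^{\otimes b}), \qquad
H^{a,b}(F;\bbZ/\ell) \xrightarrow{\cong} H^{a}_{\text{\'et}}(F;\mu_{\ell}^{\otimes b}),
\]
both compatible with $i^{*}$. The proposition then reduces to the classical fact that for a Henselian local scheme $\Spec(D)$ with residue field $F$ and coefficients of order invertible on $\Spec(D)$, the specialization map
\[
H^{*}_{\text{\'et}}(D;\mu_{\ell}^{\otimes b}) \to H^{*}_{\text{\'et}}(F;\mu_{\ell}^{\otimes b})
\]
is an isomorphism, which follows e.g.~from proper (or Henselian) base change in $\ell$-adic \'etale cohomology.

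The main obstacle is the second paragraph: one has to justify that Spitzweck's motivic cohomology of the Hensel local $D$ really does vanish outside the Beilinson--Lichtenbaum wedge, and that the Beilinson--Lichtenbaum comparison is available over such a base rather than merely for a field. Once these structural inputs are in place, the Henselian base change statement for \'etale cohomology finishes the argument mechanically, and the ring- and Bockstein-compatibility remarks above complete the proof.
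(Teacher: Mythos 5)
Your route is genuinely different from the paper's. The paper disposes of the whole statement in one line by citing Geisser's rigidity theorem for motivic cohomology over Dedekind rings \cite[1.2(3)]{Geisser}, which \emph{is} the asserted isomorphism for Hensel local rings with $\ell$ invertible; the Bockstein claim then follows, as in your first paragraph, from naturality of the comparison map on the long exact sequences attached to the coefficient sequence $0 \to \bbZ/\ell \to \bbZ/\ell^{2} \to \bbZ/\ell \to 0$. What you propose instead is essentially to reprove Geisser's theorem: identify motivic with \'etale cohomology in the Beilinson--Lichtenbaum range and invoke Henselian invariance of \'etale cohomology. That is indeed the skeleton of how the rigidity theorem is proved, so the strategy is sound; the trade-off is that the paper's citation buys the result immediately, while your route makes visible \emph{why} it is true (reduction to the affine/Henselian base change theorem in \'etale cohomology).

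However, as written your argument has a gap exactly where you flag it, and it is not a small one: the two ``structural inputs'' you defer --- that $H^{a,b}(D;\bbZ/\ell)$ vanishes for $a>b$ over the Hensel local ring $D$ (for a field this is standard, but for a one-dimensional local base the cycle-complex vanishing only gives $H^{a,b}=0$ for $a>2b$ a priori, and the truncation $\tau_{\leq b+1}R\epsilon_{*}\mu_{\ell}^{\otimes b}$ leaves the degree $a=b+1$ genuinely in question), and that Beilinson--Lichtenbaum holds over the Dedekind base in a form compatible with $\pi^{*}$ --- together constitute essentially the entire content of the proposition. Both are established in \cite{Geisser} (the Beilinson--Lichtenbaum statement over Dedekind domains there rests on the Bloch--Kato conjecture, now a theorem), so the proof can be completed by supplying those references; but without them the proposal reduces the claim to assertions that are not easier than the claim itself.
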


\begin{proof}
  The rigidity theorem for motivic cohomology in Geisser
  \cite[1.2(3)]{Geisser} gives the isomorphism. The map $\bbL\pi^*$
  gives comparison maps for the long exact sequences which define the
  Bockstein over $D$ and $F$. The rigidity theorem shows the long
  exact sequences are isomorphic, so the action of the Bockstein is
  the same in either case.
\end{proof}

\subsection{Mod 2 motivic cohomology operations and cooperations}

The mod $2$ motivic Steenrod algebra over a base scheme $S$, which we
write as $\calA^{**}(S)$, is the algebra of bistable mod $2$ motivic
cohomology operations. A bistable cohomology operation is a family of
operations $\theta_{**} : H^{**}(-;\bbZ/2) \to H^{*+a,*+b}(-;\bbZ/2)$
which are compatible with the suspension isomorphism for both the
simplicial circle $S^1$ and the Tate circle $\bbG_m$.

When $S$ is the Zariski spectrum of a characteristic $0$ field,
Voevodsky identified the structure of this algebra in \cite{MR2031198,
  MR2737977}. Voevodsky's calculation was extended to hold where the
base is the Zariski spectrum of a field of positive characteristic
$p\neq 2$ by Hoyois, Kelly and {\O}stv{\ae}r in \cite{HKOst}. In
particular, the algebra $\calA^{**}(S)$ is generated over $\bbF_2$ by
the Steenrod squaring operations $\Sq^i$ of bidegree $(i, \lfloor i/2
\rfloor)$ and the operations given by cup products $x\cup -$ where $x
\in H^{**}(S; \bbZ/2)$. The Steenrod squaring operations satisfy
motivic Adem relations, which are given by Voevodsky in
\cite[\S10]{MR2031198} (a minor modification is needed in the case
$a+b \equiv 1 \bmod 2$).

We record the structure of the mod $2$ dual Steenrod algebra
$\calA_{**}(\bbF_q)$ for a finite field $\bbF_q$ of characteristic
different from $2$ in the following proposition.
\begin{proposition}
  \label{prop:algebroid}
  Let $\bbF_q$ be a finite field of odd characteristic. The mod $2$
  dual Steenrod algebra is an associative commutative algebra of the
  following form.
  \begin{equation*}
    \calA_{**}(\bbF_q) \cong H_{**}(\bbF_q)[\tau_i, \xi_j \st i\geq 0, 
    j\geq 1]/(\tau_i^2 - \tau \xi_{i+1} - \rho \tau_{i+1} -
    \rho\tau_0\xi_{i+1})
  \end{equation*}
  Here $\tau_i$ has bidegree $(2^{i+1} -1, 2^i-1)$ and $\xi_i$ has
  bidegree $(2^{i+1}-2, 2^i -1)$. Note that if $q \equiv 1 \bmod 4$,
  the relation for $\tau_i^2$ simplifies to $\tau_i^2 = \tau
  \xi_{i+1}$ as $\rho = 0$. 

  The structure maps for the Hopf algebroid $(H_{**}(\bbF_q),
  \calA_{**}(\bbF_q))$, which we write simply as $(H_{**},
  \calA_{**})$, are as follows. %
  \begin{enumerate}[(a)]
  \item The left unit $\eta_L : H_{**} \to \calA_{**}$ is given by
    $\eta_L(x) = x$.
  \item The right unit $\eta_R : H_{**} \to \calA_{**}$ is determined
    as a map of $\bbZ/2$--algebras by $\eta_R(\rho) = \rho$ and
    $\eta_R(\tau) = \tau + \rho \tau_0$.  In the case where $\rho$ is
    trivial, that is, $q \equiv 1 \bmod 4$, the right and left unit agree
    $\eta_R = \eta_L$.
  \item The augmentation $\epsilon : \calA_{**} \to H_{**}$ kills
    $\tau_i$ and $\xi_i$, and for $x \in H_{**}$, it follows that
    $\epsilon(x) = x$.
  \item The coproduct $\Delta : \calA_{**} \to
    \calA_{**}\otimes_{H_{**}} \calA_{**}$ is a map of graded
    $\bbZ/2$--algebras determined by $\Delta(x) = x \otimes 1$ for $x
    \in H_{**}$, $\Delta(\tau_i) = \tau_i \otimes 1 + 1 \otimes \tau_i
    + \sum_{j= 0}^{i-1} \xi_{i-j}^{2^j} \otimes \tau_j$ and
    $\Delta(\xi_i) = \xi_i \otimes 1 + 1 \otimes \xi_i +
    \sum_{j=1}^{i-1} \xi_{i-j}^{2^j}\otimes \xi_j$.
  \item The antipode $c$ is a map of $\bbZ/2$--algebras determined by
    $c(\rho) = \rho$, $c(\tau) = \tau + \rho \tau_0$, $c(\tau_i) =
    \tau_i + \sum_{j=0}^{i-1} \xi_{i-j}^{2^j}c(\tau_j)$ and
    $c(\xi_i) = \xi_i + \sum_{j=1}^{i-1} \xi_{i-j}^{2^j}c(\xi_j)$.
  \end{enumerate}
\end{proposition}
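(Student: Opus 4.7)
The plan is to obtain this statement as a direct specialization of the general structure theorem for the mod $2$ motivic dual Steenrod algebra over a field of characteristic different from $2$, due to Voevodsky in characteristic $0$ \cite{MR2031198, MR2737977} and extended by Hoyois, Kelly and {\O}stv{\ae}r to positive characteristic $p \neq 2$ in \cite{HKOst}. All the formulas in the statement are instances of the formulas proved there; the only work is to plug in the known cohomology ring $H^{**}(\bbF_q;\bbZ/2) \cong \bbZ/2[\tau, u]/(u^2)$ and to check the case distinction between $\rho = 0$ and $\rho = u$.

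First I would record the algebra structure. By Voevodsky/HKO, over any field $F$ of characteristic $\neq 2$ the dual Steenrod algebra $\calA_{**}(F)$ is the $H_{**}(F)$-algebra generated by classes $\tau_i$ of bidegree $(2^{i+1}-1, 2^i - 1)$ and $\xi_j$ of bidegree $(2^{j+1}-2, 2^j - 1)$ modulo the relation $\tau_i^2 = \tau\xi_{i+1} + \rho\tau_{i+1} + \rho\tau_0\xi_{i+1}$. Specializing to $F = \bbF_q$ and invoking the explicit description of $H_{**}(\bbF_q)$ recalled earlier in the section yields the algebra presentation of $\calA_{**}(\bbF_q)$. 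When $q \equiv 1 \bmod 4$ the class $\rho$ vanishes and the quadratic relation collapses to $\tau_i^2 = \tau\xi_{i+1}$, as claimed.

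Next I would derive the Hopf algebroid structure maps. The formulas for $\eta_L$, $\epsilon$, $\Delta$ and $c$ are the standard ones from Voevodsky's computation and require no modification at the level of generators, since both the coproduct $\Delta(\xi_i)$ and $\Delta(\tau_i)$ are universal in the generators $\xi_{i-j}^{2^j}$ and $\tau_j$. The only nontrivial point is the right unit: since $\calA_{**}(\bbF_q) = \pi_{**}(H\bbZ/2 \wedge H\bbZ/2)$ is a bimodule over $H_{**}(\bbF_q)$ via the two inclusions $H\bbZ/2 \to H\bbZ/2 \wedge H\bbZ/2$, and since $\eta_R$ is dual to the Bockstein action on cohomology with $\eta_L = \eta_R$ forced on classes killed by $\beta$, one reads off $\eta_R(\rho) = \rho$ and $\eta_R(\tau) = \tau + \rho\tau_0$ from the identity $\beta(\tau) = \rho$ established earlier. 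When $q \equiv 1 \bmod 4$ the Bockstein is trivial on $H^{**}(\bbF_q; \bbZ/2)$, so $\eta_R = \eta_L$, agreeing with the stated formula. The antipode is then determined recursively by the counit axioms $\mu \circ (c\otimes\mathrm{id})\circ\Delta = \eta_L\circ\epsilon$ applied to $\tau_i$ and $\xi_i$ in turn.

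The main obstacle is not conceptual but bookkeeping: verifying that the formulas for $\eta_R$, $\Delta$ and $c$ in \cite{MR2737977} and \cite{HKOst}, which are typically stated assuming $\rho$ and $\tau$ are the generators of $H^{**}$, remain correct after observing that in $H^{**}(\bbF_q;\bbZ/2)$ the class $\rho$ is identified with $u$ when $q \equiv 3 \bmod 4$ and vanishes when $q \equiv 1 \bmod 4$. In particular one must check that the relation $\tau_i^2 = \tau\xi_{i+1} + \rho\tau_{i+1} + \rho\tau_0\xi_{i+1}$ is consistent with the coproduct formulas, which amounts to applying $\Delta$ to both sides, expanding using the Cartan-type formula and cancelling mod the relation; this is the one step where a careful expansion is required.
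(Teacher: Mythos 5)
Your proposal is correct and follows essentially the same route as the paper, which simply cites the general structure theorem of Voevodsky and Hoyois--Kelly--{\O}stv{\ae}r and implicitly specializes it to $\bbF_q$; the paper's proof is a one-line citation to \cite{HKOst} and \cite{MR2031198}. The extra explanatory detail you give (distinguishing the cases $\rho=0$ and $\rho=u$, the recursive determination of the antipode) is consistent with what those references prove, though the claim that $\eta_R$ is ``read off'' from $\beta(\tau)=\rho$ is more of a consistency check than the actual derivation, which is part of the cited structure theorem.
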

\begin{proof}
  The calculation can be found in the work of Hoyois, Kelly and
  {\O}stv{\ae}r \cite{HKOst} and Voevodsky \cite{MR2031198}.
\end{proof}

We now investigate the structure of the Hopf algebroid of mod 2
cohomology cooperations over a Dedekind domain.

\begin{definition} 
  Let $D$ be a Dedekind domain, and let $C$ denote the set of
  sequences $(\epsilon_0, r_1, \epsilon_1, r_2, \ldots)$ with
  $\epsilon_i \in \{0,1\}$, $r_i \geq 0$ and only finitely many
  non-zero terms.  The elements $\tau_i \in
  \calA_{2^{i+1}-1,2^i-1}(D)$ and $\xi_i \in
  \calA_{2^{i+1}-2,2^i-1}(D)$ are constructed by Spitzweck in
  \cite[11.23]{Spitzweck}.  For any sequence $I = (\epsilon_0, r_1,
  \epsilon_1, r_2, \ldots)$ in $C$, write $\omega(I)$ for the element
  $\tau_0^{\epsilon_0}\xi_1^{r_1}\cdots$ and $(p(I),q(I))$ for the
  bidegree of the operation $\omega(I)$.
\end{definition}

Spitzweck calculates in \cite[11.24]{Spitzweck} that the dual Steenrod
algebra is generated by the elements $\tau_i$ and $\xi_j$ but does not
identify the relations for $\tau_i^2$. We record Spitzweck's
calculation in the following proposition.

\begin{proposition}
  \label{dual_steenrod_algebra} 
  Let $D$ be a Dedekind domain.  As an $H\bbZ/2$ module, there is a
  weak equivalence $\bigvee_{I\in B} \Sigma^{p(I),q(I)} H\bbZ/2 \to
  H\bbZ/2 \wedge H\bbZ/2$.  The map is given by $\omega(I)$ on the
  factor $\Sigma^{p(I),q(I)} H\bbZ/2$.
\end{proposition}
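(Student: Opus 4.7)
My plan follows Spitzweck's strategy in \cite[11.24]{Spitzweck}: construct the candidate splitting using the explicit classes $\omega(I)$ and then reduce the assertion that it is a weak equivalence to the analogous statement over the residue fields of $D$, where it is already known from Proposition \ref{prop:algebroid}.

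First I would assemble the map
\begin{equation*}
  \phi : \bigvee_{I \in C} \Sigma^{p(I), q(I)} H\bbZ/2 \longrightarrow H\bbZ/2 \wedge H\bbZ/2
\end{equation*}
by sending the unit of the $I$-th wedge summand to $\omega(I) \in \calA_{p(I), q(I)}(D)$, viewed as a map $\Sigma^{p(I),q(I)} H\bbZ/2 \to H\bbZ/2 \wedge H\bbZ/2$. This uses Spitzweck's construction of the individual classes $\tau_i$ and $\xi_j$ in \cite[11.23]{Spitzweck}, which is carried out once and for all over $\Spec(\bbZ)$ and then pulled back to $D$ along the unique map $f: \Spec(D) \to \Spec(\bbZ)$. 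Compatibility with base change is crucial here: the isomorphism $\bbL f^* H\bbZ \cong H\bbZ_D$ recalled in Section \ref{mot_coh} ensures that $\omega(I)$ on $\Spec(D)$ is exactly $\bbL f^*$ of the corresponding class on $\Spec(\bbZ)$.

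Next I would verify that $\phi$ is a weak equivalence by testing on residue fields. For each point $s \in \Spec(D)$ of residue characteristic different from $2$, base change $\bbL i_s^*$ along $i_s : \Spec(k(s)) \to \Spec(D)$ is symmetric monoidal and satisfies $\bbL i_s^* H\bbZ/2 \simeq H\bbZ/2_{k(s)}$ together with $\bbL i_s^*(H\bbZ/2 \wedge H\bbZ/2) \simeq H\bbZ/2_{k(s)} \wedge H\bbZ/2_{k(s)}$. Moreover, the rigidity statement in Proposition \ref{prop:motcoh_dedekind} (applied after Henselization) identifies both the source and target of $\phi$ with the corresponding objects over $k(s)$, and pulls $\omega(I)$ back to the Milnor basis element of the same name over $k(s)$. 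The pullback of $\phi$ is then the classical decomposition of Voevodsky and Hoyois--Kelly--{\O}stv{\ae}r, which is a weak equivalence by Proposition \ref{prop:algebroid}.

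The main obstacle is passing from these fiberwise equivalences back to a global equivalence over $D$. A map of cell $H\bbZ/2$-modules need not be detected by its restrictions to residue fields in general; one has to exploit that both sides of $\phi$ are free $H\bbZ/2$-modules on cells of prescribed bidegrees, so that checking the equivalence reduces to comparing the bigraded homotopy sheaves stalkwise. This is exactly the technical work carried out in \cite[\S 11]{Spitzweck}, using Spitzweck's explicit description of $H\bbZ_{\Spec(\bbZ)}$ and its good base-change properties; rather than reconstruct that machinery, I would cite it directly and record the conclusion in the form stated above.
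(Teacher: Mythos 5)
Your proposal is essentially the same as what the paper does: the paper offers no independent argument for this proposition, but simply records it as Spitzweck's calculation \cite[11.24]{Spitzweck}, and your sketch likewise constructs the map from the classes $\omega(I)$ and then defers to Spitzweck's machinery for the proof that it is an equivalence. The intermediate steps you outline (base change from $\Spec(\bbZ)$, fiberwise reduction to residue fields) are a fair summary of that reference, so there is nothing to add.
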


To obtain the relations for $\tau_i^2$, we find an analog of the
result of Voevodsky \cite[6.10]{MR2031198} when $D$ is a Hensel local
ring.

\begin{proposition}
  Let $D$ be a Hensel local ring in which $2$ is invertible and let
  $F$ denote the residue field of $D$. Then the following isomorphism
  holds.
  \begin{equation*}
    H^{**}(B\mu_{2}, \bbZ/2) \cong 
    H^{**}(D, \bbZ/2)[[u,v]]/(u^2 = \tau v + \rho u)
  \end{equation*}
  Here $v$ is the class $v_{2}\in H^{2,1}(B\mu_{2})$ defined by
  Spitzweck in \cite[page 81]{Spitzweck} and $u \in
  H^{1,1}(B\mu_{2};\bbZ/2)$ is the unique class satisfying
  $\tilde{\beta}(u) = v$, where $\tilde{\beta}$ is the integral
  Bockstein determined by the coefficient sequence $\bbZ \to \bbZ \to
  \bbZ/2$.
\end{proposition}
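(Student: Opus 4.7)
The plan is to reduce to the case of the residue field $F$ via rigidity, and then invoke the established computation of $H^{**}(B\mu_{2,F};\bbZ/2)$. Specifically, Voevodsky \cite[6.10]{MR2031198}, as extended as needed by Hoyois, Kelly and \O stv\ae r \cite{HKOst}, proves the desired presentation when the base is a field of characteristic $\neq 2$. It therefore suffices to show that the map $\pi^* : H^{**}(B\mu_{2,D}; \bbZ/2) \to H^{**}(B\mu_{2,F}; \bbZ/2)$ is a bigraded ring isomorphism sending $u_D \mapsto u_F$ and $v_D \mapsto v_F$; the relation $u^2 = \tau v + \rho u$ then lifts automatically from $F$ to $D$.

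To establish the isomorphism, I would realize $B\mu_2$ as the colimit of the smooth $D$-schemes $L_n := (\bbA_D^{n+1}\setminus\{0\})/\mu_2$, which exist as smooth schemes because $2$ is invertible in $D$ so $\mu_2$ acts freely. The rigidity theorem used in Proposition \ref{prop:motcoh_dedekind} (due to Geisser \cite{Geisser}) provides compatible isomorphisms $H^{**}(L_{n,D}; \bbZ/2) \xrightarrow{\cong} H^{**}(L_{n,F}; \bbZ/2)$ for all $n$. Passing to the limit through the Milnor short exact sequence
\begin{equation*}
0 \to \lim\nolimits^1 H^{*-1,*}(L_n;\bbZ/2) \to H^{**}(B\mu_2;\bbZ/2) \to \lim_n H^{**}(L_n;\bbZ/2) \to 0
\end{equation*}
and using that in each fixed bidegree the tower eventually stabilizes, so that $\lim^1$ vanishes, yields the desired ring isomorphism $\pi^*$. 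Multiplicativity is preserved since rigidity is induced by $\bbL\pi^*$ on the level of spectra.

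The naturality of $u$ and $v$ is then immediate: Spitzweck constructs $v \in H^{2,1}(B\mu_2;\bbZ/2)$ already over $\Spec(\bbZ)$ in \cite[page 81]{Spitzweck}, and $u$ is characterized in its bidegree by $\tilde\beta(u) = v$, where the integral Bockstein is likewise natural under base change. Thus $\pi^*(u_D) = u_F$ and $\pi^*(v_D) = v_F$. Together with the identification $\tau_D \leftrightarrow \tau_F$, $\rho_D \leftrightarrow \rho_F$ from Proposition \ref{prop:motcoh_dedekind}, the putative class $u_D^2 - \tau v_D - \rho u_D \in H^{2,2}(B\mu_{2,D};\bbZ/2)$ lies in the kernel of $\pi^*$, hence vanishes, and the power-series presentation over $F$ pulls back to the presentation over $D$.

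The main obstacle I anticipate is justifying the rigidity-plus-limit reduction for the non-smooth object $B\mu_2$: one must carefully verify that Geisser rigidity applies uniformly and naturally to the Totaro-style approximations $L_n$, that the comparison maps are multiplicative after passing to the limit, and that the $\lim^1$ contribution vanishes in every bidegree relevant to the power-series presentation. Once these technical points are secured, the uniqueness of $u$ as the preimage of $v$ under $\tilde\beta$ in its bidegree, together with the isomorphism $\pi^*$, transports Voevodsky's relation from $F$ to $D$ without incident.
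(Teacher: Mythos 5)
Your overall strategy---reduce to the residue field $F$ by showing $\pi^*\colon H^{**}(B\mu_{2,D};\bbZ/2)\to H^{**}(B\mu_{2,F};\bbZ/2)$ is an isomorphism, note that $u$ and $v$ are compatible with base change, and transport Voevodsky's relation---is exactly the paper's strategy. The difference is the mechanism for proving that $\pi^*$ is an isomorphism, and that is where your argument has a genuine gap. You assert that the rigidity theorem of Geisser ``provides compatible isomorphisms $H^{**}(L_{n,D};\bbZ/2)\cong H^{**}(L_{n,F};\bbZ/2)$,'' but the cited rigidity statement concerns the motivic cohomology of the Hensel local base $D$ itself, not of arbitrary smooth $D$-schemes. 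Rigidity in the latter generality is false: for instance, for $D$ the Henselization of $\bbZ_{(p)}$, the smooth $D$-scheme $\bbA^1_D\setminus(\{0\}\cup\{p\})$ has special fiber $\bbA^1\setminus\{0\}$ but generic fiber $\bbA^1$ minus two points, and already $H^{1,1}$ distinguishes them. So the comparison for the approximations $L_n=(\bbA^{n+1}\setminus\{0\})/\mu_2$ is not a citation; it requires knowing that $L_n$ (or $B\mu_2$) is built out of Tate-type cells over $D$, i.e., it requires essentially the computation you are trying to avoid.

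The paper closes exactly this gap by a different reduction: it uses the triangle $B\mu_{2+}\to(\calO(-2)_{\bbP^{\infty}})_+\to\mathrm{Th}(\calO(-2))$, available over $D$ by Spitzweck's construction, and the associated long exact sequence in mod $2$ motivic cohomology. The middle and right terms have cohomology free over $H^{**}$ of the base (projective bundle formula and Thom isomorphism over a Dedekind base), so rigidity for the base plus the 5-lemma yields that all three comparison maps are isomorphisms; the relation and the choices of $u$ and $v$ then descend as you describe. If you want to keep your colimit-of-smooth-models approach, you would need to run an analogous Gysin/cofiber-sequence argument for each $L_n$ over $D$ to see that its cohomology is free over $H^{**}(D)$ before rigidity for the base can be propagated to $L_n$; at that point you have reproduced the paper's computation, and the Milnor $\lim^1$ discussion becomes the only extra overhead. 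The remaining ingredients of your write-up (vanishing of $\lim^1$ by degreewise stabilization, naturality of $v$ and of the integral Bockstein characterizing $u$, and killing $u_D^2-\tau v_D-\rho u_D$ in the kernel of an isomorphism) are fine.
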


\begin{proof}
  The motivic classifying space $B\mu_2$ over $D$ (respectively $F$) fits
  into a triangle
  $B\mu_{2+} \to (\calO(-2)_{\bbP^{\infty}})_+ \to
  \mathrm{Th}(\calO(-2))$
  by \cite[(6.2)]{MR2031198} and \cite[(25)]{Spitzweck}.  From this
  triangle, we obtain a long exact sequence in mod $2$ motivic
  cohomology \cite[(6.3)]{MR2031198} and \cite[(26)]{Spitzweck}. The
  comparison map $\bbL\pi^* : \SH_D \to \SH_F$ induces a homomorphism
  of these long exact sequences. The rigidity theorem
  \ref{prop:motcoh_dedekind} and the 5-lemma then show that the
  comparison maps are all isomorphisms. As the desired relation holds
  in the motivic cohomology of $B\mu_{2}$ over $F$ and the choices of
  $u$ and $v$ are compatible with base change, the result follows.
\end{proof}

With this result, the relations $\tau_i^2 =\tau \xi_{i+1} + \rho
\tau_{i+1} + \rho \tau_0 \xi_{i+1}$ in $\calA_{**}(D)$ follow when $D$
is a Hensel local ring in which $2$ is invertible by the argument
given by Voevodsky in \cite[12.6]{MR2031198}. Furthermore, the
calculation of Spitzweck in \cite[11.23]{Spitzweck} shows that the
coproduct $\Delta$ is the same as in proposition
\ref{prop:algebroid}(d). The action of the Steenrod squaring
operations $H^{**}(D)$ and $H^{**}(F)$ agree by the naturality of
these cohomology operations, since these cohomology groups are
isomorphic. This shows that the right unit $\eta_R$ and the antipode
$c$ are given by the formulas in proposition
\ref{prop:algebroid}(b,e).

\begin{remark}
  \label{tau_xi_image} 
  Let $D$ be a Dedekind domain in which $2$ is invertible and consider
  the map $f : \bbZ[1/2] \to D$.  A key observation of Spitzweck in
  the proof of \cite[11.24]{Spitzweck} is that the map $\bbL
  f^*:\calA_{**}(\bbZ[1/2])\to\calA_{**}(D)$ satisfies $\bbL f^*
  \tau_i=\tau_i$ and $\bbL f^* \xi_i=\xi_i$ for all $i$.  For a map
  $j: D \to \widetilde{D}$ of Dedekind domains in which $2$ is
  invertible, it follows that $\bbL j^* \tau_i = \tau_i$ and $\bbL j^*
  \xi_i = \xi_i$ for all $i$.
\end{remark}

\begin{proposition}
  \label{prop:algebroid_iso}
  Let $D$ be a Hensel local ring in which $2$ is invertible and let
  $F$ denote the residue field of $D$. Then the comparison map
  $\pi^* : \calA_{**}(D) \to \calA_{**}(F)$ is an isomorphism of Hopf
  algebroids.
\end{proposition}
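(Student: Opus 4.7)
The strategy is to exhibit $\pi^*$ as an $H_{**}$-module isomorphism that fixes the standard generators, and then to observe that every Hopf algebroid structure map is a polynomial expression in these generators and elements of $H_{**}$, hence is automatically preserved.

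For the module claim, Proposition \ref{dual_steenrod_algebra} gives free $H_{**}$-module bases $\{\omega(I)\}_{I \in C}$ on both sides, while Proposition \ref{prop:motcoh_dedekind} provides the coefficient isomorphism $\pi^* : H_{**}(D) \xrightarrow{\cong} H_{**}(F)$. Applying Remark \ref{tau_xi_image} to the compositions $\bbZ[1/2] \to D$ and $\bbZ[1/2] \to F$ (and using naturality of the base change through $\pi$) shows that $\pi^*(\tau_i) = \tau_i$ and $\pi^*(\xi_i) = \xi_i$, so $\pi^*(\omega(I)) = \omega(I)$ for every $I \in C$, yielding the module isomorphism. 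For the Hopf algebroid compatibility, the quadratic relation
\[ \tau_i^2 = \tau\xi_{i+1} + \rho\tau_{i+1} + \rho\tau_0\xi_{i+1} \]
has already been established in $\calA_{**}(D)$ in the paragraph preceding the proposition, via the Hensel analogue of the $H^{**}(B\mu_2;\bbZ/2)$ computation and the argument of Voevodsky \cite[12.6]{MR2031198}. The coproduct $\Delta$ over $D$ is given by the same formula as in Proposition \ref{prop:algebroid}(d) by Spitzweck \cite[11.23]{Spitzweck}, and the left and right units, augmentation and antipode are then determined by $\Delta$, the quadratic relation, the Bockstein action transported through Proposition \ref{prop:motcoh_dedekind}, and the Hopf algebroid axioms, hence take the same form over $D$ as over $F$.

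The only genuinely non-routine step, namely identification of the relation for $\tau_i^2$ over $D$, has already been discharged. Given that, the proof becomes a direct comparison of the two explicit presentations of Proposition \ref{prop:algebroid} along the module isomorphism above: since $\pi^*$ is an $H_{**}$-algebra map that fixes each of the generators $\tau_i$ and $\xi_j$, it intertwines every structure map, and since it is a bijection on the $\omega(I)$ module basis, it is an isomorphism of Hopf algebroids.
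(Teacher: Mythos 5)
Your argument is correct and follows essentially the same route as the paper: the module isomorphism comes from Remark \ref{tau_xi_image} (generators $\tau_i$, $\xi_j$ are preserved under base change) together with Propositions \ref{dual_steenrod_algebra} and \ref{prop:motcoh_dedekind}, and the compatibility with the structure maps is exactly what the paragraph preceding the proposition establishes (the $\tau_i^2$ relation via $B\mu_2$, the coproduct via Spitzweck, and the right unit and antipode via naturality of the Steenrod operations). The paper's proof is just a terser version of the same argument.
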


\begin{proof}
  Remark \ref{tau_xi_image} shows that the map $\pi^* : \calA_{**}(D)
  \to \calA_{**}(F)$ is an isomorphism of left
  $H_{**}(F)$ modules. The compatibility of the isomorphism with the
  coproduct, right unit and antipode was established above.
\end{proof}

The following definition is taken from Dugger and Isaksen
\cite[2.11]{DI}.

\begin{definition}
  \label{def:mot_finite}
  A set of bigraded objects $X = \{x_{(a, b)}\}$ is said to be
  motivically finite if for any bigrading $(a,b)$ there are only
  finitely many objects $y_{(a',b')} \in X$ for which $a \geq a'$ and
  $2b-a \geq 2b'-a'$. We say a bigraded algebra or module is
  motivically finite if it has a generating set which is motivically
  finite.
\end{definition}

To motivate the preceding definition, observe that if $H^{**}(X)$ is
a motivically finite $H^{**}(F)$ module, then $H^{**}(X)$ is a finite
dimensional $\bbF_{\ell}$ vector space in each bidegree.

For a Hensel local ring $D$, the isomorphism $\calA_{**}(D) \cong
\calA_{**}(F)$ of motivically finite algebras gives an isomorphism of
their duals $\calA^{**}(D) \cong \calA^{**}(F)$. See Hoyois, Kelly and
{\O}stv{\ae}r \cite[5.2]{HKOst} and Spitzweck \cite[11.25]{Spitzweck}
for the proof that the dual of the Hopf algebroid of cooperations is
the Steenrod algebra.

The analogous results of this section hold for mod $\ell$ motivic
cohomology over a base field or a Hensel local ring in which $\ell$ is
invertible for odd primes $\ell$. Precise statements can be found in
Wilson \cite{WThesis}.

\section{Motivic Adams spectral sequence}
\label{section:MASS}
 
The motivic Adams spectral sequence over a base scheme $S$ may be
defined using the appropriate notion of an Adams resolution; see Adams
\cite{Adams}, Switzer \cite{Switzer}, or Ravenel \cite{Ravenel} for
treatments in the topological case. We recount the definition for
completeness and establish some basic properties of the motivic Adams
spectral sequence under base change.  We follow Dugger and Isaksen
\cite[\S3]{DI} for the definition of the motivic Adams spectral
sequence.  See also the work of Hu, Kriz and Ormsby \cite[\S6]{HKO}.

Let $p$ and $\ell$ be distinct primes and let $q=p^{\nu}$ for some
integer $\nu\geq 1$. We will be interested in the specific case of the
motivic Adams spectral sequence over a field and over a Hensel
discrete valuation ring with residue field of characteristic $p$.  We
write $H$ for the spectrum $H\bbZ/\ell$ over the base scheme $S$ and
$H^{**}(S)$ for the motivic cohomology of $S$ with $\bbZ/\ell$
coefficients.  The spectrum $H$ is a ring spectrum and is cellular in
the sense of Dugger and Isaksen \cite{DI-Cell} by work of Spitzweck
\cite[11.4]{Spitzweck}.

\subsection{Construction of the mod $\ell$ MASS}

\begin{definition} 
  \label{def:standard_resolution}
  Consider a spectrum $X$ over the base scheme $S$ and let $\myol{H}$
  denote the spectrum in the cofibration sequence $\myol{H} \to \unit
  \to H \to \Sigma \myol{H}$.  The standard $H$-Adams resolution of
  $X$ is the tower of cofibration sequences $X_{f+1} \to X_f \to W_f$
  given by $X_f = \myol{H}^{\wedge f} \wedge X$ and $W_f = H \wedge
  X_f$; compare this with \cite[\S15]{Adams}.
  \begin{equation*}
    \xymatrix@R=.5cm{
      X_0 = X \ar[rd]_{j_0} && \myol{H}\wedge X \ar[ll]_{i_1}
      \ar[dr]_{j_1} && \myol{H}\wedge \myol{H} \wedge X
      \ar[ll]_{i_2} & \cdots \ar[l] \\ &H \wedge X
      \ar[ur]^{\bullet}_{\del_0} && H \wedge \myol{H} \wedge X
      \ar[ur]^{\bullet}_{\del_1} &&
    }
  \end{equation*}
\end{definition}

\begin{definition}
  \label{def:mass}
  Let $X$ be a $T$-spectrum over $S$ and let $\{X_f, W_f\}$ be the
  standard $H$-Adams resolution of $X$.  The motivic Adams spectral
  sequence for $X$ with respect to $H$ is the spectral sequence
  determined by the following exact couple.
  \begin{equation*}
    \xymatrix@R=.5cm{
      \oplus \pi_{**}X_f \ar[rr]^{i_*}& & \oplus \pi_{**} X_f
      \ar[dl]^{j_*} \\ &\oplus \pi_{**} W_f \ar[ul]^{\del_*}&
    }
  \end{equation*} 
  The $E_1$ term of the motivic Adams spectral sequence is
  $E_1^{f,(s,w)} = \pi_{s,w}W_f$.  The index $f$ is called the Adams
  filtration, $s$ is the stem and $w$ is the motivic weight.  The
  Adams filtration of $\pi_{**}X$ is given by $F_i\pi_{**}X =
  \im(\pi_{**}X_i \to \pi_{**}X)$.
\end{definition}

\begin{proposition} 
  \label{prop:functoriality}
  Let $\frakS$ denote the category of spectral sequences in the
  category of abelian groups.  The associated spectral sequence to the
  standard $H$-Adams resolution defines a functor
  $\MASS : \SH_S \to \frakS$.  Furthermore, the motivic Adams spectral
  sequence is natural with respect to base change.
\end{proposition}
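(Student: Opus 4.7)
The plan is to show two things: (i) $\MASS$ is a functor in the spectrum variable, and (ii) it commutes with $\bbL f^{*}$ for a map $f:R\to S$ of base schemes. Both rely on the observation that the standard $H$-Adams resolution is built entirely out of the cofiber sequence $\myol{H}\to\unit\to H$, the smash product, and the functor $\pi_{**}(-)$, so it suffices to lift each of these operations to a strictly functorial construction and then invoke the standard fact that a morphism of exact couples determines a morphism of spectral sequences.

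For functoriality in $X$, I would work in a symmetric monoidal model category presenting $\SH_S$ (e.g.\ symmetric $T$-spectra) with a functorial cofibrant replacement $Q$. Fix once and for all a cofibrant model of $H$, a cofibrant model of the unit $\unit$, a cofibrant representative $\myol{H}$ of the fiber of $\unit\to H$, and a cofibrant representative of the defining map $\myol{H}\to\unit$. Then $X\mapsto \myol{H}^{\wedge f}\wedge QX$ is a strict endofunctor for each $f\geq 0$, and the maps $X_{f+1}\to X_f\to W_f$ in Definition~\ref{def:standard_resolution} are strict natural transformations. Passing to $\pi_{**}$ and assembling the Adams filtration yields a functorial exact couple, whose derived spectral sequence is $\MASS(X)$. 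A morphism $\phi:X\to Y$ in $\SH_S$ is represented by $Q\phi$, and smashing with $\myol{H}^{\wedge f}$ produces a ladder of cofiber sequences, giving a morphism of exact couples and hence a morphism in $\frakS$. Composition and identities are preserved because they are at the level of the strict model.

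For naturality with respect to base change, let $f:R\to S$ be a map of base schemes. The derived pullback $\bbL f^{*}$ is a symmetric monoidal exact functor $\SH_S\to\SH_R$, so it preserves smash products, cofiber sequences, and the sphere spectrum $\unit$. By the results recalled in Section~\ref{mot_coh}, $\bbL f^{*}H\bbZ_S\cong H\bbZ_R$, and since $H=H\bbZ/\ell$ is the cofiber of multiplication by $\ell$ on $H\bbZ$, we also get $\bbL f^{*}H_S\cong H_R$ and consequently $\bbL f^{*}\myol{H}_S\cong\myol{H}_R$. Therefore $\bbL f^{*}X_f\cong(\bbL f^{*}X)_f$ and $\bbL f^{*}W_f\cong(\bbL f^{*}X)_f\wedge H_R$, compatibly with the structure maps $i_{*}$, $j_{*}$, $\del_{*}$ of the exact couple. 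Applying $\pi_{**}$ after $\bbL f^{*}$ yields a morphism of exact couples and hence a natural morphism of spectral sequences $\MASS_S(X)\to\MASS_R(\bbL f^{*}X)$.

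The main obstacle is the passage between the strict model-categorical construction and the triangulated category $\SH_S$, where cofibers are only well-defined up to non-canonical isomorphism. The argument requires fixing point-set level choices (a cofibrant $H$, a chosen $\myol{H}$, and a functorial $Q$) so that the resolution becomes a strict functor before descending to homotopy, and similarly requires the isomorphism $\bbL f^{*}H_S\cong H_R$ to be implemented by a natural comparison of point-set models. Once these choices are in place the construction proceeds uniformly, and both functoriality in $X$ and compatibility with $\bbL f^{*}$ follow formally from the symmetric monoidal and exact properties of the ingredients.
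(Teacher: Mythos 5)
Your proposal is correct and follows essentially the same route as the paper: both arguments rest on the fact that the standard resolution is given by smashing with the fixed spectra $\myol{H}^{\wedge f}$ and $H$, so functoriality in $X$ follows from monoidality of the smash product, and base-change compatibility follows from $f^{*}\unit=\unit$, $f^{*}H_S=H_R$, and the monoidality of $\bbL f^{*}$. The extra point-set care you take with strict cofibrant models is harmless but not needed here, since $X_f=\myol{H}^{\wedge f}\wedge X$ and $W_f=H\wedge X_f$ are explicit formulas rather than cofibers depending on $X$, so the construction is already functorial at the level of $\SH_S$.
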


\begin{proof} 
  The construction of the standard $H$-Adams resolution is functorial
  because $\SH_S$ is symmetric monoidal.  Given $X \to X'$ we get
  induced maps of standard $H$-Adams resolutions
  $\{ X_f, W_f \} \to \{X_f', W_f'\}$.  As $\pi_{**}(-)$ is a
  triangulated functor, we get an induced map of the associated exact
  couples and hence of spectral sequences $\MASS(X) \to \MASS(X')$.

  Let $f:R \to S$ be a map of base schemes.  The claim is that there
  is a natural transformation between $\MASS : \SH_S \to \frakS$ and
  $\MASS \circ \bbL f^* : \SH_S \to \SH_R \to \frakS$.  Let $X\in
  \SH_S$ and let $\{X_f, W_f \}$ be the standard $H_S$-Adams
  resolution of $X$ in $\SH_S$.  We may as well assume $X$ is
  cofibrant, in which case $QX = X$ where $Q$ is the cofibrant
  replacement functor.  Let $\{X_f', W_f'\}$ denote the standard
  $H_R$-Adams resolution of $\bbL f^*X = f^*X$.  Observe that
  $\{f^*X_f , f^*W_f\} = \{X_f',W_f'\}$, since $f^* \unit = \unit$,
  $f^* H_S = H_R$ and $\bbL f^*$ is a monoidal functor.  We therefore
  have a map $\{\bbL f^* X_f, \bbL f^* W_f \} \to \{ X_f',W_f'\}$.
  Applying $\bbL f^* : \SH_S(\Sigma^{s,w}\unit,-) \to
  \SH_R(\Sigma^{s,w}\unit, \bbL f^* -)$ to $\{X_f, W_f\}$ gives a map
  of exact couples and therefore a map $\Phi_X : \MASS_S(X) \to
  \MASS_R(\bbL f^* X)$.  It is straightforward to verify that $\Phi$
  determines a natural transformation.
\end{proof}

\begin{corollary} 
  For a map of base schemes $f : R \to S$, there is a map of motivic
  Adams spectral sequences $\Phi : \MASS_S(\unit) \to \MASS_R(\unit)$.
  The map $\Phi$ is furthermore compatible with the induced map
  $\pi_{**}(S) \to \pi_{**}(R)$.
\end{corollary}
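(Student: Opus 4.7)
The plan is to apply Proposition \ref{prop:functoriality} directly with $X = \unit \in \SH_S$. Since the derived base change $\bbL f^*$ is a symmetric monoidal functor of triangulated categories (with respect to the smash product), it preserves the monoidal unit up to canonical isomorphism, so $\bbL f^* \unit \cong \unit$ in $\SH_R$. Proposition \ref{prop:functoriality} then produces a map of spectral sequences
\begin{equation*}
  \Phi_{\unit} : \MASS_S(\unit) \longrightarrow \MASS_R(\bbL f^* \unit) \cong \MASS_R(\unit),
\end{equation*}
which is the desired $\Phi$.

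To unwind why this works at the level of resolutions, I would note that $\unit$ is cofibrant, so $Q\unit = \unit$, and that $f^* H_S \cong H_R$ (this was recorded in Section \ref{mot_coh}, using $\bbL f^* H\bbZ_S \cong H\bbZ_R$ together with passage to mod-$\ell$ coefficients). Consequently $f^*$ sends the cofibration sequence $\myol{H}_S \to \unit \to H_S$ over $S$ to the corresponding cofibration sequence $\myol{H}_R \to \unit \to H_R$ over $R$, and by monoidality it sends the entire standard $H_S$-Adams tower $\{X_f = \myol{H}_S^{\wedge f}, W_f = H_S \wedge X_f\}$ for $\unit$ onto the standard $H_R$-Adams tower for $\unit$. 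Applying the functor $\SH_S(\Sigma^{s,w}\unit,-)$ and comparing it, via $\bbL f^*$, with $\SH_R(\Sigma^{s,w}\unit, \bbL f^*-)$ yields a map of exact couples, which is precisely the exact couple presentation of $\Phi_{\unit}$.

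For the second assertion, I would observe that on the abutment the map of exact couples induces, in each bidegree $(s,w)$, the homomorphism
\begin{equation*}
  \bbL f^* : \SH_S(\Sigma^{s,w}\unit, \unit) \longrightarrow \SH_R(\Sigma^{s,w}\unit, \unit),
\end{equation*}
which is by definition the base-change map $\pi_{s,w}(S) \to \pi_{s,w}(R)$. Since the Adams filtration $F_i\pi_{**}X = \im(\pi_{**}X_i \to \pi_{**}X)$ is defined purely in terms of the tower, and $\bbL f^*$ commutes with the tower up to canonical equivalence, the map $\Phi$ respects the Adams filtration, and the induced map on the filtration quotients coincides with the associated graded of $\pi_{**}(S) \to \pi_{**}(R)$.

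I do not expect any serious obstacle here: the work was done in Proposition \ref{prop:functoriality}, and the present corollary is essentially specialization to $X = \unit$ together with the bookkeeping observation $\bbL f^*\unit \cong \unit$. The one detail to be careful with is the identification $f^* H_S \cong H_R$ at the point-set level (to get a genuine, not merely homotopical, map of towers), which is already covered in Section \ref{mot_coh}.
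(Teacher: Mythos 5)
Your proposal is correct and follows the same route as the paper, which states this corollary without further proof as an immediate specialization of proposition \ref{prop:functoriality} to $X = \unit$; the key identifications you invoke ($f^*\unit = \unit$, $f^*H_S = H_R$, monoidality of $\bbL f^*$) are exactly the ones already recorded in the proof of that proposition. The compatibility with $\pi_{**}(S) \to \pi_{**}(R)$ is, as you say, just the observation that the map of exact couples on the $X_0 = \unit$ term is the base-change map on stable homotopy groups.
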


\begin{definition}
  \label{def:cellular}
  A particularly well behaved family of spectra in $\SH_S$ are the
  cellular spectra in the sense of Dugger and Isaksen
  \cite[2.10]{DI-Cell}. A spectrum $E \in \SH_S$ is cellular if it can
  be constructed out of the spheres $\Sigma^{\infty}S^{a,b}$ for any
  integers $a$ and $b$ by homotopy colimits. A cellular spectrum is of
  finite type if for some $k$ it has a cell decomposition with no
  cells $S^{a,b}$ for $a-b<k$ and at most finitely many cells
  $S^{a,b}$ for any $a$ and $b$, see Hu, Kriz and Ormsby
  \cite[\S2]{HKO}.
\end{definition}

In the following proposition, $\Ext$ is taken in the category of
$\calA_{**}$--comodules. The homological algebra of comodules is
investigated thoroughly in Adams \cite{Adams}, Switzer \cite{Switzer}
and Ravenel \cite{Ravenel}.

\begin{proposition}
  \label{prop:homological_E2}
  Suppose $X$ is a cellular spectrum over the base scheme $S$. The
  motivic Adams spectral sequence for $X$ has $E_2$ page given by
  \begin{equation*}
    E_2^{f,(s,w)} 
    \cong \Ext_{\calA_{**}(S)}^{f, (s+f,w)}(H_{**}S, H_{**}X). 
  \end{equation*}
  with differentials $d_r : E_r^{f,(s,w)} \to E_r^{f+r,(s-1,w)}$ for
  $r\geq 2$.  
\end{proposition}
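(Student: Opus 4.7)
The plan is to carry out the standard identification of the $E_1$-page with a cobar complex for the dual Steenrod algebra, then pass to cohomology. The $E_1$-term is $\pi_{**}W_f = \pi_{**}(H\wedge \myol{H}^{\wedge f}\wedge X)$, so the first task is to relate this to something purely algebraic in terms of $\calA_{**}(S)$ and $H_{**}X$. The main tool is Proposition~\ref{dual_steenrod_algebra}, which says that $H\wedge H$ is free as a left $H$-module, together with the cellularity hypothesis on $X$.

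First I would establish a Künneth-type isomorphism: for any cellular spectrum $Y$, $H_{**}(H\wedge Y) \cong \calA_{**}(S)\otimes_{H_{**}} H_{**}Y$. This follows from Proposition~\ref{dual_steenrod_algebra} (so that $H\wedge H$ splits as a wedge of suspended copies of $H$, making the homology functor commute with the smash), and cellularity of $Y$ lets one reduce the verification to the case $Y = S^{a,b}$ by homotopy colimit arguments, as in Dugger-Isaksen~\cite{DI-Cell}. Applying this inductively to $W_f = H \wedge \myol{H}^{\wedge f} \wedge X$ (using that each $\myol{H}^{\wedge i}\wedge X$ is cellular, as $\myol{H}$ sits in a cofiber sequence with cellular spectra) gives
\begin{equation*}
  \pi_{**}W_f \cong \calA_{**}(S)\otimes_{H_{**}} \overline{\calA}_{**}(S)^{\otimes f}\otimes_{H_{**}} H_{**}X,
\end{equation*}
where $\overline{\calA}_{**}(S)$ is the augmentation ideal. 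The tridegrees match up by bookkeeping the bidegrees of the structure maps and the suspension $\Sigma\myol{H}$.

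Next I would identify the $d_1$ differential on the $E_1$-page with the cobar differential of the Hopf algebroid $(H_{**}, \calA_{**}(S))$. This is a formal consequence of unravelling the connecting maps $\partial_f \colon W_f \to \Sigma W_{f+1}$ in the standard $H$-Adams resolution via the unit and multiplication of the ring spectrum $H$; the comodule coaction on $H_{**}X$ and the coproduct on $\calA_{**}(S)$ from Proposition~\ref{prop:algebroid}(d) are precisely what appear. Taking homology of the cobar complex yields the claimed $\Ext$-groups, with the weight shift $(s+f, w)$ coming from how the internal bidegree of the spheres $S^{s,w}$ interacts with the bigrading of $\calA_{**}(S)$ under suspension by $\Sigma\myol{H}$.

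Finally, the differentials $d_r : E_r^{f,(s,w)} \to E_r^{f+r,(s-1,w)}$ follow directly from the exact couple in Definition~\ref{def:mass}: each application of $d_r$ increases the filtration by $r$ and decreases the stem by $1$ while preserving the motivic weight, exactly as in the topological Adams spectral sequence. The main obstacle I anticipate is the Künneth step: one must be careful that the wedge decomposition of $H\wedge H$ from Proposition~\ref{dual_steenrod_algebra} plays well with the iterated smash $\myol{H}^{\wedge f}\wedge X$, and this is exactly where cellularity of $X$ (and of $\myol{H}$, inherited from that of $H$ and $\unit$) is essential to avoid convergence or $\lim^1$ pathologies in the infinite wedge.
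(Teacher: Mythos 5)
Your proposal is correct and follows essentially the same route as the paper, which simply cites the argument of Dugger and Isaksen \cite[7.10]{DI} and notes that cellularity of $X$ and $H$ supplies the needed K\"unneth isomorphism. You have unpacked precisely that argument: the wedge splitting of $H\wedge H$ from proposition \ref{dual_steenrod_algebra}, the inductive K\"unneth identification of $\pi_{**}W_f$ with the cobar terms, and the matching of $d_1$ with the cobar differential; no genuinely different decomposition or lemma is involved.
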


\begin{proof}
  Spitzweck proves that $H$ is a cellular spectrum in
  \cite[11.4]{Spitzweck}.  The argument given for \cite[7.10]{DI} by
  Dugger and Isaksen then goes through. The cellularity of $X$ and $H$
  is sufficient to ensure that the K{\"u}nneth theorem holds, which is
  needed in the argument.
\end{proof}

\begin{corollary} 
  If $X$ and $X'$ are cellular spectra over $S$ and $X \to X'$ induces
  an isomorphism $H_{**} X \to H_{**}X'$, then the induced map
  $\MASS(X) \to \MASS(X')$ is an isomorphism of spectral sequences
  from the $E_2$ page onwards.
\end{corollary}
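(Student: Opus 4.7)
The plan is to combine the identification of the $E_2$ page from Proposition \ref{prop:homological_E2} with the functoriality of the motivic Adams spectral sequence from Proposition \ref{prop:functoriality}. First I would observe that the map $X \to X'$ induces a morphism of standard $H$-Adams resolutions $\{X_f, W_f\} \to \{X_f', W_f'\}$ (by smashing with $\myol{H}^{\wedge f}$ and $H \wedge \myol{H}^{\wedge f}$ respectively), hence a morphism of the associated exact couples, and therefore a morphism of spectral sequences $\MASS(X) \to \MASS(X')$ by the functoriality already established.

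Next, since both $X$ and $X'$ are cellular, Proposition \ref{prop:homological_E2} identifies the $E_2$ pages as
\begin{equation*}
  E_2^{f,(s,w)}(X) \cong \Ext_{\calA_{**}(S)}^{f,(s+f,w)}(H_{**}S, H_{**}X),
\end{equation*}
and similarly for $X'$. Under this identification, the morphism $\MASS(X) \to \MASS(X')$ on the $E_2$ page is induced by the map $H_{**}X \to H_{**}X'$ of $\calA_{**}(S)$-comodules, which is an isomorphism by assumption. Since $\Ext_{\calA_{**}(S)}^{*,*}(H_{**}S, -)$ is functorial in the second argument, it carries this isomorphism of comodules to an isomorphism on Ext groups, giving an isomorphism of $E_2$ pages.

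Finally, I would invoke the standard fact that a morphism of spectral sequences that is an isomorphism on some page $E_r$ is an isomorphism on all subsequent pages $E_{r'}$ for $r' \geq r$; this is immediate by induction on $r'$, using that $E_{r'+1}$ is computed as subquotients of $E_{r'}$ by the differentials, and a map of spectral sequences commutes with the differentials. Applying this with $r=2$ finishes the proof.

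I don't anticipate a serious obstacle here: the content is essentially that $\Ext$ preserves isomorphisms in its second argument and that isomorphisms of spectral sequences propagate to later pages. The only mildly delicate point worth noting is the implicit use of cellularity, which is needed to invoke Proposition \ref{prop:homological_E2} (via the K{\"u}nneth theorem) for both $X$ and $X'$.
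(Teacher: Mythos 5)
Your proposal is correct and follows exactly the route the paper intends: the paper states this as an immediate corollary of Proposition \ref{prop:functoriality} (which supplies the map of spectral sequences) and Proposition \ref{prop:homological_E2} (which identifies the $E_2$ pages naturally as $\Ext$ groups), leaving the standard propagation of an $E_2$-isomorphism to later pages implicit. No gaps.
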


\begin{corollary}
  \label{mass_iso}
  Let $f : R \to S$ be a map of base schemes and consider a cellular
  spectrum $X$ over $S$.  Suppose $f^* : H_{**}(S) \to H_{**}(R)$,
  $f^* : \calA_{**}(S) \to \calA_{**}(R)$ and $ f^* : H_{**}X \to
  H_{**} (\bbL f^* X)$ are all isomorphisms.  Then $\MASS_S(X) \to
  \MASS_R(\bbL f^* X)$ is an isomorphism of spectral sequences from
  the $E_2$ page onwards.
\end{corollary}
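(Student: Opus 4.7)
The plan is to combine the functoriality of the motivic Adams spectral sequence from Proposition \ref{prop:functoriality} with the homological identification of the $E_2$ page from Proposition \ref{prop:homological_E2}, and then propagate the $E_2$ isomorphism to all later pages by a routine induction.

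First, I would invoke Proposition \ref{prop:functoriality} to obtain the map of spectral sequences $\Phi_X : \MASS_S(X) \to \MASS_R(\bbL f^* X)$. Since $X$ is cellular over $S$ and the derived base change functor $\bbL f^*$ commutes with homotopy colimits and sends spheres to spheres, the spectrum $\bbL f^* X$ is cellular over $R$, so Proposition \ref{prop:homological_E2} applies on both sides. This identifies
\begin{equation*}
E_2^{f,(s,w)}(\MASS_S(X)) \cong \Ext_{\calA_{**}(S)}^{f,(s+f,w)}\bigl(H_{**}(S), H_{**}X\bigr),
\end{equation*}
and similarly for the target over $R$, and the map $\Phi_X$ on $E_2$ pages is the natural map induced on Ext by the base change maps $f^* : H_{**}(S) \to H_{**}(R)$, $f^* : \calA_{**}(S) \to \calA_{**}(R)$, and $f^* : H_{**}X \to H_{**}(\bbL f^* X)$.

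Second, I would use the hypothesis that all three of these comparison maps are isomorphisms. This means the base change gives an isomorphism of Hopf algebroids $(H_{**}(S), \calA_{**}(S)) \xrightarrow{\cong} (H_{**}(R), \calA_{**}(R))$ together with a compatible isomorphism of comodules $H_{**}X \xrightarrow{\cong} H_{**}(\bbL f^* X)$. Since Ext is a functor of Hopf algebroids and their comodules, such an isomorphism produces an isomorphism on the corresponding Ext groups in every bidegree. Hence $\Phi_X$ is an isomorphism on $E_2$.

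Finally, an isomorphism at $E_2$ automatically propagates to $E_r$ for all $r \geq 2$: by induction, if $\Phi_X$ induces an isomorphism on $E_r$ and commutes with the differential $d_r$, then it induces an isomorphism on $\ker d_r$ and $\operatorname{im} d_r$, hence on the quotient $E_{r+1}$. The only step requiring genuine care — and the one I would regard as the main obstacle — is verifying that the identification of the $E_2$ page provided by Proposition \ref{prop:homological_E2} is itself natural with respect to base change, so that the map induced on $E_2$ by the functorial $\Phi_X$ coincides with the map on Ext groups induced by $\bbL f^*$. This naturality is implicit in the proof of Proposition \ref{prop:homological_E2} (following the argument of Dugger–Isaksen), and I would spell it out by checking that the canonical resolution used to compute Ext is obtained from the standard $H$-Adams resolution by applying $\pi_{**}$, a construction manifestly compatible with $\bbL f^*$ under the cellularity hypotheses.
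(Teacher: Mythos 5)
Your proof is correct and is exactly the argument the paper intends: since the result is stated as a corollary immediately after Propositions \ref{prop:functoriality} and \ref{prop:homological_E2}, the authors expect the reader to combine the functoriality of $\MASS$ under base change with the comodule-Ext description of the $E_2$ page, observe that the three hypotheses give an isomorphism of Hopf algebroids together with a compatible isomorphism of comodules, and then propagate the resulting $E_2$ isomorphism through all higher pages by the standard induction on $r$. Your attention to the naturality of the $E_2$ identification with respect to $\bbL f^*$ (so that the map induced by $\Phi_X$ agrees with the map on Ext induced by base change) is exactly the one point that deserves comment and is correctly handled.
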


\begin{corollary}
  \label{prop:E2iso}
  Let $D$ be a Hensel local ring in which $\ell$ is invertible and
  write $F$ for the residue field of $D$. Then the comparison map
  $\MASS(D) \to \MASS(F)$ is an isomorphism at the $E_2$ page.
\end{corollary}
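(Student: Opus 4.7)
My plan is to obtain this corollary as a direct application of Corollary \ref{mass_iso} to the map of base schemes $\pi : \Spec F \to \Spec D$ induced by the quotient $D \to F$, taking $X = \unit$ to be the sphere spectrum over $\Spec D$. Since $\bbL \pi^* \unit = \unit$ in $\SH_F$ (the unit is preserved by monoidal left Quillen functors), the conclusion of Corollary \ref{mass_iso} then specializes to exactly the statement to be proved.

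To invoke Corollary \ref{mass_iso} I would verify its three hypotheses in turn. First, cellularity: the sphere spectrum $\unit = \Sigma^\infty S^{0,0}$ is tautologically cellular in the sense of Definition \ref{def:cellular}. Second, the isomorphism $\pi^* : H_{**}(D) \to H_{**}(F)$ of coefficient rings is precisely the content of Proposition \ref{prop:motcoh_dedekind} (Geisser's rigidity theorem), which requires the hypothesis that $\ell$ is invertible in the Hensel local ring $D$. Third, the isomorphism $\pi^* : \calA_{**}(D) \to \calA_{**}(F)$ of Hopf algebroids is Proposition \ref{prop:algebroid_iso}. The remaining condition, that $\pi^* : H_{**}\unit_D \to H_{**}(\bbL\pi^*\unit_D) = H_{**}\unit_F$ is an isomorphism, is the same statement as the first hypothesis, since $H_{**}\unit$ is by definition the coefficient ring $H_{**}$.

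There is no real obstacle at this point: all the substantive input has been done in the preceding sections. The only place where a small comment is warranted is in identifying $\bbL \pi^* \unit$ with $\unit$ and in confirming that the map $\Phi : \MASS_D(\unit) \to \MASS_F(\unit)$ constructed in Proposition \ref{prop:functoriality} agrees with the map built from the three isomorphisms listed; this agreement is immediate from the construction of the standard $H$-Adams resolution and the naturality of the change-of-base functor, both of which commute with smash products of the unit with $\myol{H}$. Once these identifications are in place, Corollary \ref{mass_iso} gives the desired isomorphism of $E_2$ pages, completing the proof.
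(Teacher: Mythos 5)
Your proposal is exactly the paper's argument: the published proof simply cites Propositions \ref{prop:motcoh_dedekind} and \ref{prop:algebroid_iso} together with Corollary \ref{mass_iso} applied to $X=\unit$, which is precisely the reduction you carry out (with the hypotheses checked in more detail). No gaps; same approach.
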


\begin{proof}
  Propositions \ref{prop:motcoh_dedekind}, \ref{prop:algebroid_iso}
  and corollary \ref{mass_iso} give the result when $X = \unit$.
\end{proof}

\subsection{Convergence of the motivic Adams spectral sequence}

To simplify the notation, write $\Ext(R)$ for
$\Ext_{\calA^{**}(R)}(H^{**}(R), H^{**}(R))$ when working over the
base scheme $S=\Spec(R)$. For any abelian group $G$ and any prime
$\ell$, we write $G_{(\ell)}$ for the $\ell$-primary part of $G$ and
$G\ellcomp = \InverseLimit G/\ell^{\nu}$ for the $\ell$-completion of
$G$. If $\{X_f, W_f\}$ is the standard $H$-Adams resolution of a
spectrum $X$, the $H$-nilpotent completion of $X$ is the spectrum
$X\Hcomp = \holim_f X/X_f$ defined by Bousfiled in
\cite[\S5]{Bousfield}. The $H$-nilpotent completion has a tower given
by $C_i = \holim_f (X_i/X_f)$.

\begin{proposition}
  \label{prop:convergence}
  Let $S$ be the Zariski spectrum of a field $F$ with characteristic
  $p \neq \ell$ and let $X$ be a cellular spectrum $X$ over $S$ of
  finite type (definition \ref{def:cellular}). If either $\ell>2$ and
  $F$ has finite mod $\ell$ cohomological dimension, or $\ell =2$ and
  $F[\sqrt{-1}]$ has finite mod $2$ cohomological dimension, the
  motivic Adams spectral sequence converges to the homotopy groups of
  the $H$-nilpotent completion of $X$
  \begin{equation*}
    E_2^{f,(s,w)} \Rightarrow \pi_{s,w}(X\Hcomp).
  \end{equation*}
  Furthermore, there is a weak equivalence $X\Hcomp \cong X\ellcomp$.
\end{proposition}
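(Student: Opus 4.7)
The plan is to adapt the convergence arguments of Hu, Kriz and Ormsby \cite{HKO} and of Dugger and Isaksen \cite{DI} --- originally formulated over $\bbC$ --- to the present setting of a general field $F$ satisfying the stated cohomological-dimension hypotheses. The argument naturally splits into three pieces: (i) a vanishing line on the $E_2$-page, (ii) strong convergence of the tower $\{C_i\}$ associated to the $H$-nilpotent completion to $\pi_{**}(X\Hcomp)$, and (iii) the identification $X\Hcomp \cong X\ellcomp$.

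First I would establish a vanishing line on the $E_2$-page. By proposition \ref{prop:homological_E2}, the $E_2$-page is $\Ext_{\calA_{**}(F)}^{f,(s+f,w)}(H_{**}F, H_{**}X)$. The finite mod $\ell$ cohomological dimension of $F$ (or of $F[\sqrt{-1}]$ when $\ell=2$) bounds $H^{**}(F;\bbZ/\ell)$, and combined with the finite-type condition on $X$ and the explicit description of $\calA_{**}(F)$ from proposition \ref{prop:algebroid}, this yields constants $\alpha, \beta$ for which $E_2^{f,(s,w)} = 0$ whenever $f > \alpha s + \beta$. The role of adjoining $\sqrt{-1}$ at the prime $2$ is to kill $\rho$, reducing to the case $\eta_L = \eta_R$ where the cobar complex simplifies considerably. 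Boardman's criterion for conditionally convergent spectral sequences then upgrades the tautological conditional convergence of the tower $\{C_i\}$ to strong convergence, since the vanishing line forces the relevant $\lim^1$-terms in each bidegree to vanish; this is step (ii).

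For step (iii), the weak equivalence $X\Hcomp \cong X\ellcomp$ follows from the standard Bousfield argument \cite[\S5]{Bousfield} adapted motivically as in \cite{HKO, DI}. The mod $\ell$ Moore spectrum $\unit/\ell$ is $H$-nilpotent, so $X\wedge \unit/\ell^{\nu}$ lies in the thick subcategory generated by $H\wedge X$; under the finite-type hypothesis on $X$, comparison of the two towers yields $\holim_f X/X_f \cong \holim_{\nu} X\wedge \unit/\ell^{\nu}$, and the right hand side is by definition $X\ellcomp$. The principal obstacle is the vanishing-line estimate of step (i): over $\bbC$ the motivic cohomology reduces to $\bbZ/\ell[\tau]$ which permits straightforward bookkeeping, whereas over a general field the additional classes $\rho$, $u$, and Milnor $K$-theory generators complicate the analysis of the cobar complex, and the cohomological-dimension hypotheses are exactly what is needed to tame them.
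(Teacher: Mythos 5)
The paper's own proof is a citation: it invokes the convergence argument of Hu--Kriz--Ormsby \cite{HKO}, notes that it requires Morel's connectivity theorem, and says that the positive-characteristic case follows from \cite{HKOst}. Your proposal sketches a plausible-sounding argument in the spirit of HKO, but it has a genuine gap and a couple of misreadings.

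The central omission is Morel's connectivity theorem, which the paper explicitly names as a requirement. Your steps (ii) and (iii) both lean on boundedness of the homotopy groups in each bidegree: the $\lim^1$ argument needs, for fixed $(s,w)$, that only finitely many terms of the Adams tower $\{X_f\}$ contribute, and the comparison of towers in step (iii) needs that finite type implies bounded-below homotopy. Neither of these follows from the finite-type hypothesis alone; finite type is a statement about \emph{cells}, and it only translates into a statement about homotopy groups once one knows the sphere spectrum itself is $(-1)$-connected. That is exactly Morel's connectivity theorem (corollary \ref{sphere_connective} in the paper). In HKO the operative fact is that $\myol{H}^{\wedge f}\wedge X$ has increasing connectivity as $f$ grows, and this is driven by Morel's theorem, not by an $E_2$-page vanishing line. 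Your proposal instead asserts a vanishing line of the form $E_2^{f,(s,w)}=0$ for $f>\alpha s+\beta$, acknowledges that establishing it is ``the principal obstacle,'' and does not actually resolve it --- so the part of the argument you identify as hardest is left unproved, and the tool that would prove it is never invoked.

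Two smaller points. First, the role of adjoining $\sqrt{-1}$ at $\ell=2$ is not ``to kill $\rho$ and make $\eta_L=\eta_R$''; it is to allow $F$ itself to have infinite mod $2$ cohomological dimension while $F[\sqrt{-1}]$ has finite dimension (as happens for $\bbR$, where $\rho$ is non-nilpotent and $H^{**}(\bbR;\bbZ/2)$ is unbounded). One still runs the argument over $F$, not over $F[\sqrt{-1}]$. Second, the assertion that $\unit/\ell$ is $H$-nilpotent in the motivic category is true but not free; it requires an argument (e.g.\ via the $H$-module splitting of $H\wedge \unit/\ell$) and should be justified rather than quoted.
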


\begin{proof}
  The argument given by Hu, Kriz and Ormsby in \cite{HKO}, which
  requires Morel's connectivity theorem for $F$, carries over to the
  positive characteristic case from the work of Hoyois, Kelly and
  {\O}stv{\ae}r \cite{HKOst}.  See Ormsby and {\O}stv{\ae}r
  \cite[3.1]{LowDimFields} for the analogous argument for the motivic
  Adams-Novikov spectral sequence.
\end{proof}

We say a line $s = mf + b$ in the $(f,s)$-plane is a vanishing line
for a bigraded group $G^{f,s}$ if $G^{f,s}$ is zero whenever $0 < s <
mf + b$.

\begin{proposition}
  \label{prop:vanishing_line}
  If $\Fbar$ is an algebraically closed field of characteristic
  $p\neq \ell$, then a vanishing line for
  $\Ext^{**}(\Fbar)\cong \Ext^{**}(W(\Fbar))$ at the prime $\ell$ is
  $s = (2\ell - 3)f$. If $\bbF_q$ is a finite field of characteristic
  $p\neq \ell$, then a vanishing line for
  $\Ext^{**}(\bbF_q)\cong \Ext^{**}(W(\bbF_q))$ at the prime $\ell$ is
  $s = (2\ell - 3)f - 1$.
\end{proposition}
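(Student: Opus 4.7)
The plan is to reduce both vanishing line statements to the classical Adams vanishing theorem via base change and the $\rho$-Bockstein spectral sequence.

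The isomorphisms $\Ext^{**}(\Fbar) \cong \Ext^{**}(W(\Fbar))$ and $\Ext^{**}(\bbF_q) \cong \Ext^{**}(W(\bbF_q))$ at the $E_2$-level are immediate from Corollary \ref{prop:E2iso} together with Proposition \ref{prop:homological_E2}, so it suffices to establish the vanishing lines over the residue fields $\Fbar$ and $\bbF_q$.

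For the algebraically closed case I would reduce to $\bbC$. By Remark \ref{tau_xi_image}, Proposition \ref{prop:algebroid_iso}, and Corollary \ref{mass_iso}, the mod $\ell$ motivic cohomology ring and dual Steenrod algebra over $\Fbar$ agree with those over $\bbC$ (both are built from $\bbF_\ell[\tau]$ with the standard Milnor-style presentation), yielding $\Ext^{**}(\Fbar) \cong \Ext^{**}(\bbC)$. Over $\bbC$ the $\tau$-Bockstein spectral sequence reduces the motivic Ext to the classical topological $\Ext_{\calA^*}(\bbF_\ell,\bbF_\ell)$. Since $\tau$ has bidegree $(0,1)$, multiplication by $\tau$ preserves both the stem $s$ and the filtration $f$ (it raises only the weight), so the classical Adams vanishing line $s=(2\ell-3)f$ for the topological Ext lifts directly to the motivic Ext over $\bbC$.

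For the finite field case I would compare $\Ext^{**}(\bbF_q)$ with $\Ext^{**}(\Fqbar)$ via the $\rho$-Bockstein spectral sequence (or its $u$-variant when $q\equiv 1\bmod 4$) mentioned in the introduction and analysed by Fu--Wilson. The cohomology $H^{**}(\bbF_q;\bbF_\ell)$ has exactly one additional generator of cohomological bidegree $(1,1)$ compared to $H^{**}(\Fqbar;\bbF_\ell)$, and the Bockstein differentials are driven by the twist $\eta_R(\tau)=\tau+\rho\tau_0$ recorded in Proposition \ref{prop:algebroid}. A careful accounting of bidegrees then shows that any class in $\Ext^{**}(\bbF_q)$ lying below the $\Fqbar$ vanishing line is forced onto the next line down, yielding the shifted vanishing line $s=(2\ell-3)f-1$.

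The hard part is this bookkeeping in the $\bbF_q$ case: one must verify that the extra cohomological generator and the Bockstein differentials contribute a shift of exactly one unit, and that $\rho$-multiples and iterated Bockstein corrections cannot accumulate into a larger shift. This is the content of the low-filtration $\rho$-Bockstein analysis carried out in Fu--Wilson.
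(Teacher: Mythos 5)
Your proposal is correct and follows essentially the same route as the paper: both establish the $E_2$-page isomorphisms via the Hensel local comparison, reduce the algebraically closed case to $\bbC$ and Adams's topological vanishing line (using that $\tau$ only shifts the weight), and obtain the one-unit shift for $\bbF_q$ from the extra degree-$(1,1)$ generator via the tensor decomposition of proposition \ref{prop:e2_1mod4} (for $q\equiv 1\bmod 4$) and the $\rho$-Bockstein $E_1$ page (for $q\equiv 3\bmod 4$). The "hard part" you flag is in fact immediate from the definition of vanishing line, since $\Ext(\bbF_q)$ is a subquotient of two copies of $\Ext(\bbC)$, one of them shifted by exactly one in the stem, and $u^2=\rho^2=0$ rules out any further accumulation.
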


\begin{proof}
  A vanishing line exists for $\Ext(\Fbar)\cong \Ext(W(\Fbar))$ when
  $\Fbar$ is an algebraically closed fields by comparison with $\bbC$
  and the topological case by work of Dugger and Isaksen
  \cite{DI}. The vanishing line $s = f(2\ell -3)$ from topology by
  Adams \cite{A61} is therefore a vanishing line for $\Ext(\Fbar)
  \cong \Ext(W(\Fbar))$.

  For a finite field $\bbF_q$, the line $ s = f(2\ell - 3) - 1$ is a
  vanishing line for $\Ext(\bbF_q)\cong \Ext(W(\bbF_q))$ by the
  identification of the $E_2$ page of the motivic Adams spectral
  sequence. When $\ell=2$ this is given in proposition
  \ref{prop:e2_1mod4} when $q \equiv 1 \bmod 4$ and the calculation of
  the $\rho$-BSS when $q\equiv 3 \bmod 4$. For odd $\ell$, see Wilson
  \cite{WThesis}.
\end{proof}

We now discuss the convergence of the motivic Adams spectral sequence
over the ring of Witt vectors associated to a finite field or an
algebraically closed field. Consult Serre \cite[II \S 6]{Serre} for a
construction of the ring of Witt vectors associated to a field of
positive characteristic.

\begin{proposition}
  \label{prop:conv_hat}
  Let $W(F)$ be the ring of Witt vectors of a field $F$ that is either
  a finite field or an algebraically closed field of characteristic
  $p$ and let $\ell$ be a prime different from $p$.  The motivic Adams
  spectral sequence for $\unit$ over $W(F)$ converges to
  $\pi_{**}(\unit\Hcomp)$ filtered by the Adams filtration, where
  $\unit\Hcomp$ is the $H$-nilpotent completion of $\unit$.
\end{proposition}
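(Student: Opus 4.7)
The plan is to separate the statement into two independent claims. First, that the spectral sequence is conditionally convergent to $\pi_{**}(\unit\Hcomp)$; and second, that convergence is strong in the sense that the $E_\infty$ page is the associated graded of the Adams filtration. The first claim is essentially formal: the standard $H$-Adams resolution realizes $\unit\Hcomp$ as $\holim_f \unit/X_f$ by definition, and the exact couple built from this tower produces, via Boardman's general framework for half-plane spectral sequences, a spectral sequence that is conditionally convergent to the homotopy groups of this homotopy limit. No field-specific input enters this half.

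For strong convergence, the standard criterion is the existence of a vanishing line of positive slope on the $E_2$ page. I would obtain such a line by base change: Corollary \ref{prop:E2iso} gives an isomorphism of spectral sequences $\MASS(W(F)) \to \MASS(F)$ from the $E_2$ page onward, so any vanishing line for the latter transfers to the former. Proposition \ref{prop:vanishing_line} supplies exactly such a line in both cases of interest, namely $s = (2\ell - 3)f$ when $F = \Fbar$ is algebraically closed and $s = (2\ell - 3)f - 1$ when $F = \bbF_q$. Both slopes are at least $1$ for every prime $\ell$.

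With a vanishing line of positive slope on the $E_2$ page, in each fixed bidegree $(s,w)$ the groups $E_2^{f,(s,w)}$ vanish once $f$ is sufficiently large; a fortiori the same holds for every $E_r$ with $r \geq 2$. Consequently $E_r^{f,(s,w)} = E_\infty^{f,(s,w)}$ for $r$ large, the Adams filtration of $\pi_{s,w}(\unit\Hcomp)$ is finite in each bidegree, the derived $\lim^1$ obstruction in Boardman's criterion vanishes, and the associated graded is identified with $E_\infty$. The only non-formal input is the transfer of the vanishing line across base change, which in turn rests on the rigidity-style comparison of motivic cohomology and Steenrod algebras between the Hensel local ring $W(F)$ and its residue field $F$ packaged in Corollary \ref{prop:E2iso}; that is the step where I expect the real technical content to sit, while the rest is general spectral sequence bookkeeping.
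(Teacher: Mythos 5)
Your proposal is correct and follows essentially the same route as the paper: the paper's proof simply invokes the argument of Dugger and Isaksen \cite[7.15]{DI} (conditional convergence of the tower spectral sequence plus strong convergence from a vanishing line) together with the vanishing line of proposition \ref{prop:vanishing_line}, which is itself already stated for $\Ext(W(F))$ via the $E_2$-identification of corollary \ref{prop:E2iso}. You have merely unpacked the same two ingredients in more detail.
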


\begin{proof}
  The convergence $\MASS_{W(F)}(\unit) \Rightarrow
  \pi_{**}(\unit\Hcomp)$ follows by the argument given by Dugger and
  Isaksen \cite[7.15]{DI} given the vanishing line in the motivic
  Adams spectral sequence by proposition \ref{prop:vanishing_line}.
\end{proof}

\begin{proposition}
  \label{prop:Hcomp_compatibility}
  Let $R$ and $S$ be base schemes for which the motivic Adams spectral
  sequence for $\unit$ converges to $\pi_{**}(\unit\Hcomp)$; see
  propositions \ref{prop:convergence} and \ref{prop:conv_hat} for
  examples. A map of base schemes $f: R \to S$ yields a comparison map
  $\MASS_S(\unit\Hcomp) \to \MASS_R(\unit\Hcomp)$ which is compatible
  with the induced map
  \begin{equation*}
    \pi_{**}(\unit\Hcomp(S)) \to \pi_{**}(\bbL f^* \unit\Hcomp(S)) \to
    \pi_{**}(\unit\Hcomp(R)).
  \end{equation*}
\end{proposition}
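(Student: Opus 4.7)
The plan is to extend the naturality argument used in proposition \ref{prop:functoriality} from the standard $H$-Adams resolution to its homotopy limit $\unit\Hcomp$. Let $\{X_f, W_f\}$ denote the standard $H_S$-Adams resolution of $\unit$ over $S$. Since $\bbL f^*$ is symmetric monoidal with $\bbL f^* \unit = \unit$ and $\bbL f^* H_S \cong H_R$, applying it term-wise produces a tower of cofibration sequences canonically identified with the standard $H_R$-Adams resolution $\{X_f', W_f'\}$ of $\unit$ over $R$. As in the proof of proposition \ref{prop:functoriality}, this yields a map of exact couples and hence a map of spectral sequences $\MASS_S(\unit) \to \MASS_R(\unit)$; under the convergence hypothesis this is the desired comparison $\MASS_S(\unit\Hcomp) \to \MASS_R(\unit\Hcomp)$.

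Next, I would lift this to a map between the $H$-nilpotent completions themselves. By definition, $\unit\Hcomp(S) = \holim_f \unit/X_f$. For any functor one has a canonical comparison morphism from the value at a homotopy limit into the homotopy limit of the values, so assembling these for each $f$ gives
\begin{equation*}
  \bbL f^* \unit\Hcomp(S) \to \holim_f \bbL f^*(\unit/X_f) \cong \holim_f (\unit/X_f') = \unit\Hcomp(R).
\end{equation*}
Applying $\pi_{**}(-)$ and precomposing with the adjunction map $\pi_{**}(\unit\Hcomp(S)) \to \pi_{**}(\bbL f^* \unit\Hcomp(S))$ produces the composite displayed in the statement.

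Finally, I would verify that the map of spectral sequences is compatible with this map on abutments. The Adams filtration $F_i \pi_{**}\unit\Hcomp(S) = \im(\pi_{**} X_i \to \pi_{**}\unit\Hcomp(S))$ maps into $F_i \pi_{**}\unit\Hcomp(R)$ because $\bbL f^*$ sends $X_i$ to $X_i'$ and the square relating $X_i \to \unit\Hcomp(S)$ to $X_i' \to \unit\Hcomp(R)$ commutes by construction. A routine diagram chase then identifies the induced map on the associated graded with the map on $E_\infty$ pages coming from our map of spectral sequences. The principal technical point is checking that the comparison morphism $\bbL f^* \holim \to \holim \bbL f^*$ (which is not in general an equivalence, as $\bbL f^*$ is a left adjoint) is compatible with all of the structure maps of the exact couples on both sides. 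This is, however, a standard consequence of the universal property of homotopy limits together with the term-wise identification $\bbL f^* X_f \cong X_f'$ and $\bbL f^* W_f \cong W_f'$ established in the first step.
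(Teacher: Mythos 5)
Your argument takes the same route as the paper: construct $\bbL f^* \unit\Hcomp(S) \to \unit\Hcomp(R)$ from the term-wise identifications $\bbL f^*(\unit/X_f(S)) \cong \unit/X_f(R)$ and the universal property of $\holim$, and then check that the spectral sequence comparison from proposition \ref{prop:functoriality} is compatible with this map on abutments. The one place where the paper is slightly more careful is the filtration on the abutment: for the $H$-nilpotent completion the relevant filtration comes from the Bousfield tower $C_i = \holim_f(X_i/X_f)$ under $\unit\Hcomp$, not directly from the images of $\pi_{**}X_i$ as your last paragraph suggests, and the paper constructs a map of towers $\bbL f^* C_i(S) \to C_i(R)$ to dispose of precisely the compatibility with the exact-couple structure that you flag as the ``principal technical point.''
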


\begin{proof}
  Let $\{X_f(S),W_f(S)\}$ denote the standard $H$-Adams resolution of
  $\unit$ over $S$. We now construct a map $\pi_{**}(\unit\Hcomp(S))
  \to \pi_{**}(\unit\Hcomp(R))$.  Recall from proposition
  \ref{prop:functoriality} that $f^* X_f(S) = X_f(R)$.  Since $\bbL
  f^*$ is a triangulated functor, there are maps $\bbL f^*
  (\unit/X_f(S)) \to \unit/X_f(R)$ and so a map $\bbL f^*
  \unit\Hcomp(S) \to \unit\Hcomp(R)$ by the universal property for
  $\unit\Hcomp(R) = \holim \unit/X_f(R)$.  Write $C_i(S)$ for the
  tower of $\unit\Hcomp(S)$ over $S$ defined above (and in Bousfield
  \cite[\S5]{Bousfield}). Similar considerations give a map of towers
  $\bbL f^* C_i(S) \to C_i(R)$.  Hence $\MASS_S(\unit\Hcomp) \to
  \MASS_R(\unit\Hcomp)$ is compatible with the induced map
  $\pi_{**}(\unit\Hcomp(S)) \to \pi_{**}(\unit\Hcomp(R))$.
\end{proof}

\begin{proposition}
  \label{prop:finiteness} 
  Let $F$ be a field of characteristic $p$ with finite mod $\ell$
  cohomological dimension for all primes $\ell\neq p$ and suppose
  $H^{s,w}(F;\bbZ/\ell)$ is a finite dimensional vector space over
  $\bbF_{\ell}$ for all $s$ and $w$. Furthermore, assume that the mod
  $\ell$ motivic Adams spectral sequence for $\unit$ over $F$ has a
  vanishing line, such as when $F$ is a finite field or an
  algebraically closed field. Then the $\ell$-primary part of
  $\pi_{s,w}(F)$ is finite whenever $s>w\geq 0$.
\end{proposition}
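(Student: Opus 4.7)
The plan is to deduce finiteness of the $\ell$-primary part of $\pi_{s,w}(F)$ by running the mod-$\ell$ motivic Adams spectral sequence, showing that its $E_\infty$ page is finite in each bidegree, and then translating this to a statement about the $\ell$-primary part.

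First I would check that each $E_2^{f,(s,w)} = \Ext^{f,(s+f,w)}_{\calA^{**}(F)}(H^{**}(F), H^{**}(F))$ is a finite-dimensional $\bbF_\ell$-vector space. By Proposition \ref{prop:algebroid} the dual Steenrod algebra $\calA_{**}(F)$ is free over $H_{**}(F)$ on the monomials $\omega(I)$, whose bidegrees $(p(I), q(I))$ satisfy $p(I) \geq 2q(I) \geq 0$ and for which only finitely many lie in any fixed bidegree region; this is the motivic finiteness observation of Definition \ref{def:mot_finite}. Combined with the hypothesis that $H^{s,w}(F;\bbZ/\ell)$ is a finite $\bbF_\ell$-vector space in each bidegree, a cobar-complex computation shows that each cobar group, and hence each $\Ext$-group, is finite in every fixed tridegree.

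Next I would use the vanishing-line hypothesis together with $s \geq 0$. By assumption (and Proposition \ref{prop:vanishing_line}) there is a line $s = mf + b$ below which all $E_2^{f,(s,w)}$ vanish, so for fixed $(s,w)$ only finitely many filtrations $f$ contribute. Combining this with the previous step shows $\bigoplus_f E_\infty^{f,(s,w)}$ is finite, and moreover of $\ell$-exponent bounded by the number of nonzero filtrations. By Proposition \ref{prop:convergence} the spectral sequence converges to $\pi_{s,w}(\unit\ellcomp) = \pi_{s,w}(F)\ellcomp$, so this group is finite and of bounded $\ell$-exponent $N$.

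Finally I must pass from $\pi_{s,w}(F)\ellcomp$ being finite to the $\ell$-primary part $\pi_{s,w}(F)_{(\ell)}$ being finite. Since the Adams filtration on $\pi_{s,w}(F)\ellcomp$ has finite length $N$, every element of $\pi_{s,w}(F)\ellcomp$ is annihilated by $\ell^N$, and the quotient $\pi_{s,w}(F)/\ell^N$ injects into $\pi_{s,w}(F)\ellcomp$ and is therefore finite. For $s>w\geq 0$ the group $\pi_{s,w}(F)$ is torsion (via Morel's identification of $\pi_{n,n}$ with Milnor--Witt $K$-theory, the rational vanishing of positive motivic stems together with the connectivity theorem, or a direct comparison to $\bbC$), so $\pi_{s,w}(F)_{(\ell)}$ is an $\ell$-power torsion group with $\pi_{s,w}(F)_{(\ell)}/\ell^N$ finite; the bounded exponent coming from the vanishing line then forces $\pi_{s,w}(F)_{(\ell)} = \pi_{s,w}(F)_{(\ell)}/\ell^N$, which is finite. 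The main obstacle is precisely this last point: ruling out an $\ell$-divisible summand (a Pr\"ufer-type tail) in $\pi_{s,w}(F)_{(\ell)}$, and the vanishing line in the MASS is exactly the input that controls it.
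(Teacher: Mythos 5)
Your proposal contains a genuine gap, concentrated in the last step, and you have in fact identified where it is without resolving it.

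The first issue is that you write ``the spectral sequence converges to $\pi_{s,w}(\unit\ellcomp) = \pi_{s,w}(F)\ellcomp$.'' These two groups are not a priori the same: proposition \ref{prop:convergence} gives convergence to $\pi_{s,w}(\unit\ellcomp)$, the homotopy of the $\ell$-completed spectrum, while $\pi_{s,w}(F)\ellcomp$ is the $\ell$-completion of the abelian group $\pi_{s,w}(\unit)$. The relationship between them is governed by a $\InverseLimit$/$\InverseLimit^1$ exact sequence that also involves the $\ell$-adic Tate module of $\pi_{s-1,w}(\unit)$; establishing that the two agree (in this case) is precisely what the paper's proof is devoted to, so you cannot simply assert it. Relatedly, the claim ``the quotient $\pi_{s,w}(F)/\ell^N$ injects into $\pi_{s,w}(F)\ellcomp$'' has the arrows backwards: $G/\ell^N$ is naturally a quotient, not a subgroup, of $G\ellcomp = \InverseLimit G/\ell^\nu$.

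The decisive gap is the elimination of an $\ell$-divisible tail. You correctly observe that a $\bbZ/\ell^\infty$ summand of $\pi_{s,w}(F)_{(\ell)}$ is precisely what must be excluded, but then write ``the vanishing line in the MASS is exactly the input that controls it'' without giving an argument. By itself it cannot: a $\bbZ/\ell^\infty$ summand is invisible to $\ell$-completion (it dies in every $G/\ell^\nu$), so the finiteness of $\pi_{s,w}(\unit\ellcomp)$ that the vanishing line produces is a priori consistent with such a summand. What the paper actually does is more delicate: from the finiteness of $\pi_{s,w}(\unit\ellcomp)$ and the $\unit/\ell^\nu$ cofiber sequences it first derives that $\InverseLimit {}_{\ell^\nu}\pi_{s-1,w}(\unit)$, the $\ell$-adic Tate module, is trivial (a finite group cannot surject onto a nonzero torsion-free group); applying this with $s$ replaced by $s+1$ shows the Tate module of $\pi_{s,w}(\unit)$ vanishes; and a nonzero $\ell$-divisible $\ell$-group necessarily contains $\bbZ/\ell^\infty$, hence contributes $\bbZ_\ell$ to the Tate module, a contradiction. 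Your proposal does not supply an argument playing the role of this Tate-module step, so the proof is incomplete as written.
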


\begin{proof}
  Ananyevsky, Levine and Panin show that the groups $\pi_{s,w}(F)$
  are torsion for $s>w\geq 0$ in \cite{ALP}. It follows that the group
  $\pi_{s,w}(F)$ is the sum of its $\ell$-primary subgroups
  $\pi_{s,w}(F)_{(\ell)}$. We set out to show that
  $\pi_{s,w}(F)_{(\ell)}$ is finite when $\ell\neq p$.

  The motivic Adams spectral sequence converges to
  $\pi_{**}(\unit\ellcomp)$ by proposition \ref{prop:convergence}
  (this requires Morel's connectivity theorem). The vanishing line in
  the motivic Adams spectral sequence shows that the Adams filtration
  of $\pi_{s,w}(\unit\ellcomp)$ has finite length, and as each group
  $E_2^{f,(s,w)}$ is a finite dimensional $\bbF_{\ell}$ vector space,
  we conclude the groups $\pi_{s,w}(\unit\ellcomp)$ are finite.  From
  the long exact sequence of homotopy groups associated to the
  triangle $\unit\ellcomp \to \prod \unit/\ell^{\nu} \to \prod
  \unit/\ell^{\nu}$ defining $\unit\ellcomp$, we extract the following
  short exact sequence of finite groups.
  \begin{equation}
    \label{eq:finite}
    0 \to \InverseLimit^{1} \pi_{s+1,w}(\unit/\ell^{\nu}) 
    \to \pi_{s,w}(\unit \ellcomp) 
    \to \InverseLimit \pi_{s,w}(\unit/\ell^{\nu}) \to 0
  \end{equation}
  Similarly, from the triangles
  $\unit \xrightarrow{\ell^{\nu}\cdot} \unit \to \unit/\ell^{\nu}$ we
  extract the short exact sequences
  \begin{equation*}
    0 \to \pi_{s,w}(\unit)/\ell^{\nu} 
    \to \pi_{s,w}(\unit/\ell^{\nu}) 
    \to {}_{\ell^{\nu}}\pi_{s-1,w}(\unit) \to 0,
  \end{equation*}
  which form a short exact sequence of towers. The maps in the tower
  $\{ \pi_{s,w}(\unit)/\ell^{\nu} \}$ are given by the reduction maps
  $\pi_{s,w}(\unit)/\ell^{\nu} \to \pi_{s,w}(\unit)/\ell^{\nu-1}$.
  Since the tower $\{ \pi_{s,w}(\unit)/\ell^{\nu} \}$ satisfies the
  Mittag-Leffler condition, we have $\InverseLimit^{1}
  \pi_{s,w}(\unit)/\ell^{\nu} = 0$.  The associated long exact
  sequence for the inverse limit gives the exact sequence
  \begin{equation}
    \label{eq:lim_sequence}
    0 \to \pi_{s,w}(\unit)\ellcomp
    \to \InverseLimit \pi_{s,w}(\unit/\ell^{\nu})
    \to \InverseLimit {}_{\ell^{\nu}}\pi_{s-1,w}(\unit) \to 0.
  \end{equation}
  The group $\InverseLimit {}_{\ell^{\nu}}\pi_{s-1,w}(\unit)$ is the
  $\ell$-adic Tate module of $\pi_{s-1,w}(\unit)$, which is
  torsion-free. Since $\InverseLimit \pi_{s,w}(\unit/\ell^{\nu})$ is
  finite by \ref{eq:finite}, the map $\InverseLimit
  \pi_{s,w}(\unit/\ell^{\nu}) \to \InverseLimit
  {}_{\ell^{\nu}}\pi_{s-1,w}(\unit)$ is trivial. But since the
  sequence \ref{eq:lim_sequence} is exact, the group $\InverseLimit
  {}_{\ell^{\nu}}\pi_{s-1,w}(\unit)$ is trivial,
  $\pi_{s,w}(\unit)\ellcomp \cong \InverseLimit
  \pi_{s,w}(\unit/\ell^{\nu})$ and $\pi_{s,w}(\unit)\ellcomp$ is
  finite.

  Write $K(i)$ for the kernel of the canonical map
  $\pi_{s,w}(\unit)\ellcomp \to \pi_{s,w}(\unit)/\ell^{i}$.  The tower
  $\cdots K(i) \subseteq K(i-1) \subseteq \cdots \subseteq K(1)$
  consists of finite groups and so it must stabilize. Hence the tower
  \begin{equation*}
    \cdots \to \pi_{s,w}(\unit)/\ell^{\nu} 
    \to \pi_{s,w}(\unit)/\ell^{\nu-1} 
    \to   \cdots \to \pi_{s,w}(\unit)/\ell
  \end{equation*}
  must also stabilize. There is then some $N$ for which
  ${\ell}^{N}\pi_{s,w}(\unit) = \ell^{\nu}\pi_{s,w}(\unit)$ for all
  $\nu \geq N$, and so $\ell^{N}\pi_{s,w}(\unit)$ is
  $\ell$-divisible. From the short exact sequence of towers
  $\ell^{\nu} \pi_{s,w}(\unit) \to \pi_{s,w}(\unit)\to
  \pi_{s,w}(\unit)/\ell^{\nu}$,
  taking the inverse limit yields the exact sequence
  \begin{equation*}
    0 \to \ell^{N}\pi_{s,w}(\unit)
    \to \pi_{s,w}(\unit)
    \to \pi_{s,w}(\unit)\ellcomp \to 0.
  \end{equation*}
  Since $\pi_{s,w}(\unit)\ellcomp$ is finite, it is $\ell$-primary and
  there is a short exact sequence
  \begin{equation*}
    0 \to \ell^{N}\pi_{s,w}(\unit)_{(\ell)}
    \to \pi_{s,w}(\unit)_{(\ell)}
    \to \pi_{s,w}(\unit)\ellcomp \to 0.
  \end{equation*}

  The group $\ell^{N}\pi_{s,w}(\unit)_{(\ell)}$ must be zero. Suppose
  for a contradiction that it is non-zero. Then
  $\ell^{N}\pi_{s,w}(\unit)_{(\ell)}$ must contain
  $\bbZ/\ell^{\infty}$ as a summand, which shows the $\ell$-adic Tate
  module of $\pi_{s,w}(\unit)$ is non-zero---a contradiction.
\end{proof}

We now identify the groups $\pi_{s,s}(\unit\ellcomp)$ for $s\geq 0$.

\begin{proposition}
  \label{prop:0_stem}
  Let $F$ be a finite field or an algebraically closed field of
  characteristic $p \neq \ell$. When $s=w\geq 0$ or $s<w$, the motivic
  Adams spectral sequence of $\unit$ over $F$ converges to the
  $\ell$-completion of $\pi_{s,w}(F)$. %
\end{proposition}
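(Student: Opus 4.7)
The plan is to apply proposition \ref{prop:convergence} to identify the abutment of the motivic Adams spectral sequence with $\pi_{s,w}(\unit\ellcomp)$, and then to show that in the range $s=w\geq 0$ or $s<w$ this abutment agrees with $\pi_{s,w}(F)\ellcomp$. The argument naturally splits according to whether $s<w$ or $s=w$.

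For $s<w$, the target $\pi_{s,w}(F)\ellcomp$ vanishes by Morel's connectivity theorem in the form of corollary \ref{sphere_connective}. To see the abutment is likewise trivial, I would revisit the short exact sequences from the proof of proposition \ref{prop:finiteness}: connectivity gives $\pi_{s,w}(\unit)=\pi_{s-1,w}(\unit)=0$, hence $\pi_{s,w}(\unit/\ell^{\nu})=0$ for all $\nu$ from the sequence $0\to\pi_{s,w}(\unit)/\ell^{\nu}\to\pi_{s,w}(\unit/\ell^{\nu})\to{}_{\ell^{\nu}}\pi_{s-1,w}(\unit)\to 0$, and then \eqref{eq:finite} forces $\pi_{s,w}(\unit\ellcomp)=0$.

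For $s=w\geq 0$, I would use Morel's identification $\pi_{s,s}(F)\cong K^{MW}_s(F)$. Since $\pi_{s-1,s}(\unit)=0$ by connectivity, the sequence \eqref{eq:lim_sequence} collapses to an isomorphism $\pi_{s,s}(\unit)\ellcomp\cong\InverseLimit\pi_{s,s}(\unit/\ell^{\nu})$, so identifying the abutment with $K^{MW}_s(F)\ellcomp$ reduces to showing $\InverseLimit^{1}\pi_{s+1,s}(\unit/\ell^{\nu})=0$ in \eqref{eq:finite}. Mittag-Leffler for the tower $\{\pi_{s+1,s}(\unit)/\ell^{\nu}\}$ is automatic with surjective reduction maps, so the remaining issue is Mittag-Leffler for the $\ell^{\nu}$-torsion tower $\{{}_{\ell^{\nu}}K^{MW}_s(F)\}$.

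The main obstacle will be verifying this last Mittag-Leffler condition for algebraically closed fields, where $K^{MW}_s$ can have divisible summands coming from roots of unity or uniquely $\ell$-divisible Milnor $K$-groups. For finite fields $\bbF_q$, I expect the finite generation of $K^{MW}_s(\bbF_q)$ to force the $\ell^{\nu}$-torsion to stabilize, so Mittag-Leffler is automatic. For $F=\Fbar$ algebraically closed of characteristic $p\neq\ell$, I would argue case by case using the structure of Milnor-Witt $K$-theory: the case $s=0$ is just $\bbZ$ with no $\ell$-torsion, the case $s=1$ recovers $\mu_{\ell^{\nu}}(\Fbar)$ with surjective transition maps given by multiplication by $\ell$, and for $s\geq 2$ the group $K^{M}_s(\Fbar)$ is uniquely $\ell$-divisible so the tower is eventually zero. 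In each case Mittag-Leffler holds, and combining everything yields the desired identification $\pi_{s,w}(\unit\ellcomp)\cong\pi_{s,w}(F)\ellcomp$.
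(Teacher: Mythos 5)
Your overall skeleton is sound and close to the paper's: handle $s<w$ by connectivity, and for $s=w\geq 0$ identify $\pi_{s,s}(\unit\ellcomp)$ with $\pi_{s,s}(\unit)\ellcomp$ using the structure of Morel's groups. But there is a concrete error in the second half: for $s=w\geq 0$ Morel's theorem gives $\pi_{s,s}(F)\cong K^{MW}_{-s}(F)$, i.e.\ $GW(F)$ for $s=0$ and the Witt group $W(F)$ for $s>0$. The groups you analyze --- $F^{\times}$ with its $\mu_{\ell^{\nu}}$-torsion at ``$s=1$'' and the uniquely $\ell$-divisible Milnor $K$-groups $K^M_s(\Fbar)$ for $s\geq 2$ --- are the \emph{positive} Milnor--Witt $K$-groups, which compute the negative stems $\pi_{-s,-s}$ and are not in the range of this proposition. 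Your case analysis is therefore carried out on the wrong groups; it happens that Mittag--Leffler holds for the towers you consider, so the conclusion survives, but the correct groups make the whole discussion unnecessary: $GW(F)$ and $W(F)$ are finitely generated (indeed finite for $s>0$) for finite and algebraically closed fields, so the $\ell^{\nu}$-torsion towers stabilize and there is nothing to check. There is also a small gap in your $s<w$ case: the sequence \eqref{eq:finite} contains the term $\InverseLimit^{1}\pi_{s+1,w}(\unit/\ell^{\nu})$, and when $s=w-1$ the groups $\pi_{s+1,w}(\unit/\ell^{\nu})\cong\pi_{w,w}(\unit)/\ell^{\nu}$ are nonzero; one must note that the reduction maps are surjective so the $\InverseLimit^{1}$ still vanishes.

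For comparison, the paper avoids rerunning the $\InverseLimit$/$\InverseLimit^{1}$ analysis altogether in the $s=w$ case: it invokes the short exact sequence
\begin{equation*}
0\to \Ext(\bbZ/\ell^{\infty},\pi_{s,s}(\unit)) \to \pi_{s,s}(\unit\ellcomp) \to \Hom(\bbZ/\ell^{\infty}, \pi_{s-1,s}(\unit)) \to 0
\end{equation*}
from Hu--Kriz--Ormsby, kills the right-hand term by connectivity, and then uses the isomorphism $\Ext(\bbZ/\ell^{\infty},A)\cong A\ellcomp$ for finitely generated $A$ together with the finite generation of $GW(F)$ and $W(F)$. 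Your route through the towers is essentially an unwinding of the same homological algebra, so with the group identification corrected it would be a valid, if longer, proof.
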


\begin{proof}
  If $s<w$, the convergence follows from Morel's connectivity theorem.
  When $s=w \geq 0$, proposition \ref{prop:convergence} implies that
  at bidegree $(s,w)$ the motivic Adams spectral sequence converges to
  the group $\pi_{s,w}(\unit\ellcomp)$.  Since $\pi_{s-1,s}(\unit)=0$
  by Morel's connectivity theorem, the short exact sequence (see, for
  example, Hu, Kriz, Ormsby \cite[(2)]{HKO})
  \begin{equation*}
    0\to \Ext(\bbZ/\ell^{\infty},\pi_{s,s}(\unit)) \to %
    \pi_{s,s}(\unit\ellcomp) \to %
    \Hom(\bbZ/\ell^{\infty}, \pi_{s-1,s}(\unit)) \to 0
  \end{equation*}
  gives an isomorphism $\Ext(\bbZ/\ell^{\infty},\pi_{s,s}(\unit))
  \cong \pi_{s,s}(\unit\ellcomp)$.  In \cite[1.25]{Morel12}, Morel has
  calculated $\pi_{0,0}(F) \cong GW(F)$ and $\pi_{s,s}(F)\cong W(F)$
  for $s>0$ where $W(F)$ is the Witt group of the field $F$. For the
  fields under consideration, $GW(F)$ and $W(F)$ is a finitely
  generated abelian group. But for any finitely generated abelian
  group $A$, there is an isomorphism $\Ext(\bbZ/\ell^{\infty}, A)
  \cong A\ellcomp$, given in Bousfield and Kan
  \cite[Chapter VI\S2.1]{BousfieldKan}, which concludes the proof.
\end{proof}

\section{Stable stems over an algebraically closed field}

Let $\Fbar$ be an algebraically closed field of positive
characteristic $p$. We write $W=W(\Fbar)$ for the ring of Witt vectors
of $\Fbar$, $K = K(\Fbar)$ for the field of fractions of $W$ and
$\Kbar = \Kbar(\Fbar)$ for the algebraic closure of $K$.  Note that
$K$ is a field of characteristic $0$.  The previous sections have set
us up with enough machinery to compare the motivic Adams spectral
sequences at a prime $\ell \neq p$ over the associated base schemes
$\Spec(\Fbar)$, $\Spec(W)$ and $\Spec(\Kbar)$.  We will often write
the ring instead of the Zariski spectrum of the ring in our
notation. For any Dedekind domain $R$, we write $\Ext(R)$ for the
trigraded ring $\Ext_{\calA^{**}(R)}(H^{**}(R),H^{**}(R))$.

\begin{proposition}
  \label{prop:Fpbar_iso}
  Let $\Fbar$ be an algebraically closed field of positive
  characteristic $p$, and let $\ell$ be a prime different from
  $p$. The $E_2$ page of the mod $\ell$ motivic Adams spectral
  sequence for $\unit$ over $W$, the ring of Witt vectors of $\Fbar$,
  is given by
\begin{equation*}
  E_2^{f,(s,w)}(W) \cong \Ext^{f,(s+f, w)}(W) \cong
  \Ext^{f,(s+f,w)}(\Fbar).
\end{equation*}
\end{proposition}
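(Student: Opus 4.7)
The plan is to assemble this statement directly from the machinery already developed in the paper, noting that $W = W(\Fbar)$ is a Hensel local ring (in fact a complete DVR) whose residue field is $\Fbar$ and whose residue characteristic is $p$, hence $\ell$ is a unit in $W$ since $\ell \neq p$. With these properties verified, essentially no new work is required; the proposition is a corollary of what has been proved.

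For the first isomorphism $E_2^{f,(s,w)}(W) \cong \Ext^{f,(s+f,w)}(W)$, I would invoke Proposition \ref{prop:homological_E2}: the sphere spectrum $\unit$ is cellular over any base, so the $E_2$ page of $\MASS_W(\unit)$ is the comodule Ext $\Ext_{\calA_{**}(W)}^{f,(s+f,w)}(H_{**}(W),H_{**}(W))$. Because $\calA_{**}(W)$ is motivically finite in the sense of Definition \ref{def:mot_finite} (the generators $\tau_i,\xi_j$ from Proposition \ref{dual_steenrod_algebra} satisfy the bigrading bound), comodule Ext agrees with module Ext over the dual Steenrod algebra $\calA^{**}(W)$, which is the ring denoted $\Ext(W)$ in the paper's convention.

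For the second isomorphism $\Ext^{f,(s+f,w)}(W) \cong \Ext^{f,(s+f,w)}(\Fbar)$, I would apply Corollary \ref{prop:E2iso} to $D = W$ with residue field $F = \Fbar$. That corollary is itself built from Proposition \ref{prop:motcoh_dedekind} (rigidity: the quotient $\pi : W \to \Fbar$ induces an isomorphism on mod $\ell$ motivic cohomology, compatible with the Bockstein) and Proposition \ref{prop:algebroid_iso} (the induced map $\pi^{*} : \calA_{**}(W) \to \calA_{**}(\Fbar)$ is an isomorphism of Hopf algebroids, using Remark \ref{tau_xi_image} for the images of $\tau_i, \xi_j$). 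These two isomorphisms are exactly the hypotheses of Corollary \ref{mass_iso} applied to $X = \unit$, so the comparison map $\MASS_W(\unit) \to \MASS_{\Fbar}(\unit)$ is an isomorphism of spectral sequences from the $E_2$ page onwards, which is precisely what is claimed.

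Since the statement is a compilation of Propositions \ref{prop:motcoh_dedekind}, \ref{prop:algebroid_iso}, \ref{prop:homological_E2} and Corollary \ref{mass_iso}, there is no serious obstacle. The only point that warrants care is checking that $W$ meets the standing hypotheses of those results: it is a Dedekind domain (in fact a DVR), it is Hensel local, $\ell$ is invertible in it (because $\ell \neq p$ and $p$ generates the maximal ideal of $W$), and $\unit$ is cellular over $W$ — each is immediate. Thus both claimed isomorphisms follow.
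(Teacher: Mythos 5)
Your proposal is correct and matches the paper's own (one-line) proof: the paper simply observes that $W$ is a Hensel local ring with residue field $\Fbar$ and invokes corollary \ref{prop:E2iso}, exactly as you do. Your additional unpacking of the first isomorphism via proposition \ref{prop:homological_E2} and of the hypotheses ($\ell$ invertible in $W$, cellularity of $\unit$) is accurate but not a different route.
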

\begin{proof}
  Since $W$ is a Hensel local ring with residue field $\Fbar$,
  proposition \ref{prop:E2iso} applies.
\end{proof}

\begin{proposition}
  \label{kbar_iso} 
  Let $\Fbar$ be an algebraically closed field of characteristic
  $p$. The homomorphism $f : W \to \Kbar$ induces isomorphisms of
  graded rings $ f^* : H_{**}(W) \to H_{**}(\Kbar)$ and
  $ f^* : \calA_{**}(W) \to \calA_{**}(\Kbar)$.
\end{proposition}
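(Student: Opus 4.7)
The plan is to establish the $H_{**}$ isomorphism first, then deduce the $\calA_{**}$ isomorphism from it together with the preservation of the canonical generators $\tau_i$ and $\xi_j$ under base change.

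For the isomorphism $f^* : H_{**}(W) \to H_{**}(\Kbar)$: since $W$ is a Hensel local ring in which $\ell$ is invertible and has residue field $\Fbar$, rigidity (Proposition \ref{prop:motcoh_dedekind}) gives $H_{**}(W) \cong H_{**}(\Fbar)$. Both $\Fbar$ and $\Kbar$ are algebraically closed fields in which $\ell$ is invertible, so the Beilinson-Lichtenbaum theorem identifies $H_{**}(\Fbar)$ and $H_{**}(\Kbar)$ with $\bbF_\ell[\tau]$, where $\tau$ in bidegree $(0,1)$ corresponds to a primitive $\ell$-th root of unity under $H^{0,1}(F;\bbZ/\ell) \cong \mu_\ell(F)$. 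It then suffices to verify that $f^*$ is nonzero on $H^{0,1}$; this holds because $W$ contains primitive $\ell$-th roots of unity (lifted from $\Fbar$ via Hensel's lemma) and $f$ sends such a primitive root in $W$ to a primitive $\ell$-th root of unity in $\Kbar$. Since $f^*$ is a map of graded rings and an isomorphism on the generator $\tau$, it is an isomorphism on the whole polynomial algebra.

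For the $\calA_{**}$ isomorphism: Proposition \ref{dual_steenrod_algebra} presents $\calA_{**}(D)$ as a free $H_{**}(D)$-module on the basis $\{\omega(I) \st I \in C\}$ built out of $\tau_i$ and $\xi_j$ for any Dedekind domain $D$. Remark \ref{tau_xi_image} (together with its odd-$\ell$ analog from \cite{WThesis}) shows that for the map $f : W \to \Kbar$ of Dedekind domains in which $\ell$ is invertible, $f^* \tau_i = \tau_i$ and $f^* \xi_j = \xi_j$ for all $i,j$. Hence $f^*$ carries the free $H_{**}(W)$-module basis $\{\omega(I)\}$ of $\calA_{**}(W)$ bijectively onto the corresponding basis of $\calA_{**}(\Kbar)$, and combined with the $H_{**}$ isomorphism this yields an isomorphism of graded rings. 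Compatibility with the Hopf algebroid structure is automatic from the formulas in Proposition \ref{prop:algebroid}, once the generators match.

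I expect the main obstacle to be verifying the preservation of $\tau$ under $f^*$. The subtlety is that $\Kbar$ is not the residue field of $W$, so rigidity does not apply directly to $f$; instead one must triangulate through $\Fbar$, combining rigidity for the reduction $W \to \Fbar$ with Voevodsky's computation of mod $\ell$ motivic cohomology for algebraically closed fields, and use the functoriality of $\mu_\ell$ to pin down the behavior of the Bott element.
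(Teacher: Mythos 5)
Your proposal follows essentially the same route as the paper: reduce to motivic cohomology, use rigidity to identify $H^{**}(W)$ with $H^{**}(\Fbar)\cong\bbF_\ell[\tau]$, pin down $f^*(\tau)$ via the identification $H^{0,1}\cong\mu_\ell$ and the isomorphism $\mu_\ell(W)\to\mu_\ell(\Kbar)$, and then transport the argument of proposition \ref{prop:algebroid_iso} (free $H_{**}$-module basis $\omega(I)$ plus remark \ref{tau_xi_image}) to handle $\calA_{**}$. The argument is correct and matches the paper's proof in all essentials.
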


\begin{proof} 
  It suffices to establish isomorphisms for motivic cohomology, as
  $H^{**}(S) \cong H_{-*,-*}(S)$.  Since $H^{**}(W)\cong
  H^{**}(\Fpbar)$, we have $H^{**}(W)\cong \bbF_{\ell}[\tau]$ where
  $\tau \in H^{0,1}(W)\cong \mu_{\ell}(W)$. We also have
  $H^{**}(\Kbar)\cong \bbF_{\ell}[\tau]$. To identify the ring map
  $f^* : H^{**}(W) \to H^{**}(R)$ it suffices to identify the value of
  $f^*(\tau)$. The homomorphism $f^* : H^{0,1}(W) \to H^{0,1}(\Kbar)$
  may be identified with $\mu_{\ell}(W) \to \mu_{\ell}(\Kbar)$, which
  is an isomorphism. Hence $f^* : H^{**}(W)\to H^{**}(\Kbar)$ is an
  isomorphism.  The argument given for proposition
  \ref{prop:algebroid_iso} establishes that $j^* : \calA_{**}(W) \to
  \calA_{**}(\Kbar)$ is an isomorphism.
\end{proof}

\begin{corollary}
  \label{E2_iso} 
  Let $\Fbar$ be an algebraically closed field of characteristic
  $p$. The homomorphisms $W \to \Kbar$ and $W \to \Fbar$ induce
  isomorphisms of motivic Adams spectral sequences for $\unit$ from
  the $E_2$ page onwards. In particular,
  $\Ext(\Fbar)\cong \Ext(W) \cong \Ext(\Kbar)$. %
\end{corollary}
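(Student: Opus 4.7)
The plan is to apply Corollary \ref{mass_iso} twice, once to the map $W \to \Fbar$ and once to the map $W \to \Kbar$, taking $X = \unit$ in both cases. Since $\bbL f^{*}\unit = \unit$ and $H_{**}\unit = H_{**}(S)$, the third hypothesis of Corollary \ref{mass_iso} is subsumed by the first, so in each case I only need to check that $f^{*}$ induces isomorphisms on mod $\ell$ motivic cohomology and on the dual motivic Steenrod algebra.

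For the map $W \to \Fbar$, this is immediate from Proposition \ref{prop:E2iso}: $W$ is a Hensel local ring with residue field $\Fbar$, and $\ell$ is invertible in $W$ since $\ell \neq p$. In fact Proposition \ref{prop:Fpbar_iso} has already recorded the resulting identification $\Ext(W) \cong \Ext(\Fbar)$, and the same argument upgrades this to an isomorphism of the whole spectral sequences from the $E_{2}$ page onwards.

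For the map $W \to \Kbar$, the two required isomorphisms $f^{*}\colon H_{**}(W) \to H_{**}(\Kbar)$ and $f^{*}\colon \calA_{**}(W) \to \calA_{**}(\Kbar)$ are precisely the content of Proposition \ref{kbar_iso}. Corollary \ref{mass_iso} then yields that $\MASS(W) \to \MASS(\Kbar)$ is an isomorphism from the $E_{2}$ page onwards, and in particular $\Ext(W) \cong \Ext(\Kbar)$.

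Combining the two chains gives $\Ext(\Fbar) \cong \Ext(W) \cong \Ext(\Kbar)$ and the asserted isomorphisms of motivic Adams spectral sequences from $E_{2}$ onwards. There is no real obstacle here: all of the substantive input, namely the rigidity of mod $\ell$ motivic cohomology for Hensel local rings and the computation that the generators $\tau_{i}$ and $\xi_{i}$ of the dual Steenrod algebra are preserved under base change of Dedekind domains in which $\ell$ is invertible, has already been packaged into Propositions \ref{prop:motcoh_dedekind}, \ref{prop:algebroid_iso} and \ref{kbar_iso}, and Corollary \ref{mass_iso} turns these comodule-level isomorphisms into the desired comparison of spectral sequences.
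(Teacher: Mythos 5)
Your proof is correct and follows exactly the route the paper intends: the case $W \to \Fbar$ is Proposition \ref{prop:Fpbar_iso} (via Proposition \ref{prop:E2iso}, since $W$ is Hensel local with residue field $\Fbar$ and $\ell \neq p$), the case $W \to \Kbar$ is Proposition \ref{kbar_iso}, and Corollary \ref{mass_iso} with $X = \unit$ converts both into isomorphisms of spectral sequences from the $E_2$ page onwards. Your observation that the third hypothesis of Corollary \ref{mass_iso} is subsumed by the first when $X=\unit$ is also correct.
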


\begin{lemma}
  \label{lem:char-0}
  Let $f: \kbar \to \Kbar$ be an extension of algebraically closed
  fields of characteristic $0$. For all $s$ and $w \geq 0$, base
  change induces an isomorphism $\pi_{s,w}(\kbar) \to
  \pi_{s,w}(\Kbar)$.
\end{lemma}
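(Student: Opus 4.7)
The plan is to split the proof into three cases based on the sign of $s-w$, and to reduce the nontrivial case to a prime-by-prime comparison of the motivic Adams spectral sequence. The case $s<w$ is settled immediately by Morel's connectivity theorem (corollary \ref{sphere_connective}), which gives $\pi_{s,w}(\kbar) = \pi_{s,w}(\Kbar) = 0$. The case $s=w\geq 0$ follows from Morel's identification of the zero line in \cite{Morel12}: for any algebraically closed field $F$ of characteristic zero one has $\pi_{0,0}(F)\cong GW(F)\cong\bbZ$ and $\pi_{s,s}(F)\cong W(F)\cong \bbZ/2$ for $s>0$, and these identifications are natural in $F$, so the base change map is an isomorphism in these bidegrees.

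For the remaining range $s>w\geq 0$, the groups $\pi_{s,w}(F)$ are torsion by the theorem of Ananyevskiy, Levine and Panin \cite{ALP}, so it suffices to establish an isomorphism on the $\ell$-primary parts for each prime $\ell$. Fix such an $\ell$. For both $F=\kbar$ and $F=\Kbar$ one has $H^{**}(F;\bbZ/\ell)\cong\bbF_{\ell}[\tau]$ with $\tau\in H^{0,1}$ corresponding to a generator of $\mu_{\ell}(F)$, and the base change map $f^*$ is an isomorphism here since $\mu_{\ell}(\kbar)\to\mu_{\ell}(\Kbar)$ is. By remark \ref{tau_xi_image}, the classes $\tau_i$ and $\xi_i$ generating the dual Steenrod algebra are pulled back from $\bbZ[1/\ell]$, so $f^*$ sends generators to generators and, together with the universal formulas in proposition \ref{prop:algebroid}, induces an isomorphism of Hopf algebroids $\calA_{**}(\kbar)\cong\calA_{**}(\Kbar)$. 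Corollary \ref{mass_iso} then yields an isomorphism of the MASS for $\unit$ from the $E_2$ page onwards.

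To conclude, the plan is to invoke proposition \ref{prop:convergence} to identify the abutment of each spectral sequence with $\pi_{s,w}(\unit\ellcomp)$; this applies because both $\kbar$ and $\Kbar$ have trivial absolute Galois group (so finite mod $\ell$ cohomological dimension for every $\ell$) and Morel's connectivity theorem holds over characteristic-zero fields. The isomorphism of spectral sequences then produces $\pi_{s,w}(\kbar)\ellcomp\cong\pi_{s,w}(\Kbar)\ellcomp$. Proposition \ref{prop:finiteness} shows that for $s>w\geq 0$ the group $\pi_{s,w}(F)_{(\ell)}$ is finite; for a finite abelian $\ell$-primary group the $\ell$-completion coincides with the group itself, so $\pi_{s,w}(\kbar)_{(\ell)}\cong\pi_{s,w}(\Kbar)_{(\ell)}$, and summing over all primes gives the claim.

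The main obstacle I anticipate is verifying that the comparison map of dual Steenrod algebras $\calA_{**}(\kbar)\to\calA_{**}(\Kbar)$ is genuinely an isomorphism of Hopf algebroids: the rigidity statement of proposition \ref{prop:algebroid_iso} is phrased for a Hensel local ring and its residue field, not for an extension of algebraically closed fields. The way around this is to exploit remark \ref{tau_xi_image}, which gives a global origin for the generators $\tau_i$ and $\xi_j$ over $\bbZ[1/\ell]$, so that under any base change they map to the canonical classes; once the $H_{**}$-module structure is pinned down this way, compatibility with coproduct, units, and antipode is forced by the formulas in proposition \ref{prop:algebroid}.
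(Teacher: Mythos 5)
Your proposal is correct and follows essentially the same route as the paper's proof: split into the three cases $s<w$, $s=w\geq 0$, $s>w\geq 0$, use Morel for the first two, and for the third pass to $\ell$-primary parts via the torsion theorem of Ananyevskiy--Levine--Panin and compare motivic Adams spectral sequences using the isomorphisms $H_{**}(\kbar)\cong H_{**}(\Kbar)$ and $\calA_{**}(\kbar)\cong\calA_{**}(\Kbar)$, together with propositions \ref{prop:convergence}, \ref{prop:finiteness} and \ref{prop:0_stem}. The main difference is cosmetic: you spell out why the dual Steenrod algebra comparison is an isomorphism of Hopf algebroids (using $\mu_\ell$ for the coefficient ring and remark \ref{tau_xi_image} for the generators), whereas the paper leaves this implicit, relying on the argument already given for proposition \ref{kbar_iso}.
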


\begin{proof}
  Let $\ell$ be a prime. The maps $f^* : H_{**}(\kbar) \to
  H_{**}(\Kbar)$ and $f^* : \calA_{**}(\kbar) \to \calA_{**}(\Kbar)$
  are isomorphisms, hence the induced map of cobar complexes $f^* :
  \calC^{*}(\kbar) \to \calC^{*}(\Kbar)$ is an isomorphism. It follows
  that the map $\MASS_{\kbar}(\unit) \to \MASS_{\Kbar}(\unit)$ is an
  isomorphism from the $E_2$ page onwards.  The homomorphism $\bbL f^*
  : \pi_{**}(\unit\Hcomp(\kbar)) \to \pi_{**}(\unit\Hcomp(\Kbar))$ is
  therefore an isomorphism since it is compatible with the map of
  spectral sequences. Propositions \ref{prop:finiteness} and
  \ref{prop:0_stem} identify $\pi_{s,w}(\unit\Hcomp)$ with %
  $\pi_{s,w}(\unit)\ellcomp$ for all $s \geq w \geq 0$ over both
  $\kbar$ and $\Kbar$. By the work of Ananyevsky, Levine and Panin
  \cite{ALP}, the groups $\pi_{s,w}(\kbar)$ and $\pi_{s,w}(\Kbar)$ are
  torsion for $s > w \geq 0$ and so they are the sum of their
  $\ell$-primary parts. This establishes the result for $s> w \geq
  0$. When $s=w\geq 0$, the result follows by proposition
  \ref{prop:0_stem} and Morel's identification of the groups
  $\pi_{n,n}(F)$. If $s<w$, the connectivity theorem applies and gives
  the isomorphism.
\end{proof}

\begin{corollary}
  \label{cor:constant_iso}
  Let $\Kbar$ be an algebraically closed field of characteristic 0.
  For any $n\geq 0$, the map $\bbL c : \pi_n^s \to \pi_{n,0}(\Kbar)$
  is an isomorphism.
\end{corollary}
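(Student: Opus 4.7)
The plan is to reduce the statement to the known case $\Kbar = \bbC$ (which is Levine's theorem as stated in the paper) by interposing a large common algebraically closed extension and invoking Lemma \ref{lem:char-0}.

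First I would construct an algebraically closed field $\Omega$ of characteristic $0$ containing both $\bbC$ and $\Kbar$. Since both are $\bbQ$-algebras and $\bbC$ is flat over $\bbQ$, the tensor product $\bbC \otimes_{\bbQ} \Kbar$ is a nonzero commutative $\bbQ$-algebra, so it has a maximal ideal $\mathfrak{m}$; the residue field $L = (\bbC \otimes_{\bbQ} \Kbar)/\mathfrak{m}$ is then a field receiving compatible injections from $\bbC$ and $\Kbar$, and I take $\Omega$ to be any algebraic closure of $L$.

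Next, writing $i_1 : \bbC \hookrightarrow \Omega$ and $i_2 : \Kbar \hookrightarrow \Omega$ for the resulting extensions of algebraically closed characteristic $0$ fields, Lemma \ref{lem:char-0} applies to both and shows that the base change maps
\begin{equation*}
  \bbL i_1^* : \pi_{n,0}(\bbC) \to \pi_{n,0}(\Omega),
  \qquad
  \bbL i_2^* : \pi_{n,0}(\Kbar) \to \pi_{n,0}(\Omega)
\end{equation*}
are isomorphisms for every $n \geq 0$. Proposition \ref{const_comm} then assembles the relevant constant-presheaf maps into a commutative diagram
\begin{equation*}
  \xymatrix@R=.5cm{
    & \pi_n^s \ar[dl]_{\bbL c} \ar[d]^{\bbL c} \ar[dr]^{\bbL c} & \\
    \pi_{n,0}(\bbC) \ar[r]_{\bbL i_1^*} & \pi_{n,0}(\Omega) & \pi_{n,0}(\Kbar) \ar[l]^{\bbL i_2^*}
  }
\end{equation*}
in which the left leg $\bbL c : \pi_n^s \to \pi_{n,0}(\bbC)$ is an isomorphism by Levine's theorem \cite[Corollary 2]{Levine}, and both horizontal maps are isomorphisms by the previous step. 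A diagram chase (two-out-of-three) forces $\bbL c : \pi_n^s \to \pi_{n,0}(\Kbar)$ to be an isomorphism as well.

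The only real subtlety is the construction of $\Omega$: one must avoid pretending that an arbitrary $\Kbar$ embeds into $\bbC$ (which is false once the transcendence degree exceeds the continuum), and instead produce a common extension by the tensor-product/quotient argument above. Once $\Omega$ is in hand, everything else is purely formal, consisting of an application of Lemma \ref{lem:char-0} and the functoriality supplied by Proposition \ref{const_comm}.
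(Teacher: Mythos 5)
Your proof is correct and follows the same route as the paper: reduce to Levine's theorem over $\bbC$ and transport the result using Lemma \ref{lem:char-0} together with the compatibility of $\bbL c$ with base change from Proposition \ref{const_comm}. The paper simply asserts that the lemma "extends the result" without saying how $\bbC$ and $\Kbar$ are to be related; your construction of a common algebraically closed overfield $\Omega$ (or, even more simply, one could compare both fields downward to $\overline{\bbQ}$) correctly supplies the detail the paper elides.
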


\begin{proof}
  The statement is true when $\Kbar = \bbC$ by Levine's theorem. The
  previous proposition extends the result to an arbitrary
  algebraically closed field of characteristic 0.
\end{proof}

\begin{theorem}
  \label{ellcomp_iso}
  Let $\Fbar$ be an algebraically closed field of characteristic $p$
  and let $\ell$ be a prime different from $p$. Then there is an
  isomorphism $\pi_{s,w}(\Fbar)\ellcomp \cong \pi_{s,w}(\bbC)\ellcomp$
  for all $s \geq w \geq 0$. 
\end{theorem}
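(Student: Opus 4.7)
The approach is to interpose the Witt vector ring $W = W(\Fbar)$ and the algebraic closure $\Kbar$ of its fraction field between $\Fbar$ and characteristic zero, transporting information along the zig-zag
\[
\Spec(\Fbar) \longleftarrow \Spec(W) \longrightarrow \Spec(\Kbar),
\]
and then finally comparing $\Kbar$ with $\bbC$. Corollary \ref{E2_iso} already supplies the key input: the mod $\ell$ motivic Adams spectral sequences for $\unit$ over $\Fbar$, $W$, and $\Kbar$ are isomorphic from the $E_2$ page onwards.

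The first task is to lift this $E_2$-isomorphism to an isomorphism of abutments. Propositions \ref{prop:convergence} and \ref{prop:conv_hat}, together with the vanishing line of Proposition \ref{prop:vanishing_line}, ensure that each of these spectral sequences converges to $\pi_{**}(\unit\Hcomp)$. By Proposition \ref{prop:Hcomp_compatibility} the base-change comparison on abutments is compatible with the $E_2$-level isomorphism, so the induced maps on $\pi_{**}(\unit\Hcomp)$ are themselves isomorphisms. When $s > w \geq 0$, Proposition \ref{prop:finiteness} identifies the abutment with $\pi_{s,w}(\unit)\ellcomp$ over both $\Fbar$ and $\Kbar$, giving $\pi_{s,w}(\Fbar)\ellcomp \cong \pi_{s,w}(\Kbar)\ellcomp$. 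When $s = w \geq 0$, Proposition \ref{prop:0_stem} gives the same identification, and Morel's description of the zero-line combined with the fact that $GW(F) \cong \bbZ$ and $W(F) \cong \bbZ/2$ for any algebraically closed field $F$ supplies the isomorphism directly.

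The second task is to replace $\Kbar$ by $\bbC$. Since $\Kbar$ and $\bbC$ are both algebraically closed fields of characteristic $0$, they admit a common algebraically closed extension $\Omega$; two applications of Lemma \ref{lem:char-0} yield $\pi_{s,w}(\bbC) \cong \pi_{s,w}(\Omega) \cong \pi_{s,w}(\Kbar)$, and passing to $\ell$-completions gives the final link in the chain. Composing the three isomorphisms $\pi_{s,w}(\Fbar)\ellcomp \cong \pi_{s,w}(W)\ellcomp \cong \pi_{s,w}(\Kbar)\ellcomp \cong \pi_{s,w}(\bbC)\ellcomp$ yields the theorem.

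The main obstacle is verifying that the $E_2$-level isomorphism genuinely propagates to an isomorphism on $\pi_{**}(\unit\Hcomp)$, which requires simultaneously the vanishing line (for strong convergence) and the explicit compatibility of base change with the $H$-nilpotent completion furnished by Proposition \ref{prop:Hcomp_compatibility}. Once these are in hand, the remainder is essentially bookkeeping: identifying $\pi_{s,w}(\unit\Hcomp)$ with $\pi_{s,w}(\unit)\ellcomp$ via Proposition \ref{prop:finiteness} off the zero-line, and invoking Morel on the zero-line.
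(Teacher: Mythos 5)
Your proposal matches the paper's proof essentially step for step: the zig-zag $\Fbar \leftarrow W \to \Kbar$, Corollary \ref{E2_iso} for the $E_2$-isomorphisms, Propositions \ref{prop:finiteness} and \ref{prop:0_stem} to identify the abutments with $\ell$-completions, and Lemma \ref{lem:char-0} to finish. Your explicit invocation of Proposition \ref{prop:Hcomp_compatibility} and your handling of the $\Kbar$-versus-$\bbC$ comparison via a common algebraically closed extension $\Omega$ are both sound and in fact spell out details the paper leaves implicit.
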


\begin{proof}
  Consider the homomorphisms $\Fbar \leftarrow W \to \Kbar$.  The
  induced maps on the motivic Adams spectral sequence are compatible
  with the maps of homotopy groups
  \begin{equation*}
    \pi_{**}(\unit\Hcomp(\Fbar)) \leftarrow %
    \pi_{**}(\unit\Hcomp(W))
    \to \pi_{**}(\unit\Hcomp(\Kbar))
  \end{equation*}
  By corollary \ref{E2_iso}, the maps
  $\MASS_{\Fbar}(\unit) \leftarrow \MASS_W(\unit) \to
  \MASS_{\Kbar}(\unit)$
  are isomorphisms at the $E_2$ page, and so there are isomorphisms
  $\pi_{**}(\unit\Hcomp(\Fbar)) \cong \pi_{**}(\unit\Hcomp(W)) \cong
  \pi_{**}(\unit\Hcomp(\Kbar))$.
  For $s \geq w \geq 0$, propositions \ref{prop:finiteness} and
  \ref{prop:0_stem} give isomorphisms
  $\pi_{s,w}(\unit\Hcomp(\Fbar)) \cong \pi_{s,w}(\Fbar)\ellcomp$ and
  $\pi_{s,w}(\unit\Hcomp(\Kbar)) \cong \pi_{s,w}(\Kbar)\ellcomp$.  The
  result now follows from lemma \ref{lem:char-0}.
\end{proof}

\begin{corollary}
  \label{cor:ellcomp_iso}
  Let $\Fbar$ be an algebraically closed field of characteristic $p$
  and let $\ell$ be a prime different from $p$.  The homomorphism
  $\bbL c : (\pi_n^s)\ellcomp \to \pi_{n,0}(\Fbar)\ellcomp$ is an
  isomorphism for all $n\geq 0$.
\end{corollary}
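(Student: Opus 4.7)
The plan is to combine Levine's theorem with Theorem \ref{ellcomp_iso}. Levine's theorem \cite[Corollary 2]{Levine} gives that $\bbL c : \pi_n^s \to \pi_{n,0}(\bbC)$ is an isomorphism, so after $\ell$-completion the induced map $(\pi_n^s)\ellcomp \to \pi_{n,0}(\bbC)\ellcomp$ is also an isomorphism. Setting $s = n$ and $w = 0$ in Theorem \ref{ellcomp_iso} supplies an isomorphism $\pi_{n,0}(\bbC)\ellcomp \cong \pi_{n,0}(\Fbar)\ellcomp$. Composing these yields an abstract isomorphism $(\pi_n^s)\ellcomp \cong \pi_{n,0}(\Fbar)\ellcomp$, and it remains only to identify this composite with the map induced by $\bbL c$.

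For the identification, the key tool is Proposition \ref{const_comm}, which asserts that for any morphism of base schemes $f : R \to S$ the derived base change $\bbL f^*$ commutes with the constant realization $\bbL c$. The isomorphism in Theorem \ref{ellcomp_iso} is assembled from the chain $\Fbar \leftarrow W \to \Kbar$, together with a comparison between $\Kbar$ and $\bbC$ obtained by embedding both algebraically closed fields of characteristic $0$ into a common one and applying Lemma \ref{lem:char-0}. Since $\bbL c : \SH \to \SH_{(-)}$ factors through each intermediate base via Proposition \ref{const_comm}, naturality along every leg of this zig-zag forces the composite of our two isomorphisms to equal the $\ell$-completion of $\bbL c : \pi_n^s \to \pi_{n,0}(\Fbar)$.

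The bulk of the work for this corollary is already packaged in Theorem \ref{ellcomp_iso}, whose proof uses the ring of Witt vectors, the rigidity theorem for motivic cohomology, and the comparison of motivic Adams spectral sequences across mixed characteristic. The remaining step here is essentially formal. The one place requiring care is the naturality argument: since the zig-zag in Theorem \ref{ellcomp_iso} goes through $W$ and $\Kbar$ rather than directly comparing $\bbC$ with $\Fbar$, one must keep track of the commuting diagrams of base change functors to ensure the composite really is $\bbL c$, but this is immediate from Proposition \ref{const_comm} applied leg by leg.
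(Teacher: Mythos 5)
Your proposal is correct and follows essentially the same route as the paper: the paper also deduces the result from the zig-zag $\Fbar \leftarrow W \to \Kbar$ of Theorem \ref{ellcomp_iso} together with the fact that $\bbL c : (\pi_n^s)\ellcomp \to \pi_{n,0}(\Kbar)\ellcomp$ is an isomorphism (Levine's theorem transported to $\Kbar$ via Lemma \ref{lem:char-0}), using the compatibility of $\bbL c$ with base change from Proposition \ref{const_comm} to see that all maps in the resulting commuting diagram are isomorphisms. Your explicit attention to identifying the composite with $\bbL c$ leg by leg is exactly the content of the paper's diagram.
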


\begin{proof}
  The previous theorem yields the following diagram for all $n\geq 0$.
  \begin{equation*}
    \xymatrix{
      & (\pi_n^s)\ellcomp %
      \ar[rd]_{\cong}^{\bbL c} %
      \ar[d]^{\bbL c} %
      \ar[ld]_{\bbL c} & & \\ %
      \pi_{n,0}(\Fbar)\ellcomp & \pi_{n,0}(\unit\Hcomp(W)) 
      \ar[r]^{\cong} \ar[l]_{\cong} & %
      \pi_{n,0}(\Kbar)\ellcomp 
    }
  \end{equation*}
  The map $\bbL c : (\pi_n^s)\ellcomp \to \pi_{n,0}(\Kbar)\ellcomp$ is
  an isomorphism by corollary \ref{cor:constant_iso}, and so all of
  the maps in the above diagram are isomorphisms.
\end{proof}

\begin{corollary} 
  \label{cor:fq_summand}
  For a finite field $\bbF_q$ with characteristic $p \neq \ell$, the
  group $(\pi_n^s)\ellcomp$ is a summand of
  $\pi_{n,0}(\bbF_q)\ellcomp$ for $n\geq 0$.
\end{corollary}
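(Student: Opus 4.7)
The plan is to exploit the previous corollary, which identifies $(\pi_n^s)\ellcomp$ with $\pi_{n,0}(\Fbar)\ellcomp$ where $\Fbar$ denotes the algebraic closure of $\bbF_q$, and observe that this isomorphism factors through $\pi_{n,0}(\bbF_q)\ellcomp$, forcing a splitting.

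First I would fix the inclusion of base schemes $j : \Spec(\Fbar) \to \Spec(\bbF_q)$, where $\Fbar$ is an algebraic closure of $\bbF_q$; note $\mathrm{char}(\Fbar) = p \neq \ell$, so Corollary \ref{cor:ellcomp_iso} applies. Applying Proposition \ref{const_comm} to the map $j$ produces a commutative triangle
\begin{equation*}
\xymatrix@R=.5cm{
& \SH \ar[dl]_{\bbL c} \ar[dr]^{\bbL c} & \\
\SH_{\bbF_q} \ar[rr]^{\bbL j^*} & & \SH_{\Fbar}
}
\end{equation*}
of constant and base-change functors. Passing to homotopy groups and $\ell$-completing gives a factorization of the comparison map for $\Fbar$ through the comparison map for $\bbF_q$:
\begin{equation*}
(\pi_n^s)\ellcomp \xrightarrow{\bbL c} \pi_{n,0}(\bbF_q)\ellcomp \xrightarrow{\bbL j^*} \pi_{n,0}(\Fbar)\ellcomp.
\end{equation*}

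Next I would invoke Corollary \ref{cor:ellcomp_iso} to conclude that the composite in the displayed sequence is an isomorphism. Consequently the first arrow $\bbL c : (\pi_n^s)\ellcomp \to \pi_{n,0}(\bbF_q)\ellcomp$ is a split monomorphism of abelian groups, with retraction given by $(\bbL j^*)$ followed by the inverse of the composite isomorphism. This exhibits $(\pi_n^s)\ellcomp$ as a direct summand of $\pi_{n,0}(\bbF_q)\ellcomp$, completing the proof.

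There is essentially no obstacle here: all the heavy lifting has already been done in establishing the isomorphism of Corollary \ref{cor:ellcomp_iso} and the compatibility of constant-functor maps under base change in Proposition \ref{const_comm}. The only point to be mindful of is that the inclusion $\bbF_q \hookrightarrow \Fbar$ yields a genuine map of base schemes so that Proposition \ref{const_comm} applies verbatim, and that the splitting argument needs no further finiteness or convergence input, since it is purely a statement about a section of abelian groups whose composite with a quotient is the identity.
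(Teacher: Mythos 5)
Your argument coincides with the paper's: both factor the constant functor map through $\pi_{n,0}(\bbF_q)$ via Proposition \ref{const_comm}, pass to $\ell$-completions, and invoke Corollary \ref{cor:ellcomp_iso} to see the composite is an isomorphism, whence the first map is split monic. The proof is correct.
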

\begin{proof} 
  The map $\bbL c : \pi_n^s \to \pi_{n,0}(\Fpbar)$ factors through
  $\pi_{n,0}(\bbF_q)$.  Passing to the $\ell$-completion, corollary
  \ref{cor:ellcomp_iso} implies the composition $(\pi_n^s)\ellcomp \to
  \pi_{n,0}(\bbF_q)\ellcomp \to \pi_{n,0}(\Fpbar)\ellcomp$ is an
  isomorphism.  Hence the result.
\end{proof}

\section{The motivic Adams spectral sequence for finite fields}

We now analyze the two-complete stable stems $\hat{\pi}_{**}(\bbF_q) =
\pi_{**}(\bbF_q)\twocomp$ when $q$ is odd.  The results of the
previous section allow us to identify the $n$th topological
two-complete stable stem $\hat{\pi}_n^s = (\pi_n^s)\twocomp$ as a
summand of $\hat{\pi}_{n,0}(\bbF_q)$.  With this, we are able to
analyze the MASS for $\bbF_q$ in a range.  We remind the reader that
these results assume Morel's connectivity theorem hold for $\bbF_q$,
or the results hold without qualification for the fields
$\widetilde{\bbF}_q$.  For the remainder of this section, write $H$
for the mod 2 motivic cohomology spectrum.

\subsection{The $E_2$ page of MASS over $\bbF_q$ when $q \equiv 1 \bmod 4$}

We will make frequent use of the calculation $H^{**}(\bbF_q; \bbZ/2)
\cong \bbZ/2[\tau, u]/(u^2)$ which was given in section
\ref{mot_coh}. Recall $\tau$ and $u$ are in bidegree $(0,1)$ and
$(1,1)$ respectively.

\begin{proposition} 
  \label{prop:e2_1mod4}
  The $E_2$ page of the mod $2$ motivic Adams spectral sequence for
  the sphere spectrum over $\bbF_q$ with $q \equiv 1 \bmod 4$ is the
  trigraded algebra
  \begin{equation*}
    E_2 \cong \Ext(\bbF_q) \cong
    \bbF_2[\tau, u]/(u^2) \otimes_{\bbF_2[\tau]} \Ext(\Fpbar).
  \end{equation*}
  We abuse notation and write $\tau$ and $u$ for their duals. Hence in
  the above, $\tau$ and $u$ are of degree $(0,-1)$ and $(-1,-1)$
  respectively.
\end{proposition}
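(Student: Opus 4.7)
The plan is to reduce the Ext calculation over $\bbF_q$ to the known one over $\Fpbar$ by exploiting that $q \equiv 1 \bmod 4$ forces $\rho = 0$ in $H^{**}(\bbF_q;\bbZ/2)$. By Proposition \ref{prop:algebroid}(b), this makes the left and right units of $\calA_{**}(\bbF_q)$ agree, so $(H_{**}(\bbF_q), \calA_{**}(\bbF_q))$ is an honest Hopf algebra and the usual unreduced cobar complex computes Ext without having to track a nontrivial $\eta_R$.

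First I would establish the Hopf algebra isomorphism
\begin{equation*}
\calA_{**}(\bbF_q) \cong H_{**}(\bbF_q) \otimes_{H_{**}(\Fpbar)} \calA_{**}(\Fpbar)
\end{equation*}
over $H_{**}(\bbF_q)$. With $\rho = 0$, Proposition \ref{prop:algebroid} gives
$\calA_{**}(\bbF_q) \cong H_{**}(\bbF_q)[\tau_i,\xi_j]/(\tau_i^2 - \tau\xi_{i+1})$
and similarly over $\Fpbar$; since $H_{**}(\bbF_q) \cong \bbF_2[\tau,u]/(u^2)$ is free of rank $2$ over $H_{**}(\Fpbar) \cong \bbF_2[\tau]$ with basis $\{1,u\}$, the isomorphism of algebras is immediate. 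The coproduct formulas in Proposition \ref{prop:algebroid}(d) involve only the $\tau_i$ and $\xi_j$, are $H_{**}$-linear, and pull back from $\Fpbar$; the class $u$ is primitive as an element of $H_{**}(\bbF_q)$, so $\Delta$ respects the decomposition. The antipode is handled the same way, using $c(\tau) = \tau$ when $\rho = 0$.

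Next I would apply this tensor decomposition in the cobar complex. Writing $\Gamma = \calA_{**}(\Fpbar)$, $R = H_{**}(\Fpbar)$, $S = H_{**}(\bbF_q)$, the augmentation ideal satisfies $\overline{S \otimes_R \Gamma} = S \otimes_R \overline{\Gamma}$, so in each cobar degree
\begin{equation*}
C^n_{\calA_{**}(\bbF_q)}(S) \;\cong\; S \otimes_R C^n_{\Gamma}(R),
\end{equation*}
compatibly with the cobar differential. Because $S$ is free (hence flat) over $R$, taking cohomology commutes with $S \otimes_R -$, yielding
\begin{equation*}
\Ext_{\calA_{**}(\bbF_q)}(S,S) \;\cong\; S \otimes_R \Ext_{\Gamma}(R,R).
\end{equation*}
Combining this with the $E_2$-page identification of Proposition \ref{prop:homological_E2} and the duality between Ext over $\calA_{**}$ and over $\calA^{**}$ noted after Proposition \ref{prop:algebroid_iso} gives the stated formula; the degree conventions $(0,1)\leftrightarrow(0,-1)$ and $(1,1)\leftrightarrow(-1,-1)$ for $\tau$ and $u$ simply reflect the passage between cohomological and homological indexing.

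The main obstacle is the bookkeeping: verifying that the algebra isomorphism is compatible with the full Hopf structure (coproduct, counit, antipode) and that the tridegrees transport correctly through the change from $\calA^{**}$ to $\calA_{**}$. The genuinely algebraic content—that Ext over a Hopf algebra base-changes along a flat extension of the ground ring—is standard once the tensor decomposition of $\calA_{**}(\bbF_q)$ is in hand, and no new relations appear because $u$ lies in $H_{**}(\bbF_q)$ and is primitive.
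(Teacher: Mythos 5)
Your proof is correct in substance but takes the comodule route (cobar complex over the dual Steenrod algebra $\calA_{**}$) whereas the paper works entirely on the module side: it extends a free $\calA^{**}(\Fpbar)$-resolution $P^{\bullet}$ of $H^{**}(\Fpbar)$ to a free $\calA^{**}(\bbF_q)$-resolution $P^{\bullet}\otimes\bbF_2[\tau,u]/(u^2)$ of $H^{**}(\bbF_q)$, using flatness of $\bbF_2[\tau,u]/(u^2)$ over $\bbF_2[\tau]$, and then identifies the two $\Hom$-complexes directly. The structural point you make---that $\rho=0$ forces $\eta_L=\eta_R$ so $(H_{**},\calA_{**})$ becomes an honest Hopf algebra---corresponds to the paper's remark that $\Sq^1(\tau)=0$ is needed for $P^{\bullet}\otimes\bbF_2[\tau,u]/(u^2)$ to carry an $\calA^{**}(\bbF_q)$-module structure; these are dual expressions of the same constraint, and both arguments would fail for $q\equiv 3\bmod 4$, where the paper instead invokes the $\rho$-Bockstein spectral sequence. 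Your cobar argument has the advantage of making the flat base change $S\otimes_R(-)$ entirely formal at the level of complexes, while the paper's resolution-based argument more directly matches the $\Ext$-over-$\calA^{**}$ convention used in its definition of $\Ext(\bbF_q)$ and is closer to the Dugger--Isaksen template it cites. One cosmetic note: you should make the duality step explicit, since the paper defines $\Ext(\bbF_q)$ via modules over $\calA^{**}$ but Proposition~\ref{prop:homological_E2} produces comodule Ext over $\calA_{**}$; the identification of the two (for motivically finite objects) is the content of the discussion after Proposition~\ref{prop:algebroid_iso} and is worth citing rather than folding silently into ``bookkeeping.''
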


\begin{proof}
  Consult Dugger and Isaksen \cite[3.5]{DI} for a similar
  argument. Recall from proposition \ref{prop:algebroid} that
  $\calA^{**}(\bbF_q) \cong \calA^{**}(\Fpbar) \otimes_{\bbF_2[\tau]}
  \bbF_2[\tau,u]/(u^2)$ and $H^{**}(\bbF_q) \cong H^{**}(\Fpbar)
  \otimes \bbF_2[\tau,u]/(u^2) $.  Since $\bbF_2[\tau,u]/(u^2)$ is
  flat as a module over $\bbF_2[\tau]$, a free resolution
  $H^{**}(\Fpbar) \leftarrow P^{\bullet}$ by $\calA^{**}(\Fpbar)$
  modules determines a free resolution $H^{**}(\bbF_q) \leftarrow
  P^{\bullet} \otimes \bbF_2[\tau,u]/(u^2)$. It is necessary here that
  $\Sq^1(\tau)=0$ for $P^{\bullet} \otimes \bbF_2[\tau,u]/(u^2)$ to be
  a resolution of $\calA^{**}(\bbF_q)$ modules. %
  \todo{this addresses comment (6)}%
  The canonical map
  \begin{equation*}
    \Hom_{\calA^{**}(\Fpbar)}(-,
    H^{**}(\Fpbar))\otimes \bbF_2[\tau,u]/(u^2) \to
    \Hom_{\calA^{**}(\bbF_q)}(- \otimes \bbF_2[\tau,u]/(u^2) ,
    H^{**}(\bbF_q))
  \end{equation*} 
  is a natural isomorphism, since a generating set for a module $M$
  over $\calA^{**}(\Fpbar)$ is also a generating set for
  $M\otimes \bbF_2[\tau,u]/(u^2)$ over $\calA^{**}(\bbF_q)$ by
  proposition \ref{prop:algebroid}. We conclude that
  $\Ext(\Fpbar)\otimes \bbF_2[\tau,u]/(u^2) \cong \Ext(\bbF_q)$.
\end{proof}

By the previous proposition, the irreducible elements of $\Ext(\bbC)$
are also irreducible elements of $\Ext(\bbF_q)$ when $q \equiv 1 \bmod
4$.  The only additional irreducible element in $\Ext(\bbF_q)$ is the
class $u$.  The irreducible elements of $\Ext(\bbF_q)$ up to stem
$s=21$ can be found in table \ref{table:1mod4}. These were obtained by
consulting Isaksen \cite[Table 8]{StableStems} and independently
verified by computer calculation by Fu and Wilson \cite{Fu-Wilson}.
\begin{table}[ht!]
\begin{minipage}[t]{.32\textwidth}
\begin{center}
\begin{tabular}{ll}\hline
  Elt. & Filtr. $(f,s,w)$ \\ %
  \hline $u$ & $(0,-1,-1)$ \\ %
  $\tau$ & $(0,0,-1)$\\ %
  $h_0$ & $(1,0,0)$\\ %
  $h_1$ & $(1,1,1)$\\ %
  $h_2$ & $(1,3,2)$\\ %
  $h_3$ & $(1,7,4)$ \\
  \end{tabular}
\end{center}
\end{minipage}
\begin{minipage}[t]{.32\textwidth}
\begin{center}
\begin{tabular}{ll}\hline
  Elt. & Filtr. $(f,s,w)$ \\ \hline 
  $c_0$ & $(3,8,5)$ \\ 
  $Ph_1$ & $(5,9,5)$ \\ 
  $Ph_2$ & $(5,11,6)$ \\ 
  $d_0$ & $(4,14, 8)$ \\ 
  $h_4$ & $(1,15, 8)$ \\ 
  $Pc_0$ & $(7, 16, 9)$ \\
  \end{tabular}
\end{center}
\end{minipage}
\begin{minipage}[t]{.32\textwidth}
\begin{center}
  \begin{tabular}{ll}\hline
    Elt. & Filtr. $(f,s,w)$ \\ \hline 
    $e_0$ & $(4, 17, 10)$ \\
    $P^2h_1$ & $(9,17,9)$ \\ 
    $f_0$ & $(4, 18, 10)$ \\ 
    $P^2h_2$ & $(9,19,10)$\\ 
    $c_1$ & $(3,19,11)$ \\ 
    $[\tau g]$ & $(4, 20, 11)$ \\
\end{tabular}
\end{center}
\end{minipage}
\caption{The irreducible elements of $\Ext(\bbF_q)$ with $q \equiv 1 \bmod 4$ in stem $s\leq 21$}
\label{table:1mod4}
\end{table}

We now investigate the motivic May spectral sequence over the finite
field $\bbF_q$ when $q \equiv 1 \bmod 4$. We will find it useful for
calculating Massey products in the MASS.

\begin{definition}
  Write $J$ for the cokernel of the map $\eta_{L}: H_{**} \to
  \calA_{**}$ in the category of bigraded $\bbF_2$ vector spaces and
  consider the increasing filtration of $\calA_{**}$ given by
  \begin{equation*}
    F_n\calA_{**} = \ker ( \calA_{**} \xrightarrow{\Delta^n} \calA_{**}^{\otimes n+1} \to J^{\otimes n + 1}).
  \end{equation*}
  This filtration on $\calA_{**}$ induces a filtration on the cobar
  complex $(\calC, d)$ defined by Ravenel in \cite[A1.2.11]{Ravenel}. The
  filtration of the cobar complex is compatible leads to a spectral
  sequence \cite[A1.3.9]{Ravenel} called the motivic May spectral
  sequence.
\end{definition}

Following the work of Dugger and Isaksen \cite[\S5]{DI}, we are able
to identify the structure of the motivic May spectral sequence over a
finite field $\bbF_q$ when $q \equiv 1 \bmod 4$. 

\begin{proposition}
  The associated graded Hopf algebroid $E^0\calA_{**}$ to the
  filtration $F^*\calA_{**}$ of the motivic dual Steenrod algebra over
  a finite field $\bbF_q$ when $q \equiv 1 \bmod 4$ is the exterior
  algebra over $H_{**}(\bbF_q) \cong \bbF_2[\tau, u]/(u^2)$
  \begin{equation*}
    E^0\calA_{**} \cong E_{H_{**}(\bbF_q)}(\tau_i, \xi_j^{2^k} \st i \geq 0, j \geq 1, k \geq 0).
  \end{equation*}
  If each generator $\zeta_i$ of $E^0\calA_*^{top}$ is is assigned the
  weight of $\tau_{i-1}$ for $i\geq 1$ and $\zeta_i^{2^j}$ is assigned
  the weight of $\xi_{i}^{2^{j-1}}$ for $j\geq 1$, there is an
  isomorphism of trigraded algebras
  \begin{equation*}
    E^0\calA_{**} \cong \bbF_2[\tau, u]/(u^2) \otimes_{\bbF_2} E^0\calA_*
  \end{equation*}
  where $\calA_*$ denotes the topological dual Steenrod algebra, which
  was studied by Milnor in \cite{Milnor}.
\end{proposition}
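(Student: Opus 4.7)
The plan is to follow the approach of Dugger--Isaksen in \cite[\S5]{DI} for the complex numbers, exploiting the fact that $q\equiv 1 \bmod 4$ forces $\rho = 0$, which makes the dual Steenrod algebra $\calA_{**}(\bbF_q)$ structurally parallel to $\calA_{**}(\bbC)$: the right unit equals the left unit, and the defining relations simplify to $\tau_i^2 = \tau\xi_{i+1}$ with no $\rho$-corrections. By proposition \ref{prop:algebroid}, combined with the base-change observation used in proposition \ref{prop:e2_1mod4}, one has the identification
\begin{equation*}
\calA_{**}(\bbF_q) \cong \calA_{**}(\Fpbar)\otimes_{\bbF_2[\tau]} \bbF_2[\tau,u]/(u^2),
\end{equation*}
so it suffices to carry out the associated-graded computation and then tensor with $\bbF_2[\tau,u]/(u^2)$.

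First, I would inspect the coproduct formulas of proposition \ref{prop:algebroid}(d). The reduced coproducts
\begin{equation*}
\overline{\Delta}(\tau_i) = \sum_{j=0}^{i-1}\xi_{i-j}^{2^j}\otimes \tau_j,
\qquad
\overline{\Delta}(\xi_i) = \sum_{j=1}^{i-1}\xi_{i-j}^{2^j}\otimes \xi_j,
\end{equation*}
show that each generator $\tau_i$ (respectively $\xi_i$) has reduced coproduct lying in $J\otimes J$ with summands built from generators of strictly smaller index. Choosing May weights on $\tau_i$ and on the $2^k$-th powers $\xi_j^{2^k}$ exactly as in \cite[\S5]{DI}, the generators become primitive in the associated graded, and the generating set $\{\tau_i,\xi_j^{2^k}\}$ is carried to an algebraically independent set in $E^0\calA_{**}$. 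The key input that makes the algebra exterior rather than polynomial is the relation $\tau_i^2 = \tau\xi_{i+1}$ (valid since $\rho = 0$): with the May weight assignment, $\tau\xi_{i+1}$ sits in May filtration strictly greater than $2$ times the May filtration of $\tau_i$, so the relation reads $\tau_i^2 = 0$ in $E^0\calA_{**}$. A parallel filtration count shows $(\xi_j^{2^k})^2 = 0$ in $E^0\calA_{**}$ as well. Hence $E^0\calA_{**}$ is the exterior algebra over $H_{**}(\bbF_q) = \bbF_2[\tau,u]/(u^2)$ on the stated generators.

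For the second isomorphism of trigraded algebras, I would compare the motivic and topological May spectral sequences. Under the correspondence $\zeta_i \leftrightarrow \tau_{i-1}$ and $\zeta_i^{2^j} \leftrightarrow \xi_i^{2^{j-1}}$, the stem and filtration degrees agree automatically (by the degrees recorded in proposition \ref{prop:algebroid}), while the motivic weight is imposed on the topological generators as specified in the statement. Since $u$ is a new generator in weight $(-1,-1)$ that is primitive (it comes from $H_{**}$) and squares to zero, the decomposition $E^0\calA_{**}\cong \bbF_2[\tau,u]/(u^2)\otimes_{\bbF_2}E^0\calA_{*}^{\mathrm{top}}$ follows from the algebra presentation established above.

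The step I expect to require the most care is the verification that the May weights of the motivic generators are such that the correction term $\tau\xi_{i+1}$ lies in strictly higher May filtration than $\tau_i^2$; this is where the choice of weights must be made compatibly with the coproduct bookkeeping. Once that is in place, the rest of the argument is essentially a transcription of the Dugger--Isaksen computation over $\bbC$, enhanced only by the extra exterior generator $u$ of $H_{**}(\bbF_q)$.
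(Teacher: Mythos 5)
Your argument is correct and follows the same route as the paper: both reduce to the Dugger--Isaksen computation of $E^0\calA_{**}(\bbC)$ and transport it along the flat extension $\bbF_2[\tau]\to\bbF_2[\tau,u]/(u^2)$, the paper simply citing \cite[5.2(a)]{DI} where you re-derive it from the coproduct and the relation $\tau_i^2=\tau\xi_{i+1}$. The one point worth making explicit (as the paper does) is that $u$ lies in $F^0\calA_{**}$ because it comes from $H_{**}$, so the filtration itself is the base change of the filtration over $\bbC$; also note that with the filtration as defined here $\tau\xi_{i+1}$ lies in strictly \emph{lower} filtration than $\tau_i^2$, which is why the relation degenerates to $\tau_i^2=0$ in the associated graded.
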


\begin{proof}
  Since $u \in F^0\calA_{**}(\bbF_q)$ and $\calA_{**}(\bbF_q) \cong
  \bbF_2[\tau,u]/(u^2) \otimes_{\bbF_2[\tau]} \calA_{**}(\bbC)$, there
  are isomorphisms $F^n\calA_{**}(\bbF_q) \cong
  F^n\calA_{**}(\bbC)\otimes_{\bbF_2[\tau]}\bbF_2[\tau,u]/(u^2)$.
  Over $\bbC$, there is an isomorphism
  \begin{equation*}
    E^0\calA_{**}(\bbC) \cong \bbF_2[\tau] \otimes_{\bbF_2} E^0\calA_*
  \end{equation*}
  which follows by dualizing the result of Dugger and Isaksen in
  \cite[5.2(a)]{DI}. The result now follows as $\bbF_2[\tau] \to
  \bbF_2[\tau,u]/(u^2)$ is flat.
\end{proof}

\begin{proposition}
  The $E_2$ page of the motivic May spectral sequence over a finite
  field $\bbF_q$ with $q \equiv 1 \bmod 4$ is given by 
  \begin{align*}
    E_2^{m,f,s,w} &= \Ext_{E^0\calA_{**}(\bbF_q)}^{f,(s+f,w,m)}(H_{**}(\bbF_q),H_{**}(\bbF_q)) \\
    & \cong \bbF_2[\tau, u]/(u^2) \otimes_{\bbF_2[\tau]}
    \Ext_{E^0\calA_{**}(\bbC)}^{f,(s+f,w,m)}(H_{**}(\bbC),H_{**}(\bbC))
  \end{align*}
  where $f$ is the Adams filtration (or homological degree), $s$ is
  the stem, $w$ is the motivic weight and $m$ is the May
  filtration. The differential $d_r$ changes grading as $d_r : E_r^{m,
    f, s, w} \to E_r^{m+r-1, f+1, s-1, w}$. The motivic May spectral
  sequence converges to $\Ext_{\calA_{**}}(H_{**}, H_{**})$.

  To be consistent with the work of Dugger and Isaksen \cite{DI,
    StableStems}, we write the grading of an element in the May
  spectral sequence in the form $(m,f,s,w)$.
\end{proposition}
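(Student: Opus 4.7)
The plan is to mirror the argument for proposition \ref{prop:e2_1mod4}, replacing the motivic Steenrod algebra with its associated graded Hopf algebroid, and to invoke the general machinery of the spectral sequence of a filtered differential graded algebra as recorded in Ravenel \cite[A1.3.9]{Ravenel}.

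First I would observe that the filtration $F^{*}\calA_{**}$ is cocomplete, exhaustive, and compatible with the Hopf algebroid structure in the sense of \cite[A1.3.9]{Ravenel}. Applying that general construction to the cobar complex $(\calC(\bbF_q),d)$ produces a convergent spectral sequence whose $E_{2}$ page is identified with $\Ext_{E^{0}\calA_{**}(\bbF_q)}^{f,(s+f,w,m)}(H_{**}(\bbF_q),H_{**}(\bbF_q))$, with tri-degree conventions and differential $d_r : E_r^{m,f,s,w} \to E_r^{m+r-1,f+1,s-1,w}$ inherited from the cobar differential and the filtration shift. Convergence to $\Ext_{\calA_{**}}(H_{**},H_{**})$ is then immediate from the exhaustiveness of the filtration (this is where I would just cite \cite[A1.3.9]{Ravenel} rather than redo the proof).

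Next I would establish the tensor product decomposition. By the preceding proposition there is an isomorphism
\begin{equation*}
E^{0}\calA_{**}(\bbF_q) \cong \bbF_2[\tau,u]/(u^{2}) \otimes_{\bbF_2[\tau]} E^{0}\calA_{**}(\bbC),
\end{equation*}
and correspondingly $H_{**}(\bbF_q) \cong \bbF_2[\tau,u]/(u^{2}) \otimes_{\bbF_2[\tau]} H_{**}(\bbC)$. Since $\bbF_2[\tau] \to \bbF_2[\tau,u]/(u^{2})$ is flat, I can take a resolution of $H_{**}(\bbC)$ by free $E^{0}\calA_{**}(\bbC)$ modules in the cobar resolution, base change it up to $\bbF_q$, and still obtain a resolution of $H_{**}(\bbF_q)$ by free $E^{0}\calA_{**}(\bbF_q)$ modules. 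The key observation, identical to the one used in proposition \ref{prop:e2_1mod4}, is that because $u \in F^{0}$ and $\Sq^{1}\tau = 0$ (so that the relevant boundary maps are $\bbF_2[\tau,u]/(u^{2})$-linear on the associated graded), base change commutes with both forming the cobar complex and taking $\Hom$. Applying $\Hom_{E^{0}\calA_{**}(-)}(-,H_{**}(-))$ to the two resolutions then yields the natural isomorphism of tri-graded algebras
\begin{equation*}
\Ext_{E^{0}\calA_{**}(\bbF_q)}^{f,(s+f,w,m)}(H_{**}(\bbF_q),H_{**}(\bbF_q)) \cong \bbF_2[\tau,u]/(u^{2}) \otimes_{\bbF_2[\tau]} \Ext_{E^{0}\calA_{**}(\bbC)}^{f,(s+f,w,m)}(H_{**}(\bbC),H_{**}(\bbC)).
\end{equation*}

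The main obstacle, as in proposition \ref{prop:e2_1mod4}, is bookkeeping: one must verify that the May filtration is preserved under the base change $\bbC \to \bbF_q$, that the tri-grading conventions for $(m,f,s,w)$ carry over on the nose, and that flatness really does let one commute tensoring with $\bbF_2[\tau,u]/(u^{2})$ past the Ext computation. The latter is precisely where the hypothesis $q \equiv 1 \bmod 4$ (equivalently $\rho = 0$, $\eta_R = \eta_L$) is used: it guarantees that $\calA_{**}(\bbF_q)$, and hence its associated graded, splits as a tensor product over $\bbF_2[\tau]$ rather than being merely an extension, which is what makes the flatness argument go through verbatim.
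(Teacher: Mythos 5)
Your proposal is correct and follows essentially the same route as the paper: both identify the $E_2$ page via Ravenel \cite[A1.3.9]{Ravenel} applied to the filtered cobar complex, and both obtain the tensor decomposition by flatness of $\bbF_2[\tau,u]/(u^2)$ over $\bbF_2[\tau]$, reducing to the known computation over $\bbC$ from Dugger and Isaksen \cite[5.2(b)]{DI}. The only cosmetic difference is that the paper states the $E_2$ identification in terms of the derived functors of the cotensor product $H_{**}\square_{\calA_{**}}-$ and then converts Cotor to Ext, whereas you work directly with Ext and base change of free resolutions.
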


\begin{proof}
  The $E_2$ page of the motivic May spectral sequence is identified by
  Ravenel in \cite[A1.3.9]{Ravenel} in terms of the derived functors
  of the cotensor product $H_{**} \square_{\calA_{**}} -$. In this
  case, the natural isomorphism $\Hom_{\calA_{**}}(H_{**}, -) \cong
  H_{**} \square_{\calA_{**}} -$ identifies the cotor groups with the
  ext groups in the statement of the proposition. The second
  isomorphism follows formally from the result over $\bbC$ established
  by Dugger and Isaksen in \cite[5.2(b)]{DI} by the flatness of
  $\bbF_2[\tau,u]/(u^2)$ over $\bbF_2[\tau]$.
\end{proof}

A description of the motivic May spectral sequence $E_2$ page over
$\bbC$ is given by Dugger and Isaksen in \cite[\S5]{DI} up to the 36
stem, from which one obtains a description of the motivic May spectral
sequence $E_2$ page over $\bbF_q$ when $q \equiv 1 \bmod 4$ using the
previous proposition. One must simply add $u$ to the list of
generators of the $E_2$ page given in \cite[Table 1]{DI} and the
relation $u^2 =0$.

\subsection{The $E_2$ page of MASS over $\bbF_q$ when $q \equiv 3 \bmod 4$}

For a finite field $\bbF_q$ with $q \equiv 3 \bmod 4$, the $E_2$ page
of the MASS can be identified in a range using the $\rho$-Bockstein
spectral sequence ($\rho$-BSS) which was introduced by Hill in
\cite{Hill}.  Here $\rho=[-1]$ is the non-zero class in
$H^{1,1}(\bbF_q)\cong \bbF_q^{\times}/2$, since $-1$ is not a square
in $\bbF_q^{\times}$.  We briefly describe the construction of the
$\rho$-BSS and refer the reader to Dugger and Isaksen \cite{DI-Real}
or Ormsby \cite{MotBPInvQ, MotInvPadic} for more details.

Let $\calC$ be the cobar construction corresponding to the Hopf
algebroid
\begin{equation*}
  (\bbF_2[\tau, \rho]/(\rho^2), \calA_{**}(\bbF_q)).
\end{equation*}
The filtration of $\calC$ given by $0 \subseteq \rho \calC \subseteq
\calC$ determines a spectral sequence, which in this case is just the
long exact sequence associated to the short exact sequence of
complexes
\begin{equation*} 
  0 \to \rho \calC \to \calC \to \calC/\rho\calC \to   0.
\end{equation*}

Note that $\rho \calC$ and $\calC/\rho \calC$ are both isomorphic to
the cobar construction over $\bbC$.  Hence we have the following long exact
sequence.
\begin{equation*} 
  \xymatrix@C=.75cm{
    \cdots \rho \Ext^{i,(*,*)}(\bbC) \ar[r] & \Ext^{i,(*,*)}(\bbF_q)
    \ar[r] & \Ext^{i,(*,*)}(\bbC) \ar[r]^-{d_1} & \rho
    \Ext^{i+1,(*,*)}(\bbC) \cdots }
\end{equation*}
In spectral sequence notation, the $E_1$ page is given by
\begin{equation*}
  E_1^{\epsilon, f, (s, w)} \cong
  \begin{cases}
    \Ext^{f,(s,w)}(\bbC) & \text{ if } \epsilon = 0 \\
    \rho \Ext^{f, (s+1,w+1)}(\bbC) & \text{ if } \epsilon = 1 \\
    0 & \text{ otherwise }
  \end{cases}
\end{equation*}
with differential $d_1 : E_1^{\epsilon,f,(s,w)} \to E_1^{\epsilon + 1,
  f+1, (s-1,w)}$.  The differential $d_1$ satisfies the Leibniz rule,
so it suffices to identify the differential on irreducible
elements. We identify all differentials up to the 20 stem by hand in
the following proposition; these calculations have been verified by
computer calculations.

\begin{proposition} 
  \label{prop:rhoBSS_diffs}
  In the $\rho$-BSS for $\bbF_q$ with $q \equiv 3 \bmod 4$, every
  irreducible element $x$ of $\Ext(\bbC)$ in stem $s\leq 19$ other
  than $\tau$ has $d_1( x) = 0$.  Also, $d_1( \tau) = \rho h_0$ and
  $d_1 ([\tau g]) = \rho h_2 e_0$.  Here $[\tau g]$ is the irreducible
  element of $\Ext(\bbC)$ in stem $20$, weight $11$ and filtration
  $4$.
\end{proposition}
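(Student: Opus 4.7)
The plan is to exploit two structural facts: $d_1$ is a derivation on $E_1$ (standard for the spectral sequence of a filtered differential graded algebra with multiplicative filtration), and $d_1(\rho) = 0$ since $\eta_R(\rho) = \rho$ by Proposition \ref{prop:algebroid}(b). These together reduce the computation to evaluating $d_1$ on each irreducible generator of $\Ext(\bbC)$ in the range $s \leq 20$. Mechanically, for $[x] \in \Ext(\bbC) = H(\calC/\rho\calC)$ the differential $d_1$ is computed by lifting $x$ to a cochain in $\calC(\bbF_q)$, applying the cobar differential, and reading off the class in $\rho\calC \cong \calC(\bbC)$.

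For $\tau$, I would apply this recipe directly. The formula $\eta_R(\tau) = \tau + \rho\tau_0$ from Proposition \ref{prop:algebroid}(b) gives $d[\tau] = \eta_R(\tau) - \tau = \rho\tau_0$ in $\calC^1(\bbF_q)$. Since $\tau_0$ has bidegree $(1,0)$ and represents $h_0$, this yields $d_1(\tau) = \rho h_0$.

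For the other irreducible generators in stem $s \leq 19$, namely $h_i$ for $0 \leq i \leq 4$, $c_0$, $Ph_1$, $Ph_2$, $d_0$, $Pc_0$, $e_0$, $P^2h_1$, $f_0$, $P^2h_2$, and $c_1$, the strategy is to show each admits a cobar cocycle representative that lifts verbatim from $\calC(\bbC)$ to $\calC(\bbF_q)$. The coproduct formulas of Proposition \ref{prop:algebroid}(d) are formally identical over the two fields, and $\eta_R$ acts as the identity on everything in $H_{**}$ except $\tau$. Thus any cocycle built as a formal polynomial expression in the $\tau_i$ and $\xi_j$ (not requiring the multiplicative relation $\tau_i^2 = \tau\xi_{i+1}$, which differs by $\rho\tau_{i+1} + \rho\tau_0\xi_{i+1}$ between $\bbC$ and $\bbF_q$) remains a cocycle and hence has $d_1 = 0$. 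In most of the low-stem cases one can alternatively shortcut the argument by checking against the tables of \cite{StableStems, Charts} that the target $\rho\Ext^{f+1,(s,w+1)}(\bbC)$ is zero, forcing $d_1(x) = 0$.

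The main obstacle is $[\tau g]$ in bidegree $(4,20,11)$, where the target $\rho\Ext^{5,(20,12)}(\bbC)$ is non-trivial and contains the class $\rho h_2 e_0$. This element is subtle because $g$ itself is not a class in $\Ext(\bbC)$ but only $\tau g$ is, so any cobar representative intrinsically involves the $\tau_i^2$ relation, which is exactly the place where the $\bbF_q$ and $\bbC$ dual Steenrod algebras disagree by a $\rho$-term. I would attack this either by a Massey product description of $[\tau g]$ inherited from the motivic May spectral sequence, applying the Leibniz rule for $d_1$ on Massey products (via May's convergence theorem) and matching against indeterminacy; or, following the approach the paper attributes to Fu--Wilson \cite{Fu-Wilson}, by an explicit computer calculation in the cobar complex, propagating the extra $\rho\tau_{i+1} + \rho\tau_0\xi_{i+1}$ contributions through the chosen lift until they collect as $\rho h_2 e_0$ modulo cobar boundaries. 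Controlling the exact coefficient and ruling out additional $\rho$-terms in the target is the delicate part, and is where computer assistance appears essential.
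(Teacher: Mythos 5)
Your opening moves are sound: $d_1$ is a derivation, $d_1(\rho)=0$, and for $\tau$ your cobar calculation (via $\eta_R(\tau)=\tau+\rho\tau_0$) gives $d_1(\tau)=\rho h_0$, which is equivalent to the paper's "degree-reasons plus $\tau$ cannot survive" argument. However, you miss the two cases that carry the real content, and the method you substitute for them has gaps.

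First, you do not flag $f_0$ as a special case. Its $d_1$ target $\rho\,\Ext^{5,(18,11)}(\bbC)$ is \emph{not} zero --- it contains $\rho h_1 e_0$ --- so the vanishing-target shortcut fails. Your fallback, the "cobar-lift" heuristic, is not a proof: knowing that the coproduct formulas look the same on generators does not show that a specific cocycle for $f_0$ can be written without invoking $\tau_i^2 = \tau\xi_{i+1}$ or without $\tau$-coefficients (through which $\eta_R$ introduces $\rho$-terms), and you do not exhibit such a representative. The paper disposes of this case with a two-line Leibniz argument: since $h_1 f_0 = 0$ in $\Ext(\bbC)$ we get $h_1 d_1(f_0)=0$, but $\rho h_1 e_0$ is not annihilated by $h_1$, so $d_1(f_0)=0$. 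Your proposal contains no argument that would reach this conclusion.

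Second, for $[\tau g]$ you defer to Massey products or to computer calculation. The paper's argument is again a short Leibniz computation that you should have in your toolbox: from $h_0[\tau g]=\tau h_2 e_0$ one gets $h_0\,d_1([\tau g]) = d_1(\tau h_2 e_0) = \rho h_0 h_2 e_0$, and since $\rho h_2 e_0$ is the only possible nonzero value for $d_1([\tau g])$ and $h_0\cdot\rho h_2 e_0 \neq 0$, the differential is forced. This is both more elementary and fully rigorous, whereas your proposal leaves the coefficient and indeterminacy control unresolved. The product relations in $\Ext(\bbC)$ that make these Leibniz-rule arguments go through ($h_1 f_0=0$, $h_0[\tau g]=\tau h_2 e_0$) are the key inputs that your proposal lacks.
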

\begin{proof} 
  The differential $d_1$ vanishes on all irreducible classes in
  $\Ext(\bbC)$ up to stem 20 for degree reasons except for possibly
  $\tau$, $f_0$ and $[\tau g]$.  The class $\tau$ cannot survive the
  $\rho$-BSS, since if it did, it would contribute a nonzero element
  to $\Ext^{0,0,-1}(\bbF_q)\cong \Hom^{0,-1}_{\calA}(H^{**},H^{**})$
  which is trivial.  We conclude $d_1(\tau) = \rho h_0$, because this
  is the only possible nonzero value for $d_1(\tau)$.

  The two possibilities for $d_1(f_0)$ are $0$ and $\rho h_1
  e_0$. Since $h_1 f_0=0$ in $\Ext(\bbC)$, we must have $d_1(h_1 f_0)
  = h_1 d_1(f_0) = 0$; hence $d_1(f_0)$ is annihilated by $h_1$. But
  as $\rho h_1 e_0$ is not annihilated by $h_1$, we must have
  $d_1(f_0)=0$.

  The only possible nonzero value for $d_1([\tau g])$ is $\rho h_2
  e_0$. From the relation $h_0 [\tau g] = \tau h_2 e_0$, we calculate
  $d_1(\tau h_2 e_0) = \rho h_0h_2e_0$ and $d_1(h_0 [\tau g]) = h_0
  d_1([\tau g])$. Hence $h_0 d_1([\tau g]) = h_0 \rho h_2 e_0$, from
  which the result follows.
\end{proof}

\begin{example} 
  Since $d_1 (h_1) =0$, we conclude $d_1 (\tau h_1) = \rho h_0 h_1 = 0$,
  as $h_0h_1$ vanishes in $\Ext(\bbC)$.  Hence there is a class
  $[\tau h_1] \in \Ext^{1,(1,0)}(\bbF_q)$ which is irreducible.
\end{example}

With this analysis of the $\rho$-BSS for $\bbF_q$ with $q \equiv 3
\bmod 4$, the structure of $\Ext(\bbF_q)$ as a graded abelian group up
to stem $21$ follows immediately and we may further identify all
irreducible elements in this range. The results of this proposition
were verified by computer calculation by Fu and Wilson
\cite{Fu-Wilson}.

\begin{proposition}
  \label{irred_3mod4} 
  When $q \equiv 3\bmod 4$, the irreducible elements of $\Ext(\bbF_q)$
  up to stem $s=21$ are given in table \ref{table:3mod4}.
\begin{table}[ht!]
\begin{minipage}[t]{.32\textwidth}
\begin{center}
\begin{tabular}{ll}\hline
  Elt. & Filtr. $(f,s,w)$ \\ \hline 
  $\rho$ & $(0,-1,-1)$ \\
  $[\rho\tau]$ & $(0,-1,-2)$\\ 
  $[\tau^2]$ & $(0,0,-2)$\\ 
  $h_0$ & $(1,0,0)$\\ 
  $h_1$ & $(1,1,1)$\\ 
  $[\tau h_1]$ & $(1,1,0)$\\ 
  $h_2$ & $(1,3,2)$\\ 
  $[\tau h_2^2]$ & $(2,6,3)$\\ 
  $h_3$ & $(1,7,4)$ \\
  $[\tau h_0^3h_3]$ & $(4,7,3)$ \\
  $c_0$ & $(3,8,5)$\\ 
  \end{tabular}
\end{center}
\end{minipage}
\begin{minipage}[t]{.32\textwidth}
\begin{center}
\begin{tabular}{ll}\hline
  Elt. & Filtr. $(f,s,w)$ \\ \hline 
  $[\tau c_0]$ & $(3,8,4)$\\ 
  $Ph_1$ & $(5,9,5)$ \\ 
  $[\tau Ph_1]$ &$(5,9,4)$ \\ 
  $Ph_2$ & $(5,11,6)$ \\ 
  $[\tau h_0 h_3^2]$ & $(3, 14, 7)$\\ 
  $d_0$ & $(4,14, 8)$ \\ 
  $[\tau h_0^2 d_0]$ & $(6,14,7)$ \\
  $h_4$ & $(1,15,8)$ \\
  $[\tau h_0^7 h_4]$ & $(8,15,7)$ \\
  $Pc_0$ & $(7,16,9)$ \\%
  \phantom{asd} & \phantom{asd} \\
  \end{tabular}
\end{center}
\end{minipage}
\begin{minipage}[t]{.32\textwidth}
\begin{center}
  \begin{tabular}{ll}\hline
    Elt. & Filtr. $(f,s,w)$ \\ \hline 
    $[\tau Pc_0]$ & $(7, 16, 8)$\\%
    $e_0$ & $(4, 17, 10)$ \\%
    $P^2h_1$ & $(9,17,9)$\\%
    $[\tau P^2 h_1]$ & $(9,17,8)$\\%
    $f_0$ & $(4,18, 10)$ \\ %
    $P^2h_2$ & $(9,19,10)$\\ %
    $c_1$ & $(3,19,11)$ \\ %
    $[\tau c_1]$ & $(3,19,10)$\\%
    $[\rho\tau g]$ & $(4, 19, 10)$ \\%
    $[\tau^2 g]$ & $(4, 20, 10)$ \\
    \phantom{asd} & \phantom{asd} \\
\end{tabular}
\end{center}
\end{minipage}
\caption{The irreducible elements of $\Ext(\bbF_q)$ with $q \equiv 3 \bmod 4$ in stem $s\leq 21$}
\label{table:3mod4}
\end{table}
\end{proposition}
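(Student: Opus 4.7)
The plan is to compute the $E_\infty$-page of the $\rho$-Bockstein spectral sequence in the range $s \le 21$ and read off a generating set; because the filtration $0 \subseteq \rho \calC \subseteq \calC$ has length two, $E_2 = E_\infty$, and the $E_\infty$-page is exactly the associated graded of $\Ext(\bbF_q)$ for the $\rho$-filtration. The $E_1$-page is two copies of $\Ext(\bbC)$ (in $\epsilon = 0$ and $\epsilon = 1$), and Isaksen's tables \cite{StableStems} describe $\Ext(\bbC)$ in a much larger range than we need.

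First I would extend the $d_1$-differentials from Proposition \ref{prop:rhoBSS_diffs} to every class in the range by the Leibniz rule. The only non-vanishing $d_1$ on irreducibles in stem $\le 20$ are $d_1(\tau) = \rho h_0$ and $d_1([\tau g]) = \rho h_2 e_0$, so for any $x \in \Ext(\bbC)$ with $d_1(x) = 0$ we get $d_1(\tau x) = \rho h_0 x$ and $d_1(\tau^2 x) = 0$. The criterion for a class of the form $\tau x$ to survive to $E_\infty$ is therefore $h_0 \cdot x = 0$ in $\Ext(\bbC)$, and Isaksen's tables make this check mechanical on a case-by-case basis. In the $\epsilon = 1$ column, a class $\rho y$ survives iff $y$ does not appear in the image of $d_1$, that is, iff $y$ is not of the form $h_0 x$ for some $x$ with $d_1(x) = 0$ divisible by $\tau$.

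Next I would read off the generators of $E_\infty$. The classes in the $\epsilon = 0$ column contribute generators of $\Ext(\bbF_q)$ not divisible by $\rho$: these split into the surviving irreducibles of $\Ext(\bbC)$ (namely $h_i$, $c_0$, $d_0$, $Ph_i$, $e_0$, $f_0$, $c_1$, $Pc_0$, $P^2 h_1$, $P^2 h_2$) and new classes $[\tau x]$ arising whenever $h_0 x = 0$, yielding $[\tau h_1], [\tau h_2^2], [\tau h_0^3 h_3], [\tau c_0], [\tau P h_1], [\tau h_0 h_3^2], [\tau h_0^2 d_0], [\tau h_0^7 h_4], [\tau P c_0], [\tau P^2 h_1], [\tau c_1]$ and, at the boundary, $[\tau^2 g]$ in stem $20$ (noting that $d_1([\tau^2 g]) = \tau \cdot d_1([\tau g]) = \tau \rho h_2 e_0$ lives in $\rho \tau \Ext(\bbC)$, which is not in the $\epsilon = 1$ part of $E_1$, and so vanishes). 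The $\epsilon = 1$ column contributes the $\rho$-divisible classes $\rho$, $[\rho\tau]$, and $[\rho \tau g]$ (the latter because $\rho h_2 e_0$ is hit by $d_1$ on $[\tau g]$, so $[\tau g]$ itself does not survive but $[\rho \tau g]$ does survive as a cycle in the image column). Decomposability is then checked stem-by-stem: a class $[\tau x]$ cannot be decomposed as $\tau \cdot x$ in $\Ext(\bbF_q)$ precisely because $\tau \notin \Ext(\bbF_q)$, and the other indecomposability checks are routine in the displayed range.

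The main obstacle is verifying that no subtle hidden extension in the $\rho$-filtration produces an unexpected multiplicative decomposition, turning an apparent irreducible into a product. The potentially delicate cases concentrate around stems $14$--$17$ where the $h_0$-towers and the $Pc_0$, $P^2 h_1$ patterns meet, and at stem $20$ where $[\tau^2 g]$ and $[\rho \tau g]$ interact through the relation $\rho \cdot [\tau^2 g] = \tau \cdot [\rho \tau g]$ up to higher $\rho$-filtration. These hidden product checks were independently confirmed by the computer calculations of Fu and Wilson \cite{Fu-Wilson}, which matches the list in Table \ref{table:3mod4} exactly and thus completes the identification of irreducibles.
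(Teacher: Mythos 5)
Your overall strategy is the same as the paper's: run the two-step $\rho$-Bockstein filtration (so $E_2=E_\infty$), extend the $d_1$'s of Proposition \ref{prop:rhoBSS_diffs} by the Leibniz rule, note that $\rho y$ is reducible whenever $y\neq 1$ survives, and decide irreducibility of a surviving $x$ by asking whether it admits a factorization $x=ab$ in $\Ext(\bbC)$ with both factors surviving; both you and the paper then lean on the Fu--Wilson computer calculations for the hidden multiplicative structure. So the architecture is fine.

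There is, however, a concrete error in your treatment of the stem-$20$ class $[\tau^2 g]=\tau\cdot[\tau g]$. You compute $d_1([\tau^2 g])=\tau\cdot d_1([\tau g])=\rho\tau h_2e_0$, omitting the Leibniz term $d_1(\tau)\cdot[\tau g]=\rho h_0[\tau g]$, and you then dismiss $\rho\tau h_2e_0$ on the grounds that it ``is not in the $\epsilon=1$ part of $E_1$.'' That claim is false: the $\epsilon=1$ column is all of $\rho\Ext(\bbC)$, and $\tau h_2e_0$ is a nonzero element of $\Ext(\bbC)$ (indeed $\tau h_2e_0=h_0[\tau g]\neq 0$), so $\rho\tau h_2e_0$ is a nonzero class there. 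As written, your computation would force $d_1([\tau^2 g])\neq 0$, i.e.\ $[\tau^2 g]$ would not survive, contradicting the table. The correct argument is that the full Leibniz expansion is $d_1(\tau\cdot[\tau g])=\rho h_0[\tau g]+\rho\tau h_2e_0=\rho\bigl(h_0[\tau g]+\tau h_2e_0\bigr)=0$ by the relation $h_0[\tau g]=\tau h_2e_0$ in $\Ext(\bbC)$ — the same relation the paper uses in Proposition \ref{prop:rhoBSS_diffs} to pin down $d_1([\tau g])$ in the first place. Also note that your general statement ``$d_1(\tau^2x)=0$ for $x$ with $d_1(x)=0$'' does not cover this case, since $[\tau g]$ is not a $d_1$-cycle; $[\tau^2 g]$ genuinely requires the separate cancellation above. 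With that repair, your argument matches the paper's.
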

\begin{proof} 
  The structure of $\Ext(\bbF_q)$ as an abelian group follows directly
  from the $\rho$-BSS and the differentials calculated in proposition
  \ref{prop:rhoBSS_diffs}. We now explain why the tabulated elements
  comprise all of the irreducible elements in this range. If $y \in
  H^{**}(\rho\calC) \cong \rho \Ext(\bbC)$, then we may write $y =
  \rho \cdot x$ with $x \in H^{**}(\calC/\rho\calC) \cong \Ext(\bbC)$.
  So long as $x\neq 1$ and $d_1(x)=0$, the element $y$ is
  reducible. By proposition \ref{prop:rhoBSS_diffs} we conclude the
  only irreducible elements arising from $\rho\Ext(\bbC)$ in this
  range are $\rho$, $[\rho \tau]$ and $[\rho\tau g]$.

  Now consider an element $x$ of $H^{**}(\calC/\rho\calC) \cong
  \Ext(\bbC)$ which survives the $\rho$-BSS, that is, $d_1(x)=0$. Then
  $x$ is irreducible in $\Ext(\bbF_q)$ if and only if for any
  factorization $x = a\cdot b$ in $\Ext(\bbC)$ with $d_1(a)=d_1(b)=0$
  it follows $a=1$ or $b=1$. This observation identifies all of the
  remaining irreducible elements in $\Ext(\bbF_q)$ in the range $s\leq
  21$.
\end{proof}

\begin{remark}
  Although proposition \ref{irred_3mod4} lists all of the irreducible
  elements in $\Ext(\bbF_q)$ when $q \equiv 3 \bmod 4$ in a range,
  there are hidden products in the $\rho$-BSS. For example, the
  product $[\tau h_2^2]\cdot h_1 = \rho c_0$ is hidden in the
  $\rho$-BSS. We obtained this product by computer calculation,
  however the arguments by Dugger and Isaksen in \cite[6.2]{DI-Real}
  can be used to obtain some products by hand.
\end{remark}

\subsection{The Adams spectral sequence for $H\bbZ[p^{-1}]$}

We begin with the motivic Adams spectral sequence for
$X=H\bbZ[p^{-1}]$ over a finite field $\bbF_q$ of characteristic
$p$, as defined in \ref{def:mass}. In propositions
\ref{tau-differentials-1} and \ref{tau-differentials-3} we identify
the differentials for $\MASS_{\bbF_q}(H\bbZ[p^{-1}])$ which
converges to $\pi_{**} (H\bbZ[p^{-1}]\twocomp) \cong
H_{**}(\bbF_q ;\bbZ)\twocomp$.  We accomplish this by working
backwards from our knowledge of the target group $H^{**}(\bbF_q;
\bbZ)\twocomp$ which is isomorphic to $H_{et}^*(\bbF_q; \bbZ_2(*))$ as
a consequence of the Beilinson-Lichtenbaum conjecture.  Soul{\'e}'s
calculation of $ H_{et}^*(\bbF_q; \bbZ_2(*))$ in \cite[IV.2]{Soule}
then gives
  \begin{equation*}
    \pi_{s,w}(H\bbZ[p^{-1}]) \cong
    \begin{cases} %
      \bbZ_{\ell} & \text{if } s=w=0 \\ %
      \bbZ/(q^w-1)\twocomp & \text{if } s=-1, w\geq 1 \\ %
      0 & \text{otherwise}.
    \end{cases}
  \end{equation*}
  Although the spectrum $H\bbZ[p^{-1}]$ is cellular by the
  Hopkins-Morel theorem proven by Hoyois \cite[\S8.1]{Hoyois}, it is
  unclear if it is finite type. Instead of relying on proposition
  \ref{prop:convergence} for convergence, we establish a weak
  equivalence of the $H$-nilpotent completion of $H\bbZ[p^{-1}]$ with
  $H\bbZ\twocomp$.

\begin{lemma}
  \label{lem:hz_ellcomp}
  Let $\bbF_q$ be a finite field of characteristic $p\neq 2$.  The
  $H$-nilpotent completion of $H\bbZ[p^{-1}]$ is weak equivalent
  to $H\bbZ\twocomp$.
\end{lemma}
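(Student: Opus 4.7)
The plan is to realize $H\bbZ\twocomp$ as the $H$-nilpotent completion of $H\bbZ[p^{-1}]$ by verifying two properties of the natural map $\phi : H\bbZ[p^{-1}] \to H\bbZ\twocomp$, which exists because the odd prime $p$ is a unit in $\bbZ_2$. Specifically, it suffices to show that $H\bbZ\twocomp$ is already $H$-nilpotent complete and that $\phi$ is an $H$-equivalence; the universal property of the $H$-nilpotent completion then yields the stated weak equivalence.

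To establish that $H\bbZ\twocomp$ is $H$-nilpotent complete, write $H\bbZ\twocomp \simeq \holim_n H\bbZ/2^n$. The cofiber sequences $H\bbZ/2 \to H\bbZ/2^n \to H\bbZ/2^{n-1}$ exhibit each $H\bbZ/2^n$ as an $n$-fold iterated extension of copies of $H$, hence as an $H$-nilpotent spectrum. Since the class of $H$-nilpotent complete spectra contains all $H$-nilpotent spectra and is closed under homotopy limits, $H\bbZ\twocomp$ is $H$-nilpotent complete.

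To see that $\phi$ is an $H$-equivalence, smash it with $H$. Writing $H\bbZ[p^{-1}] \simeq \colim\bigl(H\bbZ \xrightarrow{p} H\bbZ \xrightarrow{p} \cdots \bigr)$ and using that $p$ is a unit modulo $2$, so that multiplication by $p$ is the identity on $H = H\bbZ/2$ and hence invertible on $H \wedge H\bbZ$, one obtains $H \wedge H\bbZ[p^{-1}] \simeq H \wedge H\bbZ$. A parallel argument working tower-wise through $\{H\bbZ/2^n\}$ shows $H \wedge H\bbZ\twocomp \simeq H \wedge H\bbZ$ as well, and these identifications are compatible with $\phi$.

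The principal obstacle is handling the smash-holim interchange needed to compute $H \wedge H\bbZ\twocomp$ without appealing to the finite-type hypothesis of proposition \ref{prop:convergence}, which is not known to hold for $H\bbZ[p^{-1}]$. This is best circumvented by working directly with the Adams tower $\{\unit/\unit_f\}$ of the sphere: because each $\unit/\unit_f$ is a finite iterated extension of copies of $H$, it is $2^f$-torsion, so a stage-wise induction shows $H\bbZ[p^{-1}] \wedge \unit/\unit_f \simeq H\bbZ\twocomp \wedge \unit/\unit_f$ for every $f$. Passing to the homotopy limit and using that $H\bbZ\twocomp$ is $H$-nilpotent complete yields $(H\bbZ[p^{-1}])\Hcomp \simeq H\bbZ\twocomp$.
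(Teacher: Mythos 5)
Your proposal takes a genuinely different route from the paper. The paper's proof appeals to Bousfield's notion of an $H$-nilpotent resolution (definitions and results [5.6], [5.7], [5.8] of his localization paper, as observed in Dugger--Isaksen to carry over motivically): it shows the tower $\{H\bbZ/2^\nu\}$ under $H\bbZ[p^{-1}]$ is an $H$-nilpotent resolution by checking that each $H\bbZ/2^\nu$ is $H$-nilpotent and that $\colim_\nu\SH_{\bbF_q}(H\bbZ/2^\nu,N)\to\SH_{\bbF_q}(H\bbZ[p^{-1}],N)$ is an isomorphism for all $H$-nilpotent $N$; this last is reduced to the vanishing of $\SH_{\bbF_q}(H\bbZ[p^{-1}],N)[1/2]$, proven by induction over Bousfield's filtration of the $H$-nilpotent spectra. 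Crucially, this argument works entirely with mapping groups and never needs to commute smash products against homotopy limits. Your plan instead attempts to directly identify $(H\bbZ[p^{-1}])\Hcomp$ by comparing Adams tower stages.

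The difficulty, which you correctly identify but do not actually resolve, is computing anything of the form $H\bbZ\twocomp\wedge Y$. Your last paragraph asserts $H\bbZ[p^{-1}]\wedge\unit/\unit_f\simeq H\bbZ\twocomp\wedge\unit/\unit_f$ ``by a stage-wise induction'' from the $2^f$-torsion of $\unit/\unit_f$. But trace the induction through the triangles $W_j\to\unit/\unit_{j+1}\to\unit/\unit_j$: the base case (and every stage $W_j = H\wedge\myol{H}^{\wedge j}$) requires precisely the comparison $H\bbZ[p^{-1}]\wedge H\simeq H\bbZ\twocomp\wedge H$, which is the unresolved smash--holim interchange from your third paragraph. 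The $2^f$-torsion observation alone does not close this. What is missing is an argument that the fiber $F$ of $H\bbZ\to H\bbZ\twocomp$ satisfies $F\wedge\unit/\unit_f=0$; the standard way is to note $F\simeq\holim(\cdots\xrightarrow{2}H\bbZ\xrightarrow{2}H\bbZ)$, on which multiplication by $2$ is invertible (it is a shift of the tower), so that $F\wedge\unit/\unit_f$ carries both an invertible and a nilpotent self-map given by $2$, hence vanishes. That extra step would make your route work and is arguably tidier than the paper's filtration induction, but as written the gap is real. Separately, the appeal in your first paragraph to ``the universal property of the $H$-nilpotent completion'' is imprecise: $X\Hcomp$ is not terminal among $H$-nilpotent complete spectra receiving an $H$-equivalence from $X$ (that characterizes the $H$-localization $L_HX$, which in general differs from $X\Hcomp$). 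The desired conclusion should instead be drawn from the explicit description $X\Hcomp=\holim_f(\unit/\unit_f)\wedge X$ together with the stage-wise comparison, which is what your final paragraph attempts.
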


\begin{proof}
  We will show that the tower $H\bbZ/2 \leftarrow H\bbZ/2^2 \leftarrow
  H\bbZ/2^3 \leftarrow \cdots$ under $H\bbZ[p^{-1}]$ is an
  $H$-nilpotent resolution under $H\bbZ[p^{-1}]$ (defined by Bousfield
  in \cite[5.6]{Bousfield}). It will then follow that the homotopy
  limit of this tower is weak equivalent to the $H$-nilpotent
  completion of $H\bbZ[p^{-1}]$; that is, $H\bbZ\twocomp \cong
  H\bbZ[p^{-1}]\Hcomp$ by the observations of Dugger and Isaksen in
  \cite[\S7.7]{DI} which shows Bousfield's result
  \cite[5.8]{Bousfield} holds in the motivic stable homotopy category.

  The spectrum $H\bbZ[p^{-1}]$ is the homotopy colimit of the diagram
  $H\bbZ \xrightarrow{p\cdot} H\bbZ \xrightarrow{p\cdot} \cdots$.
  From the triangle $H\bbZ \xrightarrow{2^{\nu} \cdot} H\bbZ \to
  H\bbZ/2^{\nu}$, we obtain a triangle $H\bbZ[p^{-1}]
  \xrightarrow{2^{\nu}\cdot} H\bbZ[p^{-1}] \to H\bbZ/2^{\nu}$ after
  inverting $p$ since $p\neq 2$ and $H\bbZ/2^{\nu}
  \xrightarrow{p\cdot} H\bbZ/2^{\nu}$ is a homotopy equivalence.
  Consider the following cofibration sequence of towers.
  \begin{equation*}
    \xymatrix@R=.5cm{
    H\bbZ[p^{-1}] \ar[d]_{=} & 
    H\bbZ[p^{-1}] \ar[l]_{2\cdot} \ar[d]^{2\cdot } & 
    H\bbZ[p^{-1}] \ar[l]_{2\cdot} \ar[d]^{2^2 \cdot} & \cdots \ar[l]\\ 
    H\bbZ[p^{-1}] \ar[d] & 
    H\bbZ[p^{-1}] \ar[l]_{=} \ar[d] & 
    H\bbZ[p^{-1}] \ar[l]_{=} \ar[d] & \cdots \ar[l]\\
    \mathrm{pt} & 
    H\bbZ/2 \ar[l] & 
    H\bbZ/2^2 \ar[l] & \cdots \ar[l]
  }
  \end{equation*}
  It is clear that $H\bbZ/2^{\nu}$ is $H$-nilpotent for all $\nu\geq
  1$. For any $H$-nilpotent spectrum $N$ we show that the induced map
  $\colim_{\nu} \SH_{\bbF_q}(H\bbZ/2^{\nu}, N ) \to
  \SH_{\bbF_q}(H\bbZ[p^{-1}], N)$ is an isomorphism following the
  proof of Bousfield \cite[5.7]{Bousfield}. This isomorphism holds if
  and only if
  \begin{equation*}
    \colim \{ \SH_{\bbF_q}(H\bbZ[\tfrac{1}{p}], N) \xrightarrow{2\cdot} \SH_{\bbF_q}(H\bbZ[\tfrac{1}{p}], N) \} \cong 
    \SH_{\bbF_q}(H\bbZ[\tfrac{1}{p}], N)[\tfrac{1}{2}]
  \end{equation*}
  vanishes for all $H$-nilpotent $N$. This follows by an inductive
  proof with the following filtration of the $H$-nilpotent spectra
  given in \cite[3.8]{Bousfield}. Take $C_0$ to be the collection of
  spectra $H\wedge X$ for $X$ any spectrum, and let $C_{m+1}$ be the
  collection of the spectra $N$ for which either $N$ is a retract of
  an element of $C_m$ or there is a triangle $X \to N \to Z$ with $X$
  and $Z$ in $C_m$.

  If $N = H \wedge X$, it is clear that $\SH_{\bbF_q}(H\bbZ[p^{-1}],
  N) \xrightarrow{2} \SH_{\bbF_q}(H\bbZ[p^{-1}], N)$ is the zero map,
  which establishes the base case. If the claim holds for $N$ in
  filtration $C_m$, the claim holds for $N$ in filtration $C_{m+1}$ by
  a standard argument.  The claim now follows.
\end{proof}

\begin{proposition}
  \label{tau-differentials-1}
  The mod 2 motivic Adams spectral sequence for $X=H\bbZ[p^{-1}]$
  over $\bbF_q$ when $q \equiv 1 \bmod 4$ has $E_1$ page given by
  \begin{equation*}
    E_1 \cong \bbF_2[\tau,u, h_0]/(u^2)
  \end{equation*} 
  where $h_0 \in E_1^{1,(0,0)}$.

  Write $\nu_2$ for the 2-adic valuation and $\epsilon(q)$ for
  $\nu_2(q-1)$. For all $r\geq 1$ the differentials $d_r$ vanish on
  $u\tau^j$ and $h_0^j$. If $r < \epsilon(q) + \nu_{2}(j)$ the
  differentials $d_r\tau^j$ vanish and we have
  \begin{equation*}
    d_{\epsilon(q)+\nu_2(j)} \tau^{j} = u\tau^{j-1}
    h_0^{\epsilon(q) + \nu_2(j)}.  
  \end{equation*}
  In particular, the differential $d_1$ is trivial, so
  $E_2 \cong E_1$.
\end{proposition}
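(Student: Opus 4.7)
The plan is to identify the $E_1$ page via the $\Ext$-description of proposition \ref{prop:homological_E2} (the vanishing of $d_1$ being part of the content) and then force the remaining differentials using the known target of the MASS.

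For the $E_1$ page, observe first that since $p \neq 2$, inverting $p$ does not affect mod 2 motivic homology, so $H_{**}(H\bbZ[p^{-1}]) \cong H_{**}(H\bbZ)$. The cofiber sequence $H\bbZ \xrightarrow{2} H\bbZ \to H$ together with the fact that multiplication by $2$ is zero on the $H$-module $H \wedge H\bbZ$ yields a short exact sequence of $\calA_{**}$-comodules
\begin{equation*}
0 \to H_{**}(H\bbZ) \to \calA_{**} \to \Sigma^{-1,0} H_{**}(H\bbZ) \to 0.
\end{equation*}
Splicing this SES with its stem-shifts produces a cofree resolution of $H_{**}(H\bbZ)$ by copies of $\calA_{**}$, and the $\Ext_{\calA_{**}}(H_{**}, -)$-computation on this resolution yields $\Ext^s \cong H_{**}$ in each Adams filtration $s \geq 0$, with generator $h_0^s$ in trigrading $(f, s, w) = (s, 0, 0)$. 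Over $\bbC$ this gives $\bbF_2[\tau, h_0]$, and the flatness argument of proposition \ref{prop:e2_1mod4}---namely $\calA_{**}(\bbF_q) \cong \calA_{**}(\bbC) \otimes_{\bbF_2[\tau]} \bbF_2[\tau, u]/(u^2)$ with $\bbF_2[\tau, u]/(u^2)$ flat over $\bbF_2[\tau]$---transports this to $\bbF_2[\tau, u, h_0]/(u^2)$ over $\bbF_q$.

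For the differentials, convergence (lemma \ref{lem:hz_ellcomp}) identifies the target with $\pi_{**}(H\bbZ\twocomp) \cong H^{**}(\bbF_q; \bbZ_2)$, computed by Beilinson-Lichtenbaum and Soul{\'e} to be $\bbZ_2$ in stem/weight $(0,0)$, a cyclic $2$-group of order $2^{\epsilon(q) + \nu_2(j)}$ in the stem $-1$ bidegree whose associated $h_0$-tower sits on $u\tau^{j-1}$, and zero elsewhere. Degree considerations immediately force $u\tau^j$ and $h_0^j$ to be permanent cycles: a $d_r$ on $u\tau^j$ lands in stem $-2$ where $E_1 = 0$, and a $d_r$ on $h_0^j$ lands in an $E_1$-unpopulated bidegree. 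By the Leibniz rule, only the differentials on $\tau^j$ remain to be identified. Using the lifting-the-exponent formula $\nu_2(q^j - 1) = \epsilon(q) + \nu_2(j)$, valid for $q \equiv 1 \bmod 4$, matching the required surviving $h_0$-tower length on $u\tau^{j-1}$ to the target order forces $d_r\tau^j = u\tau^{j-1} h_0^r$ precisely at $r = \epsilon(q) + \nu_2(j)$, with all lower $d_{r'}\tau^j$ vanishing. The triviality of $d_1$ then follows since $\epsilon(q) \geq 2$ when $q \equiv 1 \bmod 4$.

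The main obstacle is the first step: justifying the resolution and the resulting $\Ext$-computation. In topology this typically proceeds by change of rings to the exterior sub-Hopf-algebra $E[Q_0]$, but motivically the relation $\tau_0^2 = \tau\xi_1$ prevents $\tau_0$ from generating such a subalgebra, so change of rings is unavailable in its naive form. The spliced cofiber-sequence resolution described above sidesteps this by producing a manifestly cofree resolution without identifying any sub-Hopf-algebra. Beyond this, the argument is largely bookkeeping, made manageable by the extreme sparsity of the $E_1$ page, which is concentrated in the two stems $\{0, -1\}$.
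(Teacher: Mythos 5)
Your proposal is correct in substance and reaches the same differentials by the same endgame (convergence via lemma \ref{lem:hz_ellcomp}, Soul\'e's computation, and the ``only possible pattern'' argument, which you spell out more explicitly than the paper by first disposing of $u\tau^j$ and $h_0^j$ on degree grounds and then invoking Leibniz). Where you genuinely diverge is in identifying the page itself. The paper does not compute $\Ext$ at all: it builds a non-standard $H$-Adams resolution of $H\bbZ[p^{-1}]$ directly from the triangles $H\bbZ[p^{-1}]\xrightarrow{2\cdot}H\bbZ[p^{-1}]\to H$ of lemma \ref{lem:hz_ellcomp}, so that every $W_f$ is a single copy of $H$ and the $E_1$ page is literally $\pi_{**}(H)[h_0]\cong\bbF_2[\tau,u,h_0]/(u^2)$ by inspection; this is what makes the proposition's statement about the $E_1$ page (rather than $E_2$) meaningful. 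You instead run the algebraic analogue: smashing the same cofiber sequence with $H$, using that $2=0$ on $H\wedge H\bbZ$ to get a short exact sequence of comodules, and splicing to an extended (hence relatively injective) resolution computing $\Ext_{\calA_{**}}(H_{**},H_{**}(H\bbZ))\cong H_{**}[h_0]$, then transporting from $\bbC$ to $\bbF_q$ by the flatness of $\bbF_2[\tau,u]/(u^2)$ over $\bbF_2[\tau]$. The two are really the same cofiber sequence viewed geometrically versus algebraically; yours buys an honest $E_2$-identification within the standard framework of proposition \ref{prop:homological_E2} and a clean way around the unavailability of the $E[Q_0]$ change-of-rings, while the paper's buys the literal $E_1$ statement with no homological algebra. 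Two cosmetic points: your cokernel should be the suspension $\Sigma^{1,0}H_{**}(H\bbZ)$, not $\Sigma^{-1,0}$ (the cofiber of a null map splits off a shifted copy in degree one higher), and since you only identify $E_2$, the clause ``$d_1$ is trivial, so $E_2\cong E_1$'' is for you a reinterpretation rather than a computation --- which you acknowledge and which costs nothing downstream, as only $E_2$ and the higher differentials are used in corollary \ref{sphere-tau-diffs}.
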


\begin{proof}
  We build the following $H^{**}$-Adams resolution of
  $H\bbZ[p^{-1}]$ utilizing the triangles constructed in
  \ref{lem:hz_ellcomp}.
  \begin{equation}
    \label{eq:H-adams-res}
    \xymatrix@R=.5cm{ H\bbZ[p^{-1}] \ar[rd]_{j_0} && H\bbZ[p^{-1}]
      \ar[ll]_{2\cdot} \ar[dr]_{j_1} && H\bbZ[p^{-1}] \ar[ll]_{2\cdot} & \cdots
      \ar[l] \\ &H \ar[ur]^-{\bullet}_{\del_0} && H
      \ar[ur]^-{\bullet}_{\del_1} && }
  \end{equation} 
  The spectrum $H\bbZ[p^{-1}]$ is cellular, so the motivic Adams
  spectral sequence for $X=H\bbZ[p^{-1}]$ converges to
  $\pi_{**}(H\bbZ[p^{-1}]\Hcomp)$ by proposition
  \ref{prop:convergence}. Lemma \ref{lem:hz_ellcomp} shows that
  $\pi_{**}(H\bbZ[p^{-1}]\Hcomp)\cong \pi_{**}(H\bbZ\twocomp)$,
  so the spectral sequence converges
  \begin{equation*}
    E_2^{f,(s,w)} \Rightarrow H^{-s,-w}(\bbF_q;\bbZ)\twocomp.
  \end{equation*}

  The groups $H^{s,w}(\bbF_q;\bbZ)\twocomp$ are isomorphic to
  $H_{et}^s(\bbF_q; \bbZ_2(w))$ which were calculated by Soul{\'e} in
  \cite[IV.2]{Soule}. If $q \equiv 1 \bmod 4$
  \begin{equation}
    \label{eq:ell-adic-cohom}
    H^{-s,-w}(\bbF_q;\bbZ)\twocomp \cong
    \begin{cases} %
      \bbZ_{\ell} & \text{if } s=w=0 \\ %
      \bbZ/(q^w-1)\twocomp & \text{if } s=-1, w\geq 1 \\ %
      0 & \text{otherwise}.
    \end{cases}
  \end{equation}
  Note that $\nu_2(q^{w}-1) = \epsilon(q) + \nu_2(w)$ for all natural
  numbers $w$. The formulas for the differentials on $\tau^j$ are the
  only choice to give $H^{**}(\bbF_q;\bbZ)\twocomp$ as the
  $E_{\infty}$ term.
\end{proof}

\begin{proposition}
  \label{tau-differentials-3}
  The mod 2 motivic Adams spectral sequence for $X=H\bbZ[p^{-1}]$
  over $\bbF_q$ when $q \equiv 3 \bmod 4$ has $E_1$ page given by
  \begin{equation*}
    E_1 \cong \bbF_2[\tau, \rho, h_0]/(\rho^2)
  \end{equation*} 
  where $h_0 \in E_1^{1,(0,0)}$.  

  For all $r\geq 1$ the differentials $d_r$ vanish on $\rho \tau^j$
  and $h_0^j$. For odd natural numbers $j$, we calculate $d_1 (\tau^j)
  = \rho \tau^{j-1} h_0$.  Write $\lambda(q)$ for $\nu_2(q^2-1)$.  If
  $r < \lambda(q) + \nu_{2}(n)$ the differentials $d_r \tau^{2n}$
  vanish and
  \begin{equation*}
    d_{\lambda(q)+\nu_2(n)} \tau^{2n} = \rho\tau^{2n-1}
    h_0^{\lambda(q) + \nu_2(n)}.
  \end{equation*}
\end{proposition}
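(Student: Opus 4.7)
The plan is to mirror the proof of Proposition \ref{tau-differentials-1}. The first step is to use the same standard $H$-Adams resolution of $X = H\bbZ[p^{-1}]$ built from the cofibration sequences $H\bbZ[p^{-1}] \xrightarrow{2\cdot} H\bbZ[p^{-1}] \to H$. Since $H\bbZ[p^{-1}]$ is cellular, Proposition \ref{prop:convergence} together with Lemma \ref{lem:hz_ellcomp} yield convergence of the resulting motivic Adams spectral sequence to $\pi_{**}(H\bbZ\twocomp) \cong H^{-*,-*}(\bbF_q;\bbZ)\twocomp$. For the $E_1$ page, the assumption $q \equiv 3 \bmod 4$ gives $u = \rho$ and hence $H^{**}(\bbF_q;\bbZ/2) \cong \bbF_2[\tau,\rho]/(\rho^2)$; the tower of the resolution contributes the polynomial generator $h_0 \in E_1^{1,(0,0)}$ exactly as in the $q \equiv 1 \bmod 4$ case, yielding $E_1 \cong \bbF_2[\tau,\rho,h_0]/(\rho^2)$.

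Both $\rho$ and $h_0$ are permanent cycles: $\rho$ detects the class $[-1] \in K^{M}_1(\bbF_q)$ and $h_0$ detects multiplication by $2$ in the abutment. Hence by the Leibniz rule the vanishing of $d_r(\rho\tau^j)$ follows from $\rho^2 = 0$ as soon as $d_r\tau^j$ is known, and $d_r h_0^j = 0$ is automatic. The task thus reduces to computing $d_r$ on the $\tau$-powers, and the values are forced by Soul\'{e}'s calculation of the abutment. A direct $2$-adic congruence argument gives $\nu_2(q^j - 1) = 1$ when $j$ is odd, and $\nu_2(q^{2n} - 1) = \lambda(q) + \nu_2(n)$ when $j = 2n$ is even (using $q^2 \equiv 1 \bmod 8$). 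In weight $-j$ at stem $-1$, the abutment therefore has order $2^{\nu_2(q^j - 1)}$, and it must be represented by a tower whose bottom class is $\rho\tau^{j-1}$. For $j$ odd this group is $\bbZ/2$, so only $\rho\tau^{j-1}$ may survive, forcing $d_1\tau^j = \rho\tau^{j-1}h_0$ as the unique nonzero option. For $j = 2n$ even, the classes $\rho\tau^{2n-1}h_0^{k}$ must survive to $E_\infty$ precisely for $0 \leq k < \lambda(q) + \nu_2(n)$, which simultaneously pins down the vanishing of $d_r\tau^{2n}$ for $r < \lambda(q) + \nu_2(n)$ and the nonzero differential $d_{\lambda(q)+\nu_2(n)}\tau^{2n} = \rho\tau^{2n-1}h_0^{\lambda(q)+\nu_2(n)}$.

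The main obstacle is the internal consistency of these formulas against the Leibniz rule. One must verify that $d_1\tau^{2n+1} = d_1(\tau\cdot\tau^{2n}) = \rho\tau^{2n}h_0$ is compatible with $d_1\tau = \rho h_0$ and $d_1\tau^{2n} = 0$, and more delicately that $d_r$ on the product $\tau^{2n} = (\tau^2)^n$ matches the predicted formula. Both reduce to the base cases $d_1\tau = \rho h_0$, $d_r\tau^2 = 0$ for $r < \lambda(q)$, and $d_{\lambda(q)}\tau^2 = \rho\tau h_0^{\lambda(q)}$, combined with the elementary identity $\nu_2(q^{2n} - 1) = \lambda(q) + \nu_2(n)$ via induction on $\nu_2(n)$; these are bookkeeping exercises once the cohomological input from Soul\'{e} is in hand.
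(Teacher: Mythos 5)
Your proposal is correct and follows essentially the same route as the paper: the paper's proof simply says that the argument for the $q \equiv 1 \bmod 4$ case (convergence of the spectral sequence for $H\bbZ[p^{-1}]$ to $H^{**}(\bbF_q;\bbZ)\twocomp$, plus Soul\'e's computation of the abutment forcing the differentials) goes through, with the orders $\nu_2(q^j-1)=1$ for $j$ odd and $\nu_2(q^{2n}-1)=\lambda(q)+\nu_2(n)$ dictating the stated formulas. Your additional remarks on the Leibniz-rule consistency and the $2$-adic valuations are elaborations of exactly the bookkeeping the paper leaves implicit.
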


\begin{proof}
  The proof of the previous proposition goes through, except the
  target groups $ H^{-s,-w}(\bbF_q;\bbZ)\twocomp$ force different
  differentials in the spectral sequence when $q \equiv 3 \bmod 4$.
  Soul{\'e}'s calculation in equation \ref{eq:ell-adic-cohom}
  shows the order of $H^{1,1}(\bbF_q;\bbZ)\twocomp$ is $\nu_2(q-1)=1$,
  so we conclude $d_1( \tau) = \rho h_0$. As we have $\nu_2(q^{2j}-1)
  = \lambda(q) + \nu_2(j)$ for all natural numbers $j$, the claimed
  formulas for the differentials on $\tau^{2n}$ hold.
\end{proof}

\begin{corollary}
  \label{sphere-tau-diffs}
  In the MASS of $\unit$ over a finite field $\bbF_q$ with $q \equiv 1
  \bmod 4$, the differentials $d_r(\tau^j)$ vanish when $r <
  \epsilon(q) + \nu_{2}(j)$ and
  \begin{equation*}
    d_{\epsilon(q)+\nu_2(j)} \tau^{j} = u\tau^{j-1}
    h_0^{\epsilon(q) + \nu_2(j)}.  
  \end{equation*}

  In the MASS of $\unit$ over a finite field $\bbF_q$ with $q \equiv 3
  \bmod 4$, the differentials $d_r ([\tau^{2}]^n)$ vanish when $r <
  \lambda(q) + \nu_{2}(n)$ and
  \begin{equation*}
    d_{\lambda(q)+\nu_2(n)} [\tau^{2}]^n = [\rho\tau][\tau^2]^{n-1}
    h_0^{\lambda(q) + \nu_2(n)}.
  \end{equation*}
\end{corollary}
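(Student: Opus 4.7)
The plan is to deduce the sphere differentials from those of $H\bbZ[p^{-1}]$ (propositions \ref{tau-differentials-1} and \ref{tau-differentials-3}) by naturality along the unit map $\eta : \unit \to H\bbZ[p^{-1}]$ of ring spectra over $\bbF_q$. By proposition \ref{prop:functoriality}, this produces a morphism of multiplicative motivic Adams spectral sequences $\eta_* : \MASS_{\bbF_q}(\unit) \to \MASS_{\bbF_q}(H\bbZ[p^{-1}])$, and the identity $\eta_*(d_r x) = d_r(\eta_*(x))$ is the bridge.

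First I would identify $\eta_*$ on the relevant generators at $E_2$. When $q \equiv 1 \bmod 4$, the subalgebra $\bbF_2[\tau,u,h_0]/(u^2) \subseteq \Ext(\bbF_q)$ of $E_2(\unit)$ maps isomorphically onto $E_2(H\bbZ[p^{-1}]) = \bbF_2[\tau,u,h_0]/(u^2)$, each generator hitting its namesake. When $q \equiv 3 \bmod 4$, the classes $[\tau^2]$, $[\rho\tau]$, $\rho$ and $h_0$ in $\Ext(\bbF_q)$ map to $\tau^2$, $\rho\tau$, $\rho$, $h_0$ respectively. Here $h_0$ detects multiplication by $2$ on the sphere and on $H\bbZ[p^{-1}]$, while the coefficient classes descend from the canonical inclusion of $H^{**}(\bbF_q)$ into either $E_2$ page via the standard Adams resolution. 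Combining naturality with propositions \ref{tau-differentials-1} and \ref{tau-differentials-3} then yields $\eta_*(d_r\tau^j) = 0$ for $r < r_0 := \epsilon(q)+\nu_2(j)$ and $\eta_*(d_{r_0}\tau^j) = u\tau^{j-1}h_0^{r_0}$ in the case $q \equiv 1 \bmod 4$, and analogously $\eta_*(d_{r_0}[\tau^2]^n) = [\rho\tau][\tau^2]^{n-1}h_0^{r_0}$ with $r_0 = \lambda(q)+\nu_2(n)$ in the case $q \equiv 3 \bmod 4$.

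The principal obstacle is to promote these image-level identities to genuine identities in $\MASS(\unit)$; an equation of the form $\eta_*(y) = z$ only determines $y$ up to $\ker\eta_*$ at the relevant bidegree. I would dispatch this by directly inspecting the bidegree $(r,(-1,-j))$, respectively $(r,(-1,-2n))$, of $\Ext(\bbF_q)$ using proposition \ref{prop:e2_1mod4} (respectively the $\rho$-Bockstein description underlying proposition \ref{irred_3mod4}). Since $\Ext(\Fpbar) \cong \Ext(\bbC)$ is concentrated in nonnegative stems, any class of stem $-1$ in $\Ext(\bbF_q)$ must carry a factor of $u$ (when $q\equiv 1\bmod 4$) or of $\rho$ or $[\rho\tau]$ (when $q\equiv 3\bmod 4$); the remaining factor lies in a one-dimensional summand of $\Ext(\Fpbar)$ concentrated at stem $0$, generated by the single monomial $\tau^{j-1}h_0^r$ (respectively $[\tau^2]^{n-1}h_0^r$ after combining with $[\rho\tau]$). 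This pins down the target of the differential uniquely, forces vanishing on all earlier pages, and delivers the claimed formulas in both cases.
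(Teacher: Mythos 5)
Your proof is correct and follows the same strategy as the paper: pull back the differentials from $\MASS(H\bbZ[p^{-1}])$ along the unit map using the identification of $\eta_*$ on the coefficient classes at $E_2$. Your explicit lifting argument (that the target bidegree $(r,(-1,-j))$, respectively $(r,(-1,-2n))$, is one-dimensional and detected by $\eta_*$ at each relevant page, so the image-level identity determines the differential on the sphere) is precisely the step the paper compresses into the phrase that the $H\bbZ[p^{-1}]$ differentials ``force'' those for the sphere.
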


\begin{proof}
  The unit map $\unit \to H\bbZ[p^{-1}]$ induces a map of motivic
  Adams spectral sequences $\MASS(\unit) \to \MASS(H\bbZ[p^{-1}])$. On
  the $E_2$ page, observe that when $q \equiv 1 \bmod 4$ the classes
  $\tau$ and $u$ map to $\tau$ and $u$ respectively. When $q \equiv 3
  \bmod 4$, the classes $[\tau^2]$, $\rho$, $[\rho \tau]$ map to
  $[\tau^2]$, $\rho$, $[\rho \tau]$ respectively. The identification
  of the differentials in the MASS for $H\bbZ[p^{-1}]$ in propositions
  \ref{tau-differentials-1} and \ref{tau-differentials-3} then force
  the differentials stated in the corollary.
\end{proof}

\begin{example}
  When $q \equiv 3 \bmod 4$, the Massey product $\langle \rho, \rho,
  h_0 \rangle$ in the mod 2 motivic Adams spectral sequence for
  $H\bbZ[p^{-1}]$ is $\rho\tau$. Since we have $\rho^2 =0$
  and $d_1(\tau) = \rho h_0$, it follows that $0 + \rho \tau$ is in
  the Massey product. It is straightforward to verify that the
  indeterminacy is trivial. 
\end{example}

\subsection{Stable stems over $\bbF_q$}

We now begin an analysis of the differentials in the MASS to identify
the two-complete stable stems over $\bbF_q$. To assist the reader with
the computations presented below, figure \ref{fig:1mod4} and figure
\ref{fig:3mod4} display $E_2$ page charts of the MASS over $\bbF_q$.
Further charts can be found in the work of Fu and Wilson
\cite{Fu-Wilson}. Throughout this section, $\bbF_q$ is a finite field
with $q$ elements where $q$ is odd, and we write $\hat{G}$ for the
two-completion of an abelian group $G$.

Corollary \ref{cor:fq_summand} shows that $\pihat_n^s$ is a summand of
$\pihat_{n,0}(\bbF_q)$ for all $n \geq 0$. We will soon see that for
small values of $n\geq 0$ we have $\hat{\pi}_{n,0}(\bbF_q) \cong
\hat{\pi}_n^s \oplus \hat{\pi}_{n+1}^s$.  However this pattern fails
when $n = 19$ and $q \equiv 1 \bmod 4$. %

\begin{lemma}
  \label{lem:0stem}
  For a finite field $\bbF_q$ with $q$ odd, there is an isomorphism
  $\pi_{0,0}(\bbF_q) \cong \pi_0^s \oplus \pi_1^s$.
\end{lemma}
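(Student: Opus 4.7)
The plan is to invoke two classical identifications. First, I would apply Morel's theorem from \cite{Morel12} which identifies the $0$-line of the motivic stable stems over any field $F$ with the Grothendieck-Witt ring, giving an isomorphism $\pi_{0,0}(\bbF_q) \cong GW(\bbF_q)$. This is the same identification already invoked at the end of the proof of theorem \ref{thm:ellcomp_iso}.

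Next, I would quote Scharlau's computation \cite[Chapter 2, 3.3]{Scharlau} of the Grothendieck-Witt ring of a finite field of odd characteristic, namely $GW(\bbF_q) \cong \bbZ \oplus \bbZ/2$. Combining these two isomorphisms yields $\pi_{0,0}(\bbF_q) \cong \bbZ \oplus \bbZ/2$.

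To conclude, I match this with the topological stems: $\pi_0^s \cong \bbZ$ is generated by the identity and $\pi_1^s \cong \bbZ/2$ is generated by the Hopf map $\eta$, so $\pi_0^s \oplus \pi_1^s \cong \bbZ \oplus \bbZ/2$, giving the desired abstract isomorphism $\pi_{0,0}(\bbF_q) \cong \pi_0^s \oplus \pi_1^s$.

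There is no real obstacle here; the statement is a direct consequence of results already cited in the paper, and is included as the base case of the stem-by-stem analysis that follows. If one wanted a canonical splitting compatible with the map $\bbL c$, one could note that the composite $\pi_0^s \xrightarrow{\bbL c} \pi_{0,0}(\bbF_q) \cong GW(\bbF_q) \xrightarrow{\mathrm{rank}} \bbZ$ is an isomorphism, so $\bbL c$ provides the $\bbZ$ summand, and the $\bbZ/2$ summand is the fundamental ideal $I(\bbF_q)$, which is detected by the class $\langle 1 \rangle - \langle u \rangle$ where $u$ is a non-square; this class corresponds to $\eta \in \pi_1^s$ under the identification arising from the motivic Hopf map.
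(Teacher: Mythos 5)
Your proposal is correct and is essentially identical to the paper's own proof: both invoke Morel's identification $\pi_{0,0}(\bbF_q)\cong GW(\bbF_q)$, Scharlau's computation $GW(\bbF_q)\cong\bbZ\oplus\bbZ/2$, and the standard values $\pi_0^s\cong\bbZ$, $\pi_1^s\cong\bbZ/2$. The extra remark about the canonical splitting via the rank map and the fundamental ideal is a nice addition but not needed for the stated (abstract) isomorphism.
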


\begin{proof}
  The stem $\pi_{0,0}(\bbF_q)$ is isomorphic to the Grothendieck-Witt
  group $GW(\bbF_q)$ by Morel \cite{Mor03}. The isomorphism
  $GW(\bbF_q)\cong \bbZ \oplus \bbZ/2$ was established by Scharlau in
  \cite[Chapter 2, 3.3]{Scharlau}. Recall that $\pi_0^s \cong \bbZ$ and
  $\pi_1^s \cong \bbZ/2$. Hence we conclude $\pi_{0,0}(\bbF_q) \cong
  \pi_0^s \oplus \pi_1^s$.
\end{proof}

Morel's calculation of $\pi_{0,0}(\bbF_q)$ shows that $2 =
(1-\epsilon) + \rho \eta$, hence multiplication by 2 in
$\pi_{**}(\bbF_q)$ is detected in the mod 2 motivic Adams spectral
sequence by the class $h_0 + \rho h_1$ in $\Ext(\bbF_q)$. This is
needed to solve the extension problems when passing from the Adams
spectral sequence $E_{\infty}$ page to the stable stems.

\begin{proposition}
  \label{prop:1mod4}
  When $q \equiv 1 \bmod 4$ and $0 \leq n \leq 18$, there is an
  isomorphism
  $\pihat_{n,0}(\bbF_q) \cong \pihat_n^s \oplus \pihat_{n+1}^s$.
\end{proposition}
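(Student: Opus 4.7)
The plan is to run the mod $2$ motivic Adams spectral sequence of $\unit$ over $\bbF_q$ in the range $0 \leq n \leq 18$ and compare stem-by-stem with the topological Adams spectral sequence over $\bbC$. Proposition \ref{prop:e2_1mod4} gives
\begin{equation*}
  E_2(\bbF_q) \cong \Ext(\bbC)\oplus u\cdot \Ext(\bbC)
\end{equation*}
as $\bbF_2[\tau]$-modules, where $u\in E_2^{0,(-1,-1)}$. Restricting to the weight-$0$ column relevant for $\pihat_{n,0}(\bbF_q)$ decomposes each $E_2^{f,(n,0)}(\bbF_q)$ as a sum of a weight-$0$ piece of $\Ext(\bbC)$ in stem $n$ (the ``topological part'', which by Levine's theorem and Corollary \ref{cor:fq_summand} is expected to contribute $\pihat_n^s$) and a $u$-multiple of a weight-$1$ piece of $\Ext(\bbC)$ in stem $n+1$ (which should give the shifted copy $\pihat_{n+1}^s$). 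The lemma \ref{lem:0stem} handles the base case $n=0$ via Morel's identification of $\pi_{0,0}(\bbF_q)$ with $GW(\bbF_q)$.

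For the differentials, I would leverage two inputs. First, by Corollary \ref{E2_iso} together with the base change map $\bbC \leftarrow W(\Fpbar) \to \Fpbar$ (via Proposition \ref{prop:Hcomp_compatibility}), every differential supported on a class coming from $\Ext(\bbC)$ in the $\bbF_q$-MASS agrees with the corresponding differential in the $\bbC$-MASS; these are fully known in our range from Dugger--Isaksen \cite{DI} and Isaksen \cite{StableStems}. Second, the ``new'' differentials involving $u$ are completely pinned down by Corollary \ref{sphere-tau-diffs}, which gives
\begin{equation*}
  d_{\epsilon(q)+\nu_2(j)}(\tau^{j}) = u\,\tau^{j-1} h_0^{\epsilon(q)+\nu_2(j)},
\end{equation*}
and by Leibniz expansions with this formula. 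The motivic May spectral sequence of the preceding subsection can be invoked to compute any Massey products that arise. Reading these against the charts up to stem $n=18$ yields the $E_\infty$ page of the MASS over $\bbF_q$ in the range of interest.

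To go from $E_\infty$ to the stable stems, I would solve the remaining $h_0$-extensions using Morel's identity $2 = \langle 1\rangle - \langle -1\rangle + \rho\eta$ in $\pi_{0,0}(\bbF_q)$, which, because $\rho=0$ in $H^{**}(\bbF_q;\bbZ/2)$ when $q\equiv 1\bmod 4$, ensures multiplication by $2$ is detected by $h_0$ on the topological summand. The splitting $\pihat_{n,0}(\bbF_q)\cong \pihat_n^s\oplus \pihat_{n+1}^s$ would then be obtained by comparing along $\bbF_q\to \Fpbar$ (which realizes $\pihat_n^s$ as a summand via Corollary \ref{cor:fq_summand}) and verifying that the $u$-part assembles into a complementary copy of $\pihat_{n+1}^s$ by matching filtration-by-filtration orders and extension data.

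The main obstacle I anticipate is the last step: confirming that no hidden multiplicative extensions mix the topological summand and the $u$-summand, especially in the more crowded stems ($n=14, 15, 17, 18$) where the $\bbC$-motivic $E_\infty$ already has several nontrivial classes per bidegree. The $\tau$-differentials couple the two summands on the $E_r$-level, so the direct sum structure is not a priori preserved at $E_\infty$; the absence of hidden ``cross'' extensions in the range must be checked case-by-case, using the $h_0$-, $h_1$-, and $h_2$-tower structure from the $\bbC$-calculation together with the constraint imposed by the group orders. The fact that this splitting fails at $n=19$ shows the check is genuinely content-bearing, so cross-referencing with the computer-assisted Ext calculations of Fu--Wilson \cite{Fu-Wilson} is the natural way to be certain that the range $n\leq 18$ really is free of such obstructions.
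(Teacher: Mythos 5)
Your overall strategy is the paper's: identify $E_2(\bbF_q)\cong \bbF_2[\tau,u]/(u^2)\otimes_{\bbF_2[\tau]}\Ext(\bbC)$, settle the differentials by comparison and by the $\tau$-differentials of corollary \ref{sphere-tau-diffs}, and use corollary \ref{cor:fq_summand} to control extensions. But there is one step that fails as stated: the claim that ``every differential supported on a class coming from $\Ext(\bbC)$ in the $\bbF_q$-MASS agrees with the corresponding differential in the $\bbC$-MASS.'' The only comparison maps available go from $\MASS(\bbF_q)$ to $\MASS(\Fqbar)$ (and from $\MASS(W(\Fpbar))$ to $\MASS(\Fpbar)$ and $\MASS(\Kbar)$); the map $E_2(\bbF_q)\to E_2(\Fqbar)$ kills $u$, so it determines $d_r$ on a class $x\in\Ext(\bbC)\subset\Ext(\bbF_q)$ only modulo $u$-divisible classes. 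The decomposition $E_2(\bbF_q)\cong \Ext(\bbC)\oplus u\,\Ext(\bbC)$ is an algebraic splitting of the $E_2$-page, not a splitting of spectral sequences, so differentials can (and for $\tau$ do) cross from the first summand into the second. Concretely, comparison alone leaves open whether $d_2(h_2)=uh_1^3$, whether $d_3(h_0h_4)$ is $h_0d_0$ or $h_0d_0+uh_1d_0$, and the analogous $u$-corrections for $e_0$ and $f_0$; corollary \ref{sphere-tau-diffs} plus Leibniz does not resolve these, since they are not $\tau$-power phenomena.

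This ambiguity is exactly what the paper's proof spends its effort on: it rules out $d_2(h_2)=uh_1^3$ by noting that this would force $d_2(\tau^2h_2)=u\tau^2h_1^3\neq 0$ and destroy the $\bbZ/8=\pihat_3^s$ summand guaranteed by corollary \ref{cor:fq_summand}; it pins down $d_2(Ph_2)$ and $d_2(P^2h_2)$ via the Massey products $\langle h_3,h_0^4,h_2\rangle$ and Moss's theorem; and it forces $d_3(h_0h_4)=h_0d_0$ (not $h_0d_0+uh_1d_0$) so that $\pihat_{14}^s\cong\bbZ/2\oplus\bbZ/2$ survives as a summand. So the summand property of corollary \ref{cor:fq_summand}, which you reserve for the final extension analysis, is in fact needed earlier, to determine the differentials themselves. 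With that repair your argument matches the paper's; the rest (the $n=0$ case via lemma \ref{lem:0stem}, the Leibniz computation in weight $0$, and the absence of hidden $2$-extensions because $\pihat_n^s$ must split off) is in line with what the paper does.
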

\begin{proof}
  Lemma \ref{lem:0stem} takes care of the case when $n = 0$. We now
  focus on $0<n\leq 18$ where the mod 2 MASS over $\bbF_q$ converges
  to the groups $\pihat_{n,0}(\bbF_q)$ by propositions
  \ref{prop:convergence} and \ref{prop:finiteness}.

  The irreducible elements of $\Ext(\bbF_q)$ in this range are given
  in table \ref{table:1mod4}.  All differentials $d_r$ for $r\geq 2$
  vanish on $h_0, h_1, h_3, c_0, Ph_1, d_0, Pc_0, P^2h_1$ for degree
  reasons.  As $\pihat_{3,0}(\bbF_q)$ must contain
  $\pihat_3^s \cong \bbZ/8$ as a summand by proposition
  \ref{cor:fq_summand}, we conclude
  $d_2 (\tau^2 h_2) = \tau^2 d_2(h_2) = 0$. The only possible non-zero
  value for $d_2(h_2)$ is $uh_1^3$. If $d_2(h_2)=uh_1^3$, then
  $d_2(\tau^2 h_2) = u\tau^2 h_1^3$ would be non-zero by the product
  structure of $\Ext(\bbF_q)$ in proposition
  \ref{prop:e2_1mod4}---a contradiction. Hence $d_2(h_2)= 0$.

  The non-zero Massey product $Ph_2 = \langle h_3, h_0^4, h_2\rangle$
  has no indeterminacy, because $h_3 E_2^{4,(3,2)} +
  E_2^{4,(7,4)}h_2=0$.  Since $\pihat_{11}^s\cong \bbZ/8$ is a summand
  of $\pihat_{11,0}$, the differential $d_2(Ph_2)$ must vanish.  The
  non-zero Massey product $P^2h_2 = \langle h_3, h_0^4, h_2\rangle$
  has no indeterminacy, because $h_3 E_2^{8,(11,6)} +
  E_2^{4,(7,4)}Ph_2 = 0$. Since $d_2 (Ph_2) = 0$, the topological
  result of Moss \cite[1(ii)]{Moss} implies $d_2 (P^2h_2) = 0$.

  The comparison map $\MASS(\bbF_q) \to \MASS(\Fpbar)$ shows that
  $d_2(h_4)$ and $d_3(h_0h_4)$ must be non-zero, as these
  differentials are non-zero in $\MASS(\Fpbar)$ by corollary
  \ref{E2_iso} and calculations of Isaksen \cite[Table 8]{StableStems}
  over $\bbC$. The only possible choice for $d_2(h_4)$ is $h_0h_3^2$,
  but $d_3(h_0h_4)$ is either $h_0d_0$ or $h_0d_0 + uh_1d_0$. In order
  to have $\pihat_{14}^s\cong \bbZ/2 \oplus \bbZ/2$ as a summand of
  $\pihat_{14,0}$, we must have $d_3(h_0h_4) = h_0d_0$.  A similar
  argument establishes $d_2 (e_0) = h_1^2 d_0$ and $d_2 (f_0) = h_0^2
  e_0$.  Note that $d_4 (h_0^3 h_4) = 0$ for degree reasons.

  The elements in weight $0$ are all of the form $\tau^j x$ or
  $u\tau^{j-1}x$ where $x$ is not a multiple of $\tau$ and of weight
  $j$.  The differentials of the elements in weight $0$ are now
  readily identified by using the Leibniz rule from corollary
  \ref{sphere-tau-diffs}.  Since $\pihat_n^s$ is a summand of
  $\pihat_{n,0}(\bbF_q)$ for all $n\geq 0$, we see that there are no
  hidden 2-extensions for $0< n \leq 18$. 
\end{proof}

In the proof of the following proposition, we provide some technical
details in a smaller font for the convenience of the reader. We follow
the convention of Dugger and Isaksen \cite{DI} and write the grading
of an element in the motivic May spectral sequence as $(m,s,f,w)$
where $m$ is the May filtration, $s$ is the stem, $f$ is the Adams
filtration and $w$ is the motivic weight. However, we continue to
write the grading in the MASS as $(f, (s,w))$.

\begin{proposition}
  \label{19and20} 
  When $q \equiv 1 \bmod 4$, there are isomorphisms
  $\hat{\pi}_{19,0}(\bbF_q) \cong (\bbZ/8 \oplus \bbZ/2) \oplus
  \bbZ/4$ and $\hat{\pi}_{20,0}(\bbF_q) \cong \bbZ/8 \oplus
  \bbZ/2$. In particular, when $q \equiv 1 \bmod 8$ we find $d_2([\tau
  g])$ is trivial, and when $q \equiv 5 \bmod 8$ we calculate
  $d_2([\tau g]) =u h_0 h_2 e_0$.
\end{proposition}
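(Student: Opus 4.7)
The plan is to determine the $E_\infty$ page of $\MASS_{\bbF_q}(\unit)$ at bidegrees $(s,w)=(19,0)$ and $(20,0)$, then resolve the resulting filtration extensions. First I would enumerate the classes of $E_2$ in these bidegrees: by proposition \ref{prop:e2_1mod4} together with Isaksen's charts for $\Ext(\bbC)\cong\Ext(\Fpbar)$, each class has the form $\tau^j x$ or $u\tau^{j-1}y$ with $x$ and $y$ irreducible generators from $\Ext(\Fpbar)$ appropriately shifted so that the total bidegree is $(19,0)$ or $(20,0)$. The only new generator appearing in this range beyond what was already handled in proposition \ref{prop:1mod4} is $[\tau g]$ in tridegree $(4,20,11)$.

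Next I would carry over the differentials already determined in proposition \ref{prop:1mod4}, in particular the base-change of Isaksen's differentials $d_2(h_4)=h_0h_3^2$, $d_2(e_0)=h_1^2d_0$, $d_2(f_0)=h_0^2e_0$, and propagate them by the Leibniz rule together with the $\tau$-differentials from corollary \ref{sphere-tau-diffs}. This accounts for every $d_2$ at $(19,0)$ and $(20,0)$ except for $d_2([\tau g])$ and the interactions it produces with $u\tau^{10}$-multiples.

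To pin down $d_2([\tau g])$, I would exploit the relation $h_0\cdot[\tau g]=\tau h_2e_0$, which holds in $\Ext(\bbF_q)$ since it is inherited from $\Ext(\bbC)$ via corollary \ref{E2_iso}. Applying $d_2$ yields
\begin{equation*}
h_0 \, d_2([\tau g]) \;=\; d_2(\tau)\cdot h_2e_0 \;+\; \tau\cdot d_2(h_2 e_0).
\end{equation*}
Isaksen's complex calculation shows $d_2(h_2e_0)=0$. When $q\equiv 1\bmod 8$ one has $\epsilon(q)\geq 3$, so corollary \ref{sphere-tau-diffs} gives $d_2(\tau)=0$ and hence $h_0\cdot d_2([\tau g])=0$; checking that $h_0$ acts injectively on the relevant group of $E_2$-classes forces $d_2([\tau g])=0$. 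When $q\equiv 5\bmod 8$ one has $\epsilon(q)=2$, so $d_2(\tau)=uh_0^2$ and therefore $h_0\cdot d_2([\tau g])=uh_0^2 h_2 e_0$; since there is a unique class mapping to $uh_0^2h_2e_0$ under $h_0$-multiplication in this bidegree, we obtain $d_2([\tau g])=uh_0h_2e_0$.

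Finally, I would assemble the $E_\infty$ page in $(19,0)$ and $(20,0)$ and resolve the group extensions. The subgroup $\pihat_n^s$ is a summand of $\pihat_{n,0}(\bbF_q)$ by corollary \ref{cor:fq_summand}, pinning down $\bbZ/8\oplus\bbZ/2\subset\pihat_{19,0}(\bbF_q)$ and $\bbZ/8\subset\pihat_{20,0}(\bbF_q)$, and multiplication by $2$ is detected by $h_0$ (since $\rho=0$). The main obstacle I foresee is showing that the two very different pictures for $d_2([\tau g])$ produce the same final answer: when $q\equiv 5\bmod 8$ the nontrivial $d_2([\tau g])$ directly removes the appropriate $E_2$ contributions, whereas when $q\equiv 1\bmod 8$ the class $[\tau g]$ survives past $E_2$ and must be eliminated by a later differential driven by the higher $d_r(\tau^j)$ formulas in corollary \ref{sphere-tau-diffs}. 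Verifying this coincidence, and checking by hand that no hidden $2$-extensions survive beyond those forced by the summand $\pihat_n^s$, is the delicate combinatorial step.
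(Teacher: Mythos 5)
Your overall scaffolding (enumerate the weight-zero $E_2$ classes in stems $19$ and $20$, import the differentials from proposition \ref{prop:1mod4} and corollary \ref{sphere-tau-diffs}, and use the splitting of corollary \ref{cor:fq_summand} together with $h_0$-detection of multiplication by $2$ to resolve the extensions) matches the paper. But the central step --- determining $d_2([\tau g])$ --- has a genuine gap. Your plan is to apply the Leibniz rule to the relation $h_0\,[\tau g]=\tau h_2e_0$, obtaining $h_0\cdot d_2([\tau g]) = d_2(\tau)\,h_2e_0$, and then to divide by $h_0$. This fails because $h_0$ is \emph{not} injective on the relevant part of $E_2^{6,(19,11)}$: the only candidate value for $d_2([\tau g])$ is $uh_0h_2e_0$, and $h_0\cdot uh_0h_2e_0 = u h_0^2h_2e_0 = u\,h_0^3 g = 0$, since the $h_0$-tower on $g$ in the $20$-stem of $\Ext(\bbC)$ has length three (this is visible in figure \ref{fig:1mod4}, where the tower on $\tau^{11}[\tau g]$ stops at filtration $6$). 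For the same reason the right-hand side $d_2(\tau)h_2e_0 = uh_0^2h_2e_0$ vanishes even when $q\equiv 5\bmod 8$. So your identity reads $0=0$ in every case and distinguishes neither $d_2([\tau g])=0$ from $d_2([\tau g])=uh_0h_2e_0$; the claimed ``unique class mapping to $uh_0^2h_2e_0$ under $h_0$-multiplication'' does not exist because that target is zero.

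This is exactly why the paper takes a different and heavier route: it first shows $[\tau g]$ is the Massey product $\langle\tau,h_1^4,h_4\rangle$ with zero indeterminacy, using May differentials ($d_4(b_{21}^2)=h_1^4h_4$) and the May convergence theorem with a check of crossing differentials, and then applies Moss's theorem to compute $d_2$ of that Massey product, evaluating the three bracket terms $\langle d_2(\tau),h_1^4,h_4\rangle$, $\langle\tau,0,h_4\rangle$, $\langle\tau,h_1^4,h_0h_3^2\rangle$ separately. Some such secondary-operation input appears unavoidable here, precisely because all primary products out of the candidate classes degenerate. The remainder of your outline would go through once $d_2([\tau g])$ is known, but you would still need to exhibit the hidden $2$-extension from $u\tau^{11}h_3^3$ to $\tau^{12}h_2e_0$ in the $20$-stem (forced by $\hat\pi_{20}^s\cong\bbZ/8$ being a summand) rather than merely checking that no unexpected extensions occur.
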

\begin{proof} 
  When $q \equiv 1 \bmod 4$, it is possible that $d_2([\tau g]) $ is
  $uh_0h_2e_0$. We analyze this differential using Massey products
  obtained from the May spectral sequence.  We show that $\langle
  \tau, h_1^4, h_4 \rangle = \{[\tau g] \}$ in the $E_2$ page of the
  MASS using Massey products in the May spectral sequence and the May
  convergence theorem in Isaksen \cite[2.2.1]{StableStems}. %

  At the $E_4$ page of the May spectral sequence we calculate
  $d_4(b_{21}^2) = h_1^4 h_4$ and $d_4(0) = \tau h_1^4$, as $\tau
  h_1^4 =0$; hence $[\tau g] = \tau b_{21}^2 \in \langle \tau, h_1^4,
  h_4 \rangle$ in the May spectral sequence.  There are no crossing
  differentials, so the May convergence shows $[\tau g] \in \langle
  \tau, h_1^4, h_4 \rangle$ in the MASS.

  \details{In this case, we must check if there are crossing
    differentials $d_t$ for $t\geq 5$. To see $E_4^{*,5,3,3}=0$ over
    $\bbF_q$, we check $E_4^{*,5,3,3}=0$ and $E_4^{*,6,3,4}=0$ over
    $\bbC$ using the chart in \cite[Appendix C]{DI}.  All that is in
    $(*,5,3,3)$ is $h_1b_{20}$, but this does not survive to
    $E_4$. And nothing is in $(*,6,3,4)$ even at the $E_2$ page.

    To see $E_5^{*,20,4,12}$ is trivial over $\bbF_q$, observe that
    all that is in $E_4^{*,20,4,12}$ over $\bbC$ is $b_{21}^2$, which
    does not survive to the $E_5$ page. The group $E_4^{*,21,4,13}$
    over $\bbC$ is trivial. A potential contribution from $h_0h_3^3$
    or $h_0h_2^2h_4$ is ruled out by weight reasons, and because they
    do not survive to the $E_4$ page from the differentials
    $d_2(h_0(1))$ and $d_2(h_0b_{22})$.  }

  The indeterminacy $\tau E_2^{4, (20,12)}+E_2^{3, (5,3)}h_4$ in the
  MASS is trivial, so we conclude $\langle \tau, h_1^4, h_4 \rangle =
  \{[\tau g ]\}$.

  We now identify $d_2([\tau g])$ using the following formula due to
  Moss \cite[1(i)]{Moss}.
  \begin{equation}
    \label{eq:taug}
    d_2(\langle \tau, h_1^4, h_4 \rangle) \subseteq 
    \langle d_2(\tau), h_1^4,h_4\rangle + 
    \langle \tau, 0, h_4\rangle + 
    \langle \tau, h_1^4, h_0h_3^2 \rangle
  \end{equation}
  The Massey product $\langle \tau, 0, h_4\rangle$ contains $0$ and
  has no indeterminacy.

  \details{ Here $0 = d_2(h_1^4)$ is in grading $E_2^{6,(3,4)}$, so
    the indeterminacy is $ \tau E_3^{6,(19,12)} + E_3^{5,(4,3)}h_4.$
    The degree of $h_1^2e_0$ is $6,(19,12)$, but it does not survive
    to the $E_3$ page. The group $E_3^{5,(4,3)}$ is trivial by
    checking the $E_2$ page. }

  To calculate $ \langle \tau, h_1^4, h_0h_3^2 \rangle$ we again use
  the May spectral sequence and the May convergence theorem. We
  calculate this Massey product at the $E_2$ page using
  $d_2(h_2b_{20}) = \tau h_1^4$ and $h_1^4h_0h_3^2 =0$ and see that $0
  \in \langle \tau, h_1^4, h_0h_3^2 \rangle$. There are no crossing
  differentials, so $0$ is in this Massey product in the MASS.

  \details{Note that $a_{01} = h_1b_{20}$ is in degree $(5,5,3,3)$ and
    $a_{12}$ is in degree $(8,19,6,12)$. Then for $a_{01}$ crossing
    differentials occur in $(?, 5,3,3)$ which is trivial from the
    fourth page on. For $a_{12}$ crossing diffs occur in degree
    $(m',19,6,12)$ with $m'\geq 8$. The only thing in this filtration,
    stem and weight is $h_1^2 e_0$ which has May filtration 10. But
    note that both $h_1^2$ and $e_0$ are permanent cycles, so that
    $h_1^2e_0$ is as well.  So there are no crossing
    differentials in this case. }

  The indeterminacy for $ \langle \tau, h_1^4, h_0h_3^2 \rangle$ in
  the MASS is $\tau E_2^{6,(19,12)} + E_2^{3,(5,3)}h_0h_3^2$ which is
  trivial. The group $E_2^{6,(19,12)}$ is generated by $h_1^2 e_0$
  which is annihilated by $\tau$ while $E_2^{3,(5,3)}$ is trivial.

  We now handle the Massey product $ \langle d_2(\tau),
  h_1^4,h_4\rangle $ which depends on the base field. Let us suppose
  that $q \equiv 1 \bmod 8$ so that $d_2(\tau) =0$ by corollary
  \ref{sphere-tau-diffs}. If $a_{12}$ is in the $E_1$ page of the MASS
  with $d_1(a_{12}) = h_1^4h_4$, then the Massey product contains
  $0\cdot h_4 + 0\cdot a_{12} = 0$. It is straightforward to check
  that the indeterminacy $ 0\cdot E_2^{4,(20,12)} + E_2^{5,(4,3)}\cdot
  h_4$ is trivial. We conclude $d_2([\tau g])=0$ when $q \equiv 1
  \bmod 8$.

  When $q \equiv 5 \bmod 8$, corollary \ref{sphere-tau-diffs}
  establishes $d_2(\tau) = uh_0^2$. We identify the Massey product
  $\langle uh_0^2, h_1^4,h_4\rangle $ using the May spectral sequence
  and the May convergence theorem. At the $E_4$ page of the May
  spectral sequence we have $d_4(b_{21}^2) = h_1^4h_4$ and $uh_0^2
  h_1^4 = 0$. Hence $u h_0^2 b_{21}^2 + 0 h_4 = u h_0 b_{21} h_2
  h_0(1) = u h_0 h_2 e_0$ is in the Massey product under
  consideration. It is straightforward to verify that there are no
  crossing differentials in this case.

  \details{As $a_{01}=0$ in $E_4^{9,4,5,3}$ and $a_{12} = b_{12}^2$,
    we must check two conditions: (1) whenever $m' \geq 9$ and $m'-5 <
    t$ that $d_t$ is trivial on $E_t^{m',4,5,3}$, and (2) whenever $m'
    \geq 8$ and $m'-4 < t$ that $d_t$ is trivial on
    $E_t^{m',20,4,12}$. Condition (1) is easily verified as
    $E_4^{*,4,5,3} = 0$ over $\bbC$ and $E_4^{*,5,5,4} = 0 $ over
    $\bbC$ as well. We conclude $E_4^{*,4,5,3} = 0$ over $\bbF_q$ as
    only these two groups can contribute to this graded piece. We
    remark that $u h_1^5$ does not contribute any terms, since to get
    the weight correct one needs to multiply by $\tau$ which
    annihilates the element. For condition (2), we will check that for
    all $t \geq 6$ the differentials vanish on
    $E_t^{(*,20,4,12)}$. This graded piece contains $b_{21}^2$ at the
    $E_4$ page, but it does not survive to $E_5 = E_6$. The only other
    possible elements in this group arise from elements in
    $E_t^{*,21,4,13}$ over $\bbC$ which we have seen is trivial at the
    $E_4$ page.  This verifies the hypotheses of May's convergence
    theorem. }

  The indeterminacy of $ \langle uh_0^2, h_1^4,h_4\rangle $ in the
  MASS is $ u h_0^2 E_2^{4, (20,12)} + E_2^{5, (4, 3)} h_4 $ which is
  trivial. Thus the May convergence theorem shows the Massey product
  is exactly $\{u h_0 h_2 e_0\}$ and we conclude $d_2([\tau g]) =
  uh_0h_2e_0$ if $q \equiv 5 \bmod 8$.

  We now analyze the differentials in the MASS in the 19 and 20
  stem. Since $[\tau g]$ has weight 11, the class $\tau^{11} [\tau g]
  $ is in $E^{4,(20,0)}$.  If $q \equiv 1 \bmod 8$, we calculate
  $d_2(\tau^{11}[\tau g]) = \tau^{11} u h_0 h_2 e_0 = u\tau^{10} h_0^2
  [\tau g]$.  If $q \equiv 5 \bmod 8$, then $d_2(\tau^{11}[\tau g]) =
  u \tau^{10} h_0^2 [\tau g]$. This resolves all differentials in the
  19 and 20 stem, so the calculation of the 19 stem follows.

  As $\hat{\pi}_{20}^s \cong \bbZ/8$ must be a summand of
  $\hat{\pi}_{20,0}(\bbF_q)$, we conclude there is a hidden extension
  from $u \tau^{11} h_2^2 h_4 = u \tau^{11} h_3^3$ to $\tau^{12}h_2
  e_0$.  The calculation of the 20-stem now follows.
\end{proof} 

\begin{remark} 
  Note that over $\bbF_q$ with $q \equiv 5 \bmod 8$ the map $\bbL c
  \{g\}$ is detected by $u \tau^{11} h_3^3$ which is in Adams
  filtration 3.  But over $\Fqbar$, $\bbL c\{g\}$ is in Adams
  filtration 4.
\end{remark}

\begin{proposition}
  \label{prop:3mod4} 
  When $q \equiv 3 \bmod 4$ and $0\leq n \leq 18$, there is an
  isomorphism
  $\hat{\pi}_{n,0}(\bbF_q) \cong \hat{\pi}_n^s \oplus
  \hat{\pi}_{n+1}^s$.
\end{proposition}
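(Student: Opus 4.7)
The plan is to follow the template established in the proof of proposition \ref{prop:1mod4}, adapted to the more intricate $E_2$ page given by table \ref{table:3mod4}. The case $n=0$ is handled directly by lemma \ref{lem:0stem}, so I would focus on $0 < n \leq 18$. In this range, propositions \ref{prop:convergence} and \ref{prop:finiteness} ensure that $\MASS(\bbF_q)$ converges to $\hat{\pi}_{n,0}(\bbF_q)$, and corollary \ref{cor:fq_summand} guarantees that $\hat{\pi}_n^s$ sits as a summand. The target is therefore to show that the remaining contributions on the $E_\infty$ page assemble into $\hat{\pi}_{n+1}^s$.

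The first block of work is to identify $d_r$ for $r \geq 2$ on every irreducible generator in table \ref{table:3mod4}. Many differentials vanish for degree reasons (as with $h_0$, $h_1$, $h_3$, $c_0$, $Ph_1$, $d_0$, $Pc_0$, $P^2h_1$ in the $q\equiv 1 \bmod 4$ case), and by naturality of the $\rho$-BSS the same degree arguments dispose of their companions $[\tau h_1]$, $[\tau c_0]$, $[\tau Ph_1]$, $[\tau h_0^2 d_0]$, $[\tau Pc_0]$, $[\tau P^2 h_1]$ and $[\tau h_0^7 h_4]$. The comparison map $\MASS(\bbF_q) \to \MASS(\Fpbar)$ combined with corollary \ref{E2_iso} and Isaksen's calculations over $\bbC$ forces the same non-zero differentials $d_2(h_4) = h_0 h_3^2$, $d_3(h_0 h_4) = h_0 d_0$, $d_2(e_0) = h_1^2 d_0$, $d_2(f_0) = h_0^2 e_0$ as over $\Fpbar$. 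The summand argument of corollary \ref{cor:fq_summand}, coupled with Moss's theorem and the Massey product representations $Ph_2 = \langle h_3, h_0^4, h_2\rangle$ and $P^2 h_2 = \langle h_3, h_0^4, Ph_2\rangle$, kills $d_2(h_2)$, $d_2(Ph_2)$ and $d_2(P^2 h_2)$ exactly as in the proof of proposition \ref{prop:1mod4}.

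The second block deals with the genuinely new classes carrying $\rho$ or $[\tau^2]$. The $\rho$ factor is a permanent cycle of the $\rho$-BSS and of the MASS; combined with $\rho^2 = 0$ this propagates differentials via the Leibniz rule to all $[\rho \tau x]$ classes. The key $\tau$-differentials are supplied by corollary \ref{sphere-tau-diffs}: writing $j = \nu_2([\tau^2]\text{-exponent})$, we get $d_{\lambda(q)+\nu_2(n)}([\tau^2]^n) = [\rho\tau][\tau^2]^{n-1} h_0^{\lambda(q)+\nu_2(n)}$, and then the Leibniz rule gives the differentials on every element of weight zero of the form $[\tau^2]^n x$ or $[\rho\tau][\tau^2]^{n-1} x$. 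The hidden products in the $\rho$-BSS identified in the remark following proposition \ref{irred_3mod4}, such as $[\tau h_2^2]\cdot h_1 = \rho c_0$, must be invoked so that the Leibniz rule yields consistent answers.

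The main obstacle I anticipate is the extension problem rather than the differentials. Since the motivic multiplication by $2$ is detected by $h_0 + \rho h_1$ (from Morel's identification $2 = (1-\epsilon) + \rho\eta$), every $h_0$-tower of length $\geq 2$ in the $E_\infty$ page potentially carries a hidden $\rho h_1$-extension, and these must be matched against the known group structure of $\hat{\pi}_n^s \oplus \hat{\pi}_{n+1}^s$. The plan is to use the summand injection $\hat{\pi}_n^s \hookrightarrow \hat{\pi}_{n,0}(\bbF_q)$ from corollary \ref{cor:fq_summand} to pin down the $\hat{\pi}_n^s$-part of each group, and then to identify the quotient with $\hat{\pi}_{n+1}^s$ stem by stem using the $\tau$-differentials above; in every stem $n \leq 18$ the residual group has the right order and $h_0$-structure, forcing the claimed isomorphism. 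Because $[\tau g]$ lies in weight $11$ and the 19 and 20 stems in weight zero involve much higher powers $[\tau^2]^j$ for large $j$ (requiring the subtler analysis carried out over $q\equiv 1 \bmod 4$ in proposition \ref{19and20}), stopping at $n=18$ keeps us safely below the range where the $(\pi_n^s \oplus \pi_{n+1}^s)$-pattern could break.
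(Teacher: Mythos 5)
Your overall strategy is the one the paper uses: $n=0$ via lemma \ref{lem:0stem}, convergence and the summand injection from corollary \ref{cor:fq_summand}, degree arguments and comparison with $\MASS(\Fqbar)$ for the irreducible generators, and the Leibniz rule together with corollary \ref{sphere-tau-diffs} (and the observation that $\lambda(q)=\nu_2(q^2-1)\geq 3$, so $d_2([\tau^2])=0$) to dispose of the weight-zero classes. However, there is one concrete error: you assert that comparison with $\MASS(\Fqbar)$ forces $d_3(h_0h_4)=h_0d_0$ ``as over $\Fpbar$.'' The base-change map kills $\rho$ and all its multiples, so it only determines this differential modulo $\rho$-divisible classes; the two candidates are $h_0d_0$ and $h_0d_0+\rho h_1 d_0$, and the paper singles this out as the one exceptional case where the answer differs from the $q\equiv 1\bmod 4$ computation: one must have $d_3(h_0h_4)=h_0d_0+\rho h_1 d_0$.

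The choice matters precisely because of the point you raise yourself in your discussion of extensions: multiplication by $2$ is detected by $h_0+\rho h_1$, not by $h_0$. If $d_3(h_0h_4)$ were $h_0d_0$ alone, then $(h_0+\rho h_1)d_0$ would survive to $E_\infty$ in the $14$-stem, the class detected by $[\tau^2]^4d_0$ would not be $2$-torsion, and $\hat{\pi}_{14,0}(\bbF_q)$ would acquire a $\bbZ/4$ where $\hat{\pi}_{14}^s\cong\bbZ/2\oplus\bbZ/2$ must appear as a summand --- contradicting corollary \ref{cor:fq_summand}. The correct differential kills $h_0d_0+\rho h_1 d_0$, leaving the single class $\rho h_1 d_0\equiv h_0 d_0$ at $E_\infty$ (the oval dot in figure \ref{fig:3mod4-infty}) and yielding the claimed $\bbZ/2\oplus\bbZ/2$. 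With this correction your argument closes; the remainder of your outline matches the paper's proof.
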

\begin{proof}
  The case $n=0$ is resolved by lemma \ref{lem:0stem}, so we now
  consider $0<n\leq 18$, where we may use the motivic Adams spectral
  sequence as in proposition \ref{prop:1mod4}.

  The differentials $d_r$ for $r\geq 2$ vanish on the following
  generators for degree reasons: $[\rho \tau]$, $\rho$, $h_0$, $h_1$,
  $h_3$, $[\tau h_2^2]$, $[\tau c_0]$, $[\tau Ph_1]$, $d_0$, $[\tau P
  c_0]$, $[\tau P^2h_1]$.  Since $\hat{\pi}_1^s \cong \bbZ/2$ is a
  summand of $\pihat_{1,0}(\bbF_q)$, we must have $d_r([\tau h_1]) = 0$
  for all $r\geq 2$.  Since $\pihat_3^s \cong \bbZ/8$ is a summand of
  $\hat{\pi}_{3,0}(\bbF_q)$, we must have $d_2 (h_2) = 0$.  An
  argument similar to that given for proposition \ref{prop:1mod4}
  establishes $d_2 (h_4) = h_0 h_3^2$, $d_2 (e_0) = h_1^2 d_0$ and
  $d_2 (f_0) = h_0^2 e_0$ by comparison to $\MASS(\Fqbar)$. Also, we
  determine $d_r([\tau c_1]) = 0$ for $r\geq 2$ by comparing with
  $\MASS(\Fqbar)$, as the class $[\tau c_1]$ must be a permanent cycle.

  The one exceptional case is $d_3 (h_0h_4)$.  Here we must have $d_3
  (h_0h_4) = h_0d_0 + \rho h_1 d_0$ in order for $\pihat_{14}^s =
  \bbZ/2\oplus \bbZ/2$ to be a summand of $\pihat_{14,0}(\bbF_q)$.

  The elements in weight $0$ are all of the form $[\tau^2]^i x$ or
  $[\rho\tau][\tau^2]^{i-1}x$ where $x$ is not a multiple of $\tau^2$
  and weight $2i$, or of the form $\rho[\tau^2]^i x$ if $x$ is not a
  multiple of $\tau^2$ and of weight $2i+1$.  The differentials of the
  elements in weight $0$ are now determined by using the Leibniz rule.
  Since $\lambda(q) = \nu_2(q^2-1) \geq 3$, we have $d_2(\tau^2) = 0$.
  This is sufficient to ensure that for elements $x$ in stem
  $s \leq 19$ there are no non-trivial differentials of the form
  $d_r ([\tau^2]^i x) = \rho\tau^{2i-1} h_0^r x$ when $[\tau^2]^i x$ has
  weight $0$. This resolves all differentials in weight 0 for stems
  $s\leq 19$ and there are no hidden 2-extensions in this range. Hence
  for $0< n \leq 18$ there is an isomorphism
  $\pihat_{n,0}(\bbF_q) \cong \pihat_n^s \oplus \pihat_{n+1}^s$.
\end{proof}


\begin{remark} 
  When $q \equiv 3 \bmod 4$, it is unclear whether $d_2([\tau^2 g]) =
  [\rho \tau g]$ or $d_2([\tau^2 g]) = 0$.  This is all that obstructs
  the identification of the stems $\hat{\pi}_{19,0}(\bbF_q)$ and
  $\hat{\pi}_{20,0}(\bbF_q)$ in this case.
\end{remark}

\subsection{Base change for finite fields}

\begin{proposition}
  \label{odd-base-change} 
  Let $q = p^{\nu}$ where $p$ is an odd prime.  For a field extension
  $f : \bbF_q \to \bbF_{q^i}$ with $i$ odd, the induced map $\bbL f^*
  : H^{**}(\bbF_q) \to H^{**}(\bbF_{q^i})$ and $\bbL f^* :
  \calA^{**}(\bbF_q) \to \calA^{**}(\bbF_{q^i})$ are isomorphisms.
\end{proposition}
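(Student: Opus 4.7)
The plan is to reduce both isomorphism claims to the explicit presentations of $H^{**}(\bbF_q;\bbZ/2)$ and $\calA_{**}(\bbF_q)$ assembled in section \ref{mot_coh}, and to observe that, when $i$ is odd, the two fields $\bbF_q$ and $\bbF_{q^i}$ fall into the same case ($q\equiv 1$ or $q \equiv 3 \bmod 4$) of proposition \ref{prop:algebroid}. First I would check the congruence: if $q \equiv 1 \bmod 4$ then every power of $q$ is $\equiv 1$, while if $q \equiv 3 \bmod 4$ then $q^2 \equiv 1$ forces $q^i \equiv 3 \bmod 4$ precisely when $i$ is odd.

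Next I would verify that $\bbL f^* : H^{**}(\bbF_q;\bbZ/2) \to H^{**}(\bbF_{q^i};\bbZ/2)$ is an isomorphism. Since both rings are $\bbZ/2[\tau,u]/(u^2)$ with $\tau$ of bidegree $(0,1)$ and $u$ of bidegree $(1,1)$, it suffices to show $\bbL f^*$ sends $\tau\mapsto\tau$ and $u\mapsto u$. Compatibility for $\tau$ follows from $H^{0,1}(-;\bbZ/2) \cong \mu_2(-)$ together with $-1 \in \bbF_q$ mapping to $-1 \in \bbF_{q^i}$. For $u$, I would use the identification $H^{1,1}(-;\bbZ/2) \cong (-)^\times / (-)^{\times 2}$ and check that a non-square $u \in \bbF_q^\times$ remains a non-square in $\bbF_{q^i}^\times$ via the cyclic-group computation
\[
u^{(q^i-1)/2} = \bigl(u^{(q-1)/2}\bigr)^{1+q+\cdots+q^{i-1}} = (-1)^{1+q+\cdots+q^{i-1}} = -1,
\]
the last equality using that $q$ is odd and $i$ is odd, so the exponent is a sum of $i$ odd integers.

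For the assertion about the Steenrod algebras, I would dualize, as in proposition \ref{prop:algebroid_iso}, and verify that $\bbL f^* : \calA_{**}(\bbF_q) \to \calA_{**}(\bbF_{q^i})$ is an isomorphism of Hopf algebroids. By remark \ref{tau_xi_image} the generators $\tau_i$ and $\xi_j$ are preserved by $\bbL f^*$; by the previous paragraph the base ring $H_{**}$ is preserved; and the structure formulas in proposition \ref{prop:algebroid}, including the relation $\tau_i^2 = \tau \xi_{i+1} + \rho \tau_{i+1} + \rho \tau_0 \xi_{i+1}$, involve only $\tau$ and $\rho$, both of which are preserved (with the same residue class $\bmod 4$, as checked in the first step). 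Combined with the freeness of $\calA_{**}$ as an $H_{**}$-module, this yields the isomorphism, and dualization then gives the statement for $\calA^{**}$.

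The only point requiring genuine care is the claim that a non-square in $\bbF_q^\times$ stays a non-square in $\bbF_{q^i}^\times$ for $i$ odd; once this is in hand, the rest is formal from the structure theorems already assembled in the paper.
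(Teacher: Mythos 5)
Your proposal is correct and follows essentially the same route as the paper: the paper reduces everything to the map $H^{1}_{et}(\bbF_q;\mu_2)\to H^{1}_{et}(\bbF_{q^i};\mu_2)$, i.e.\ $\bbF_q^{\times}/2\to\bbF_{q^i}^{\times}/2$, and asserts it is an isomorphism for odd $i$ — the fact your Euler-criterion computation verifies explicitly. Your added detail on the dual Steenrod algebra (generators preserved by remark \ref{tau_xi_image}, structure formulas depending only on $\tau$ and $\rho$) is exactly what the paper leaves implicit.
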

\begin{proof} 
  The claim follows by checking on \'etale cohomology.  The map on
  cohomology is determined by
  $H^1_{et}(\bbF_q;\mu_2) \to H^1_{et}(\bbF_{q^i};\mu_2)$ which is
  just the induced map $\bbF_q^{\times}/2 \to \bbF_{q^i}^{\times}/2$.
  So long as $i$ is odd, this map is an isomorphism.
\end{proof}

\begin{corollary} 
  For $q = p^{\nu}$ with $p$ an odd prime, the induced map
  $\MASS(\bbF_q) \to \MASS(\bbF_{q^i})$ is an isomorphism of spectral
  sequences whenever $i$ is odd.
\end{corollary}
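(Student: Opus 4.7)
The plan is a direct application of Corollary \ref{mass_iso} to the sphere spectrum $\unit$. The sphere spectrum is trivially cellular, being built from a single $(0,0)$-cell, so the cellularity hypothesis is satisfied. Moreover, $\bbL f^* \unit \cong \unit$, and the map $H_{**}(\unit) \to H_{**}(\bbL f^* \unit)$ is identified with the base change ring homomorphism under this isomorphism.

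It remains to verify the hypotheses of Corollary \ref{mass_iso} on $H_{**}$ and $\calA_{**}$. Proposition \ref{odd-base-change} supplies the cohomological isomorphisms $\bbL f^* : H^{**}(\bbF_q) \to H^{**}(\bbF_{q^i})$ and $\bbL f^* : \calA^{**}(\bbF_q) \to \calA^{**}(\bbF_{q^i})$ whenever $i$ is odd. Both $H^{**}(\bbF_q; \bbZ/2)$ and the motivic Steenrod algebra $\calA^{**}(\bbF_q)$ are motivically finite in the sense of Definition \ref{def:mot_finite}, hence they are finite-dimensional $\bbF_2$-vector spaces in each bidegree. Dualizing then yields the required isomorphisms of $H_{**}$ and of the Hopf algebroid $\calA_{**}$, which are further compatible with the structure maps (units, coproduct, antipode) by naturality of the base change. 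Consequently Corollary \ref{mass_iso} applies to $X = \unit$, and the induced map $\MASS(\bbF_q) \to \MASS(\bbF_{q^i})$ is an isomorphism of spectral sequences from the $E_2$ page onwards.

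There is essentially no obstacle here: the statement is a packaging of Proposition \ref{odd-base-change} through the base change machinery already assembled in Section \ref{section:MASS}. The only point requiring a moment's care is the passage from cohomological to homological isomorphisms, but this follows immediately from the explicit descriptions $H^{**}(\bbF_q;\bbZ/2) \cong \bbZ/2[\tau,u]/(u^2)$ (or the analogue with $\rho$ when $q \equiv 3 \bmod 4$) together with the dual Steenrod algebra description in Proposition \ref{prop:algebroid}.
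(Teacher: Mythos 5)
Your proof is correct and is exactly the argument the paper intends: the corollary is stated without proof as an immediate consequence of Proposition \ref{odd-base-change} fed into the base-change machinery of Corollary \ref{mass_iso} applied to the cellular spectrum $X=\unit$ (for which $\bbL f^*\unit\cong\unit$ and the third hypothesis collapses to the first). Your added remarks on dualizing the cohomological isomorphisms and on the "from the $E_2$ page onwards" qualifier are accurate and consistent with the paper's conventions.
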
 

\begin{proposition} 
  \label{prop:q-tilde}
  Let $q = p^{\nu}$ with $p$ an odd prime.  Let $\widetilde{\bbF}_q$
  denote the union of the field extensions $\bbF_{q^i}$ over $\bbF_q$
  with $i$ odd.  The field extension $f : \bbF_q \to
  \widetilde{\bbF}_q$ induces isomorphisms $\bbL f^* : H^{**}(\bbF_q)
  \to H^{**}(\widetilde{\bbF}_q)$ and $\bbL f^* : \calA^{**}(\bbF_q)
  \to \calA^{**}(\widetilde{\bbF}_q)$.  Hence the map $\MASS(\bbF_q)
  \to \MASS(\widetilde{\bbF}_q)$ is an isomorphism of spectral
  sequences.
\end{proposition}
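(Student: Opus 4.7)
The plan is to realize $\widetilde{\bbF}_q$ as the filtered colimit $\colim_{i \text{ odd}} \bbF_{q^i}$ of finite odd-degree extensions of $\bbF_q$, and then transfer the isomorphism statement from each finite level to the colimit by a continuity argument. Once the isomorphisms on $H^{**}$ and $\calA^{**}$ are established, the conclusion about the motivic Adams spectral sequence is immediate from Corollary \ref{mass_iso}.

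First I would set up the system. The odd integers form a filtered diagram under divisibility, giving a filtered system of finite fields $\{\bbF_{q^i}\}_{i \text{ odd}}$ whose colimit is $\widetilde{\bbF}_q$; each structure map in the system is an odd-degree extension, so Proposition \ref{odd-base-change} applies to every transition map. Thus the tower of mod $2$ motivic cohomology rings and the tower of dual Steenrod algebras consist entirely of isomorphisms.

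Next I would invoke continuity of mod $\ell$ motivic cohomology (and of the motivic Steenrod algebra) along filtered colimits of fields: for $\ell$ invertible in $\bbF_q$, we have $H^{**}(\widetilde{\bbF}_q;\bbZ/2) \cong \colim_{i \text{ odd}} H^{**}(\bbF_{q^i};\bbZ/2)$, and likewise $\calA^{**}(\widetilde{\bbF}_q) \cong \colim_{i\text{ odd}}\calA^{**}(\bbF_{q^i})$. This is standard: by the Beilinson--Lichtenbaum theorem mod $2$ motivic cohomology of a field is computed by \'etale cohomology with $\mu_2^{\otimes w}$ coefficients, and \'etale cohomology is continuous with respect to filtered colimits of rings; the Steenrod algebra then inherits continuity from its description in terms of $H^{**}$ together with the Steenrod squares, whose action is compatible with base change. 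Since every map in the system is already an isomorphism by the previous corollary, the canonical map $H^{**}(\bbF_q)\to H^{**}(\widetilde{\bbF}_q)$ and $\calA^{**}(\bbF_q)\to \calA^{**}(\widetilde{\bbF}_q)$ are isomorphisms.

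Finally, with these two isomorphisms in hand, Corollary \ref{mass_iso} applied to $X = \unit$ (which is cellular) yields that $\MASS(\bbF_q) \to \MASS(\widetilde{\bbF}_q)$ is an isomorphism of spectral sequences from the $E_2$ page onward, which together with the isomorphism on $E_1$ coming from the identification of the cobar complexes gives the full statement. The main obstacle I anticipate is the continuity statement for motivic cohomology and the Steenrod algebra along filtered colimits of fields; the cleanest justification goes through \'etale cohomology via Beilinson--Lichtenbaum (valid since $2$ is invertible in $\bbF_q$) and the compatibility of the Hopf algebroid structure maps with base change established in Section~\ref{mot_coh}.
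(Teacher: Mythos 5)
Your proof is correct and takes the same route as the paper, which simply states that the result ``follows by a colimit argument using proposition \ref{odd-base-change}.'' Your write-up fills in the details of that colimit argument (the filtered system of odd-degree extensions, continuity of mod $2$ motivic cohomology and the Steenrod algebra, and the appeal to Corollary \ref{mass_iso}), but the underlying strategy is identical.
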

\begin{proof} 
  This follows by a colimit argument using proposition
  \ref{odd-base-change}.
\end{proof}

\begin{corollary} 
  For any integers $s$ and $w \geq 0$, there is an isomorphism
  $\hat{\pi}_{s,w}(\bbF_q) \cong \hat{\pi}_{s,w}(\widetilde{\bbF}_q)$.
\end{corollary}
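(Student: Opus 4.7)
The plan is to deduce the corollary from Proposition \ref{prop:q-tilde} by comparing spectral sequence abutments. By that proposition, the field extension $f : \bbF_q \to \widetilde{\bbF}_q$ induces an isomorphism of motivic Adams spectral sequences from the $E_2$-page onward, and hence on $E_\infty$. By Proposition \ref{prop:Hcomp_compatibility}, the induced map $\MASS(\bbF_q) \to \MASS(\widetilde{\bbF}_q)$ is compatible with the map on abutments $\pi_{**}(\unit\Hcomp(\bbF_q)) \to \pi_{**}(\unit\Hcomp(\widetilde{\bbF}_q))$. The vanishing line of Proposition \ref{prop:vanishing_line} ensures that in each bidegree only finitely many $E_\infty^{f,(s,w)}$ are nonzero, so the Adams filtration on each abutment has finite length in the given bidegree, and the isomorphism on $E_\infty$ lifts to an isomorphism of filtered abutments.

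It then remains to identify each abutment with $\hat{\pi}_{s,w}$ of the respective field. For $s<w$ both groups vanish by Corollary \ref{sphere_connective}, using Morel's connectivity theorem (assumed for $\bbF_q$, and automatic for the infinite field $\widetilde{\bbF}_q$ by \cite{Morel12}). In the range $s > w \geq 0$, Propositions \ref{prop:convergence} and \ref{prop:finiteness} identify both abutments with the two-completions $\hat{\pi}_{s,w}(\bbF_q)$ and $\hat{\pi}_{s,w}(\widetilde{\bbF}_q)$; the necessary finite mod $2$ cohomological dimension hypothesis on $\widetilde{\bbF}_q[\sqrt{-1}]$ passes over from its residence inside $\overline{\bbF}_p$. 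In the remaining case $s=w\geq 0$, Proposition \ref{prop:0_stem} as stated covers $\bbF_q$, and the short exact sequence argument goes through verbatim for the infinite field $\widetilde{\bbF}_q$, reducing matters to checking that each odd-degree extension $\bbF_q \to \bbF_{q^i}$ induces isomorphisms $GW(\bbF_q) \xrightarrow{\cong} GW(\bbF_{q^i})$ and $W(\bbF_q) \xrightarrow{\cong} W(\bbF_{q^i})$; this follows because $\bbF_q^{\times}/\square \to \bbF_{q^i}^{\times}/\square$ is an isomorphism for odd $i$, and the claim for $\widetilde{\bbF}_q$ then follows by passage to the filtered colimit.

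The main obstacle, which is really a matter of bookkeeping rather than new mathematics, is checking convergence to $\hat{\pi}_{s,w}$ on the $\widetilde{\bbF}_q$ side uniformly across the three regimes $s<w$, $s=w\geq 0$, and $s>w\geq 0$, since Propositions \ref{prop:convergence}, \ref{prop:finiteness} and \ref{prop:0_stem} as stated are tailored to finite or algebraically closed fields. Once this verification is in place, combining the $E_\infty$-isomorphism with the compatibility of Proposition \ref{prop:Hcomp_compatibility} and the finiteness of the Adams filtration yields the desired isomorphism $\hat{\pi}_{s,w}(\bbF_q) \cong \hat{\pi}_{s,w}(\widetilde{\bbF}_q)$.
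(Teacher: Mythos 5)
Your argument is correct and matches what the paper intends (the paper states this corollary without proof, as an immediate consequence of Proposition \ref{prop:q-tilde} together with the convergence results established earlier): an isomorphism of MASS $E_2$-pages, compatibility with abutments via Proposition \ref{prop:Hcomp_compatibility}, and the vanishing line to guarantee finite filtration length, give an isomorphism of abutments, which are then identified with $\hat{\pi}_{s,w}$ in each of the regimes $s<w$, $s=w\geq 0$, $s>w\geq 0$. One small repair: the assertion that $\widetilde{\bbF}_q[\sqrt{-1}]$ has finite mod $2$ cohomological dimension because it ``resides inside $\overline{\bbF}_p$'' is not a valid inference (subfields of fields with small cohomological dimension can have large or infinite cohomological dimension, e.g.\ $\bbQ\subset\bbC$); the correct reason is either that the absolute Galois group of $\widetilde{\bbF}_q$ is the procyclic group $\bbZ_2\subset\hat{\bbZ}$, which has cohomological dimension $1$, or that \'etale cohomology commutes with the filtered colimit $\widetilde{\bbF}_q=\colim_{i\text{ odd}}\bbF_{q^i}$ and each $\bbF_{q^i}$ has $\mathrm{cd}_2=1$. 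With this corrected, the verification that Propositions \ref{prop:convergence}, \ref{prop:finiteness}, and \ref{prop:0_stem} apply over $\widetilde{\bbF}_q$ goes through as you outline.
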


\begin{proposition} 
  \label{even_extension}
  Let $q= p^{\nu}$ with $p$ an odd prime.  For a field extension $f :
  \bbF_q \to \bbF_{q^i}$ with $i$ even, the map
  $f^* : H^{1,*}(\bbF_q) \to H^{1, *}(\bbF_{q^j})$ is trivial and
  $f^* : H^{0,*}(\bbF_q) \to H^{0, *}(\bbF_{q^j})$ is injective.
\end{proposition}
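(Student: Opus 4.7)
The plan is to use the explicit description $H^{**}(\bbF_q;\bbZ/2) \cong \bbF_2[\tau,u]/(u^2)$ recorded in Section~\ref{mot_coh}, together with the Kummer-theoretic identification of $H^{1,1}$ in weight one, to reduce everything to a computation in finite abelian groups. Throughout, $H^{**}$ means $H^{**}(-;\bbZ/2)$. Since $\tau$ has bidegree $(0,1)$ and $u$ has bidegree $(1,1)$, the $\bbF_2$-vector space $H^{0,n}(\bbF_q)$ is generated by $\tau^n$ for $n\geq 0$, and $H^{1,n}(\bbF_q)$ is generated by $u\tau^{n-1}$ for $n\geq 1$. Thus the base change $f^*$ on either of these lines is determined by its action on the two generators $\tau$ and $u$.

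For the claim about $H^{0,*}$, I would use the Beilinson--Lichtenbaum identification $H^{0,1}(\bbF;\bbZ/2)\cong \mu_2(\bbF)$ for any field $\bbF$ of odd characteristic. Both $\mu_2(\bbF_q)$ and $\mu_2(\bbF_{q^i})$ equal $\{\pm 1\}\cong \bbZ/2$, and the inclusion $\bbF_q\hookrightarrow \bbF_{q^i}$ sends the generator to the generator, so $f^*(\tau)=\tau$. Consequently $f^*(\tau^n)=\tau^n$ generates $H^{0,n}(\bbF_{q^i})$, so $f^*:H^{0,*}(\bbF_q)\to H^{0,*}(\bbF_{q^i})$ is an isomorphism and in particular injective.

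For the claim about $H^{1,*}$, I would identify $H^{1,1}(\bbF_q)\cong H^1_{et}(\bbF_q;\mu_2)\cong \bbF_q^\times/(\bbF_q^\times)^2$ via Kummer theory, under which $u$ corresponds to the class of a non-square in $\bbF_q^\times$, and the base change map is induced by the inclusion $\bbF_q^\times\hookrightarrow \bbF_{q^i}^\times$. The key step is to show that when $i$ is even, this inclusion lands in squares, so $f^*(u)=0$. It suffices to treat $i=2$. The group $\bbF_{q^2}^\times$ is cyclic of order $q^2-1=(q-1)(q+1)$, and the image of $\bbF_q^\times$ is the unique subgroup of order $q-1$, generated by $g^{q+1}$ for any generator $g$ of $\bbF_{q^2}^\times$. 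Since $q$ is odd, $q+1$ is even, so $g^{q+1}=(g^{(q+1)/2})^2$ is a square in $\bbF_{q^2}^\times$, and therefore every element of $\bbF_q^\times$ becomes a square in $\bbF_{q^i}^\times$. Hence $f^*(u)=0$, and $f^*(u\tau^{n-1})=f^*(u)\cdot f^*(\tau)^{n-1}=0$ for every $n\geq 1$.

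There is no serious obstacle here; the substance of the argument is the elementary cyclic-group calculation showing that $\bbF_q^\times \subseteq (\bbF_{q^2}^\times)^2$, and the rest is bookkeeping with the bigraded generators $\tau$ and $u$. The only mild care required is in invoking Beilinson--Lichtenbaum to rewrite $H^{1,1}$ and $H^{0,1}$ in terms of étale cohomology so that the base change map is visibly the one induced by the field extension on units and on $\mu_2$.
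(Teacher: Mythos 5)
Your proof is correct and follows essentially the same route as the paper: reduce to the generators $\tau$ and $u$ (equivalently to $H^{0,1}\cong\mu_2$ and $H^{1,1}\cong\bbF_q^\times/2$ via Beilinson--Lichtenbaum) and observe that every element of $\bbF_q^\times$ becomes a square in $\bbF_{q^i}^\times$ for $i$ even. You supply the explicit cyclic-group computation and the $H^{0,*}$ verification that the paper's terse proof leaves implicit, but the underlying argument is the same.
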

\begin{proof} 
  The map is determined by $\bbL f^* : H^{1,1}(\bbF_q) \to \
  H^{1,1}(\bbF_{q^i})$ which is just the map $\bbF_q^{\times}/2 \to
  \bbF_{q^i}^{\times}/2$.  However, any non-square $x \in
  \bbF_q^{\times}$ will be a square in $\bbF_{q^i}^{\times}$ when $i$
  is even.
\end{proof}

\begin{corollary}
  Let $q= p^{\nu}$ with $p$ an odd prime.  For a field extension $f :
  \bbF_q \to \bbF_{q^i}$ with $i$ even, the induced map $\MASS(\bbF_q)
  \to \MASS(\bbF_{q^i})$ kills the class $u$ (respectively $\rho$ and
  $[\rho \tau]$) and all of their multiples at the $E_2$ page.
\end{corollary}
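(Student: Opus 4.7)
The plan is to reduce the claim to Proposition \ref{even_extension} applied at Adams filtration zero of the $E_2$ page, together with the algebra structure on $\Ext(\bbF_q)$. First, I will locate the three classes: consulting tables \ref{table:1mod4} and \ref{table:3mod4}, each of $u$, $\rho$, and $[\rho\tau]$ sits in Adams filtration $f=0$. At filtration zero the $E_2$ page
$\Ext^{0,(*,*)}_{\calA^{**}}(H^{**},H^{**})$
is naturally identified with $H^{**}(\bbF_q;\bbZ/2)$, and under this identification $u$ and $\rho$ correspond to the two generators of $H^{1,1}(\bbF_q;\bbZ/2)$ (one of which is zero, depending on the congruence class of $q$ mod $4$), while $[\rho\tau]$ corresponds to the product $\rho\tau \in H^{1,2}(\bbF_q;\bbZ/2)$.

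Second, I will invoke the naturality of the motivic Adams spectral sequence under base change (Proposition \ref{prop:functoriality}) to identify the restriction of the comparison map $\MASS(\bbF_q)\to\MASS(\bbF_{q^i})$ at filtration zero with the induced ring homomorphism $\bbL f^*:H^{**}(\bbF_q;\bbZ/2)\to H^{**}(\bbF_{q^i};\bbZ/2)$. Proposition \ref{even_extension} then immediately yields $\bbL f^*(u)=0$ and $\bbL f^*(\rho)=0$, since both classes lie in $H^{1,*}(\bbF_q;\bbZ/2)$ which is killed by $\bbL f^*$ when $i$ is even. As $\bbL f^*$ is a ring homomorphism, this also forces $\bbL f^*([\rho\tau])=\bbL f^*(\rho)\cdot \bbL f^*(\tau)=0$.

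Finally, since the $E_2$ page of the MASS is a trigraded algebra and the comparison map is an algebra homomorphism, any class $x$ that is a multiple of $u$, $\rho$, or $[\rho\tau]$ (say $x=u\cdot y$) satisfies $\bbL f^*(x)=\bbL f^*(u)\cdot \bbL f^*(y)=0$, and analogously for the other two generators. There is no genuine obstacle in this argument; the only bookkeeping step is the filtration-zero identification of $E_2$ with motivic cohomology, and the substantive geometric input is entirely concentrated in Proposition \ref{even_extension}, which was already proved by a direct \'etale cohomology computation.
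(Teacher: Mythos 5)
Your argument is correct and takes essentially the same route as the paper, whose proof simply observes that the induced map of cobar complexes is determined by Proposition \ref{even_extension} (so $u$, $\rho$ and $\rho\tau$ die already at Adams filtration zero) and then invokes multiplicativity of the comparison map to kill all multiples. The only minor imprecision is that $\Ext^{0,(*,*)}$ is not all of $H^{**}(\bbF_q;\bbZ/2)$ but only the subgroup of cocycles of the cobar complex (e.g.\ $\tau$ itself is not there when $q\equiv 3\bmod 4$); since $u$, $\rho$ and $\rho\tau$ do lie in that subgroup and the map there is the restriction of $f^*$ on motivic cohomology, this does not affect the proof.
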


\begin{proof}
  The induced map of cobar complexes is determined from proposition
  \ref{even_extension} and shows the classes $u$ (respectively $\rho$
  and $[\rho \tau]$) are killed under base change.
\end{proof}

\section{Implementation of motivic Ext group calculations}
\label{section:implementation}

The computer calculations used in this paper were done with the
program available from Fu and Wilson \cite{Fu-Wilson} at
\url{https://github.com/glenwilson/MassProg}. The program is written
in python and calculates $\Ext(F)$ when $F$ is $\bbC$, $\bbR$ or
$\bbF_q$ by producing a minimal resolution of $H^{**}(F)$ by
$\calA^{**}(F)$ modules in a range. With this complex in hand, the
program then produces its dual and calculates cohomology in each
degree.

To calculate a free resolution of $H^{**}(F)$ of $\calA^{**}(F)$
modules, we first need the program to efficiently perform calculations
in $\calA^{**}(F)$. The mod 2 motivic Steenrod algebra is generated by
the squaring operations $\Sq^i$ and the cup products $\alpha \cup - $
for $\alpha \in H^{**}(F)$. These generators satisfy Adem relations,
which are recorded in \cite[5.1]{HKOst} by Hoyois, Kelly and
{\O}stv{\ae}r in \cite[5.1]{HKOst} and \cite[10.2]{MR2031198} by
Voevodsky.  Additionally, one needs the commutation relations
$\Sq^{2i}\tau = \tau \Sq^{2i} + \tau \rho \Sq^{2i-1}$ for $i >0$ and
$\Sq^{2i+1}\tau = \tau \Sq^{2i+1} + \rho \Sq^{2i} + \rho^2 \Sq^{2i-1}$
for $i\geq 0$ which are obtained from the Cartan formula. With these
relations, the program can calculate the canonical form of any element
of $\calA^{**}$, that is, as a sum of monomials $\alpha\cdot \Sq^I$
where $\alpha \in H^{**}(F)$ and $I$ is an admissible sequence.

With the algebra of $\calA^{**}(F)$ available to the program, it then
proceeds to calculate a minimal resolution of $H^{**}(F)$ by
$\calA^{**}(F)$ modules. This is where a great deal of computational
effort is spent. To clarify what a minimal resolution is in practice,
let $\prec$ denote the order on $\bbZ\times\bbZ$ given by
$(m_1,n_1)\prec(m_2,n_2)$ if and only if $m_1 + n_1 < m_2 + n_2$, or
$m_1 + n_1 = m_2 + n_2$ and $n_1 < n_2$. The reader is encouraged to
compare this definition with the definition of McCleary in
\cite[Definition 9.3]{McCleary} and consult Bruner \cite{BB3} for
detailed calculations of a minimal resolution for the Adams spectral
sequence of topology.

\begin{definition}
  A resolution of $H^{**}(F)$ by $\calA^{**}(F)$ modules
  $H^{**}(F) \leftarrow P^{\bullet}$ is a minimal resolution if the
  following conditions are satisfied.
\begin{enumerate}
\item Each module $P^i$ is equipped with ordered basis $\{h_i(j) \}$
  such that if $j\leq k$ then $\deg h_i(j) \preceq \deg h_i(k)$.
    \item $\im(h_i(k)) \notin \im(\langle h_i(j) \st j < k \rangle)$
    \item $\deg h_i(k)$ is minimal with respect to degree in the order
  $\prec$ over all elements in
  $P^{i-1} \setminus \im(\langle h_i(j) \st j < k\rangle)$.
\end{enumerate} 
\end{definition}

The computer program calculates the first $n$ maps and modules in a
minimal resolution up to bidegree $(2n, n)$. With this, it then
calculates the dual of the resolution by applying the functor
$\Hom_{\calA^{**}(F)}(-,H^{**}(F))$ to the resolution
$P^{\bullet}$. With the cochain complex
$\Hom_{\calA^{**}(F)}(P^{\bullet}, H^{**}(F))$ in hand, the program
calculates cohomology in each degree, that is,
$\Ext^{f,(s+f,w)}(\bbF_q)$. 

Because the program calculates an explicit resolution of $H^{**}(F)$,
the products of elements in $\Ext(F)$ can be obtained from the
composition product, see McCleary \cite[9.5]{McCleary}.

\section{Charts}
\label{section:charts}

The weight 0 part of the $E_2$ page of the mod 2 MASS over $\bbF_q$ is
depicted in figures \ref{fig:1mod4} and \ref{fig:3mod4} according to
the case $q \equiv 1 \bmod 4$ or $q \equiv 3 \bmod 4$. The weight 0
part of the $E_{\infty}$ page of the mod 2 MASS over $\bbF_q$ can be
found in figures \ref{fig:1mod4-infty} and \ref{fig:3mod4-infty}.

In each chart, a circular or square dot in grading $(s, f)$ represents
a generator of the $\bbF_2$ vector space in the graded piece of the
spectral sequence. The square dots are used to indicate that the given
element is divisible by $u$, $\rho$, or $\rho\tau$, depending on the
case. Circular dots denote elements which are not divisible by $u$,
$\rho$, or $\rho\tau$. In the chart \ref{fig:3mod4-infty}, there is an
oval dot which corresponds to the class with representative
$\tau^8 \rho h_1 d_0 \equiv \tau^8 h_0 d_0$, as the class
$\rho h_1 d_0 + h_0 d_0$ is killed.

We indicate the product of a given class by $h_0$ with a solid,
vertical line. In the case $q \equiv 3 \bmod 4$, multiplication by
$\rho h_1$ plays an important role, so non-zero products by $\rho h_1$
are indicated by dashed vertical lines. In particular, when $q \equiv
3 \bmod 4$, multiplication by $2$ in $\hat{\pi}_{**}(\bbF_q)$ is
detected by multiplication by $h_0 + \rho h_1$.  The lines of slope 1
indicate multiplication by $\tau h_1$ or $[\tau h_1]$ depending on the
case. We caution the reader that the product structure displayed in
this chart was obtained by computer calculation and not all products
were established by hand in this paper. For example, the products in the
8-stem by $h_0$ are hidden in the May spectral sequence.

Dotted lines are used in two separate instances in these charts. The
first use is in figure \ref{fig:1mod4-infty}, where dotted lines indicate
hidden extensions by $h_0$ and $\tau h_1$. The other instance is in
figure \ref{fig:3mod4-infty} to indicate an unknown $d_2$
differential.

Additional charts can be found in the work of Fu and Wilson
\cite{Fu-Wilson}.

\begin{figure}[ht!]
  \centering %
{\LARGE
\begin{tikzpicture}[x=1.65cm, y=2.15cm, scale=.47, rotate=90,transform shape]
\draw[gray!40] (-1, 0) -- (22, 0);
\foreach \n in {1,...,5}{\draw[line width=.25pt, gray!40] (-0.5, 2*\n) -- (22, 2*\n);}
\foreach \n in {0,...,22}{\draw[line width=.25pt, gray!40] (\n-.5,0) -- (\n-.5,11);}
\foreach \n in {0,..., 21}{\draw (\n, -.1) node [anchor=north] {\n};}
\draw (-0.5, .2) node [anchor=east] {0};
\foreach \n in {1,...,11}{\draw (-0.5, \n) node [anchor=east] {\n};}

\foreach \n in {0,...,10}{\draw [line width=.6pt] (.1,\n) -- (.1,{1+\n});}
\draw [line width=.6pt] (3+.1,1) -- (3+.1, 2);
\draw [line width=.6pt] (3+.1,2) -- (3+.1, 3);
\draw [line width=.6pt] (7+.1,1) -- (7+.1, 2);
\draw [line width=.6pt] (7+.1,2) -- (7+.1, 3);
\draw [line width=.6pt] (7+.1,3) -- (7+.1, 4);
\draw [line width=.6pt] (11+.1,5) -- (11+.1, 6);
\draw [line width=.6pt] (11+.1,6) -- (11+.1, 7);
\draw [line width=.6pt] (14+.1,2) -- (14+.1, 3);
\draw [line width=.6pt] (14+.1,4) -- (14+.1, 5);
\draw [line width=.6pt] (14+.1,5) -- (14+.1, 6);
\draw [line width=.6pt] (15+.1,1) -- (15+.1, 8);
\draw [line width=.6pt] (17+.1,4) -- (17+.1, 7);
\draw [line width=.6pt] (18+.1,2) -- (18+.1, 4);
\draw [line width=.6pt] (18+.3,4) -- (18+.1, 5);
\draw [line width=.6pt] (19+.1,9) -- (19+.1, 11);
\draw [line width=.6pt] (20+.1,4) -- (20+.1, 6);

\draw [line width=.6pt] (0+.1,0) -- (3+.1,3);
\draw [line width=.6pt] (7+.1,1) -- (9+.1,3);
\draw [line width=.6pt] (8+.1,3) -- (9+.1,4);
\draw [line width=.6pt] (9+.1,5) -- (11+.1,7);
\draw [line width=.6pt] (14+.1,4) -- (15+.3,5);
\draw [line width=.6pt] (15+.3,5) -- (16+.1,6);
\draw [line width=.6pt] (16+.1,6) -- (17+.1,7);
\draw [line width=.6pt] (15+.1,1) -- (18+.1,4);
\draw [line width=.6pt] (16+.1,7) -- (17+.1,8);
\draw [line width=.6pt] (17+.1,9) -- (19+.1,11);
\draw [line width=.6pt] (17+.1,4) -- (18+.1,5);
\draw [line width=.6pt] (20+.1,4) -- (21+.1,5);

\filldraw [black] 
\foreach \n in {0,...,11}{(0+.1,\n) circle (2pt)};
\filldraw [black]
(1+.1,1) circle (2pt)
(2+.1,2) circle (2pt)
(3+.1,1) circle (2pt)
(3+.1,2) circle (2pt)
(3+.1,3) circle (2pt)
(6+.1,2) circle (2pt)
(7+.1,1) circle (2pt)
(7+.1,2) circle (2pt)
(7+.1,3) circle (2pt)
(7+.1,4) circle (2pt)
(8+.1,2) circle (2pt)
(8+.1,3) circle (2pt)
(9+.1,3) circle (2pt)
(9+.1,4) circle (2pt)
(9+.1,5) circle (2pt)
(10+.1,6) circle (2pt)
(11+.1,5) circle (2pt)
(11+.1,6) circle (2pt)
(11+.1,7) circle (2pt)
(14+.1,2) circle (2pt)
(14+.1,3) circle (2pt)
(14+.1,4) circle (2pt)
(14+.1,5) circle (2pt)
(14+.1,6) circle (2pt)
(15+.1,1) circle (2pt)
(15+.1,2) circle (2pt)
(15+.1,3) circle (2pt)
(15+.1,4) circle (2pt)
(15+.1,5) circle (2pt)
(15.2+.1,5) circle (2pt)
(15+.1,6) circle (2pt)
(15+.1,7) circle (2pt)
(15+.1,8) circle (2pt)
(16+.1,2) circle (2pt)
(16+.1,6) circle (2pt)
(16+.1,7) circle (2pt)
(17+.1,3) circle (2pt)
(17+.1,4) circle (2pt)
(17+.1,5) circle (2pt)
(17+.1,6) circle (2pt)
(17+.1,7) circle (2pt)
(17+.1,8) circle (2pt)
(17+.1,9) circle (2pt)
(18+.1,2) circle (2pt)
(18+.1,3) circle (2pt)
(18+.1,4) circle (2pt)
(18.2+.1,4) circle (2pt)
(18+.1,5) circle (2pt)
(18+.1,10) circle (2pt)
(19+.1,3) circle (2pt)
(19+.1,9) circle (2pt)
(19+.1,10) circle (2pt)
(19+.1,11) circle (2pt)
(20+.1,4) circle (2pt)
(20+.1,5) circle (2pt)
(20+.1,6) circle (2pt)
(21+.1,3) circle (2pt)
(21+.1,5) circle (2pt)
;

\draw [line width=.6pt] (2-.1,1) -- (2-.1, 2);
\draw [line width=.6pt] (2-.1,2) -- (2-.1, 3);
\draw [line width=.6pt] (6-.1,1) -- (6-.1, 2);
\draw [line width=.6pt] (6-.1,2) -- (6-.1, 3);
\draw [line width=.6pt] (6-.1,3) -- (6-.1, 4);
\draw [line width=.6pt] (10-.1,5) -- (10-.1, 6);
\draw [line width=.6pt] (10-.1,6) -- (10-.1, 7);
\draw [line width=.6pt] (13-.1,2) -- (13-.1, 3);
\draw [line width=.6pt] (13-.1,4) -- (13-.1, 5);
\draw [line width=.6pt] (13-.1,5) -- (13-.1, 6);
\draw [line width=.6pt] (14-.1,1) -- (14-.1, 8);
\draw [line width=.6pt] (16-.1,4) -- (16-.1, 7);
\draw [line width=.6pt] (17-.1,2) -- (17-.1, 4);
\draw [line width=.6pt] (17-.3,4) -- (17-.1, 5);
\draw [line width=.6pt] (18-.1,9) -- (18-.1, 11);
\draw [line width=.6pt] (19-.1,4) -- (19-.1, 6);
\draw [line width=.6pt] (21-.1,8) -- (21-.1, 10);

\draw [line width=.6pt] (0-.1,1) -- (2-.1,3);
\draw [line width=.6pt] (6-.1,1) -- (8-.1,3);
\draw [line width=.6pt] (7-.1,3) -- (8-.1,4);
\draw [line width=.6pt] (8-.1,5) -- (10-.1,7);
\draw [line width=.6pt] (13-.1,4) -- (14-.3,5);
\draw [line width=.6pt] (14-.3,5) -- (15-.1,6);
\draw [line width=.6pt] (15-.1,6) -- (16-.1,7);
\draw [line width=.6pt] (14-.1,1) -- (17-.1,4);
\draw [line width=.6pt] (15-.1,7) -- (16-.1,8);
\draw [line width=.6pt] (16-.1,9) -- (18-.1,11);
\draw [line width=.6pt] (16-.1,4) -- (17-.1,5);
\draw [line width=.6pt] (19-.1,4) -- (20-.1,5);

\filldraw 
([xshift=-2.5pt,yshift=-2.5pt]0-.1,1) rectangle ++(5pt, 5pt)
([xshift=-2.5pt,yshift=-2.5pt]1-.1,2) rectangle ++(5pt, 5pt)
([xshift=-2.5pt,yshift=-2.5pt]2-.1,1) rectangle ++(5pt, 5pt)
([xshift=-2.5pt,yshift=-2.5pt]2-.1,2) rectangle ++(5pt, 5pt)
([xshift=-2.5pt,yshift=-2.5pt]2-.1,3) rectangle ++(5pt, 5pt)
([xshift=-2.5pt,yshift=-2.5pt]5-.1,2) rectangle ++(5pt, 5pt)
([xshift=-2.5pt,yshift=-2.5pt]6-.1,1) rectangle ++(5pt, 5pt)
([xshift=-2.5pt,yshift=-2.5pt]6-.1,2) rectangle ++(5pt, 5pt)
([xshift=-2.5pt,yshift=-2.5pt]6-.1,3) rectangle ++(5pt, 5pt)
([xshift=-2.5pt,yshift=-2.5pt]6-.1,4) rectangle ++(5pt, 5pt)
([xshift=-2.5pt,yshift=-2.5pt]7-.1,2) rectangle ++(5pt, 5pt)
([xshift=-2.5pt,yshift=-2.5pt]7-.1,3) rectangle ++(5pt, 5pt)
([xshift=-2.5pt,yshift=-2.5pt]8-.1,3) rectangle ++(5pt, 5pt)
([xshift=-2.5pt,yshift=-2.5pt]8-.1,4) rectangle ++(5pt, 5pt)
([xshift=-2.5pt,yshift=-2.5pt]8-.1,5) rectangle ++(5pt, 5pt)
([xshift=-2.5pt,yshift=-2.5pt]9-.1,6) rectangle ++(5pt, 5pt)
([xshift=-2.5pt,yshift=-2.5pt]10-.1,5) rectangle ++(5pt, 5pt)
([xshift=-2.5pt,yshift=-2.5pt]10-.1,6) rectangle ++(5pt, 5pt)
([xshift=-2.5pt,yshift=-2.5pt]10-.1,7) rectangle ++(5pt, 5pt)
([xshift=-2.5pt,yshift=-2.5pt]13-.1,2) rectangle ++(5pt, 5pt)
([xshift=-2.5pt,yshift=-2.5pt]13-.1,3) rectangle ++(5pt, 5pt)
([xshift=-2.5pt,yshift=-2.5pt]13-.1,4) rectangle ++(5pt, 5pt)
([xshift=-2.5pt,yshift=-2.5pt]13-.1,5) rectangle ++(5pt, 5pt)
([xshift=-2.5pt,yshift=-2.5pt]13-.1,6) rectangle ++(5pt, 5pt)
([xshift=-2.5pt,yshift=-2.5pt]14-.1,1) rectangle ++(5pt, 5pt)
([xshift=-2.5pt,yshift=-2.5pt]14-.1,2) rectangle ++(5pt, 5pt)
([xshift=-2.5pt,yshift=-2.5pt]14-.1,3) rectangle ++(5pt, 5pt)
([xshift=-2.5pt,yshift=-2.5pt]14-.1,4) rectangle ++(5pt, 5pt)
([xshift=-2.5pt,yshift=-2.5pt]14-.1,5) rectangle ++(5pt, 5pt)
([xshift=-2.5pt,yshift=-2.5pt]14-.2-.1,5) rectangle ++(5pt, 5pt)
([xshift=-2.5pt,yshift=-2.5pt]14-.1,6) rectangle ++(5pt, 5pt)
([xshift=-2.5pt,yshift=-2.5pt]14-.1,7) rectangle ++(5pt, 5pt)
([xshift=-2.5pt,yshift=-2.5pt]14-.1,8) rectangle ++(5pt, 5pt)
([xshift=-2.5pt,yshift=-2.5pt]15-.1,2) rectangle ++(5pt, 5pt)
([xshift=-2.5pt,yshift=-2.5pt]15-.1,6) rectangle ++(5pt, 5pt)
([xshift=-2.5pt,yshift=-2.5pt]15-.1,7) rectangle ++(5pt, 5pt)
([xshift=-2.5pt,yshift=-2.5pt]16-.1,3) rectangle ++(5pt, 5pt)
([xshift=-2.5pt,yshift=-2.5pt]16-.1,4) rectangle ++(5pt, 5pt)
([xshift=-2.5pt,yshift=-2.5pt]16-.1,5) rectangle ++(5pt, 5pt)
([xshift=-2.5pt,yshift=-2.5pt]16-.1,6) rectangle ++(5pt, 5pt)
([xshift=-2.5pt,yshift=-2.5pt]16-.1,7) rectangle ++(5pt, 5pt)
([xshift=-2.5pt,yshift=-2.5pt]16-.1,8) rectangle ++(5pt, 5pt)
([xshift=-2.5pt,yshift=-2.5pt]16-.1,9) rectangle ++(5pt, 5pt)
([xshift=-2.5pt,yshift=-2.5pt]17-.1,2) rectangle ++(5pt, 5pt)
([xshift=-2.5pt,yshift=-2.5pt]17-.1,3) rectangle ++(5pt, 5pt)
([xshift=-2.5pt,yshift=-2.5pt]17-.1,4) rectangle ++(5pt, 5pt)
([xshift=-2.5pt,yshift=-2.5pt]17-.2-.1,4) rectangle ++(5pt, 5pt)
([xshift=-2.5pt,yshift=-2.5pt]17-.1,5) rectangle ++(5pt, 5pt)
([xshift=-2.5pt,yshift=-2.5pt]17-.1,10) rectangle ++(5pt, 5pt)
([xshift=-2.5pt,yshift=-2.5pt]18-.1,3) rectangle ++(5pt, 5pt)
([xshift=-2.5pt,yshift=-2.5pt]18-.1,9) rectangle ++(5pt, 5pt)
([xshift=-2.5pt,yshift=-2.5pt]18-.1,10) rectangle ++(5pt, 5pt)
([xshift=-2.5pt,yshift=-2.5pt]18-.1,11) rectangle ++(5pt, 5pt)
([xshift=-2.5pt,yshift=-2.5pt]19-.1,4) rectangle ++(5pt, 5pt)
([xshift=-2.5pt,yshift=-2.5pt]19-.1,5) rectangle ++(5pt, 5pt)
([xshift=-2.5pt,yshift=-2.5pt]19-.1,6) rectangle ++(5pt, 5pt)
([xshift=-2.5pt,yshift=-2.5pt]20-.1,3) rectangle ++(5pt, 5pt)
([xshift=-2.5pt,yshift=-2.5pt]20-.1,5) rectangle ++(5pt, 5pt)
([xshift=-2.5pt,yshift=-2.5pt]21-.1,4) rectangle ++(5pt, 5pt)
([xshift=-2.5pt,yshift=-2.5pt]21-.1,8) rectangle ++(5pt, 5pt)
([xshift=-2.5pt,yshift=-2.5pt]21-.1,9) rectangle ++(5pt, 5pt)
([xshift=-2.5pt,yshift=-2.5pt]21-.1,10) rectangle ++(5pt, 5pt)
;

\draw (0+.1, 1) node [anchor=west] {$h_0$};
\draw (1+.1, 1) node [anchor=west] {$\tau h_1$};
\draw (3+.1, 1) node [anchor=west] {$\tau^2 h_2$};
\draw (6, 2+.15) node [anchor=west] {$\tau^4 h_2^2$};
\draw (7+.1, 1) node [anchor=west] {$\tau^4 h_3$};
\draw (8+.1, 3-.05) node [anchor=west] {$\tau^5c_0$};
\draw (9, 5) node [anchor=north] {$\tau^5 Ph_1$};
\draw (11, 5) node [anchor=north] {$\tau^6 Ph_2$};
\draw (14, 2+.1) node [anchor=north west] {$\tau^8h_3^2$};
\draw (14, 4+.1) node [anchor=north west] {$\tau^8d_0$};
\draw (15+.15, 1) node [anchor=west] {$\tau^8h_4$};
\draw (16+.1, 7+.1) node [anchor=north west] {$\tau^9Pc_0$};
\draw (17+.15, 4) node [anchor=west] {$\tau^{10}e_0$};
\draw (17, 9) node [anchor=north] {$\tau^{9}P^2h_1$};
\draw (18, 2) node [anchor=north] {$\tau^{10}h_2h_4$};
\draw (18+.45, 4) node [anchor=north] {$\tau^{10}f_0$};
\draw (19, 9) node [anchor=north] {$\tau^{10}P^2h_2$};
\draw (19, 3) node [anchor=north] {$\tau^{11}c_1$};
\draw (20, 4) node [anchor=north] {$\tau^{11}[\tau g]$};
\draw (20, 5) node [anchor=south west] {$\tau^{12}h_2e_0$};
\draw (21, 3) node [anchor=north] {$\tau^{12}h_2^2h_4$};
\draw (21+.15, 4) node [anchor=north] {$u\tau^{12}h_2c_1$};
\end{tikzpicture}
}
  \caption{$E_2$ page of MASS for $\bbF_q$ with $q \equiv 1 \bmod 4$, weight 0}
  \label{fig:1mod4}
\end{figure}
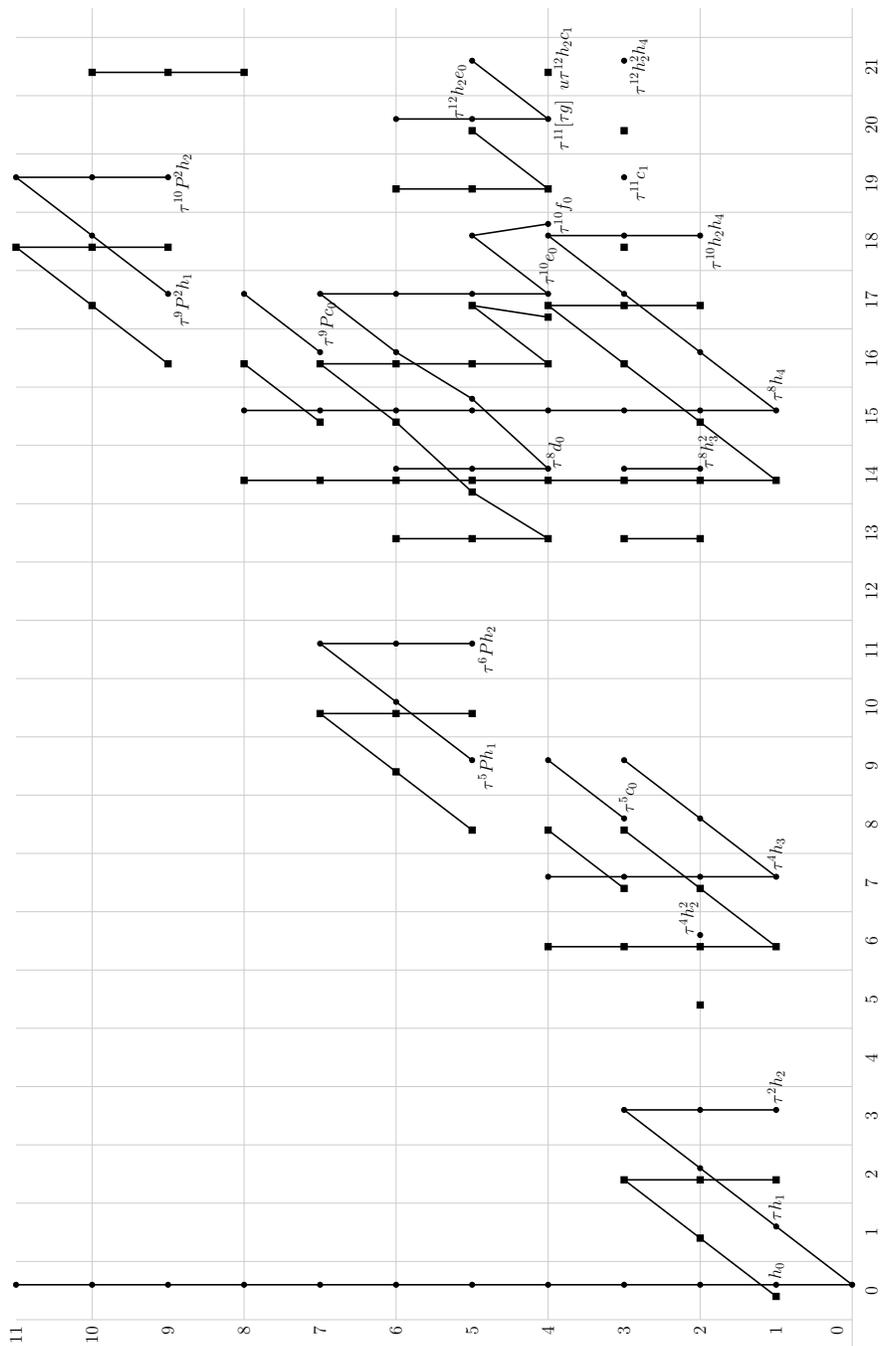

\begin{figure}[ht!]
  \centering
{\LARGE
\begin{tikzpicture}[x=1.65cm, y=2.15cm, scale=.47, rotate=90, transform shape]
\draw[gray!40] (-1, 0) -- (22, 0);
\foreach \n in {1,...,5}{\draw[line width=.25pt, gray!40] (-0.5, 2*\n) -- (22, 2*\n);}
\foreach \n in {0,...,22}{\draw[line width=.25pt, gray!40] (\n-.5,0) -- (\n-.5,11);}
\foreach \n in {0,..., 21}{\draw (\n, -.1) node [anchor=north] {\n};}
\draw (-0.5, .2) node [anchor=east] {0};
\foreach \n in {1,...,11}{\draw (-0.5, \n) node [anchor=east] {\n};}

\foreach \n in {0,...,10}{\draw [line width=.6pt] (.1,\n) -- (.1,{1+\n});}
\draw [line width=.6pt] (3+.1,1) -- (3+.1, 2);
\draw [line width=.6pt] (3+.1,2) -- (3+.1, 3);
\draw [line width=.6pt] (7+.1,1) -- (7+.1, 2);
\draw [line width=.6pt] (7+.1,2) -- (7+.1, 3);
\draw [line width=.6pt] (7+.1,3) -- (7+.1, 4);
\draw [line width=.6pt] (11+.1,5) -- (11+.1, 6);
\draw [line width=.6pt] (11+.1,6) -- (11+.1, 7);
\draw [line width=.6pt] (15+.1,4) -- (15+.1, 8);
\draw [line width=.6pt] (18+.1,2) -- (18+.1, 4);
\draw [line width=.6pt] (19+.1,9) -- (19+.1, 11);
\draw [line width=.6pt] (20+.1,5) -- (20+.1, 6);
\draw [line width=.6pt, dotted] (20-.1, 3) -- (20+.1, 5);

\draw [line width=.6pt] (0+.1,0) -- (3+.1,3);
\draw [line width=.6pt] (7+.1,1) -- (9+.1,3);
\draw [line width=.6pt] (8+.1,3) -- (9+.1,4);
\draw [line width=.6pt] (9+.1,5) -- (11+.1,7);
\draw [line width=.6pt] (14+.1,4) -- (15+.3,5);
\draw [line width=.6pt] (16+.1,2) -- (18+.1,4);
\draw [line width=.6pt] (16+.1,7) -- (17+.1,8);
\draw [line width=.6pt] (17+.1,9) -- (19+.1,11);
\draw [line width=.6pt, dotted] (20-.1,3) -- (21+.1, 5);

\filldraw [black] 
\foreach \n in {0,...,11}{(0+.1,\n) circle (2pt)};
\filldraw [black]
(1+.1,1) circle (2pt)
(2+.1,2) circle (2pt)
(3+.1,1) circle (2pt)
(3+.1,2) circle (2pt)
(3+.1,3) circle (2pt)
(6+.1,2) circle (2pt)
(7+.1,1) circle (2pt)
(7+.1,2) circle (2pt)
(7+.1,3) circle (2pt)
(7+.1,4) circle (2pt)
(8+.1,2) circle (2pt)
(8+.1,3) circle (2pt)
(9+.1,3) circle (2pt)
(9+.1,4) circle (2pt)
(9+.1,5) circle (2pt)
(10+.1,6) circle (2pt)
(11+.1,5) circle (2pt)
(11+.1,6) circle (2pt)
(11+.1,7) circle (2pt)
(14+.1,2) circle (2pt)
(14+.1,4) circle (2pt)
(15+.1,4) circle (2pt)
(15+.1,5) circle (2pt)
(15.2+.1,5) circle (2pt)
(15+.1,6) circle (2pt)
(15+.1,7) circle (2pt)
(15+.1,8) circle (2pt)
(16+.1,2) circle (2pt)
(16+.1,7) circle (2pt)
(17+.1,3) circle (2pt)
(17+.1,5) circle (2pt)
(17+.1,8) circle (2pt)
(17+.1,9) circle (2pt)
(18+.1,2) circle (2pt)
(18+.1,3) circle (2pt)
(18+.1,4) circle (2pt)
(18+.1,10) circle (2pt)
(19+.1,3) circle (2pt)
(19+.1,9) circle (2pt)
(19+.1,10) circle (2pt)
(19+.1,11) circle (2pt)
(20+.1,5) circle (2pt)
(20+.1,6) circle (2pt)
(21+.1,3) circle (2pt)
(21+.1,5) circle (2pt)
;

\draw [line width=.6pt] (2-.1,1) -- (2-.1, 2);
\draw [line width=.6pt] (2-.1,2) -- (2-.1, 3);
\draw [line width=.6pt] (6-.1,1) -- (6-.1, 2);
\draw [line width=.6pt] (6-.1,2) -- (6-.1, 3);
\draw [line width=.6pt] (6-.1,3) -- (6-.1, 4);
\draw [line width=.6pt] (10-.1,5) -- (10-.1, 6);
\draw [line width=.6pt] (10-.1,6) -- (10-.1, 7);
\draw [line width=.6pt] (14-.1,4) -- (14-.1, 8);
\draw [line width=.6pt] (17-.1,2) -- (17-.1, 4);
\draw [line width=.6pt] (18-.1,9) -- (18-.1, 11);
\draw [line width=.6pt] (19-.1,4) -- (19-.1, 5);
\draw [line width=.6pt] (21-.1,8) -- (21-.1, 10);

\draw [line width=.6pt] (0-.1,1) -- (2-.1,3);
\draw [line width=.6pt] (6-.1,1) -- (8-.1,3);
\draw [line width=.6pt] (7-.1,3) -- (8-.1,4);
\draw [line width=.6pt] (8-.1,5) -- (10-.1,7);
\draw [line width=.6pt] (13-.1,4) -- (14-.3,5);
\draw [line width=.6pt] (15-.1,2) -- (17-.1,4);
\draw [line width=.6pt] (15-.1,7) -- (16-.1,8);
\draw [line width=.6pt] (16-.1,9) -- (18-.1,11);
\draw [line width=.6pt] (19-.1,4) -- (20-.1,5);

\filldraw 
([xshift=-2.5pt,yshift=-2.5pt]0-.1,1) rectangle ++(5pt, 5pt)
([xshift=-2.5pt,yshift=-2.5pt]1-.1,2) rectangle ++(5pt, 5pt)
([xshift=-2.5pt,yshift=-2.5pt]2-.1,1) rectangle ++(5pt, 5pt)
([xshift=-2.5pt,yshift=-2.5pt]2-.1,2) rectangle ++(5pt, 5pt)
([xshift=-2.5pt,yshift=-2.5pt]2-.1,3) rectangle ++(5pt, 5pt)
([xshift=-2.5pt,yshift=-2.5pt]5-.1,2) rectangle ++(5pt, 5pt)
([xshift=-2.5pt,yshift=-2.5pt]6-.1,1) rectangle ++(5pt, 5pt)
([xshift=-2.5pt,yshift=-2.5pt]6-.1,2) rectangle ++(5pt, 5pt)
([xshift=-2.5pt,yshift=-2.5pt]6-.1,3) rectangle ++(5pt, 5pt)
([xshift=-2.5pt,yshift=-2.5pt]6-.1,4) rectangle ++(5pt, 5pt)
([xshift=-2.5pt,yshift=-2.5pt]7-.1,2) rectangle ++(5pt, 5pt)
([xshift=-2.5pt,yshift=-2.5pt]7-.1,3) rectangle ++(5pt, 5pt)
([xshift=-2.5pt,yshift=-2.5pt]8-.1,3) rectangle ++(5pt, 5pt)
([xshift=-2.5pt,yshift=-2.5pt]8-.1,4) rectangle ++(5pt, 5pt)
([xshift=-2.5pt,yshift=-2.5pt]8-.1,5) rectangle ++(5pt, 5pt)
([xshift=-2.5pt,yshift=-2.5pt]9-.1,6) rectangle ++(5pt, 5pt)
([xshift=-2.5pt,yshift=-2.5pt]10-.1,5) rectangle ++(5pt, 5pt)
([xshift=-2.5pt,yshift=-2.5pt]10-.1,6) rectangle ++(5pt, 5pt)
([xshift=-2.5pt,yshift=-2.5pt]10-.1,7) rectangle ++(5pt, 5pt)
([xshift=-2.5pt,yshift=-2.5pt]13-.1,2) rectangle ++(5pt, 5pt)
([xshift=-2.5pt,yshift=-2.5pt]13-.1,4) rectangle ++(5pt, 5pt)
([xshift=-2.5pt,yshift=-2.5pt]14-.1,4) rectangle ++(5pt, 5pt)
([xshift=-2.5pt,yshift=-2.5pt]14-.1,5) rectangle ++(5pt, 5pt)
([xshift=-2.5pt,yshift=-2.5pt]14-.2-.1,5) rectangle ++(5pt, 5pt)
([xshift=-2.5pt,yshift=-2.5pt]14-.1,6) rectangle ++(5pt, 5pt)
([xshift=-2.5pt,yshift=-2.5pt]14-.1,7) rectangle ++(5pt, 5pt)
([xshift=-2.5pt,yshift=-2.5pt]14-.1,8) rectangle ++(5pt, 5pt)
([xshift=-2.5pt,yshift=-2.5pt]15-.1,2) rectangle ++(5pt, 5pt)
([xshift=-2.5pt,yshift=-2.5pt]15-.1,7) rectangle ++(5pt, 5pt)
([xshift=-2.5pt,yshift=-2.5pt]16-.1,3) rectangle ++(5pt, 5pt)
([xshift=-2.5pt,yshift=-2.5pt]16-.1,5) rectangle ++(5pt, 5pt)
([xshift=-2.5pt,yshift=-2.5pt]16-.1,8) rectangle ++(5pt, 5pt)
([xshift=-2.5pt,yshift=-2.5pt]16-.1,9) rectangle ++(5pt, 5pt)
([xshift=-2.5pt,yshift=-2.5pt]17-.1,2) rectangle ++(5pt, 5pt)
([xshift=-2.5pt,yshift=-2.5pt]17-.1,3) rectangle ++(5pt, 5pt)
([xshift=-2.5pt,yshift=-2.5pt]17-.1,4) rectangle ++(5pt, 5pt)
([xshift=-2.5pt,yshift=-2.5pt]17-.1,10) rectangle ++(5pt, 5pt)
([xshift=-2.5pt,yshift=-2.5pt]18-.1,3) rectangle ++(5pt, 5pt)
([xshift=-2.5pt,yshift=-2.5pt]18-.1,9) rectangle ++(5pt, 5pt)
([xshift=-2.5pt,yshift=-2.5pt]18-.1,10) rectangle ++(5pt, 5pt)
([xshift=-2.5pt,yshift=-2.5pt]18-.1,11) rectangle ++(5pt, 5pt)
([xshift=-2.5pt,yshift=-2.5pt]19-.1,4) rectangle ++(5pt, 5pt)
([xshift=-2.5pt,yshift=-2.5pt]19-.1,5) rectangle ++(5pt, 5pt)
([xshift=-2.5pt,yshift=-2.5pt]20-.1,3) rectangle ++(5pt, 5pt)
([xshift=-2.5pt,yshift=-2.5pt]20-.1,5) rectangle ++(5pt, 5pt)
([xshift=-2.5pt,yshift=-2.5pt]21-.1,4) rectangle ++(5pt, 5pt)
([xshift=-2.5pt,yshift=-2.5pt]21-.1,8) rectangle ++(5pt, 5pt)
([xshift=-2.5pt,yshift=-2.5pt]21-.1,9) rectangle ++(5pt, 5pt)
([xshift=-2.5pt,yshift=-2.5pt]21-.1,10) rectangle ++(5pt, 5pt)
;

\filldraw 
(0-.1,1) circle (2pt)
(1-.1,2) circle (2pt)
(2-.1,1) circle (2pt)
(2-.1,2) circle (2pt)
(2-.1,3) circle (2pt)
(5-.1,2) circle (2pt)
(6-.1,1) circle (2pt)
(6-.1,2) circle (2pt)
(6-.1,3) circle (2pt)
(6-.1,4) circle (2pt)
(7-.1,2) circle (2pt)
(7-.1,3) circle (2pt)
(8-.1,3) circle (2pt)
(8-.1,4) circle (2pt)
(8-.1,5) circle (2pt)
(9-.1,6) circle (2pt)
(10-.1,5) circle (2pt)
(10-.1,6) circle (2pt)
(10-.1,7) circle (2pt)
(13-.1,2) circle (2pt)
(13-.1,4) circle (2pt)
(14-.1,4) circle (2pt)
(14-.1,5) circle (2pt)
(14-.2-.1,5) circle (2pt)
(14-.1,6) circle (2pt)
(14-.1,7) circle (2pt)
(14-.1,8) circle (2pt)
(15-.1,2) circle (2pt)
(15-.1,7) circle (2pt)
(16-.1,3) circle (2pt)
(16-.1,5) circle (2pt)
(16-.1,8) circle (2pt)
(16-.1,9) circle (2pt)
(17-.1,2) circle (2pt)
(17-.1,3) circle (2pt)
(17-.1,4) circle (2pt)
(17-.1,10) circle (2pt)
(18-.1,3) circle (2pt)
(18-.1,9) circle (2pt)
(18-.1,10) circle (2pt)
(18-.1,11) circle (2pt)
(19-.1,4) circle (2pt)
(19-.1,5) circle (2pt)
(20-.1,3) circle (2pt)
(20-.1,5) circle (2pt)
(21-.1,4) circle (2pt)
(21-.1,8) circle (2pt)
(21-.1,9) circle (2pt)
(21-.1,10) circle (2pt)
;

\draw (0+.1, 1) node [anchor=west] {$h_0$};
\draw (1+.1, 1) node [anchor=west] {$\tau h_1$};
\draw (3+.1, 1) node [anchor=west] {$\tau^2 h_2$};
\draw (6, 2+.15) node [anchor=west] {$\tau^4 h_2^2$};
\draw (7+.1, 1) node [anchor=west] {$\tau^4 h_3$};
\draw (8+.1, 3-.05) node [anchor=west] {$\tau^5c_0$};
\draw (9, 5) node [anchor=north] {$\tau^5 Ph_1$};
\draw (11, 5) node [anchor=north] {$\tau^6 Ph_2$};
\draw (14, 2+.1) node [anchor=north west] {$\tau^8h_3^2$};
\draw (14, 4+.1) node [anchor=north west] {$\tau^8d_0$};
\draw (16+.1, 7+.1) node [anchor=north west] {$\tau^9Pc_0$};
\draw (17, 9) node [anchor=north] {$\tau^{9}P^2h_1$};
\draw (18, 2) node [anchor=north] {$\tau^{10}h_2h_4$};
\draw (19, 9) node [anchor=north] {$\tau^{10}P^2h_2$};
\draw (19, 3) node [anchor=north] {$\tau^{11}c_1$};
\draw (20, 5) node [anchor=south west] {$\tau^{12}h_2e_0$};
\draw (21, 3) node [anchor=north] {$\tau^{12}h_2^2h_4$};
\draw (21+.15, 4) node [anchor=north] {$u\tau^{12}h_2c_1$};

\end{tikzpicture}
}
\caption{$E_{\infty}$ page of MASS for $\bbF_q$ with
  $q \equiv 1 \bmod 4$, weight 0}
  \label{fig:1mod4-infty}
\end{figure}

\begin{figure}[ht!]
  \centering
{\LARGE
\begin{tikzpicture}[x=1.6cm, y=2.15cm, scale=.47, rotate=90, transform shape]
\draw[gray!40] (-1, 0) -- (22, 0);
\foreach \n in {1,...,5}{\draw[line width=.25pt, gray!40] (-0.5, 2*\n) -- (22, 2*\n);}
\foreach \n in {0,...,22}{\draw[line width=.25pt, gray!40] (\n-.5,0) -- (\n-.5,11);}
\foreach \n in {0,..., 21}{\draw (\n, -.1) node [anchor=north] {\n};}
\draw (-0.5, .2) node [anchor=east] {0};
\foreach \n in {1,...,11}{\draw (-0.5, \n) node [anchor=east] {\n};}

\foreach \n in {0,...,10}{\draw [line width=.6pt] (.1,\n) -- (.1,{1+\n});}
\draw [line width=.6pt] (1+.1,1) .. controls (1+.1, 1+.2) and (1,2-.2) .. (1-.1,2);
\draw [line width=.6pt] (2+.1,2) .. controls (2+.1, 2+.2) and (2,3-.2) .. (2-.1,3);
\draw [line width=.6pt] (3+.1,1) -- (3+.1, 2);
\draw [line width=.6pt] (3+.1,2) -- (3+.1, 3);
\draw [line width=.6pt] (7+.1,1) -- (7+.1, 2);
\draw [line width=.6pt] (7+.1,2) -- (7+.1, 3);
\draw [line width=.6pt] (7+.1,3) -- (7+.1, 4);
\draw [line width=.6pt] (8+.1,2) .. controls (8+.1, 2+.2) and (8,3-.2) .. (8-.1,3);
\draw [line width=.6pt] (8+.1,3) .. controls (8+.1, 3+.2) and (8,4-.2) .. (8-.1,4);
\draw [line width=.6pt] (9+.1,5) .. controls (9+.1, 5+.2) and (9,6-.2) .. (9-.1,6);
\draw [line width=.6pt] (10+.1,6) .. controls (10+.1, 6+.2) and (10,7-.2) .. (10-.1,7);
\draw [line width=.6pt] (11+.1,5) -- (11+.1, 6);
\draw [line width=.6pt] (11+.1,6) -- (11+.1, 7);
\draw [line width=.6pt] (14+.1,2) -- (14+.1, 3);
\draw [line width=.6pt] (14+.1,4) -- (14+.1, 5);
\draw [line width=.6pt] (14+.1,5) -- (14+.1, 6);
\draw [line width=.6pt] (15+.1,1) -- (15+.1, 8);
\draw [line width=.6pt] (15+.3,5) .. controls (15+.3, 5+.2) and (15+.1,6-.1) .. (15-.1,6);
\draw [line width=.6pt] (16+.1,2) .. controls (16+.1, 2+.2) and (16,3-.2) .. (16-.1,3);
\draw [line width=.6pt] (16+.1,6) .. controls (16+.1, 6+.2) and (16,7-.2) .. (16-.1,7);
\draw [line width=.6pt] (17+.1,3) .. controls (17+.1, 3+.2) and (17,4-.2) .. (17-.1,4);
\draw [line width=.6pt] (17+.1,4) -- (17+.1, 7);
\draw [line width=.6pt] (17+.1,9) .. controls (17+.1, 9+.2) and (17,10-.2) .. (17-.1,10);
\draw [line width=.6pt] (18+.1,2) -- (18+.1, 4);
\draw [line width=.6pt] (18+.3,4) -- (18+.1, 5);
\draw [line width=.6pt] (18+.1,10) .. controls (18+.1, 10+.2) and (18,11-.2) .. (18-.1,11);
\draw [line width=.6pt] (19+.1,9) -- (19+.1, 11);
\draw [line width=.6pt] (20+.1,4) -- (20+.1, 6);

\draw [line width=.6pt,  dashed] (0+.1,0) -- (0-.1,1);
\draw [line width=.6pt,  dashed] (1+.1,1) .. controls (1+.1-.1,1+.2) and (1-.1,2-.2) .. (1-.1,2);
\draw [line width=.6pt,  dashed] (2+.1,2) .. controls (2+.1-.1,2+.2) and (2-.1,3-.2) .. (2-.1,3);
\draw [line width=.6pt,  dashed] (7+.1,1) -- (7-.1,2);
\draw [line width=.6pt,  dashed] (8+.1,2) .. controls (8+.1-.1,2+.2) and (8-.1,3-.2) .. (8-.1,3);
\draw [line width=.6pt,  dashed] (8+.1,3) .. controls (8+.1-.1,3+.2) and (8-.1,4-.2) .. (8-.1,4);
\draw [line width=.6pt,  dashed] (9+.1,5) .. controls (9+.1-.1,5+.2) and (9-.1,6-.2) .. (9-.1,6);
\draw [line width=.6pt,  dashed] (10+.1,6) .. controls (10+.1-.1,6+.2) and (10-.1,7-.2) .. (10-.1,7);
\draw [line width=.6pt,  dashed] (14+.1,4) -- (14-.3,5);
\draw [line width=.6pt,  dashed] (15+.1,1) -- (15-.1,2);
\draw [line width=.6pt,  dashed] (15+.3,5) .. controls (15+.3-.2,5+.2) and (15-.1,6-.2) .. (15-.1,6);
\draw [line width=.6pt,  dashed] (16+.1,2) .. controls (16+.1-.1,2+.2) and (16-.1,3-.2) .. (16-.1,3);
\draw [line width=.6pt,  dashed] (16+.1,6) .. controls (16+.1-.1,6+.2) and (16-.1,7-.2) .. (16-.1,7);
\draw [line width=.6pt,  dashed] (17+.1,3) .. controls (17+.1-.1,3+.2) and (17-.1,4-.2) .. (17-.1,4);
\draw [line width=.6pt,  dashed] (17+.1,4) -- (17-.1,5);
\draw [line width=.6pt,  dashed] (17+.1,9) .. controls (17+.1-.1,9+.2) and (17-.1,10-.2) .. (17-.1,10);
\draw [line width=.6pt,  dashed] (18+.1,10) .. controls (18+.1-.1,10+.2) and (18-.1,11-.2) .. (18-.1,11);
\draw [line width=.6pt,  dashed] (20+.1,4) -- (20-.1,5);

\draw [line width=.6pt] (0+.1,0) -- (3+.1,3);
\draw [line width=.6pt] (7+.1,1) -- (9+.1,3);
\draw [line width=.6pt] (8+.1,3) -- (9+.1,4);
\draw [line width=.6pt] (9+.1,5) -- (11+.1,7);
\draw [line width=.6pt] (14+.1,4) -- (15+.3,5);
\draw [line width=.6pt] (15+.3,5) -- (16+.1,6);
\draw [line width=.6pt] (16+.1,6) -- (17+.1,7);
\draw [line width=.6pt] (15+.1,1) -- (18+.1,4);
\draw [line width=.6pt] (16+.1,7) -- (17+.1,8);
\draw [line width=.6pt] (17+.1,9) -- (19+.1,11);
\draw [line width=.6pt] (17+.1,4) -- (18+.1,5);
\draw [line width=.6pt] (20+.1,4) -- (21+.1,5);

\filldraw [black] 
\foreach \n in {0,...,11}{(0+.1,\n) circle (2pt)};
\filldraw [black]
(1+.1,1) circle (2pt)
(2+.1,2) circle (2pt)
(3+.1,1) circle (2pt)
(3+.1,2) circle (2pt)
(3+.1,3) circle (2pt)
(6+.1,2) circle (2pt) 
(7+.1,1) circle (2pt)
(7+.1,2) circle (2pt)
(7+.1,3) circle (2pt)
(7+.1,4) circle (2pt)
(8+.1,2) circle (2pt)
(8+.1,3) circle (2pt)
(9+.1,3) circle (2pt)
(9+.1,4) circle (2pt)
(9+.1,5) circle (2pt)
(10+.1,6) circle (2pt)
(11+.1,5) circle (2pt)
(11+.1,6) circle (2pt)
(11+.1,7) circle (2pt)
(14+.1,2) circle (2pt)
(14+.1,3) circle (2pt)
(14+.1,4) circle (2pt)
(14+.1,5) circle (2pt)
(14+.1,6) circle (2pt)
(15+.1,1) circle (2pt)
(15+.1,2) circle (2pt)
(15+.1,3) circle (2pt)
(15+.1,4) circle (2pt)
(15+.1,5) circle (2pt)
(15.2+.1,5) circle (2pt)
(15+.1,6) circle (2pt)
(15+.1,7) circle (2pt)
(15+.1,8) circle (2pt)
(16+.1,2) circle (2pt)
(16+.1,6) circle (2pt)
(16+.1,7) circle (2pt)
(17+.1,3) circle (2pt)
(17+.1,4) circle (2pt)
(17+.1,5) circle (2pt)
(17+.1,6) circle (2pt)
(17+.1,7) circle (2pt)
(17+.1,8) circle (2pt)
(17+.1,9) circle (2pt)
(18+.1,2) circle (2pt)
(18+.1,3) circle (2pt)
(18+.1,4) circle (2pt)
(18.2+.1,4) circle (2pt)
(18+.1,5) circle (2pt)
(18+.1,10) circle (2pt)
(19+.1,3) circle (2pt)
(19+.1,9) circle (2pt)
(19+.1,10) circle (2pt)
(19+.1,11) circle (2pt)
(20+.1,4) circle (2pt)
(20+.1,5) circle (2pt)
(20+.1,6) circle (2pt)
(21+.1,3) circle (2pt)
(21+.1,5) circle (2pt)
;

\draw [line width=.6pt] (2-.1,1) -- (2-.1, 2);
\draw [line width=.6pt] (2-.1,2) -- (2-.1, 3);
\draw [line width=.6pt] (6-.1,1) -- (6-.1, 2);
\draw [line width=.6pt] (6-.1,2) -- (6-.1, 3);
\draw [line width=.6pt] (6-.1,3) -- (6-.1, 4);
\draw [line width=.6pt] (10-.1,5) -- (10-.1, 6);
\draw [line width=.6pt] (10-.1,6) -- (10-.1, 7);
\draw [line width=.6pt] (13-.1,2) -- (13-.1, 3);
\draw [line width=.6pt] (13-.1,4) -- (13-.1, 5);
\draw [line width=.6pt] (13-.1,5) -- (13-.1, 6);
\draw [line width=.6pt] (14-.1,1) -- (14-.1, 8);
\draw [line width=.6pt] (16-.1,4) -- (16-.1, 7);
\draw [line width=.6pt] (17-.1,2) -- (17-.1, 4);
\draw [line width=.6pt] (17-.3,4) -- (17-.1, 5);
\draw [line width=.6pt] (18-.1,9) -- (18-.1, 11);
\draw [line width=.6pt] (19-.1,4) -- (19-.1, 6);
\draw [line width=.6pt] (21-.1,8) -- (21-.1, 10);

\draw [line width=.6pt] (0-.1,1) -- (2-.1,3);
\draw [line width=.6pt] (6-.1,1) -- (8-.1,3);
\draw [line width=.6pt] (7-.1,3) -- (8-.1,4);
\draw [line width=.6pt] (8-.1,5) -- (10-.1,7);
\draw [line width=.6pt] (13-.1,4) -- (14-.3,5);
\draw [line width=.6pt] (14-.3,5) -- (15-.1,6);
\draw [line width=.6pt] (15-.1,6) -- (16-.1,7);
\draw [line width=.6pt] (14-.1,1) -- (17-.1,4);
\draw [line width=.6pt] (15-.1,7) -- (16-.1,8);
\draw [line width=.6pt] (16-.1,9) -- (18-.1,11);
\draw [line width=.6pt] (16-.1,4) -- (17-.1,5);
\draw [line width=.6pt] (19-.1,4) -- (20-.1,5);

\filldraw 
([xshift=-2.5pt,yshift=-2.5pt]0-.1,1) rectangle ++(5pt, 5pt)
([xshift=-2.5pt,yshift=-2.5pt]1-.1,2) rectangle ++(5pt, 5pt)
([xshift=-2.5pt,yshift=-2.5pt]2-.1,1) rectangle ++(5pt, 5pt)
([xshift=-2.5pt,yshift=-2.5pt]2-.1,2) rectangle ++(5pt, 5pt)
([xshift=-2.5pt,yshift=-2.5pt]2-.1,3) rectangle ++(5pt, 5pt)
([xshift=-2.5pt,yshift=-2.5pt]5-.1,2) rectangle ++(5pt, 5pt)
([xshift=-2.5pt,yshift=-2.5pt]6-.1,1) rectangle ++(5pt, 5pt)
([xshift=-2.5pt,yshift=-2.5pt]6-.1,2) rectangle ++(5pt, 5pt)
([xshift=-2.5pt,yshift=-2.5pt]6-.1,3) rectangle ++(5pt, 5pt)
([xshift=-2.5pt,yshift=-2.5pt]6-.1,4) rectangle ++(5pt, 5pt)
([xshift=-2.5pt,yshift=-2.5pt]7-.1,2) rectangle ++(5pt, 5pt)
([xshift=-2.5pt,yshift=-2.5pt]7-.1,3) rectangle ++(5pt, 5pt)
([xshift=-2.5pt,yshift=-2.5pt]8-.1,3) rectangle ++(5pt, 5pt)
([xshift=-2.5pt,yshift=-2.5pt]8-.1,4) rectangle ++(5pt, 5pt)
([xshift=-2.5pt,yshift=-2.5pt]8-.1,5) rectangle ++(5pt, 5pt)
([xshift=-2.5pt,yshift=-2.5pt]9-.1,6) rectangle ++(5pt, 5pt)
([xshift=-2.5pt,yshift=-2.5pt]10-.1,5) rectangle ++(5pt, 5pt)
([xshift=-2.5pt,yshift=-2.5pt]10-.1,6) rectangle ++(5pt, 5pt)
([xshift=-2.5pt,yshift=-2.5pt]10-.1,7) rectangle ++(5pt, 5pt)
([xshift=-2.5pt,yshift=-2.5pt]13-.1,2) rectangle ++(5pt, 5pt)
([xshift=-2.5pt,yshift=-2.5pt]13-.1,3) rectangle ++(5pt, 5pt)
([xshift=-2.5pt,yshift=-2.5pt]13-.1,4) rectangle ++(5pt, 5pt)
([xshift=-2.5pt,yshift=-2.5pt]13-.1,5) rectangle ++(5pt, 5pt)
([xshift=-2.5pt,yshift=-2.5pt]13-.1,6) rectangle ++(5pt, 5pt)
([xshift=-2.5pt,yshift=-2.5pt]14-.1,1) rectangle ++(5pt, 5pt)
([xshift=-2.5pt,yshift=-2.5pt]14-.1,2) rectangle ++(5pt, 5pt)
([xshift=-2.5pt,yshift=-2.5pt]14-.1,3) rectangle ++(5pt, 5pt)
([xshift=-2.5pt,yshift=-2.5pt]14-.1,4) rectangle ++(5pt, 5pt)
([xshift=-2.5pt,yshift=-2.5pt]14-.1,5) rectangle ++(5pt, 5pt)
([xshift=-2.5pt,yshift=-2.5pt]14-.2-.1,5) rectangle ++(5pt, 5pt)
([xshift=-2.5pt,yshift=-2.5pt]14-.1,6) rectangle ++(5pt, 5pt)
([xshift=-2.5pt,yshift=-2.5pt]14-.1,7) rectangle ++(5pt, 5pt)
([xshift=-2.5pt,yshift=-2.5pt]14-.1,8) rectangle ++(5pt, 5pt)
([xshift=-2.5pt,yshift=-2.5pt]15-.1,2) rectangle ++(5pt, 5pt)
([xshift=-2.5pt,yshift=-2.5pt]15-.1,6) rectangle ++(5pt, 5pt)
([xshift=-2.5pt,yshift=-2.5pt]15-.1,7) rectangle ++(5pt, 5pt)
([xshift=-2.5pt,yshift=-2.5pt]16-.1,3) rectangle ++(5pt, 5pt)
([xshift=-2.5pt,yshift=-2.5pt]16-.1,4) rectangle ++(5pt, 5pt)
([xshift=-2.5pt,yshift=-2.5pt]16-.1,5) rectangle ++(5pt, 5pt)
([xshift=-2.5pt,yshift=-2.5pt]16-.1,6) rectangle ++(5pt, 5pt)
([xshift=-2.5pt,yshift=-2.5pt]16-.1,7) rectangle ++(5pt, 5pt)
([xshift=-2.5pt,yshift=-2.5pt]16-.1,8) rectangle ++(5pt, 5pt)
([xshift=-2.5pt,yshift=-2.5pt]16-.1,9) rectangle ++(5pt, 5pt)
([xshift=-2.5pt,yshift=-2.5pt]17-.1,2) rectangle ++(5pt, 5pt)
([xshift=-2.5pt,yshift=-2.5pt]17-.1,3) rectangle ++(5pt, 5pt)
([xshift=-2.5pt,yshift=-2.5pt]17-.1,4) rectangle ++(5pt, 5pt)
([xshift=-2.5pt,yshift=-2.5pt]17-.2-.1,4) rectangle ++(5pt, 5pt)
([xshift=-2.5pt,yshift=-2.5pt]17-.1,5) rectangle ++(5pt, 5pt)
([xshift=-2.5pt,yshift=-2.5pt]17-.1,10) rectangle ++(5pt, 5pt)
([xshift=-2.5pt,yshift=-2.5pt]18-.1,3) rectangle ++(5pt, 5pt)
([xshift=-2.5pt,yshift=-2.5pt]18-.1,9) rectangle ++(5pt, 5pt)
([xshift=-2.5pt,yshift=-2.5pt]18-.1,10) rectangle ++(5pt, 5pt)
([xshift=-2.5pt,yshift=-2.5pt]18-.1,11) rectangle ++(5pt, 5pt)
([xshift=-2.5pt,yshift=-2.5pt]19-.1,4) rectangle ++(5pt, 5pt)
([xshift=-2.5pt,yshift=-2.5pt]19-.1,5) rectangle ++(5pt, 5pt)
([xshift=-2.5pt,yshift=-2.5pt]19-.1,6) rectangle ++(5pt, 5pt)
([xshift=-2.5pt,yshift=-2.5pt]20-.1,3) rectangle ++(5pt, 5pt)
([xshift=-2.5pt,yshift=-2.5pt]20-.1,5) rectangle ++(5pt, 5pt)
([xshift=-2.5pt,yshift=-2.5pt]21-.1,4) rectangle ++(5pt, 5pt)
([xshift=-2.5pt,yshift=-2.5pt]21-.1,8) rectangle ++(5pt, 5pt)
([xshift=-2.5pt,yshift=-2.5pt]21-.1,9) rectangle ++(5pt, 5pt)
([xshift=-2.5pt,yshift=-2.5pt]21-.1,10) rectangle ++(5pt, 5pt)
;

\draw (0-.1, 1-.1) node [anchor=north east] {$\rho h_1$};
\draw (0+.15, 1) node [anchor=west] {$h_0$};
\draw (1-.05, 1) node [anchor=north west] {$[\tau h_1]$};
\draw (3+.15, 1) node [anchor=north] {$[\tau^2]h_2$};
\draw (6+.4, 2) node [anchor=south ] {$[\tau^2]^2h_2^2$};
\draw (7+.15, 1) node [anchor=north] {$[\tau^2]^2h_3$};
\draw (8+.15, 3) node [anchor=south west] {$[\tau^2]^2[\tau c_0]$};
\draw (9, 5) node [anchor=north] {$[\tau^2]^2[\tau Ph_1]$};
\draw (11+.15, 5) node [anchor=north] {$[\tau^2]^3Ph_2$};
\draw (14, 4+.05) node [anchor=north west] {$[\tau^2]^4d_0$};
\draw (14+.05, 2) node [anchor=south west] {$[\tau^2]^4h_3^2$};
\draw (15+.15, 1) node [anchor=north] {$[\tau^2]^4h_4$};
\draw (16+.1, 7) node [anchor=south] {$[\tau^2]^4[\tau Pc_0]$};
\draw (17, 4+.05) node [anchor=north west] {$[\tau^2]^5e_0$};
\draw (17, 9) node [anchor=north] {$[\tau^2]^4[\tau P^2h_1]$};
\draw (18+.25, 4) node [anchor=south] {$[\tau^2]^5f_0$};
\draw (18+.15, 2) node [anchor=north] {$[\tau^2]^5h_2h_4$};
\draw (19+.15, 9) node [anchor=north] {$[\tau^2]^5P^2h_2$};
\draw (19, 3) node [anchor=north] {$[\tau^2]^5[\tau c_1]$};
\draw (19-.05, 4) node [anchor=north] {$[\tau^2]^5[\rho\tau g]$};
\draw (20-.3, 4) node [anchor=north west] {$[\tau^2]^5[\tau^2 g]$};
\draw (20+.05, 5) node [anchor=south west] {$[\tau^2]^6h_2e_0$};
\draw (21+.15, 3) node [anchor=north] {$[\tau^2]^6h_2^2h_4$};
\draw (21-.3, 4) node [anchor=south west] {$\rho[\tau^2]^5 h_2 c_1$};
\end{tikzpicture}
}
  \caption{$E_2$ page of MASS for $\bbF_q$ with $q \equiv 3 \bmod 4$, weight 0}
  \label{fig:3mod4}
\end{figure}

\begin{figure}[ht!]
  \centering
{\LARGE
\begin{tikzpicture}[x=1.6cm, y=2.15cm, scale=.47, rotate=90, transform shape]
\draw[gray!40] (-1, 0) -- (22, 0);
\foreach \n in {1,...,5}{\draw[line width=.25pt, gray!40] (-0.5, 2*\n) -- (22, 2*\n);}
\foreach \n in {0,...,22}{\draw[line width=.25pt, gray!40] (\n-.5,0) -- (\n-.5,11);}
\foreach \n in {0,..., 21}{\draw (\n, -.1) node [anchor=north] {\n};}
\draw (-0.5, .2) node [anchor=east] {0};
\foreach \n in {1,...,11}{\draw (-0.5, \n) node [anchor=east] {\n};}

\foreach \n in {0,...,10}{\draw [line width=.6pt] (.1,\n) -- (.1,{1+\n});}
\draw [line width=.6pt] (1+.1,1) .. controls (1+.1, 1+.2) and (1,2-.2) .. (1-.1,2);
\draw [line width=.6pt] (2+.1,2) .. controls (2+.1, 2+.2) and (2,3-.2) .. (2-.1,3);
\draw [line width=.6pt] (3+.1,1) -- (3+.1, 2);
\draw [line width=.6pt] (3+.1,2) -- (3+.1, 3);
\draw [line width=.6pt] (7+.1,1) -- (7+.1, 2);
\draw [line width=.6pt] (7+.1,2) -- (7+.1, 3);
\draw [line width=.6pt] (7+.1,3) -- (7+.1, 4);
\draw [line width=.6pt] (8+.1,2) .. controls (8+.1, 2+.2) and (8,3-.2) .. (8-.1,3);
\draw [line width=.6pt] (8+.1,3) .. controls (8+.1, 3+.2) and (8,4-.2) .. (8-.1,4);
\draw [line width=.6pt] (9+.1,5) .. controls (9+.1, 5+.2) and (9,6-.2) .. (9-.1,6);
\draw [line width=.6pt] (10+.1,6) .. controls (10+.1, 6+.2) and (10,7-.2) .. (10-.1,7);
\draw [line width=.6pt] (11+.1,5) -- (11+.1, 6);
\draw [line width=.6pt] (11+.1,6) -- (11+.1, 7);
\draw [line width=.6pt] (14+.1,4) .. controls (14+.1, 4+.2) and (14-.1,5-.1) .. (14-.3,5);
\draw [line width=.6pt] (15+.1,4) -- (15+.1, 8);
\draw [line width=.6pt] (16+.1,2) .. controls (16+.1, 2+.2) and (16,3-.2) .. (16-.1,3);
\draw [line width=.6pt] (17+.1,3) .. controls (17+.1, 3+.2) and (17,4-.2) .. (17-.1,4);
\draw [line width=.6pt] (17+.1,9) .. controls (17+.1, 9+.2) and (17,10-.2) .. (17-.1,10);
\draw [line width=.6pt] (18+.1,2) -- (18+.1, 4);
\draw [line width=.6pt] (18+.1,10) .. controls (18+.1, 10+.2) and (18,11-.2) .. (18-.1,11);
\draw [line width=.6pt] (19+.1,9) -- (19+.1, 11);
\draw [line width=.6pt] (20+.1,4) -- (20+.1, 6);

\draw [line width=.6pt,  dashed] (0+.1,0) -- (0-.1,1);
\draw [line width=.6pt,  dashed] (1+.1,1) .. controls (1+.1-.1,1+.2) and (1-.1,2-.2) .. (1-.1,2);
\draw [line width=.6pt,  dashed] (2+.1,2) .. controls (2+.1-.1,2+.2) and (2-.1,3-.2) .. (2-.1,3);
\draw [line width=.6pt,  dashed] (7+.1,1) -- (7-.1,2);
\draw [line width=.6pt,  dashed] (8+.1,2) .. controls (8+.1-.1,2+.2) and (8-.1,3-.2) .. (8-.1,3);
\draw [line width=.6pt,  dashed] (8+.1,3) .. controls (8+.1-.1,3+.2) and (8-.1,4-.2) .. (8-.1,4);
\draw [line width=.6pt,  dashed] (9+.1,5) .. controls (9+.1-.1,5+.2) and (9-.1,6-.2) .. (9-.1,6);
\draw [line width=.6pt,  dashed] (10+.1,6) .. controls (10+.1-.1,6+.2) and (10-.1,7-.2) .. (10-.1,7);
\draw [line width=.6pt,  dashed] (14+.1,4) .. controls (14+.1-.2,4+.2) and (14-.3,5-.2) .. (14-.3,5);
\draw [line width=.6pt,  dashed] (16+.1,2) .. controls (16+.1-.1,2+.2) and (16-.1,3-.2) .. (16-.1,3);
\draw [line width=.6pt,  dashed] (17+.1,3) .. controls (17+.1-.1,3+.2) and (17-.1,4-.2) .. (17-.1,4);
\draw [line width=.6pt,  dashed] (17+.1,9) .. controls (17+.1-.1,9+.2) and (17-.1,10-.2) .. (17-.1,10);
\draw [line width=.6pt,  dashed] (18+.1,10) .. controls (18+.1-.1,10+.2) and (18-.1,11-.2) .. (18-.1,11);
\draw [line width=.6pt,  dashed] (20+.1,4) -- (20-.1,5);

\draw [line width=.6pt] (0+.1,0) -- (3+.1,3);
\draw [line width=.6pt] (7+.1,1) -- (9+.1,3);
\draw [line width=.6pt] (8+.1,3) -- (9+.1,4);
\draw [line width=.6pt] (9+.1,5) -- (11+.1,7);
\draw [line width=.6pt] (14+.1,4) -- (15+.3,5);
\draw [line width=.6pt] (16+.1,2) -- (18+.1,4);
\draw [line width=.6pt] (16+.1,7) -- (17+.1,8);
\draw [line width=.6pt] (17+.1,9) -- (19+.1,11);
\draw [line width=.6pt] (20+.1,4) -- (21+.1,5);

\filldraw [black] 
\foreach \n in {0,...,11}{(0+.1,\n) circle (2pt)};
\filldraw [black]
(1+.1,1) circle (2pt)
(2+.1,2) circle (2pt)
(3+.1,1) circle (2pt)
(3+.1,2) circle (2pt)
(3+.1,3) circle (2pt)
(6+.1,2) circle (2pt) 
(7+.1,1) circle (2pt)
(7+.1,2) circle (2pt)
(7+.1,3) circle (2pt)
(7+.1,4) circle (2pt)
(8+.1,2) circle (2pt)
(8+.1,3) circle (2pt)
(9+.1,3) circle (2pt)
(9+.1,4) circle (2pt)
(9+.1,5) circle (2pt)
(10+.1,6) circle (2pt)
(11+.1,5) circle (2pt)
(11+.1,6) circle (2pt)
(11+.1,7) circle (2pt)
(14+.1,2) circle (2pt)
(14+.1,4) circle (2pt)
(15+.1,4) circle (2pt)
(15+.1,5) circle (2pt)
(15.2+.1,5) circle (2pt)
(15+.1,6) circle (2pt)
(15+.1,7) circle (2pt)
(15+.1,8) circle (2pt)
(16+.1,2) circle (2pt)
(16+.1,7) circle (2pt)
(17+.1,3) circle (2pt)
(17+.1,5) circle (2pt)
(17+.1,8) circle (2pt)
(17+.1,9) circle (2pt)
(18+.1,2) circle (2pt)
(18+.1,3) circle (2pt)
(18+.1,4) circle (2pt)
(18+.1,10) circle (2pt)
(19+.1,3) circle (2pt)
(19+.1,9) circle (2pt)
(19+.1,10) circle (2pt)
(19+.1,11) circle (2pt)
(20+.1,4) circle (2pt)
(20+.1,5) circle (2pt)
(20+.1,6) circle (2pt)
(21+.1,3) circle (2pt)
(21+.1,5) circle (2pt)
;

\draw [line width=.6pt] (2-.1,1) -- (2-.1, 2);
\draw [line width=.6pt] (2-.1,2) -- (2-.1, 3);
\draw [line width=.6pt] (6-.1,1) -- (6-.1, 2);
\draw [line width=.6pt] (6-.1,2) -- (6-.1, 3);
\draw [line width=.6pt] (6-.1,3) -- (6-.1, 4);
\draw [line width=.6pt] (10-.1,5) -- (10-.1, 6);
\draw [line width=.6pt] (10-.1,6) -- (10-.1, 7);
\draw [line width=.6pt] (14-.1,4) -- (14-.1, 8);
\draw [line width=.6pt] (17-.1,2) -- (17-.1, 4);
\draw [line width=.6pt] (18-.1,9) -- (18-.1, 11);
\draw [line width=.6pt] (19-.1,4) -- (19-.1, 6);
\draw [line width=.6pt] (21-.1,8) -- (21-.1, 10);

\draw [line width=.6pt] (0-.1,1) -- (2-.1,3);
\draw [line width=.6pt] (6-.1,1) -- (8-.1,3);
\draw [line width=.6pt] (7-.1,3) -- (8-.1,4);
\draw [line width=.6pt] (8-.1,5) -- (10-.1,7);
\draw [line width=.6pt] (13-.1,4) -- (14-.3,5);
\draw [line width=.6pt] (15-.1,2) -- (17-.1,4);
\draw [line width=.6pt] (15-.1,7) -- (16-.1,8);
\draw [line width=.6pt] (16-.1,9) -- (18-.1,11);
\draw [line width=.6pt] (19-.1,4) -- (20-.1,5);

\draw [line width=1pt, dotted] (20+.1, 4) -- (19-.1, 6);

\filldraw 
([xshift=-2.5pt,yshift=-2.5pt]0-.1,1) rectangle ++(5pt, 5pt)
([xshift=-2.5pt,yshift=-2.5pt]1-.1,2) rectangle ++(5pt, 5pt)
([xshift=-2.5pt,yshift=-2.5pt]2-.1,1) rectangle ++(5pt, 5pt)
([xshift=-2.5pt,yshift=-2.5pt]2-.1,2) rectangle ++(5pt, 5pt)
([xshift=-2.5pt,yshift=-2.5pt]2-.1,3) rectangle ++(5pt, 5pt)
([xshift=-2.5pt,yshift=-2.5pt]5-.1,2) rectangle ++(5pt, 5pt)
([xshift=-2.5pt,yshift=-2.5pt]6-.1,1) rectangle ++(5pt, 5pt)
([xshift=-2.5pt,yshift=-2.5pt]6-.1,2) rectangle ++(5pt, 5pt)
([xshift=-2.5pt,yshift=-2.5pt]6-.1,3) rectangle ++(5pt, 5pt)
([xshift=-2.5pt,yshift=-2.5pt]6-.1,4) rectangle ++(5pt, 5pt)
([xshift=-2.5pt,yshift=-2.5pt]7-.1,2) rectangle ++(5pt, 5pt)
([xshift=-2.5pt,yshift=-2.5pt]7-.1,3) rectangle ++(5pt, 5pt)
([xshift=-2.5pt,yshift=-2.5pt]8-.1,3) rectangle ++(5pt, 5pt)
([xshift=-2.5pt,yshift=-2.5pt]8-.1,4) rectangle ++(5pt, 5pt)
([xshift=-2.5pt,yshift=-2.5pt]8-.1,5) rectangle ++(5pt, 5pt)
([xshift=-2.5pt,yshift=-2.5pt]9-.1,6) rectangle ++(5pt, 5pt)
([xshift=-2.5pt,yshift=-2.5pt]10-.1,5) rectangle ++(5pt, 5pt)
([xshift=-2.5pt,yshift=-2.5pt]10-.1,6) rectangle ++(5pt, 5pt)
([xshift=-2.5pt,yshift=-2.5pt]10-.1,7) rectangle ++(5pt, 5pt)
([xshift=-2.5pt,yshift=-2.5pt]13-.1,2) rectangle ++(5pt, 5pt)
([xshift=-2.5pt,yshift=-2.5pt]13-.1,4) rectangle ++(5pt, 5pt)
([xshift=-2.5pt,yshift=-2.5pt]14-.1,4) rectangle ++(5pt, 5pt)
([xshift=-2.5pt,yshift=-2.5pt]14-.1,5) rectangle ++(5pt, 5pt)
(14-.2-.1,5) ellipse (5pt and 3pt)
([xshift=-2.5pt,yshift=-2.5pt]14-.1,6) rectangle ++(5pt, 5pt)
([xshift=-2.5pt,yshift=-2.5pt]14-.1,7) rectangle ++(5pt, 5pt)
([xshift=-2.5pt,yshift=-2.5pt]14-.1,8) rectangle ++(5pt, 5pt)
([xshift=-2.5pt,yshift=-2.5pt]15-.1,2) rectangle ++(5pt, 5pt)
([xshift=-2.5pt,yshift=-2.5pt]15-.1,7) rectangle ++(5pt, 5pt)
([xshift=-2.5pt,yshift=-2.5pt]16-.1,3) rectangle ++(5pt, 5pt)
([xshift=-2.5pt,yshift=-2.5pt]16-.1,5) rectangle ++(5pt, 5pt)
([xshift=-2.5pt,yshift=-2.5pt]16-.1,8) rectangle ++(5pt, 5pt)
([xshift=-2.5pt,yshift=-2.5pt]16-.1,9) rectangle ++(5pt, 5pt)
([xshift=-2.5pt,yshift=-2.5pt]17-.1,2) rectangle ++(5pt, 5pt)
([xshift=-2.5pt,yshift=-2.5pt]17-.1,3) rectangle ++(5pt, 5pt)
([xshift=-2.5pt,yshift=-2.5pt]17-.1,4) rectangle ++(5pt, 5pt)
([xshift=-2.5pt,yshift=-2.5pt]17-.1,10) rectangle ++(5pt, 5pt)
([xshift=-2.5pt,yshift=-2.5pt]18-.1,3) rectangle ++(5pt, 5pt)
([xshift=-2.5pt,yshift=-2.5pt]18-.1,9) rectangle ++(5pt, 5pt)
([xshift=-2.5pt,yshift=-2.5pt]18-.1,10) rectangle ++(5pt, 5pt)
([xshift=-2.5pt,yshift=-2.5pt]18-.1,11) rectangle ++(5pt, 5pt)
([xshift=-2.5pt,yshift=-2.5pt]19-.1,4) rectangle ++(5pt, 5pt)
([xshift=-2.5pt,yshift=-2.5pt]19-.1,5) rectangle ++(5pt, 5pt)
([xshift=-2.5pt,yshift=-2.5pt]19-.1,6) rectangle ++(5pt, 5pt)
([xshift=-2.5pt,yshift=-2.5pt]20-.1,3) rectangle ++(5pt, 5pt)
([xshift=-2.5pt,yshift=-2.5pt]20-.1,5) rectangle ++(5pt, 5pt)
([xshift=-2.5pt,yshift=-2.5pt]21-.1,4) rectangle ++(5pt, 5pt)
([xshift=-2.5pt,yshift=-2.5pt]21-.1,8) rectangle ++(5pt, 5pt)
([xshift=-2.5pt,yshift=-2.5pt]21-.1,9) rectangle ++(5pt, 5pt)
([xshift=-2.5pt,yshift=-2.5pt]21-.1,10) rectangle ++(5pt, 5pt)
;

\draw (0-.1, 1-.1) node [anchor=north east] {$\rho h_1$};
\draw (0+.15, 1) node [anchor=west] {$h_0$};
\draw (1-.05, 1) node [anchor=north west] {$[\tau h_1]$};
\draw (3+.15, 1) node [anchor=north] {$[\tau^2]h_2$};
\draw (6+.4, 2) node [anchor=south ] {$[\tau^2]^2h_2^2$};
\draw (7+.15, 1) node [anchor=north] {$[\tau^2]^2h_3$};
\draw (8+.15, 3) node [anchor=south west] {$[\tau^2]^2[\tau c_0]$};
\draw (9, 5) node [anchor=north] {$[\tau^2]^2[\tau Ph_1]$};
\draw (11+.15, 5) node [anchor=north] {$[\tau^2]^3Ph_2$};
\draw (14+.25, 4) node [anchor=north] {$[\tau^2]^4d_0$};
\draw (14+.05, 2) node [anchor=north] {$[\tau^2]^4h_3^2$};
\draw (16+.1, 7) node [anchor=north] {$[\tau^2]^4[\tau Pc_0]$};
\draw (17, 9) node [anchor=north] {$[\tau^2]^4[\tau P^2h_1]$};
\draw (18+.15, 2) node [anchor=north] {$[\tau^2]^5h_2h_4$};
\draw (19+.15, 9) node [anchor=north] {$[\tau^2]^5P^2h_2$};
\draw (19, 3) node [anchor=north] {$[\tau^2]^5[\tau c_1]$};
\draw (19-.05, 4) node [anchor=north] {$[\tau^2]^5[\rho\tau g]$};
\draw (20-.3, 4) node [anchor=north west] {$[\tau^2]^5[\tau^2 g]$};
\draw (20+.05, 5) node [anchor=south west] {$[\tau^2]^6h_2e_0$};
\draw (21+.15, 3) node [anchor=north] {$[\tau^2]^6h_2^2h_4$};
\draw (21-.3, 4) node [anchor=south west] {$\rho[\tau^2]^5 h_2 c_1$};

\draw (19+.3, 5+.3) node [anchor=south west] {$d_2 ?$};

\end{tikzpicture}
}
\caption{$E_{\infty}$ page of MASS for $\bbF_q$ with
  $q \equiv 3 \bmod 4$, weight 0}
  \label{fig:3mod4-infty}
\end{figure}

\bibliography{bibliography} 

\begin{thebibliography}{}
\providecommand\bibmarginpar{\leavevmode\marginpar}
\def\urlstyle#1{{\tt #1}}

\bibitem{A61}
\textbf{J\,F Adams}, \emph{A finiteness theorem in homological algebra}, Proc.
  Cambridge Philos. Soc. 57 (1961) 31--36

\bibitem{Adams}
\textbf{J\,F Adams}, \emph{Stable homotopy and generalised homology}, Chicago
  Lectures in Mathematics, University of Chicago Press, Chicago, IL
  (1995)Reprint of the 1974 original

\bibitem{ALP}
\textbf{A Ananyevsky}, \textbf{M Levine}, \textbf{I Panin}, \emph{Witt sheaves
  and the $\eta$-inverted sphere spectrum}, arXiv:1504.04860  (2015)

\bibitem{Ayoub2}
\textbf{J Ayoub}, \emph{Les six op\'erations de {G}rothendieck et le formalisme
  des cycles \'evanescents dans le monde motivique. {II}}, Ast\'erisque  (2007)
  vi+364 pp. (2008)

\bibitem{Blander}
\textbf{B\,A Blander}, \href{http://dx.doi.org/10.1023/A:1013302313123}
  {\emph{Local projective model structures on simplicial presheaves}},
  $K$-Theory 24 (2001) 283--301

\bibitem{Bousfield}
\textbf{A\,K Bousfield}, \emph{The localization of spectra with respect to
  homology}, Topology 18 (1979) 257--281

\bibitem{BousfieldKan}
\textbf{A\,K Bousfield}, \textbf{D\,M Kan}, \emph{Homotopy limits, completions
  and localizations}, Lecture Notes in Mathematics, Vol. 304, Springer-Verlag,
  Berlin-New York (1972)

\bibitem{BB1}
\textbf{R\,R Bruner}, \href{http://dx.doi.org/10.1090/conm/146/01216}
  {\emph{{${\rm Ext}$} in the nineties}}, from: ``Algebraic topology
  ({O}axtepec, 1991)'', Contemp. Math. 146, Amer. Math. Soc., Providence, RI
  (1993)  71--90

\bibitem{BB3}
\textbf{R\,R Bruner}, \emph{An {A}dams Spectral Sequence primer},
  math.wayne.edu/{\textasciitilde}rrb/papers/adams.pdf  (2009)

\bibitem{BB2}
\textbf{R\,R Bruner}, \emph{The Cohomology of the mod 2 {S}teenrod Algebra},
  math.wayne.edu/{\textasciitilde}rrb/papers/cohom.pdf  (2016)

\bibitem{DI-Cell}
\textbf{D Dugger}, \textbf{D\,C Isaksen},
  \href{http://dx.doi.org/10.2140/agt.2005.5.615} {\emph{Motivic cell
  structures}}, Algebr. Geom. Topol. 5 (2005) 615--652

\bibitem{DI}
\textbf{D Dugger}, \textbf{D\,C Isaksen},
  \href{http://dx.doi.org/10.2140/gt.2010.14.967} {\emph{The motivic {A}dams
  spectral sequence}}, Geom. Topol. 14 (2010) 967--1014

\bibitem{DI-Real}
\textbf{D Dugger}, \textbf{D\,C Isaksen}, \emph{Low dimensional {M}ilnor-{W}itt
  stems over $\mathbb{R}$}, to appear in Annals of K-Theory, arXiv:1502.01007
  (2015)

\bibitem{Nordfjordeid}
\textbf{B\,I Dundas}, \textbf{M Levine}, \textbf{P\,A {\O}stv{\ae}r}, \textbf{O
  R{\"o}ndigs}, \textbf{V Voevodsky},
  \href{http://dx.doi.org/10.1007/978-3-540-45897-5} {\emph{Motivic homotopy
  theory}}, Universitext, Springer-Verlag, Berlin (2007)Lectures from the
  Summer School held in Nordfjordeid, August 2002

\bibitem{MotFunctors}
\textbf{B\,I Dundas}, \textbf{O R{\"o}ndigs}, \textbf{P\,A {\O}stv{\ae}r},
  \emph{Motivic functors}, Doc. Math. 8 (2003) 489--525 (electronic)

\bibitem{Fu-Wilson}
\textbf{K Fu}, \textbf{G\,M Wilson}, \emph{Cohomology of the {S}teenrod algebra
  over $\mathbb{R}$}, math.rugers.edu/{\textasciitilde}wilson47/  (2015)

\bibitem{Geisser}
\textbf{T Geisser}, \href{http://dx.doi.org/10.1007/s00209-004-0680-x}
  {\emph{Motivic cohomology over {D}edekind rings}}, Math. Z. 248 (2004)
  773--794

\bibitem{Hill}
\textbf{M\,A Hill}, \href{http://dx.doi.org/10.1016/j.jpaa.2010.06.017}
  {\emph{Ext and the motivic {S}teenrod algebra over {$\mathbb{R}$}}}, J. Pure
  Appl. Algebra 215 (2011) 715--727

\bibitem{Hhorn}
\textbf{P\,S Hirschhorn}, \emph{Model categories and their localizations},
  volume~99 of \emph{Mathematical Surveys and Monographs}, American
  Mathematical Society, Providence, RI (2003)

\bibitem{H-Spectra}
\textbf{M Hovey}, \href{http://dx.doi.org/10.1016/S0022-4049(00)00172-9}
  {\emph{Spectra and symmetric spectra in general model categories}}, J. Pure
  Appl. Algebra 165 (2001) 63--127

\bibitem{Hoyois}
\textbf{M Hoyois}, \emph{From algebraic cobordism to motivic cohomology}, J.
  Reine Angew. Math. 702 (2015) 173--226

\bibitem{HKOst}
\textbf{M Hoyois}, \textbf{S Kelly}, \textbf{P\,A {\O}stv{\ae}r}, \emph{The
  motivic {S}teenrod algebra in positive characteristic}, arXiv:1305.5690, to
  appear in JEMS  (2013)

\bibitem{SMod}
\textbf{P Hu}, \href{http://dx.doi.org/10.1090/memo/0767} {\emph{{$S$}-modules
  in the category of schemes}}, Mem. Amer. Math. Soc. 161 (2003) viii+125

\bibitem{HKO}
\textbf{P Hu}, \textbf{I Kriz}, \textbf{K\,M Ormsby},
  \href{http://dx.doi.org/10.1017/is010001012jkt098} {\emph{Remarks on motivic
  homotopy theory over algebraically closed fields}}, J. K-Theory 7 (2011)
  55--89

\bibitem{Charts}
\textbf{D\,C Isaksen}, \emph{Classical and motivic {A}dams charts},
  arXiv:1401.4983  (2014)

\bibitem{StableStems}
\textbf{D\,C Isaksen}, \emph{Stable stems}, arXiv:1407.8418  (2014)

\bibitem{Jardine}
\textbf{J\,F Jardine}, \emph{Motivic symmetric spectra}, Doc. Math. 5 (2000)
  445--553 (electronic)

\bibitem{Kochman}
\textbf{S\,O Kochman}, \emph{Stable homotopy groups of spheres}, volume 1423 of
  \emph{Lecture Notes in Mathematics}, Springer-Verlag, Berlin (1990)A
  computer-assisted approach

\bibitem{Levine}
\textbf{M Levine}, \href{http://dx.doi.org/10.1112/jtopol/jtt031} {\emph{A
  comparison of motivic and classical stable homotopy theories}}, J. Topol. 7
  (2014) 327--362

\bibitem{McCleary}
\textbf{J McCleary}, \emph{A user's guide to spectral sequences}, Cambridge
  University Press (2001)

\bibitem{Milnor}
\textbf{J Milnor}, \emph{The {S}teenrod algebra and its dual}, Ann. of Math.
  (2) 67 (1958) 150--171

\bibitem{Mor03}
\textbf{F Morel}, \emph{An introduction to {$\mathbb{A}^1$}-homotopy theory},
  from: ``Contemporary developments in algebraic {$K$}-theory'', ICTP Lect.
  Notes, XV, Abdus Salam Int. Cent. Theoret. Phys., Trieste (2004)  357--441
  (electronic)

\bibitem{Morel-Con}
\textbf{F Morel}, \href{http://dx.doi.org/10.1007/s10977-005-1562-7} {\emph{The
  stable {$\mathbb{A}^1$}-connectivity theorems}}, $K$-Theory 35 (2005) 1--68

\bibitem{Morel12}
\textbf{F Morel}, \href{http://dx.doi.org/10.1007/978-3-642-29514-0}
  {\emph{{$\mathbb{A}^1$}-algebraic topology over a field}}, volume 2052 of
  \emph{Lecture Notes in Mathematics}, Springer, Heidelberg (2012)

\bibitem{MV99}
\textbf{F Morel}, \textbf{V Voevodsky}, \emph{{$\bbA^1$}-homotopy theory of
  schemes}, Inst.\ Hautes {\'E}tudes Sci.\ Publ.\ Math 90 (1999) 45--143

\bibitem{Moss}
\textbf{R\,M\,F Moss}, \emph{Secondary compositions and the {A}dams spectral
  sequence}, Math. Z. 115 (1970) 283--310

\bibitem{MotInvPadic}
\textbf{K\,M Ormsby}, \href{http://dx.doi.org/10.1017/is011004017jkt153}
  {\emph{Motivic invariants of {$p$}-adic fields}}, J. K-Theory 7 (2011)
  597--618

\bibitem{MotBPInvQ}
\textbf{K\,M Ormsby}, \textbf{P\,A {\O}stv{\ae}r},
  \href{http://dx.doi.org/10.2140/gt.2013.17.1671} {\emph{Motivic
  {B}rown-{P}eterson invariants of the rationals}}, Geom. Topol. 17 (2013)
  1671--1706

\bibitem{LowDimFields}
\textbf{K\,M Ormsby}, \textbf{P\,A {\O}stv{\ae}r},
  \href{http://dx.doi.org/10.1016/j.aim.2014.07.024} {\emph{Stable motivic
  {$\pi_1$} of low-dimensional fields}}, Adv. Math. 265 (2014) 97--131

\bibitem{Ravenel}
\textbf{D\,C Ravenel}, \emph{Complex cobordism and stable homotopy groups of
  spheres}, volume 121 of \emph{Pure and Applied Mathematics}, Academic Press,
  Inc., Orlando, FL (1986)

\bibitem{RSO}
\textbf{O R{\"o}ndigs}, \textbf{M Spitzweck}, \textbf{P\,A {\O}stv{\ae}r},
  \emph{The first stable homotopy groups of motivic spheres}, arXiv:1604.00365
  (2016)

\bibitem{Scharlau}
\textbf{W Scharlau}, \emph{Quadratic and {H}ermitian forms}, volume 270 of
  \emph{Grundlehren der Mathematischen Wissenschaften [Fundamental Principles
  of Mathematical Sciences]}, Springer-Verlag, Berlin (1985)

\bibitem{Serre}
\textbf{J-P Serre}, \emph{Local fields}, volume~67 of \emph{Graduate Texts in
  Mathematics}, Springer-Verlag, New York-Berlin (1979)Translated from the
  French by Marvin Jay Greenberg

\bibitem{Soule}
\textbf{C Soul{\'e}}, \href{http://dx.doi.org/10.1007/BF01406843}
  {\emph{{$K$}-th\'eorie des anneaux d'entiers de corps de nombres et
  cohomologie \'etale}}, Invent. Math. 55 (1979) 251--295

\bibitem{Spitzweck}
\textbf{M Spitzweck}, \emph{A commutative $\mathbb{P}^1$-spectrum representing
  motivic cohomology over {D}edekind domains}, arXiv:1207.4078  (2013)

\bibitem{Switzer}
\textbf{R\,M Switzer}, \emph{Algebraic Topology---Homotopy and Homology},
  volume 212 of \emph{Die Grundleheren der mathematischen Wissenshaften in
  Einzeldarstellungen}, Springer-Verlag (1975)

\bibitem{Voev98}
\textbf{V Voevodsky}, \emph{{$\mathbb{A}^1$}-homotopy theory}, from:
  ``Proceedings of the {I}nternational {C}ongress of {M}athematicians, {V}ol.
  {I} ({B}erlin, 1998)'', Extra Vol. I (1998)  579--604 (electronic)

\bibitem{MR2031199}
\textbf{V Voevodsky}, \href{http://dx.doi.org/10.1007/s10240-003-0010-6}
  {\emph{Motivic cohomology with {${\bf Z}/2$}-coefficients}}, Publ. Math.
  Inst. Hautes \'Etudes Sci.  (2003) 59--104

\bibitem{MR2031198}
\textbf{V Voevodsky}, \href{http://dx.doi.org/10.1007/s10240-003-0009-z}
  {\emph{Reduced power operations in motivic cohomology}}, Publ. Math. Inst.
  Hautes \'Etudes Sci.  (2003) 1--57

\bibitem{MR2737977}
\textbf{V Voevodsky}, \emph{Motivic {E}ilenberg-{M}ac{L}ane spaces}, Publ.
  Math. Inst. Hautes \'Etudes Sci.  (2010) 1--99

\bibitem{WX}
\textbf{G Wang}, \textbf{Z Xu}, \emph{The algebraic {A}tiyah-{H}irzebruch
  spectral sequence of real projective spectra}, arXiv:1601.02185  (2016)

\bibitem{WThesis}
\textbf{G\,M Wilson}, \emph{Motivic Stable Stems over Finite Fields}, Rutgers
  University Ph.D. Dissertation  (2016)

\end{thebibliography}
\bibliographystyle{gtart}

\end{document}